\theoremstyle{plain}
\newtheorem{thm}{Theorem}[section]
\newtheorem{prop}[thm]{Proposition}
\newtheorem{lem}[thm]{Lemma}
\newtheorem{cor}[thm]{Corollary}
\theoremstyle{definition}
\newtheorem{dfn}[thm]{Definition}
\newtheorem{rem}[thm]{Remark}
\newtheorem{exa}[thm]{Example}
\newtheorem*{conv}{Convention}
\theoremstyle{plain}
\newtheorem{thmA}{Theorem}
\newtheorem{corA}[thmA]{Corollary}
\newcommand{\Z}{\mathbb{Z}}
\newcommand{\Q}{\mathbb{Q}}
\newcommand{\R}{\mathbb{R}}
\newcommand{\C}{\mathbb{C}}
\newcommand{\K}{\mathbb{K}}
\newcommand{\OO}{\mathcal{O}}
\DeclareMathOperator{\codim}{codim}
\DeclareMathOperator{\Exc}{Exc}
\DeclareMathOperator{\mult}{mult}
\DeclareMathOperator{\Supp}{Supp}
\DeclareMathOperator{\NEb}{\overline{\mathrm{NE}}}
\DeclareMathOperator{\proj}{Proj}
\DeclareMathOperator{\spec}{Spec}
\DeclareMathOperator{\Alb}{Alb}
\DeclareMathOperator{\sbs}{\mathbf{B}}
\begin{document}
	
	\title[Remarks on the Existence of Minimal Models]{Remarks on the Existence of Minimal Models \\ of Log Canonical Generalized Pairs}
	
	\author[Nikolaos Tsakanikas]{Nikolaos Tsakanikas}
	\address{Institut de Math\'ematiques (CAG), \'Ecole Polytechnique F\'ed\'erale de Lausanne (EPFL), 1015 Lausanne, Switzerland}
	\email{nikolaos.tsakanikas@epfl.ch}
	\email{tsakanikas@mpim-bonn.mpg.de}
	
	\author[Lingyao Xie]{Lingyao Xie}
	\address{Department of Mathematics, The University of Utah, Salt Lake City, UT 84112, USA}
	\email{lingyao@math.utah.edu}
	
	\thanks{2020 \emph{Mathematics Subject Classification}: 14E30. \newline
	\indent \emph{Keywords}: Minimal Model Program, generalized pairs, minimal models, weak Zariski decompositions, Nakayama-Zariski decomposition, Mori fiber spaces.
	}
	
	\begin{abstract}
		Given an NQC log canonical generalized pair $(X,B+M)$ whose underlying variety $X$ is not necessarily $\Q$-factorial, we show that one may run a $(K_X+B+M)$-MMP with scaling of an ample divisor which terminates, provided that $(X,B+M)$ has a minimal model in a weaker sense or that $K_X+B+M$ is not pseudo-effective. 
		We also prove the existence of minimal models of pseudo-effective NQC log canonical generalized pairs under various additional assumptions, for instance when the boundary contains an ample divisor.
	\end{abstract}
	
	\maketitle
	
	\begingroup
	\hypersetup{linkcolor=black}
	\setcounter{tocdepth}{1}
	\tableofcontents
	\endgroup

	\section{Introduction}
	
	The theory of generalized pairs was originally introduced by Birkar and Zhang \cite{BZ16} in order to address the so-called effective Iitaka fibration conjecture, but nowadays it has become a central topic in higher-dimensional birational geometry due to its plethora of applications. The survey article \cite{Bir21c} provides an overview of various applications of generalized pairs, while further applications concern the existence of minimal models conjecture \cite{LT22a,LT22b,LX23a,LX23b}, the termination of flips conjecture \cite{HM20,CT23} and the generalized non-vanishing conjecture \cite{HanLiu20,Hash22b,LMPTX23}. For the definition of the fundamental concept of an NQC log canonical generalized pair we refer to Subsection \ref{subsection:g-pairs}.
	
	The Minimal Model Program (MMP) for generalized pairs developed rapidly since the introduction of these geometric objects. Initially, it was established for NQC log canonical generalized pairs whose underlying variety has $\Q$-factorial klt singularities \cite{BZ16,HanLi22}, since various core results, such as the existence of flips, could be reduced to analogous statements for usual pairs under this additional assumption on the underlying variety. Currently, the MMP for generalized pairs works in full generality; namely, the papers \cite{HL23}, \cite{Xie22} and \cite{LX23b} proved, respectively, the Cone theorem, the Contraction theorem and the existence of flips for (not necessarily $\Q$-factorial) NQC log canonical generalized pairs. Therefore, it should now be possible, at least in principle, to remove the $\Q$-factoriality assumption from many already existing results about generalized pairs.
	
	In this paper we are mainly concerned with the problem of the existence of minimal models and Mori fiber spaces of (not necessarily $\Q$-factorial) NQC log canonical generalized pairs. To a certain extent, this paper may be regarded as an extension of the previous works \cite{LT22a,LT22b}, since our first objective is to refine the majority of the results of \cite{LT22b} by removing the assumption that the underlying variety is $\Q$-factorial. Our second goal is to make further progress towards the existence of minimal models for generalized pairs by establishing several new special cases. Our results rely essentially on the substantial recent progress in the MMP for generalized pairs mentioned above, and especially on the Contraction theorem for (not necessarily $\Q$-factorial) NQC log canonical generalized pairs \cite{Xie22}. Our main result is the following theorem.
	
	\begin{thmA}[= Theorem \ref{thm:mainthm}]
		\label{thm:mainthm_intro}
		Let $ (X/Z,B+M) $ be an NQC log canonical generalized pair. Assume that either
		\begin{enumerate}[\normalfont (a)]
			\item $ (X,B+M) $ has a minimal model in the sense of Birkar-Shokurov over $ Z $, or
			
			\item $ K_X+B+M $ is not pseudo-effective over $ Z $.
		\end{enumerate}
		Let $ A $ be an effective $ \R $-Cartier $ \R $-divisor on $ X $ which is ample over $ Z $ such that the NQC generalized pair $ \big( X/Z, (B+A)+M \big) $ is log canonical and the divisor $ K_X + B + A + M $ is nef over $Z$. Then there exists a $ (K_X + B + M) $-MMP over $Z$ with scaling of $A$ that terminates.
		In particular:
		\begin{itemize}
			\item $ (X,B+M) $ has a minimal model in the sense of Birkar-Shokurov over $ Z $ if and only if it has a minimal model over $Z$; 
			
			\item if $ K_X+B+M $ is not pseudo-effective over $ Z $, then $ (X,B+M) $ has a Mori fiber space over $ Z $.
		\end{itemize}
	\end{thmA}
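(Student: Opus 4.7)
The plan is to run the $(K_X+B+M)$-MMP over $Z$ with scaling of $A$ and to prove that it terminates in each of the two scenarios; the two bulleted consequences are then immediate, as the terminal model is either a minimal model (case (a)) or a Mori fiber space (case (b)). The first task is to verify that the MMP with scaling is well-defined: at the $i$-th stage on a model $(X_i, B_i+M_i)$, one introduces the nef threshold
\[
\lambda_i := \inf\{t \geq 0 : K_{X_i}+B_i+tA_i+M_i \text{ is nef over } Z\},
\]
with $\lambda_0 \leq 1$ by hypothesis, picks an extremal ray $R_i$ with $(K_{X_i}+B_i+M_i)\cdot R_i < 0$ and $(K_{X_i}+B_i+\lambda_i A_i+M_i)\cdot R_i = 0$, and contracts or flips it. The existence of such a ray and of the subsequent divisorial contraction or flip is guaranteed by the Cone theorem, the Contraction theorem, and the existence of flips for (not necessarily $\Q$-factorial) NQC log canonical generalized pairs \cite{HaconLiu21, Xie22, LX22b}; these inputs also ensure that the NQC log canonical condition is preserved at each step.

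For termination in case (a), I would leverage the given Birkar-Shokurov minimal model $\phi \colon (X,B+M) \dashrightarrow (Y/Z, B_Y+M_Y)$ via a common-resolution comparison. Taking $p \colon W \to X$ and $q \colon W \to Y$ resolving $\phi$, the negativity lemma identifies $p^*(K_X+B+M) - q^*(K_Y+B_Y+M_Y)$ as an effective $q$-exceptional divisor, which encodes the discrepancy difference. The strategy is to track, step by step along the MMP with scaling, a nonincreasing discrete invariant counting prime divisors on $W$ that are $\phi$-exceptional (together with a secondary invariant refining this count), exactly as in the classical termination strategy going back to Birkar and in its generalized-pair adaptations from \cite{LT22a, LT22b}. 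Finiteness of this invariant then forces the MMP to terminate with a model on which $K+B+M$ is nef over $Z$.

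For case (b) one exploits non-pseudo-effectivity: the thresholds $\lambda_i$ cannot tend to zero, since otherwise $K_X+B+M$ would be a limit of nef divisors, hence pseudo-effective over $Z$, contradicting the hypothesis. Bounding $\lambda_i$ from below by some $\lambda_\infty > 0$, the MMP becomes, after finitely many steps, essentially an MMP for the auxiliary NQC log canonical generalized pair $(X,B+\lambda_\infty A+M)$ in which every contracted extremal ray is $(K+B+\lambda_\infty A+M)$-trivial. Invoking once more the non-$\Q$-factorial MMP machinery of \cite{HaconLiu21, Xie22, LX22b}, combined with the termination arguments already developed in the $\Q$-factorial setting in \cite{LT22b}, one concludes that the process terminates with a $(K_X+B+M)$-negative fiber-type contraction, producing the desired Mori fiber space.

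The principal obstacle is the bookkeeping in case (a): the discrepancy-tracking arguments in the cited literature implicitly exploit $\Q$-factoriality, notably to guarantee that intermediate models remain $\Q$-factorial so that the standard MMP operations apply at every step. Overcoming this requires systematically replacing each such use of $\Q$-factoriality by the corresponding non-$\Q$-factorial statement from \cite{Xie22} and \cite{LX22b}, and checking at each stage that the NQC and log canonical conditions on the running model are preserved, so that these tools remain applicable throughout the sequence.
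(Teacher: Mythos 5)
Your overall architecture (run the MMP with scaling of $A$, study the limit $\lambda:=\lim_i\lambda_i$ of the nef thresholds, and invoke a Birkar-type termination criterion) is the same as the paper's, but there is a genuine gap in how you feed the hypotheses into that criterion. The termination criterion you are implicitly relying on (the analogue of \cite[Theorem 4.1(iii)]{Bir12a}, here Theorem \ref{thm:LT22b_4.1}) requires a minimal model in the sense of Birkar--Shokurov of the \emph{limiting} g-pair $\big(X,(B+\lambda A)+M\big)$, not of $(X,B+M)$ itself. In case (a) your plan is to compare discrepancies against the given Birkar--Shokurov minimal model of $(X,B+M)$ on a common resolution; this only addresses the subcase $\lambda=0$. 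If $\lambda>0$ and $\lambda<\lambda_i$ for all $i$, the rays being contracted are $(K_{X_i}+B_i+\lambda_i A_i+M_i)$-trivial but strictly $(K_{X_i}+B_i+\lambda A_i+M_i)$-negative, so the process is an MMP for the perturbed pair, and the discrepancy bookkeeping against a minimal model of the \emph{unperturbed} pair does not force termination. The paper closes exactly this gap with \cite[Theorem 1.2(1)]{LX22a}: a pseudo-effective NQC lc g-pair whose boundary contains an ample divisor (here $\lambda A$ with $\lambda>0$; pseudo-effectivity of $K_X+B+\lambda A+M$ holds because each $K_{X_i}+B_i+\lambda_iA_i+M_i$ is nef over $Z$) has a minimal model in the sense of Birkar--Shokurov. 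Without this input your case (a) argument does not go through, and it is the same input that your case (b) silently needs: there you correctly deduce $\lambda>0$, but the termination of the ensuing $(K_X+B+\lambda A+M)$-trivial MMP again rests on the existence of a minimal model for a pair of the form $\big(X,(B+\lambda' A)+M\big)$ with $0<\lambda'\le\lambda$, which is not something you have established.

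Two further points. First, the passage in case (b) from \enquote{$\lambda_i\ge\lambda>0$} to \enquote{after finitely many steps every contracted ray is $(K_X+B+\lambda A+M)$-trivial} is not automatic when the $\lambda_i$ strictly decrease to $\lambda$; the paper handles the stabilizing case via the boundedness of the lengths of extremal rays (\cite[Theorem 1.3(2)]{HaconLiu21}, packaged as Lemma \ref{lem:trivial_MMP}), which lets one view the tail of the MMP as a $\big(K_X+B+(1-\varepsilon)\lambda A+M\big)$-MMP and then apply Proposition \ref{prop:mainprop}. You should make this step explicit. Second, describing the termination mechanism as a non-increasing count of $\varphi$-exceptional divisors only accounts for divisorial contractions; the substance of Theorem \ref{thm:LT22b_4.1} is the termination of the flips, which goes through lifting the MMP to dlt models (Theorem \ref{thm:lifting_MMP}) and the comparison with the minimal model of the limiting pair, not through a simple divisor count.
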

	
	We refer to Subsection \ref{subsection:MM_CM_MFS} for the definitions of the various notions of models of a generalized pair that appear in the above statement. We stress that Theorem \ref{thm:mainthm_intro} improves on \cite[Theorem 1.2]{LT22b} in the aforementioned way and extends \cite[Theorem 1.7]{HH20} to the setting of generalized pairs as well. The three main ingredients for its proof are \cite[Theorem 1.3(1)]{LX23a}, \cite[Theorem 1.5]{Xie22} and a refinement of \cite[Theorem 4.1]{LT22b}.
	
	We emphasize that our main result, Theorem \ref{thm:mainthm_intro}, has numerous applications. First, it enables us to achieve our first goal; namely, the following four results are the desired refinements of certain results that were previously obtained in \cite{LT22a,LT22b}. As in op.\ cit., the phrase \enquote{existence of minimal models for smooth varieties} that appears below means the existence of \emph{relative} minimal models, that is, minimal models of smooth quasi-projective varieties which are projective and whose canonical class is pseudoeffective over another normal quasi-projective variety. 
	
	\begin{thmA}[= Theorem \ref{thm:EMM_implication}]
		\label{thm:EMM_implication_intro}
		The existence of minimal models for smooth varieties of dimension $n$ implies the existence of minimal models for NQC log canonical generalized pairs of dimension $n$.
	\end{thmA}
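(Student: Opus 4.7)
The plan is to use a $\Q$-factorial dlt modification to reduce to the $\Q$-factorial case previously established in \cite{LT22b}, and then to apply Theorem~\ref{thm:mainthm_intro} in order to upgrade the resulting model to an honest minimal model. Fix an NQC log canonical generalized pair $(X/Z,B+M)$ of dimension $n$; since a minimal model only exists when the log canonical class is pseudo-effective, I may assume that $K_X+B+M$ is pseudo-effective over $Z$. By Theorem~\ref{thm:mainthm_intro}(a), it then suffices to construct a minimal model of $(X/Z,B+M)$ in the sense of Birkar--Shokurov.

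First, invoking the recent progress on the MMP for (not necessarily $\Q$-factorial) NQC lc generalized pairs from \cite{HaconLiu21,Xie22,LX22b}, I would take a $\Q$-factorial dlt modification $\pi \colon X' \to X$ of $(X,B+M)$. This produces an NQC dlt generalized pair $(X'/Z,B'+M')$ with $X'$ $\Q$-factorial of dimension $n$, satisfying $\pi_*B' = B$ and $\pi_*M' = M$, together with the crepancy relation $K_{X'}+B'+M' = \pi^*(K_X+B+M)$; in particular $K_{X'}+B'+M'$ is pseudo-effective over $Z$.

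Next, since $X'$ is $\Q$-factorial, the $\Q$-factorial analogue of the desired statement, established in \cite{LT22b}, applies to $(X'/Z,B'+M')$ under the assumed EMM hypothesis for smooth varieties of dimension $n$ and yields a minimal model $(Y/Z,B_Y+M_Y)$ of $(X'/Z,B'+M')$ over $Z$. Combining the crepancy of $\pi$ with the negativity lemma, the induced birational map $\varphi \colon X \dashrightarrow Y$ furnishes a minimal model of $(X/Z,B+M)$ in the sense of Birkar--Shokurov: the pushforward identities $\varphi_*B = B_Y$ and $\varphi_*M = M_Y$ follow from $\pi_*B'=B$ and $\pi_*M'=M$, while for every prime divisor $E$ on $X$ that is contracted by $\varphi$, the required discrepancy inequality $a(E;X,B+M) \leq a(E;Y,B_Y+M_Y)$ follows from computing $a(E;X,B+M) = a(E;X',B'+M')$ via crepancy and then comparing with $a(E;Y,B_Y+M_Y)$ along the MMP $X' \dashrightarrow Y$. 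A final application of Theorem~\ref{thm:mainthm_intro}(a) then produces a genuine minimal model of $(X,B+M)/Z$.

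The main obstacle I anticipate is the discrepancy bookkeeping in the descent step: prime divisors of $X$ contracted by $\varphi$ split into those already $\pi$-exceptional (controlled directly by crepancy) and those which survive on $X'$ but are contracted only by the $\Q$-factorial MMP $X' \dashrightarrow Y$ (controlled via the negativity lemma applied to that MMP), and one must verify the Birkar--Shokurov inequality uniformly in both cases while simultaneously tracking the nef-part data, which requires care in the generalized-pair setting.
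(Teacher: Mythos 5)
Your argument is correct and is essentially the paper's own: the paper deduces the theorem from \cite[Theorem 4.1(i)]{LT22a} (which supplies a minimal model in the sense of Birkar--Shokurov) together with Proposition \ref{prop:mainprop}, and your reduction via a dlt blow-up to the $\Q$-factorial case of \cite{LT22b} is exactly the mechanism the paper codifies in Remark \ref{rem:EMM_reduction}. The only small corrections are that the Birkar--Shokurov definition requires $B_Y = \varphi_*B + E$ with $E$ the sum of the $\varphi^{-1}$-exceptional prime divisors (all of which carry coefficient $1$ inherited from the dlt blow-up), not $B_Y = \varphi_*B$, and strict inequality of discrepancies for divisors contracted by the MMP $X' \dashrightarrow Y$ --- both of which do hold in your setup.
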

	
	\begin{thmA}[= Theorem \ref{thm:MM_uniruled}]
		\label{thm:MM_uniruled_intro}
		Assume the existence of minimal models for smooth varieties of dimension $n-1$.
		
		Let $ (X/Z,B+M) $ be an NQC log canonical generalized pair of dimension $ n $ such that $ K_X+B+M $ is pseudo-effective over $ Z $. If a general fiber of the morphism $ X \to Z $ is uniruled, then $(X,B+M)$ has a minimal model over $Z$.
	\end{thmA}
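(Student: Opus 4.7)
The plan is to construct a minimal model of $(X/Z, B+M)$ in the sense of Birkar--Shokurov over $Z$; by Theorem~\ref{thm:mainthm_intro} such a model automatically upgrades to a genuine minimal model, which is what we want. The overall strategy parallels the treatment of the uniruled case for usual log canonical pairs, most notably \cite[Theorem 1.7]{HH20}, but now adapted to the generalized pair setting and to possibly non-$\Q$-factorial underlying varieties, crucially relying on the Contraction theorem for NQC lc generalized pairs proved in \cite{Xie22}.

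First, I would pass to a $\Q$-factorial dlt modification $\pi \colon \widetilde{X} \to X$ of $(X, B+M)$, which is available in this generality by \cite{Xie22} and standard dlt modification techniques. Let $(\widetilde{X}, \widetilde{B} + \widetilde{M})$ denote the resulting $\Q$-factorial dlt NQC generalized pair; since $\pi$ is birational over $Z$, the general fiber of $\widetilde{X} \to Z$ remains uniruled, and a BS-minimal model of $(\widetilde{X}, \widetilde{B} + \widetilde{M})$ over $Z$ descends to one of $(X, B + M)$ over $Z$. In particular $K_{\widetilde{X}}$ is not pseudo-effective over $Z$, so by running a $K_{\widetilde{X}}$-MMP over $Z$ with scaling of a suitable ample divisor (perturbing by a small ample if needed to invoke relative BCHM on the $\Q$-factorial klt variety $\widetilde{X}$) one arrives at a Mori fiber space $\phi \colon X' \to V$ over $Z$ with $\dim V < n$. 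Transporting $\widetilde{B}$ and $\widetilde{M}$ to $X'$ produces an NQC lc generalized pair $(X', B' + M')$ with $K_{X'} + B' + M'$ still pseudo-effective over $Z$.

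Next, since the general fibers of $\phi$ are Fano, one can run a relative $(K_{X'} + B' + M')$-MMP over $V$, which terminates with a good minimal model over $V$ thanks to the relative theory of generalized pairs of Fano type. A generalized canonical bundle formula over $V$ then yields an NQC log canonical generalized pair $(V/Z, B_V + M_V)$ with $\dim V \le n-1$ and $K_V + B_V + M_V$ pseudo-effective over $Z$. The hypothesis, combined with Theorem~\ref{thm:EMM_implication_intro} applied in dimension $n-1$, now provides a minimal model of $(V, B_V + M_V)$ over $Z$. Lifting this minimal model back through the Fano fibration $\phi$ yields a Birkar--Shokurov minimal model of $(X', B' + M')$, hence of $(X, B + M)$, over $Z$, and Theorem~\ref{thm:mainthm_intro} concludes the argument.

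The main obstacle I expect is the descent step: one must formulate and apply a generalized canonical bundle formula along the Fano fibration $\phi$ that preserves the NQC log canonical property and carries enough of the pseudo-effectiveness of $K_{X'} + B' + M'$ down to the base $V$ to genuinely reduce to a lower-dimensional problem, and one must then verify that the lower-dimensional minimal model pulls back to a BS-minimal model rather than a weaker type of model. The $\Q$-factoriality-free Contraction theorem of \cite{Xie22} and Theorem~\ref{thm:mainthm_intro} of the present paper are precisely what make both the MFS reduction and the upgrade from BS-minimal to minimal viable in this generality.
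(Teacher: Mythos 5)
Your overall architecture is right in one respect: the paper's proof is literally a two-line reduction, namely \cite[Theorem 4.3]{LT22a} produces a minimal model in the sense of Birkar--Shokurov, and Proposition \ref{prop:mainprop} upgrades it to a genuine minimal model. You correctly identified the second half. But your sketch of the first half --- the construction of the Birkar--Shokurov minimal model --- has two genuine gaps. First, you run a $K_{\widetilde{X}}$-MMP and then ``transport $\widetilde{B}$ and $\widetilde{M}$ to $X'$,'' claiming the result is an NQC lc g-pair. The steps of a $K_{\widetilde{X}}$-MMP are not $(K_{\widetilde{X}}+\widetilde{B}+\widetilde{M})$-negative, so discrepancies of $(\widetilde{X},\widetilde{B}+\widetilde{M})$ can drop below $-1$ along the way: $(X',B'+M')$ need not be log canonical, and, worse, even if it were, a minimal model of $(X',B'+M')$ would not descend to one of $(\widetilde{X},\widetilde{B}+\widetilde{M})$, because the map $\widetilde{X}\dashrightarrow X'$ is not $(K+B+M)$-non-positive and the usual discrepancy comparison fails.

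Second, your descent step does not actually reduce the dimension. The relative good minimal model of $(X',B'+M')$ over $V$ has $K+B+M$ semi-ample over $V$, but the associated ample model can have dimension $n$ (this happens whenever $K_{X'}+B'+M'$ is big over $V$, e.g.\ if $B$ contains a large ample divisor), so the canonical bundle formula lands you back in dimension $n$ and the hypothesis in dimension $n-1$ is of no use. The argument of \cite[Theorem 4.3]{LT22a} avoids both problems by working with the threshold $\tau=\min\{t\in[0,1]\mid K_X+tB+tM \text{ is pseudo-effective over } Z\}$, which is positive precisely because the general fiber is uniruled: running an MMP for $K_X+(\tau-\varepsilon)B+(\tau-\varepsilon)M$ yields a Mori fiber space on whose fibers $K+\tau B+\tau M$ is numerically trivial, so the canonical bundle formula genuinely descends to a base of dimension at most $n-1$; the minimal model of the base pair (provided by the dimension-$(n-1)$ hypothesis) then gives an NQC weak Zariski decomposition of $K_X+\tau B+\tau M$, hence of $K_X+B+M=(K_X+\tau B+\tau M)+(1-\tau)(B+M)$, and one concludes by the weak-Zariski-decomposition criterion (Theorem \ref{thm:MM_NQC_WZD}), not by ``lifting a minimal model back through the Fano fibration,'' which is not a step one can take directly. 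Without the threshold and the passage through weak Zariski decompositions, your reduction does not close.
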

	
	\begin{thmA}[= Theorem \ref{thm:MM_NQC_WZD}]
		\label{thm:MM_NQC_WZD_intro}
		Assume the existence of minimal models for smooth varieties of dimension $n-1$.
		
		Let $ (X/Z,B+M) $ be an NQC log canonical generalized pair of dimension $ n $ such that $ K_X+B+M$ is pseudo-effective over $ Z $. The following are equivalent:
		\begin{enumerate}[\normalfont (i)]
			\item $ (X,B+M) $ admits an NQC weak Zariski decomposition over $Z$,
			
			\item $ (X,B+M) $ has a minimal model over $Z$.
		\end{enumerate}
	\end{thmA}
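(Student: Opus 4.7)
My plan is to prove both implications. The direction (2) $\Rightarrow$ (1) is routine: given a minimal model $(Y, B_Y + M_Y)$ of $(X, B+M)$ over $Z$, take a common resolution $p \colon W \to X$, $q \colon W \to Y$. Then $q^*(K_Y + B_Y + M_Y)$ is NQC nef on $W$, since the NQC property of the moduli part is preserved under pullback and $K_Y + B_Y + M_Y$ is nef over $Z$. Moreover, the difference $p^*(K_X + B + M) - q^*(K_Y + B_Y + M_Y)$ is effective by the negativity lemma combined with the defining property of a minimal model. Hence $p^*(K_X + B + M)$ decomposes as the sum of an NQC nef $\R$-divisor and an effective $\R$-divisor, which is an NQC weak Zariski decomposition of $(X, B+M)$ over $Z$.

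For (1) $\Rightarrow$ (2), my plan is to use Theorem \ref{thm:mainthm_intro} as the final step; in view of its first consequence, it is enough to construct a minimal model of $(X, B+M)$ over $Z$ in the sense of Birkar-Shokurov. I would then split into cases according to whether the general fiber $F$ of $X \to Z$ is uniruled. If $F$ is uniruled, Theorem \ref{thm:MM_uniruled_intro} already produces a minimal model, which is in particular a Birkar-Shokurov minimal model. If $F$ is not uniruled, I would pass to a $\Q$-factorial dlt model $(Y, B_Y + M_Y) \to (X, B + M)$, where the hypothesized NQC weak Zariski decomposition on $X$ pulls back to one on $Y$, and then combine the non-uniruledness of $F$ with the inductive hypothesis (existence of minimal models for smooth varieties in dimension $n - 1$) following the approach of \cite{LT22b}, together with the key input \cite[Theorem 1.2(1)]{LX22a} on extracting minimal models from NQC weak Zariski decompositions. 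This yields a minimal model on $Y$, which descends to a Birkar-Shokurov minimal model of $(X, B+M)$ over $Z$, to which Theorem \ref{thm:mainthm_intro} applies to produce an honest minimal model.

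The step I expect to be the main obstacle is the non-uniruled case: one must verify that the NQC property of the weak Zariski decomposition is preserved through the passage to the dlt model and through the steps of any auxiliary MMP, and that the inductive hypothesis in dimension $n-1$ together with the non-uniruledness of the general fiber suffices to terminate the process on the dlt model. This is precisely where \cite{LT22b} and \cite[Theorem 1.2(1)]{LX22a} play the decisive role, and the delicate point is to ensure compatibility among these inputs in the non-$\Q$-factorial setting of the statement.
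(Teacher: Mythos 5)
Your overall architecture for the implication $(1)\Rightarrow(2)$ --- produce a minimal model in the sense of Birkar--Shokurov and then upgrade it to an honest minimal model via Theorem \ref{thm:mainthm} (more precisely, Proposition \ref{prop:mainprop}) --- is exactly the paper's route, and your argument for $(2)\Rightarrow(1)$ is the standard one the paper delegates to \cite[Corollary 3.10]{Tsak21}. One caveat on the easy direction: it is not enough to say that ``the NQC property of the moduli part is preserved under pullback and $K_Y+B_Y+M_Y$ is nef over $Z$.'' The divisor $K_Y+B_Y$ has no reason to be NQC, so you must show that the nef $\R$-Cartier divisor $K_Y+B_Y+M_Y$ is itself a non-negative combination of nef $\Q$-Cartier divisors; this requires a rationality/decomposition statement in the spirit of \cite[Theorem 2.28]{HaconLiu21} or \cite[Theorem 4.7.2(3)]{Fuj17book}, not just functoriality of pullback.

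The genuine gap is in the hard direction. The substantive content of $(1)\Rightarrow(2)$ is the statement: \emph{an NQC weak Zariski decomposition of $K_X+B+M$, together with the existence of minimal models for smooth varieties of dimension $n-1$, yields a minimal model in the sense of Birkar--Shokurov of $(X,B+M)$ over $Z$.} This is precisely \cite[Theorem 4.4(i)]{LT22a}, and the paper's proof is the two-step citation ``\cite[Theorem 4.4(i)]{LT22a} $+$ Proposition \ref{prop:mainprop}.'' You instead attribute this step to \cite[Theorem 1.2(1)]{LX22a}, which is a different result entirely (it concerns g-pairs whose boundary contains an ample divisor, and is used in this paper for Proposition \ref{prop:mainprop} and Theorem \ref{thm:EGMM_boundary_contains_ample}, not for weak Zariski decompositions). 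Moreover, your dichotomy on uniruledness of the general fiber is a red herring: the uniruled case is a \emph{separate} theorem (Theorem \ref{thm:MM_uniruled}) with a different hypothesis, and in the non-uniruled case your sketch never actually uses non-uniruledness --- the argument there is carried entirely by the weak Zariski decomposition hypothesis. As written, the core implication (weak Zariski decomposition plus lower-dimensional minimal models gives a Birkar--Shokurov minimal model, via an MMP whose termination is controlled by special termination in dimension $n-1$) is neither proved nor correctly sourced, so the proposal does not close the loop.
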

	
	Recall that an NQC generalized pair $(X/Z,B+M)$ admits an NQC weak Zariski decomposition over $Z$ if its canonical class $K_X+B+M$ can be written, birationally and up to numerical equivalence over $Z$, as the sum of an NQC and an effective $\R$-Cartier $\R$-divisor; see Subsection \ref{subsection:NQC_WZD} for the precise definition. The implication (ii) $\implies$ (i) in Theorem \ref{thm:MM_NQC_WZD_intro} is a consequence of the Negativity lemma and does not even require the assumptions in lower dimensions. The essence of Theorem \ref{thm:MM_NQC_WZD_intro} is thus that the converse implication also holds under some mild assumptions in lower dimensions. It refines the previous results \cite[Theorem 1.5]{Bir12b}, \cite[Theorem 1.5]{HanLi22} and \cite[Theorems B and 4.2]{LT22a}.
	
	Since the existence of relative minimal models for smooth varieties of dimension $ n \leq 4 $ was established by \cite[Theorem 5-1-15]{KMM87}, we also deduce the following corollary in low dimensions. Part (i) follows immediately from Theorem \ref{thm:EMM_implication_intro} for $n \leq 4$, while parts (ii) and (iii) are special cases of Theorems \ref{thm:MM_uniruled_intro} and \ref{thm:MM_NQC_WZD_intro}, respectively, for $n=5$.
	
	\begin{corA}[= Corollaries \ref{cor:maincor_I} and \ref{cor:maincor_II}]
		\label{cor:maincor_intro}
		Let $ (X/Z,B+M) $ be an NQC log canonical generalized pair of dimension $ n $ such that $ K_X+B+M$ is pseudo-effective over $ Z $. The following statements hold:
		\begin{enumerate}[\normalfont (i)]
			\item If $n \leq 4$, then $ (X,B+M) $ has a minimal model over $Z$.
			
			\item If $n = 5$ and a general fiber of the morphism $ X\to Z $ is uniruled, then $ (X,B+M) $ has a minimal model over $Z$.
			
			\item If $ n = 5 $ and $ (X,B+M) $ admits an NQC weak Zariski decomposition over $ Z $ (e.g., if $ K_X+B+M $ is effective over $ Z $), then $ (X,B+M) $ has a minimal model over $Z$.
		\end{enumerate}
	\end{corA}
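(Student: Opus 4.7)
The plan is to deduce all three parts of the corollary as immediate applications of Theorems B, C, and D in the introduction, once one inputs the now-classical existence of minimal models for smooth projective varieties of dimension at most four (established by Shokurov, Birkar, and subsequent authors). With those earlier theorems in hand, the remaining work is essentially bookkeeping: match the hypotheses in low dimensions and invoke the appropriate citation.

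For part (i), I would simply feed the existence of minimal models for smooth $n$-folds (for $n\le 4$) into Theorem B (= Theorem \ref{thm:EMM_implication_intro}); its output is precisely the existence of minimal models for NQC log canonical generalized pairs of dimension $n\le 4$, which is the claim. For part (ii), I would apply Theorem C (= Theorem \ref{thm:MM_uniruled_intro}) in dimension $n=5$: its hypothesis is the existence of minimal models for smooth $4$-folds, and its conclusion, under the uniruledness of the general fiber of $X\to Z$, is exactly that $(X,B+M)$ admits a minimal model over $Z$. For part (iii), I would invoke Theorem D (= Theorem \ref{thm:MM_NQC_WZD_intro}) in dimension $n=5$, which again requires only the existence of minimal models for smooth $4$-folds; the equivalence it supplies converts the assumed NQC weak Zariski decomposition into a minimal model.

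The parenthetical special case in (iii) — that $K_X+B+M$ effective over $Z$ suffices — should be handled by the trivial identity $K_X+B+M=0+(K_X+B+M)$, which is an NQC weak Zariski decomposition with nef part $P=0$ and effective part $N=K_X+B+M\ge 0$. Since the structural content is already encoded in Theorems B, C, and D, I do not foresee any substantive obstacle: the only care required is to correctly cite the existence of minimal models for smooth fourfolds, and to verify in (iii) that a trivial decomposition indeed qualifies as NQC in the sense used by Theorem D.
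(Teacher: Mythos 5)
Your proposal is correct and is exactly the paper's argument: parts (i)--(iii) are read off from Theorems \ref{thm:EMM_implication}, \ref{thm:MM_uniruled} and \ref{thm:MM_NQC_WZD}, with the hypothesis in low dimensions supplied by \cite[Theorem 5-1-15]{KMM87}. Your handling of the parenthetical in (iii) — taking $P=0$ and $N$ the effective representative, which is trivially an NQC weak Zariski decomposition — is also the standard observation the paper leaves implicit.
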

	
	\medskip
	
	Our second objective in this paper is to derive several new results about the existence of minimal models of generalized pairs utilizing Theorem \ref{thm:mainthm_intro}. First and foremost, we show that any NQC log canonical generalized pair whose canonical class is pseudo-effective and whose boundary contains an ample divisor has a \emph{good} minimal model, improving considerably on \cite[Theorem 1.3(2)]{LX23a} and generalizing \cite[Theorem 1.5]{HH20} to the context of generalized pairs.
	
	\begin{thmA}[= Theorem \ref{thm:EGMM_boundary_contains_ample} and Corollary \ref{cor:finite_generation}]
		Let $ \big( X/Z,(B+A)+M \big) $ be an NQC log canonical generalized pair such that $K_X+B+A+M$ is pseudo-effective over $Z$, where $ A $ is an effective $ \R $-Cartier $\R$-divisor which is ample over $ Z $. Then there exists a $(K_X+B+A+M)$-MMP over $Z$ which terminates with a good minimal model of $ \big( X,(B+A)+M \big) $ over $ Z $.
		
		In particular, if $B$, $A$ and $M$ are $\Q$-divisors, then
		\[ R(X/Z,K_X+B+A+M) := \bigoplus_{m \geq 0} \pi_* \OO_X \big( m(K_X+B+A+M) \big) \]
		is a finitely generated $ \OO_Z $-algebra, where $ \pi $ denotes the projective morphism $ X \to Z $.
	\end{thmA}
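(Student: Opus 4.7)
The plan is to first produce a minimal model of $(X/Z, (B+A)+M)$ in the sense of Birkar--Shokurov via a klt perturbation combined with the existing generalized BCHM-type statement, then invoke Theorem \ref{thm:mainthm_intro}(a) to obtain a $(K_X+B+A+M)$-MMP with scaling that terminates, and finally deduce that the terminal model is good. Finite generation in the $\Q$-setting is then routine.

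Since $A$ is ample over $Z$, for a sufficiently small $\varepsilon > 0$ one can choose an effective $\R$-divisor $A'$ on $X$ with $A' \sim_{\R, Z} A + \varepsilon B$, chosen generically via a Bertini-type argument on the ample class $\varepsilon A$, such that the generalized pair $\bigl(X/Z, (1-\varepsilon)B + A' + M\bigr)$ is NQC klt, its boundary contains an ample $\R$-divisor, and
\[
K_X + (1-\varepsilon)B + A' + M \;\sim_{\R, Z}\; K_X + B + A + M.
\]
In particular the left-hand side is pseudo-effective over $Z$. Applying the generalized BCHM-type statement \cite[Theorem 1.2(2)]{LX22a} to this klt perturbation then produces a good minimal model $\phi\colon X \dashrightarrow Y$ of $\bigl(X, (1-\varepsilon)B + A' + M\bigr)$ over $Z$. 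Because the two canonical divisors are $\R$-linearly equivalent over $Z$, an extremal ray is $(K_X + (1-\varepsilon)B + A' + M)$-negative if and only if it is $(K_X + B + A + M)$-negative; hence the MMP producing $\phi$ is simultaneously a $(K_X+B+A+M)$-MMP, and its output $\phi$ qualifies as a minimal model of $(X, (B+A)+M)$ over $Z$ in the sense of Birkar--Shokurov.

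This verifies hypothesis (a) of Theorem \ref{thm:mainthm_intro}, which now provides a $(K_X+B+A+M)$-MMP over $Z$ with scaling of $A$ terminating at a minimal model $\psi\colon X \dashrightarrow Y'$ of $(X, (B+A)+M)$ over $Z$. To upgrade $\psi$ to a good minimal model I would argue that any two minimal models of a fixed NQC log canonical generalized pair are log crepant in codimension one, via the standard negativity argument applied to a common resolution of $Y$ and $Y'$; consequently semi-ampleness of $K_Y + \phi_*(B+A) + M_Y$ transfers to semi-ampleness of $K_{Y'} + \psi_*(B+A) + M_{Y'}$, showing that $\psi$ is a good minimal model.

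For the finite generation statement, when $B$, $A$, and $M$ are $\Q$-divisors, the semi-ample $\Q$-divisor $K_{Y'}+\psi_*(B+A)+M_{Y'}$ induces a projective contraction $Y' \to T$ over $Z$ along which it descends to a $T/Z$-ample $\Q$-divisor, so the associated section $\OO_Z$-algebra on $T$ is finitely generated by classical results; this algebra coincides with $R(X/Z, K_X+B+A+M)$ because $\psi$ is a birational contraction. The main technical obstacle I foresee is in the first step: the perturbation $A'$ must be chosen delicately enough to keep its multiplicity along components of $B$ strictly below $\varepsilon$—so that the klt property is preserved in the presence of coefficient-one components of $B$—while simultaneously preserving the $\R$-linear equivalence class and the ampleness of an $\R$-divisor sitting inside $(1-\varepsilon)B + A'$, in order to enter the scope of \cite[Theorem 1.2(2)]{LX22a}.
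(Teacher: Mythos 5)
Your overall architecture (produce a Birkar--Shokurov minimal model, feed it into Theorem \ref{thm:mainthm_intro}(a) to get a terminating MMP, then transfer semi-ampleness between minimal models and conclude finite generation via the semi-ample fibration) matches the paper's strategy in outline, and your last two steps are essentially Proposition \ref{prop:mainprop} plus Lemma \ref{lem:GMM}. The problem is the first step: the reduction to the klt case by perturbation does not work for log canonical \emph{generalized} pairs, and this is precisely the difficulty the theorem is meant to overcome. Replacing $B$ by $(1-\varepsilon)B$ and adding a general ample $\R$-divisor can only fix divisorial non-klt places coming from components of $B$ with coefficient one. It does nothing to lc centers of higher codimension, which for a g-pair can be forced by the nef part $M_W$ on a higher model or by the singularities of $X$ itself (e.g.\ $B=M=0$ with $X$ lc but not klt: no choice of $\varepsilon$ and $A'$ makes $\bigl(X,(1-\varepsilon)\cdot 0+A'\bigr)$ klt). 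So the g-pair $\bigl(X,(1-\varepsilon)B+A'+M\bigr)$ is in general still only lc, and \cite[Theorem 1.2(2)]{LX22a} (or Theorem \ref{thm:EMM_general_type}(i), which is the klt statement you actually need) does not apply. A secondary issue: $B$ alone need not be $\R$-Cartier, so the divisor $A+\varepsilon B$ and hence the relation $A'\sim_{\R,Z}A+\varepsilon B$ need not even be well defined.

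The paper sidesteps the perturbation entirely: it quotes \cite[Theorem 1.2(1)]{LX22a}, which already provides a minimal model in the sense of Birkar--Shokurov for the \emph{lc} g-pair $\bigl(X,(B+A)+M\bigr)$, then applies Proposition \ref{prop:mainprop} to terminate a $(K_X+B+A+M)$-MMP. For goodness it does not transfer semi-ampleness from a klt model; instead it proves Lemma \ref{lem:MMP_ampleness_boundary}, showing that after each MMP step the boundary still contains (up to $\R$-linear equivalence over $Z$) an ample divisor missing all lc centers, so that the relative base-point-freeness part of the Contraction theorem \cite[Theorem 1.2]{Xie22} applies directly to the nef divisor $K_{X'}+B'+A'+M'$ on the final model. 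If you want to salvage your argument, you would need to replace the klt perturbation by a citation of an lc-level existence result such as \cite[Theorem 1.2(1)]{LX22a}, at which point your route converges to the paper's, except that you would still need an argument in the spirit of Lemma \ref{lem:MMP_ampleness_boundary} (or an independent source of a \emph{good} model) to get semi-ampleness rather than mere nefness at the end.
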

	
	Moreover, we deal with the problem of the existence of minimal models for NQC log canonical generalized pairs whose underlying variety has maximal Albanese dimension; see Subsection \ref{subsection:mAd} for the definition of this notion for a (smooth or singular) projective variety. Specifically, we first obtain the following generalization of \cite[Theorem 3.4]{Fuj13} to the setting of generalized pairs.
		
	\begin{thmA}[= Theorem \ref{thm:mAd_klt}]
		\label{thm:mAd_klt_intro}
		Let $ (X,B+M) $ be an NQC klt generalized pair. If $ X $ has maximal Albanese dimension, then $ (X,B+M) $ has a minimal model.
	\end{thmA}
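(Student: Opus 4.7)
The plan is to combine a non-vanishing theorem for varieties of maximal Albanese dimension with Theorem \ref{thm:mainthm_intro}.

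The starting point is pseudo-effectivity. Since $X$ has maximal Albanese dimension, the Albanese morphism $X \to \Alb(X)$ is generically finite onto its image; because an abelian variety contains no rational curves, this forces $X$ to be non-uniruled, so $K_X$ is pseudo-effective by the theorem of Boucksom--Demailly--P\u{a}un--Peternell. Since $B$ is effective and $M$ is nef, $K_X+B+M$ is pseudo-effective as well; if it were not, we would be in the situation of Theorem \ref{thm:mainthm_intro}(b), giving a Mori fiber space rather than a minimal model.

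The next step is to invoke the generalized non-vanishing theorem for NQC klt generalized pairs whose underlying variety has maximal Albanese dimension, which extends the classical Chen--Hacon non-vanishing from usual klt pairs to the generalized pair setting. This produces an effective $\R$-divisor $D$ with $K_X+B+M \sim_\R D$, and hence a trivial NQC weak Zariski decomposition of $K_X+B+M$ with nef part zero and negative part $D$. I would then use this weak Zariski decomposition, together with the klt hypothesis and existing results on the MMP for NQC klt generalized pairs whose log canonical class is $\R$-linearly equivalent to an effective divisor (in the spirit of Birkar's construction of minimal models from weak Zariski decompositions, adapted to the generalized pair setting and exploiting that the klt case does not require the full dimensional induction of Theorem \ref{thm:MM_NQC_WZD_intro}), to construct a minimal model of $(X,B+M)$ in the sense of Birkar--Shokurov. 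Theorem \ref{thm:mainthm_intro}(a), applied to this Birkar--Shokurov minimal model, then yields a minimal model of $(X,B+M)$ obtained by running an MMP with scaling of an ample divisor.

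The main obstacle will be the non-vanishing step: adapting the Chen--Hacon generic vanishing arguments and Fourier--Mukai techniques on the Albanese variety to the generalized pair context requires careful handling of the nef part $M$, especially since $M$ typically descends as a nef class only on some birational modification of $X$ and so interacts non-trivially with the Albanese morphism. Once the non-vanishing is in hand, the passage from the resulting effective weak Zariski decomposition to a Birkar--Shokurov minimal model, and then to an actual minimal model via Theorem \ref{thm:mainthm_intro}, is a relatively routine combination of standard MMP techniques in the klt generalized pair setting.
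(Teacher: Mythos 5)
Your argument breaks down at the non-vanishing step, and not merely because that step is left unproved: the generalized non-vanishing you want to invoke is \emph{false}. Example \ref{exa:counter-example} of the paper --- an elliptic curve $X$ with $B=0$ and $M$ a non-torsion nef divisor of degree zero --- is an NQC klt (indeed terminal) generalized pair whose underlying variety has maximal Albanese dimension, yet $\kappa_\iota(X,K_X+B+M)=\kappa(X,M)=-\infty$, so $K_X+B+M$ is not $\R$-linearly equivalent to any effective divisor. Hence no analogue of Chen--Hacon non-vanishing can hold in the generalized pair setting, and the effective divisor $D$, the resulting ``trivial'' weak Zariski decomposition, and everything downstream of it are unavailable. (The theorem is still true in that example, since $M$ is nef and the pair is its own minimal model; the moral, which the paper draws explicitly after Theorem \ref{thm:mAd_klt}, is that such pairs need not be abundant or have good minimal models, so effectivity is the wrong intermediate target.) A secondary concern is your claim that the passage from an effective weak Zariski decomposition to a Birkar--Shokurov minimal model avoids dimensional induction in the klt case; this is not known unconditionally, but the point is moot given the failure of the first step.

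The paper's proof takes a different route that never needs effectivity, non-vanishing, or even a separate pseudo-effectivity check. Let $\pi\colon W\to X$ be a resolution and $\beta\colon W\to\Alb(W)$ the Albanese morphism; since $X$ has rational singularities, $\beta$ descends to a morphism $g\colon X\to S:=\beta(W)\subseteq\Alb(W)$. Maximal Albanese dimension makes $g$ generically finite, so $K_X+B+M$ is automatically big over $S$, and Theorem \ref{thm:EMM_general_type}(ii) (a BCHM-type statement for klt g-pairs with relatively big canonical class) yields a minimal model of $(X,B+M)$ over $S$. Since $S$ lies in an abelian variety it contains no rational curves, and by the boundedness of the lengths of extremal rays (Lemmas \ref{lem:negative_rational_curves}--\ref{lem:relMM_vs_MM}) a relative minimal model over such a base is already an absolute minimal model. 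Note also that, as the paper remarks, Theorem \ref{thm:mainthm_intro} plays no role in this proof, whereas your plan leans on it at the end.
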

	
	It should be mentioned that Theorem \ref{thm:mainthm_intro} does not play any role in the proof of the above theorem. On the other hand, in view of our previous results, one may wonder whether the hypothesis in Theorem \ref{thm:mAd_klt_intro} that $(X,B+M)$ has klt singularities can be replaced by the weaker assumption that $(X,B+M)$ has log canonical singularities. As an indirect application of Theorem \ref{thm:mainthm_intro}, we give an affirmative answer to this question under mild assumptions in lower dimensions.
	
	\begin{thmA}[= Theorem \ref{thm:mAd_lc} and Corollary \ref{cor:mAd_lc_dim5}]
		\label{thm:mAd_lc_intro}
		Assume the existence of minimal models for smooth varieties of dimension $n-1$.
		
		Let $ (X,B+M) $ be an NQC log canonical generalized pair of dimension $ n $ such that $K_X+B+M$ is pseudo-effective. If $ X $ has maximal Albanese dimension, then $ (X,B+M) $ has a minimal model.
		
		In particular, any NQC log canonical generalized pair $(X,B+M)$ of dimension $5$ such that $K_X+B+M$ is pseudo-effective and whose underlying variety $X$ has maximal Albanese dimension has a minimal model.
	\end{thmA}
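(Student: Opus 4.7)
The plan is to invoke Theorem \ref{thm:MM_NQC_WZD_intro}: under the given hypothesis on the existence of minimal models for smooth varieties of dimension $n-1$, it suffices to produce an NQC weak Zariski decomposition of $K_X+B+M$. I would construct such a decomposition by a klt perturbation that ultimately appeals to Theorem \ref{thm:mAd_klt_intro}.

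First I would replace $(X, B+M)$ by a dlt modification $f:Y\to X$, so that $(Y, B_Y + M_Y)$ is a $\Q$-factorial NQC dlt generalized pair with $f^*(K_X+B+M) = K_Y+B_Y+M_Y$. Since maximal Albanese dimension is a birational invariant between normal projective varieties, $Y$ still has maximal Albanese dimension, and an NQC weak Zariski decomposition on $Y$ pushes forward to one on $X$. Hence I may assume that $(X,B+M)$ is itself $\Q$-factorial NQC dlt. For $\epsilon\in(0,1)$ set $B_\epsilon := B - \epsilon \lfloor B\rfloor$; then $(X, B_\epsilon + M)$ is NQC klt and $X$ still has mAd. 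Writing $B_0$ for the fractional part of $B$, one has the convex-combination identity
\[
K_X + B_\epsilon + M \;=\; (1-\epsilon)(K_X + B + M) \;+\; \epsilon(K_X + B_0 + M).
\]
The key point is that $K_X + B_0 + M$ is pseudo-effective: since $X$ has mAd, a smooth model $\widetilde X\to X$ satisfies $H^0(\widetilde X, K_{\widetilde X}) \neq 0$ via pullback of a holomorphic top-form from $\Alb(\widetilde X)$, so $K_{\widetilde X}$ is effective, and its pushforward $K_X$ is effective as well. Combined with $B_0\geq 0$ and the NQC property of $M$, this yields $K_X + B_0 + M$ pseff, and hence $K_X + B_\epsilon + M$ is pseff for every $\epsilon \in (0,1)$.

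Now I would apply Theorem \ref{thm:mAd_klt_intro} to the klt pair $(X, B_\epsilon + M)$ to obtain a minimal model $(X', B'_\epsilon + M')$. On a common resolution $p:W\to X$, $q:W\to X'$, the negativity lemma gives
\[
p^*(K_X + B_\epsilon + M) \;=\; q^*(K_{X'} + B'_\epsilon + M') + E
\]
with $E \geq 0$ and $q^*(K_{X'} + B'_\epsilon + M')$ nef and NQC. Adding $\epsilon\, p^*\lfloor B\rfloor \geq 0$ to both sides exhibits an NQC weak Zariski decomposition of $p^*(K_X+B+M)$, and Theorem \ref{thm:MM_NQC_WZD_intro} then delivers a minimal model of $(X, B+M)$. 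The unconditional $n=5$ statement follows because the existence of minimal models for smooth fourfolds is known by \cite[Theorem~5-1-15]{KMM87}. The main obstacle is ensuring that pseudo-effectivity survives the klt perturbation $B \mapsto B_\epsilon$, and this is precisely where maximal Albanese dimension enters in a crucial way, through the effectivity of $K_X$.
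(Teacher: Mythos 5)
Your proposal is correct and follows the same overall skeleton as the paper's proof: reduce to the $\Q$-factorial dlt case (Remark \ref{rem:EMM_reduction} and Lemma \ref{lem:bir_inv_singular}), perturb away $\lfloor B\rfloor$ to reach the klt case, apply Theorem \ref{thm:mAd_klt}, and conclude via NQC weak Zariski decompositions and Theorem \ref{thm:MM_NQC_WZD}(ii). Where you genuinely diverge is in the perturbation step. The paper introduces the pseudo-effective threshold $\tau$ of $\lfloor B\rfloor$ and splits into two subcases: for $\tau<1$ it applies Theorem \ref{thm:mAd_klt} to $(X,\Delta+M)$ with $\Delta=B-(1-\tau)\lfloor B\rfloor$, while for $\tau=1$ it must invoke the separate result \cite[Theorem 3.1]{LT22b} to produce a weak Zariski decomposition. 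You instead observe that maximal Albanese dimension forces $h^0(\widetilde X, K_{\widetilde X})\geq 1$ on a resolution, hence $K_X$ is pseudo-effective on the $\Q$-factorial dlt model, so that $K_X+B-\epsilon\lfloor B\rfloor+M$ stays pseudo-effective for every $\epsilon\in(0,1)$; in the paper's notation this shows $\tau=0$, so the $\tau=1$ branch is vacuous and \cite[Theorem 3.1]{LT22b} is not needed. That is a clean simplification. Two small remarks: first, Theorem \ref{thm:mAd_klt} does not actually list pseudo-effectivity as a hypothesis (its proof runs through bigness over the Albanese image), so your pseudo-effectivity argument is not strictly required in order to apply it --- its real role is precisely to eliminate the threshold dichotomy; second, in your explicit decomposition $p^*(K_X+B+M)=q^*(K_{X'}+B'_\epsilon+M')+E+\epsilon\, p^*\lfloor B\rfloor$ you assert that the positive part is NQC rather than merely nef, and this requires the same justification as the implication (2)$\Rightarrow$(1) of Theorem \ref{thm:MM_NQC_WZD}(ii) (namely \cite[Corollary 3.10]{Tsak21}), which you should cite rather than take for granted.
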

	
	We discuss now two applications of Theorem \ref{thm:mainthm_intro} concerning the relation between the existence of certain types of Zariski decompositions and the existence of minimal models of NQC log canonical generalized pairs. The first one involving NQC weak Zariski decompositions was already mentioned above; see Theorem \ref{thm:MM_NQC_WZD_intro}. The second one is obtained by considering instead a stronger form of Zariski decomposition, namely the so-called NQC Nakayama-Zariski decomposition, whose definition can be found in Subsection \ref{subsection:NQC_NZD}. More precisely, we deduce the following result, which is valid in the absolute setting and does not require any assumptions in lower dimensions.
	
	\begin{thmA}[= Theorem \ref{thm:MM_NQC_NZD}]
		Let $ (X,B+M) $ be an NQC log canonical generalized pair. Then $ (X,B+M) $ has a minimal model (resp.\ good minimal model) if and only if it admits birationally a Nakayama-Zariski decomposition with NQC (resp.\ semi-ample) positive part.
	\end{thmA}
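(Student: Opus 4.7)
The plan is to prove the two directions separately. The forward direction is a routine consequence of the properties of Nakayama's $\sigma$-decomposition, while the reverse direction is the substantive content and relies on Theorem \ref{thm:mainthm_intro} together with the characteristic rigidity of the NZD under birational modifications.

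For the forward implication, I would start from a minimal model (resp.\ good minimal model) $\phi \colon (X,B+M) \dashrightarrow (Y,B_Y+M_Y)$, pass to a common log resolution $p \colon W \to X$ and $q \colon W \to Y$, and equip $W$ with the induced log smooth generalized pair structure $(W,B_W+M_W)$. The definition of a minimal model yields
\[ K_W + B_W + M_W = q^*(K_Y + B_Y + M_Y) + E, \]
with $E$ effective and $q$-exceptional. The divisor $q^*(K_Y+B_Y+M_Y)$ is NQC (resp.\ semi-ample) because $K_Y+B_Y+M_Y$ is, and it is movable, while $E$, being $q$-exceptional, is $\sigma$-rigid in the sense that its components lie in the divisorial stable base locus of $K_W+B_W+M_W$. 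Standard properties of the $\sigma$-decomposition then identify $q^*(K_Y+B_Y+M_Y)$ with $P_\sigma$ and $E$ with $N_\sigma$, giving the desired birational NZD.

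For the reverse implication, I would fix a log resolution $f \colon W \to X$ realizing the NZD as $K_W + B_W + M_W = P + N$ with $P$ NQC (resp.\ semi-ample) and $N$ effective. The core step is to construct a minimal model of $(W,B_W+M_W)$ in the sense of Birkar-Shokurov. Since $N = N_\sigma$, the support of $N$ coincides with the divisorial part of the stable base locus of $K_W+B_W+M_W$, and this rigidity is preserved under $(K_W+B_W+M_W)$-non-positive birational maps. Consequently, any $(K_W+B_W+M_W)$-MMP with scaling of an ample divisor can only contract or flip components of $\Supp(N)$, and a careful bookkeeping shows that each step strictly decreases a suitable invariant measuring $\Supp(N)$, forcing termination at a model on which $N$ is entirely contracted and $K+B+M$ is nef. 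This produces a Birkar-Shokurov minimal model of $(X,B+M)$, after which Theorem \ref{thm:mainthm_intro}(a) supplies a terminating $(K_X+B+M)$-MMP with scaling of an ample divisor that outputs a genuine minimal model $(Y',B_{Y'}+M_{Y'})$. For the \enquote{good} case, $K_{Y'}+B_{Y'}+M_{Y'}$ is numerically equivalent to the birational transform of $P$, and semi-ampleness on a minimal model being a numerical property, semi-ampleness of $P$ propagates to it.

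The main obstacle is constructing the Birkar-Shokurov minimal model in the reverse direction. One must control precisely how $N_\sigma$ transforms along successive MMP steps, exploiting the uniqueness of the NZD and the coincidence of $\Supp(N_\sigma)$ with the divisorial stable base locus to ensure that each step genuinely contracts (rather than merely replaces) a component of this support. This is exactly where the NZD hypothesis is strictly stronger than an arbitrary NQC weak Zariski decomposition and is what permits us to bypass the lower-dimensional existence assumption required in Theorem \ref{thm:MM_NQC_WZD_intro}.
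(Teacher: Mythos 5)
Your overall architecture matches the paper's: the forward direction by pulling back from a minimal model and identifying the exceptional part with $N_\sigma$, and the reverse direction by first producing a minimal model in the sense of Birkar--Shokurov and then invoking Theorem~A (Proposition~\ref{prop:mainprop}) to convert it into a genuine one. The forward direction as you sketch it is fine. However, the core step of the reverse direction --- constructing the Birkar--Shokurov minimal model from the birational Nakayama--Zariski decomposition --- is exactly the substantive theorem here, and your proposed mechanism for it does not work as stated. An MMP whose steps only modify $\Supp N$ does not terminate for that reason alone: flips whose flipping loci lie inside $\Supp N$ need not decrease the number of components of $N$ or any other obvious invariant, so "careful bookkeeping on $\Supp(N)$" is not an argument. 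The actual proofs (\cite[Theorem 1.1]{BH14b} for pairs, \cite[Theorem 4.18]{Tsak21} for g-pairs, which is what the paper cites) proceed differently: they exploit the NQC positive part $P$ as an auxiliary nef divisor, absorb it into the nef part of an auxiliary generalized pair on $W$, and run MMPs with scaling of $N$, with termination resting on \cite[Lemma 3.22]{HanLi22}-type results --- this is precisely why the paper's Remark~\ref{rem:comment_BH14b_1.1-g}(2) stresses that the NQC hypothesis is used crucially. Without this input your reverse direction is a restatement of the claim, not a proof.

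A second, smaller but genuine error: in the good case you assert that "semi-ampleness on a minimal model being a numerical property, semi-ampleness of $P$ propagates to it." Semi-ampleness is not a numerical property (a numerically trivial non-torsion divisor is nef but not semi-ample; cf.\ Example~\ref{exa:counter-example}). The correct transfer is via equality, not numerical equivalence, of pullbacks on a common resolution: by the Negativity lemma the pullbacks of the canonical classes of any two minimal models (in either sense) coincide, which is the content of Lemma~\ref{lem:GMM}; the paper combines this with a rerun of the proof of \cite[Theorem 4.18]{Tsak21} to first obtain a \emph{good} Birkar--Shokurov minimal model and then conclude that every minimal model is good.
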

	
	The above theorem extends \cite[Theorem 1.1]{BH14b} to the setting of generalized pairs, while its conclusion is even stronger. It also has significant consequences with regard to the existence of minimal models conjecture for generalized pairs. Indeed, Theorem \ref{thm:MM_logbigmoduli} and Lemma \ref{lem:Hash20d_3.11} constitute analogues of \cite[Theorem 1.5]{Hash22c} and \cite[Lemma 3.11]{Hash20d}, respectively, in the context of generalized pairs.
	
	\medskip

	Another application of Theorem \ref{thm:mainthm_intro} is the following analogue of \cite[Theorem 1.1]{Gon11} in the context of generalized pairs.
	
	\begin{thmA}[= Theorem \ref{thm:Gon11_Thm1.1}]
		Let $ (X,B+M) $ be an NQC log canonical generalized pair. If $ \kappa_\sigma(X,K_X+B+M) = 0 $, then $ (X,B+M) $ has a minimal model.
	\end{thmA}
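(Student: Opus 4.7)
The plan is to combine Nakayama's theory of the numerical dimension with Theorem~\ref{thm:MM_NQC_NZD}, which itself is one of the main applications of Theorem~\ref{thm:mainthm_intro} established earlier in the paper.

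First, since $\kappa_\sigma(X,K_X+B+M)=0$ the divisor $K_X+B+M$ is in particular pseudo-effective, and Nakayama's characterization of vanishing numerical dimension yields $K_X+B+M \equiv N_\sigma(K_X+B+M)$. In other words, $K_X+B+M$ is numerically equivalent to a specific effective $\R$-divisor $N$. Second, I would pass to a log resolution $f\colon Y \to X$ of $(X,B)$ on which the nef part $M$ descends to a nef $\R$-divisor $M_Y$, forming an NQC log canonical generalized pair $(Y,B_Y+M_Y)$ with $K_Y+B_Y+M_Y = f^*(K_X+B+M)+E$ for an effective $f$-exceptional divisor $E$. The behaviour of $\kappa_\sigma$ under such pullbacks preserves the hypothesis, so that the Nakayama-Zariski decomposition $K_Y+B_Y+M_Y = P_\sigma + N_\sigma$ on $Y$ has numerically trivial positive part $P_\sigma \equiv 0$.

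Given this birational NZD with numerically trivial positive part, Theorem~\ref{thm:MM_NQC_NZD} directly yields a minimal model of $(X,B+M)$, provided one can verify that $P_\sigma$ is NQC. When $P_\sigma$ vanishes as a divisor this is automatic; in general, after passing to a sufficiently high birational model and using the explicit effective representative $N$ to absorb the numerically trivial movable contribution, one arranges $P_\sigma$ to be a non-negative combination of numerically trivial (hence nef) $\Q$-Cartier divisors, which is the NQC property.

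I expect the main obstacle to lie in this last step: converting the purely numerical identity $P_\sigma \equiv 0$ into the NQC property for $P_\sigma$ as an actual $\R$-divisor on a suitable birational model. This is a subtle manipulation on the level of divisors rather than numerical classes, and it is precisely where the concrete effective representative produced by Nakayama's theorem --- rather than merely the vanishing of $\kappa_\sigma$ --- plays its decisive role. Once this is in place, the conclusion follows at once from Theorem~\ref{thm:MM_NQC_NZD}, and hence ultimately from Theorem~\ref{thm:mainthm_intro}.
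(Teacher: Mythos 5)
Your route is genuinely different from the paper's. The paper first reduces to an NQC $\Q$-factorial dlt g-pair via a dlt blow-up (Remark \ref{rem:EMM_reduction}, Proposition \ref{prop:mainprop}) and then proves a termination statement, Lemma \ref{lem:Gon11_Thm5.1}: any $(K_X+B+M)$-MMP with scaling of an ample divisor terminates, because otherwise the canonical class would become movable after finitely many flips, and then $\kappa_\sigma=0$ forces $P_\sigma\big(f^*(K_X+B+M)\big)\equiv 0$ while movability forces $N_\sigma\big(f^*(K_X+B+M)\big)$ to be $f$-exceptional, whence $K_X+B+M\equiv 0$ is already nef --- a contradiction. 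You instead feed the condition $\kappa_\sigma=0$ into Theorem \ref{thm:MM_NQC_NZD}(ii). This is legitimate (no circularity: that theorem rests on external results and on Proposition \ref{prop:mainprop}, not on anything in Subsection \ref{subsection:num_dim_zero}) and, if completed, is arguably the shorter argument; what it buys is an existence proof via the Nakayama--Zariski machinery, whereas the paper's argument additionally yields termination of the MMP with scaling on the dlt model.

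The step you flag as the crux --- upgrading $P_\sigma\big(f^*(K_X+B+M)\big)\equiv 0$ to the assertion that this divisor is NQC --- is indeed the only missing piece, but the mechanism you sketch does not work: the Nakayama--Zariski decomposition is canonical, so $P_\sigma\big(f^*(K_X+B+M)\big)$ is a fixed $\R$-divisor on the resolution and there is nothing to \enquote{absorb} into the negative part or into an effective representative. The correct fix is elementary: every numerically trivial $\R$-Cartier $\R$-divisor is automatically NQC. Indeed, write $P_\sigma\big(f^*(K_X+B+M)\big)=\sum a_i D_i$ with each $D_i$ Cartier; the set of vectors $(x_i)$ with $\sum x_i [D_i]=0$ in $N^1(W)_\R$ is a linear subspace defined over $\Q$, so $(a_i)$ is a real linear combination of rational solutions, which exhibits $P_\sigma\big(f^*(K_X+B+M)\big)$ as a non-negative combination of numerically trivial (hence nef) $\Q$-Cartier divisors. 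With that observation inserted, your argument closes; note also that your intermediate passage to a log smooth model of the g-pair is superfluous, since Definition \ref{dfn:NQC_NZD} only requires a resolution on which $P_\sigma$ of the pullback is NQC.
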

	
	Furthermore, we can now extend to the setting of generalized pairs all results from \cite[Subsection 3.2]{Hash20d}, albeit this is often only partially possible; see Section \ref{section:applications_II} for the details. For instance, we obtain the following sufficient conditions for the existence of minimal models of abundant or log abundant NQC log canonical generalized pairs (whose underlying variety is projective); see Subsection \ref{subsection:log_abundant} for the relevant definitions.
	
	\begin{corA}[= Corollaries \ref{cor:Hash20d_3.13} and \ref{cor:Hash20d_3.12}]
		\label{cor:Hash20d_3.13+3.12_intro}
		Let $(X,B+M)$ be an NQC log canonical generalized pair. Assume that $K_X+B+M$ is pseudo-effective and abundant and that all lc centers of $(X,B+M)$ have dimension at most $ 4 $. Then $(X,B+M)$ has a minimal model which is abundant.
		
		In particular, any NQC log canonical generalized pair $(X,B+M)$ of dimension $6$ such that $K_X+B+M$ is pseudo-effective and abundant and $\llcorner B \lrcorner = 0$ has a minimal model which is abundant.
	\end{corA}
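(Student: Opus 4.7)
The plan is to deduce both assertions from the generalized-pair analogue of \cite[Lemma 3.11]{Hash20d}, namely Lemma \ref{lem:Hash20d_3.11}, combined with Theorem \ref{thm:mainthm_intro} and the low-dimensional result Corollary \ref{cor:maincor_intro}(i).

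The \enquote{in particular} statement follows immediately from the main one: since the nef part $M$ contributes no discrepancies, if $\llcorner B \lrcorner = 0$ then no divisor on $X$ is an lc place of $(X,B+M)$, so every lc center has codimension at least $2$ in $X$. When $\dim X = 6$, this forces all lc centers to have dimension at most $4$, and the main statement applies.

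For the main statement, I first pass to an NQC dlt modification $(X',B'+M') \to (X,B+M)$, which does not enlarge the dimensions of lc centers and preserves both the pseudo-effectiveness and the abundance of the generalized log canonical divisor, so it suffices to produce an abundant minimal model of $(X',B'+M')$. By generalized adjunction, the restriction of $(X',B'+M')$ to each lc center $S$ is an NQC log canonical generalized pair $(S,B_S+M_S)$ of dimension at most $4$ whose log canonical divisor is pseudo-effective and abundant, so Corollary \ref{cor:maincor_intro}(i) yields a minimal model of $(S,B_S+M_S)$, which is still abundant because abundance is preserved under the MMP. Lemma \ref{lem:Hash20d_3.11} is then designed to combine such minimal models on all lc centers with the abundance of $K_{X'}+B'+M'$ to produce a Birkar--Shokurov minimal model of $(X',B'+M')$; Theorem \ref{thm:mainthm_intro}(a) finally converts this into an honest minimal model obtained by a terminating $(K_{X'}+B'+M')$-MMP with scaling, and since abundance is preserved throughout, the resulting minimal model of $(X,B+M)$ is abundant.

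The principal obstacle is the lifting step encoded in Lemma \ref{lem:Hash20d_3.11}: Hashizume's argument, based on Nakayama--Zariski decompositions and log abundance on lc centers, has to be transcribed into the NQC log canonical generalized setting, and one must verify at each step that the required adjunction and restriction properties are available for generalized pairs.
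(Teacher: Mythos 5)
Your reduction of the \enquote{in particular} statement to the main one is exactly the paper's argument, and the overall shape of your plan (dlt blow-up, control the lc centers, lift to the ambient space) is also the right one. However, there are two genuine gaps in the main argument.

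First, a dlt modification \emph{does} enlarge the dimensions of lc centers: the blow-up $h\colon (X',B'+M')\to(X,B+M)$ extracts divisors with discrepancy $-1$, which become components of $\llcorner B'\lrcorner$ and hence lc centers of $(X',B'+M')$ of dimension $\dim X-1$. What is true is only that every lc center $T$ of $(X',B'+M')$ \emph{surjects onto} an lc center of $(X,B+M)$, so one gets a morphism $T\to S$ with $\dim S\leq 4$ and $(K_{X'}+B'+M')|_T\sim_{\R,S}0$ --- not a g-pair of dimension at most $4$. This is precisely where Lemma \ref{lem:Hash20d_3.11} enters in the paper: via the canonical bundle formula it descends the problem to the base $S$ of dimension $\leq 4$, applies Corollary \ref{cor:maincor_II}(i) there, and pulls the resulting NQC Nakayama--Zariski decomposition back to $T$ using Theorem \ref{thm:MM_NQC_NZD}(ii). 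Your step restricting to lc centers and invoking Corollary \ref{cor:maincor_intro}(i) directly is therefore not available.

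Second, you have misread the role of Lemma \ref{lem:Hash20d_3.11}: it produces minimal models of g-pairs fibered in relatively trivial fashion over a low-dimensional base, but it is \emph{not} a gluing statement that combines minimal models on the lc centers into a Birkar--Shokurov minimal model of the total space. That combination is the hard part of the paper's proof and proceeds differently: one constructs, via Lemma \ref{lem:scaling_numbers_to_zero} and Theorem \ref{thm:lifting_MMP_with_scaling}, an MMP with scaling on the dlt model whose nef thresholds tend to zero; the existence of minimal models or Mori fiber spaces for the adjoint g-pairs on all lc centers feeds into a special termination argument showing the MMP is eventually an isomorphism near $\Supp\llcorner B'_i\lrcorner$; and then \cite[Lemma 3.8 and Theorem 3.14]{Hash22a}, applied to the (still abundant) g-pair reached after finitely many steps, yield a minimal model in the sense of Birkar--Shokurov, which Proposition \ref{prop:mainprop} (equivalently, Remark \ref{rem:EMM_reduction}) converts into an honest minimal model. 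Without this machinery your argument does not close.
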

	
	\begin{corA}[= Corollary \ref{cor:Hash20d_3.10}]
		\label{cor:Hash20d_3.10_intro}
		Let $(X,B+M)$ be an NQC log canonical generalized pair. Assume that $K_X+B+M$ is pseudo-effective and log abundant with respect to $(X,B+M)$. Assume, moreover, that the stable base locus of $K_X+B+M$ does not contain the center of any divisorial valuation $ P $ over $X$ such that $ a(P,X,B+M) < 0 $. Then $(X,B+M)$ has a minimal model which is log abundant.
	\end{corA}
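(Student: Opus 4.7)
My plan is to reduce the statement to Theorem \ref{thm:MM_NQC_NZD}, which guarantees the existence of a minimal model as soon as one exhibits birationally a Nakayama–Zariski decomposition of $K_X+B+M$ with NQC positive part. The task is thus to extract such a decomposition from the two hypotheses of log abundance and of control of the stable base locus, and then to argue that the resulting minimal model is log abundant.

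First I would pass to a dlt modification $\phi \colon Y \to X$ of $(X, B+M)$, which is now available in the NQC log canonical generalized pair setting in view of the foundational progress cited in the introduction. Writing $K_Y + B_Y + M_Y = \phi^*(K_X + B + M)$, the $\phi$-exceptional divisors correspond to divisorial valuations $P$ over $X$ with $a(P, X, B+M) < 0$, so the stable base locus hypothesis ensures that none of them is contained in $\sbs(K_Y + B_Y + M_Y)$. Moreover, the lc centers of $(Y, B_Y + M_Y)$ map onto the lc centers of $(X, B+M)$ via $\phi$, so the log abundance of $(X, B+M)$ is inherited by $(Y, B_Y+M_Y)$.

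The key step, and what I expect to be the main obstacle, is to construct birationally a Nakayama–Zariski decomposition of $K_Y + B_Y + M_Y$ with NQC positive part. The plan is to exploit log abundance to obtain an Iitaka fibration for $K_Y+B_Y+M_Y$ and, via the canonical bundle formula for NQC generalized pairs, to realize the numerical positive part as the pullback of a big NQC class from the base; after a further log-resolution, this class becomes semi-ample and NQC, producing the candidate positive part. Simultaneously, the stable base locus hypothesis forces the negative part of this decomposition to avoid every lc center of the dlt pair $(Y, B_Y + M_Y)$, which is precisely what is required for the decomposition to qualify, in the sense of Subsection \ref{subsection:NQC_NZD}, as a Nakayama–Zariski decomposition with NQC positive part. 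The subtle point here is guaranteeing the NQC property—not just $\R$-Cartierness—of the positive part, and this is where the interplay of the two hypotheses must be handled with care.

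Given such a decomposition, Theorem \ref{thm:MM_NQC_NZD} produces a minimal model of $(Y, B_Y + M_Y)$, which is simultaneously a minimal model of $(X, B+M)$ because $\phi$ is a dlt modification and no $\phi$-exceptional divisor is contracted by the MMP (thanks to the stable base locus control established above). Finally, log abundance is preserved along each step of any $(K_Y + B_Y + M_Y)$-MMP, so the resulting minimal model is itself log abundant with respect to the strict transform of $(X, B+M)$, completing the proof.
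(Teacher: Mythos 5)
Your overall strategy --- reducing to Theorem \ref{thm:MM_NQC_NZD} by producing birationally a Nakayama--Zariski decomposition of $K_X+B+M$ with NQC positive part --- is genuinely different from the paper's, but it has a gap at exactly the point you flag as ``the main obstacle'', and that gap is not a technicality: it is essentially the whole difficulty. It is not known that a pseudo-effective abundant (or even log abundant) divisor admits birationally a Nakayama--Zariski decomposition with NQC positive part. Passing to the Iitaka fibration only tells you that the restriction of $K_Y+B_Y+M_Y$ to a general fiber has $\kappa_\sigma=\kappa_\iota=0$; upgrading this fiberwise information to a global decomposition $f^*(K_Y+B_Y+M_Y)=P+N$ with $P$ NQC is precisely the content of results such as \cite[Theorem 3.14]{Hash22a}, which require additional hypotheses on lc centers and are themselves proved by MMP techniques rather than by the canonical bundle formula. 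If log abundance alone yielded such a decomposition, Theorem \ref{thm:MM_NQC_NZD}(ii) would give minimal models for all abundant NQC lc g-pairs and the stable-base-locus hypothesis in the statement would be superfluous. Relatedly, your use of that hypothesis is misplaced: Definition \ref{dfn:NQC_NZD} places no condition on the negative part relative to the lc centers, so forcing $N_\sigma$ to avoid the lc centers is neither required for the decomposition to qualify nor does it help make the positive part NQC.

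A second unproved assertion is that ``log abundance is preserved along each step of any $(K_Y+B_Y+M_Y)$-MMP''. Abundance of the total space is preserved (Remark \ref{rem:abundant_MM}), but abundance of the restrictions to the lc centers is not automatic: one must compare discrepancies over the lc centers in both directions, and the inequality in the ``wrong'' direction is exactly where the paper uses the stable base locus hypothesis, via \cite[Lemma 3.5]{Hash22a}, \cite[Lemma 3.6(4)]{LX22a} and \cite[Lemma 3.9]{Hash22a}. This is in fact the actual mechanism of the paper's proof: Lemma \ref{lem:Hash20d_3.10_auxiliary} shows that every g-pair occurring in a $(K_{X'}+B'+M')$-MMP with scaling of an ample divisor on a dlt blow-up remains log abundant, and Corollary \ref{cor:Hash20d_3.8} (finiteness of log abundant models in such an MMP, resting on \cite[Theorem 7.6]{LX22a}) then forces termination; no Zariski decomposition is constructed at any point. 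To repair your argument you would either have to supply the construction of the NQC positive part --- which I do not believe can be extracted from the stated hypotheses by the canonical bundle formula alone --- or switch to the termination argument just described.
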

	
	We conclude the introduction by commenting briefly on the previous two corollaries, beginning with Corollary \ref{cor:Hash20d_3.13+3.12_intro}. First, observe that the $\dim X = 5$ case of Corollary \ref{cor:Hash20d_3.13+3.12_intro} is a special case of Corollary \ref{cor:maincor_intro}(iii) and then the condition $\llcorner B \lrcorner =0$ is actually redundant, whereas the $\dim X = 6$ case of Corollary \ref{cor:Hash20d_3.13+3.12_intro} is new, cf.\ \cite[Corollary 1.6]{Hash20d}. 
	Second, regarding the proof of Corollary \ref{cor:Hash20d_3.13+3.12_intro}, one of its main ingredients is \cite[Theorem 3.14]{Hash22a}. Hashizume's theorem, together with Theorem \ref{thm:mainthm_intro}, imply that an NQC klt generalized pair $(X,B+M)$ such that $K_X+B+M$ is pseudo-effective and \emph{abundant} has a minimal model $(X',B'+M')$, which is also (klt and) abundant. If, moreover, all divisors involved have rational coefficients, then it follows from \cite[Theorem 2]{Chaud23b} that $(X',B'+M')$ is actually a \emph{good} minimal model of $(X,B+M)$; see also \cite[First paragraph in \S 2.2.2]{LX23b}. In particular, klt generalized pairs \emph{of general type} with rational coefficients have good minimal models;  see also Theorem \ref{thm:EMM_general_type} for a more general version of the previous result, which follows readily from \cite{BCHM10}.
	
	Finally, as far as Corollary \ref{cor:Hash20d_3.10_intro} is concerned, prompted by the case of usual pairs, one might expect to establish the existence of a \emph{good} minimal model of $(X,B+M)$ in the setting of Corollary \ref{cor:Hash20d_3.10_intro}. However, as indicated by \cite[Example 2.2]{LX23b}, this need not be true in the context of generalized pairs; in other words, the canonical divisor of a log canonical generalized pair is not necessarily semi-ample even if it is nef and log abundant.
	
	\medskip
	
	\emph{Acknowledgements}: We would like to thank Guodu Chen, Enrica Floris, Christopher Hacon, Jingjun Han, Kenta Hashizume, Vladimir Lazi\'c, Jihao Liu, Zhixin Xie and Sokratis Zikas for useful comments and suggestions, as well as the referee for many valuable comments. 
	Part of this project was completed when N.T.\ visited Enrica Floris at the University of Poitiers in November 2022. He would like to express his gratitude to the members of the Department of Mathematics of the University of Poitiers, especially to Enrica Floris and Sokratis Zikas, for their hospitality. 
	
	N.T.\ is grateful to the Max Planck Institute for Mathematics (MPIM) in Bonn for its hospitality and financial support. N.T.\ also acknowledges support by the ERC starting grant \#804334. L.X.\ is partially supported by the NSF research grants no:\ DMS-1801851, DMS-1952522 and by a grant from the Simons Foundation; Award Number:\ 256202.

	\section{Preliminaries}
	
	Throughout the paper we work over the field $ \C $ of complex numbers. Unless otherwise stated, we assume that varieties are normal and quasi-projective and that a variety $X$ over a variety $Z$, denoted by $X/Z$, is projective over $Z$. We often quote in the paper the Negativity lemma; see \cite[Lemma 3.39(1)]{KM98} and \cite[Lemma 2.3.26]{Fuj17book}.
	A \emph{fibration} is a projective surjective morphism with connected fibers, and a \emph{birational contraction} is a birational map whose inverse does not contract any divisors.
	
	Let $ \pi \colon X\to Z $ be a projective morphism between normal varieties. An $ \R $-Cartier $ \R $-divisor $ D $ on $ X $ is said to be \emph{pseudo-effective over $ Z $} if it is pseudo-effective on a very general fiber of $ \pi $; and \emph{NQC (over $ Z $)} \cite{HanLi22} if it is a non-negative linear combination of $\Q$-Cartier divisors on $X$ which are nef over $ Z $. Two $ \R $-Cartier $ \R $-divisors $ D_1 $ and $ D_2 $ on $ X $ are said to be \emph{$\R$-linearly equivalent over $ Z $}, denoted by $ D_1 \sim_{\R,Z} D_2 $, if there exists an $\R$-Cartier $\R$-divisor $G$ on $Z$ such that $D_1\sim_\R D_2+\pi^*G$; and \emph{numerically equivalent over $ Z $}, denoted by $ D_1 \equiv_Z D_2 $, if it holds that $ D_1\cdot C = D_2 \cdot C $ for any curve $ C $ contained in a fiber of $ \pi $. Finally, we denote by $ N^1(X/Z)_\R $ the $\mathbb{R}$-vector space of relative numerical equivalence classes of $\mathbb{R}$-Cartier divisors on $ X $ over $ Z $
	and by $ \rho(X/Z) $ the \emph{relative Picard number} of $ X $ over $ Z $, i.e., $  \rho(X/Z) := \dim_\R N^1(X/Z)_\R $.

	\subsection{Generalized pairs}
	\label{subsection:g-pairs}
	
	For the standard theory of usual pairs and the Minimal Model Program (MMP) we refer to \cite{KM98,Fuj17book}, while for the recently developed theory of generalized pairs we refer to \cite{BZ16,LT22a,LT22b,Xie22,HL23,LX23a,LX23b} and the relevant references therein.
	We recall now the definitions of generalized pairs and their usual classes of singularities. Afterwards, we briefly discuss some basic results about generalized pairs.
	
	\begin{dfn}
		A \emph{generalized pair}, abbreviated as \emph{g-pair}, consists of 
		\begin{itemize}
			\item a normal variety $ X $, equipped with a projective morphism $ X \to Z $,
			
			\item an effective $ \R $-divisor $ B $ on $X$,
			
			\item a projective birational morphism $ f \colon W \to X $ from a normal variety $ W $ and an $\R$-Cartier $\R$-divisor $ M_W $ on $ W $ which is nef over $ Z $,
		\end{itemize}
		such that the divisor $ K_X + B + M $ is $ \R $-Cartier, where $ M := f_* M_W $. 
		We say that the divisor $ B $ (resp.\ $ M $) is the \emph{boundary part} (resp.\ the \emph{nef part}) of the g-pair, and we call the given g-pair \emph{NQC} if $M_W$ is an NQC divisor (over $ Z $) on $ W $.
	\end{dfn}
	
	We note that the variety $ W $ in the definition may always be chosen as a sufficiently high birational model of $ X $; see \cite[Definition 1.4]{BZ16}. Usually we denote a g-pair as above by $(X/Z,B+M)$, but remember the whole g-pair structure. In a few occasions all divisors involved will be $\Q$-divisors and then we will use the term \emph{$\Q$-g-pair} to refer to such a g-pair. Sometimes we will work exclusively in the absolute setting ($Z = \spec \C$) and then the underlying variety $X$ of any given g-pair $(X,B+M)$ will be projective, even though this will not be mentioned explicitly, since it is implied by our conventions above.
	
	\begin{dfn}
		Let $ (X,B+M) $ be a g-pair with data $ W \overset{f}{\longrightarrow} X \to Z $ and $ M_W $. Let $ E $ be a divisorial valuation over $ X $. We may assume that the \emph{center} $ c_W (E) $ of $E$ on $W$ is a prime divisor on $ W $. If we write 
		\[ K_W + B_W + M_W = f^* ( K_X + B + M ) \]
		for some $ \R $-divisor $ B_W $ on $ W $, then the \emph{discrepancy of $ E $ with respect to $ (X,B+M) $} is defined as
		\[ a(E, X, B+M) :=-\mult_E B_W . \]
		
		We say that the g-pair $ (X,B+M) $ is:
		\begin{itemize}
			\item \emph{klt} if $  a(E,X,B+M) > -1 $ for any divisorial valuation $ E $ over $ X $;
			
			\item \emph{lc} if $  a(E,X,B+M) \geq -1 $ for any divisorial valuation $ E $ over $ X $;
			
			\item \emph{dlt} if it is lc and if there exists an open subset $U \subseteq X$ such that $(U,B|_U)$ is a log smooth pair, and if $ a(E,X,B+M) = -1 $ for some divisorial valuation $ E $ over $X$, then $ c_X(E) \cap U \neq \emptyset $ and $ c_X(E) \cap U $ is an lc center of $(U,B|_U)$.
		\end{itemize}
	\end{dfn}
	
	We highlight that, according to \cite[Theorem 6.1]{Hash22a}, the above definition of dlt singularities, namely \cite[Definition 2.3]{HanLi22}, and the one from \cite[Subsection 2.13(2)]{Bir19} coincide for NQC g-pairs.
	
	We also recall that, given an  lc g-pair $ (X,B+M) $, an irreducible subvariety $ S $ of $ X $ is called an \emph{lc center} of $ (X,B+M) $ if there exists a divisorial valuation $ E $ over $ X $ such that $  c_X(E) = S $ and $ a(E,X,B + M) = -1 $.
	
	\medskip
	
	The next result is \cite[Proposition 3.10]{HanLi22} and will be frequently used in the paper without explicit mention.
	
	\begin{lem}
		Let $ (X,B+M) $ be an lc g-pair with data $ W \overset{f}{\to} X \to Z $ and $ M_W $. Then, after possibly replacing $ f $ with a higher model, there exist a $\Q$-factorial dlt g-pair $(X',B'+M')$ with data $ W \overset{g}{\to} X' \to Z $ and $ M_W $, and a projective birational morphism $ h \colon X' \to X $ such that 
		\[ K_{X'} + B' + M' \sim_\R h^* (K_X + B + M) \quad \text{and} \quad B' = h_*^{-1} B + E , \]
		where $ E $ is the sum of all $ h $-exceptional prime divisors on $ X' $. The g-pair $(X',B'+M')$ is called a \emph{dlt blow-up} of $(X,B+M)$.
	\end{lem}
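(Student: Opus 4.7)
The plan is to construct $(X',B'+M')$ by passing to a sufficiently high log resolution, equipping it with the tautological log-smooth dlt generalized pair structure in which every $f$-exceptional prime divisor is assigned coefficient $1$, and then running a relative MMP over $X$ to contract the excess exceptional divisors. This follows the classical Birkar--Hacon--Xu strategy for producing dlt modifications, adapted to the generalized setting via the recent MMP machinery for NQC lc g-pairs.

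First, I would replace $f\colon W\to X$ by a higher birational model so that $f$ is a log resolution of $(X,\Supp B)$ while $M_W$ is pulled back to remain nef over $Z$. On $W$, set
\[
\Gamma := f_*^{-1}B + \sum_{E\ f\text{-exc.}} E .
\]
Since $(X,B+M)$ is lc, one has $a(E,X,B+M)\ge -1$ for every $f$-exceptional prime divisor $E$, so writing $K_W+B_W+M_W\sim_{\R} f^*(K_X+B+M)$ as in the definition of discrepancy yields
\[
K_W+\Gamma+M_W \;\sim_{\R}\; f^*(K_X+B+M) + F, \qquad F := \sum_{E\ f\text{-exc.}}\!\bigl(1+a(E,X,B+M)\bigr)E \;\ge\; 0,
\]
with $F$ effective and $f$-exceptional. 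As $(W,\Gamma)$ is log smooth, the g-pair $(W,\Gamma+M_W)$ with data $\mathrm{id}_W\colon W\to W$ and $M_W$ is $\Q$-factorial dlt.

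Next, I would run a $(K_W+\Gamma+M_W)$-MMP over $X$ with scaling of a sufficiently ample divisor. Each step exists by the Cone theorem of \cite{HaconLiu21}, the Contraction theorem of \cite{Xie22}, and the existence of flips of \cite{LX22b} for NQC lc generalized pairs. Because $K_W+\Gamma+M_W\equiv_X F$ with $F\ge 0$ entirely $f$-exceptional, every extremal ray contracted by this relative MMP intersects $F$ negatively, so the MMP is effectively an $F$-MMP over $X$: the induced birational contraction $g\colon W\dashrightarrow X'$ contracts precisely the components of $F$ (or their successive strict transforms). Setting $h\colon X'\to X$, $B' := g_*\Gamma = h_*^{-1}B + E$ with $E$ the sum of all $h$-exceptional prime divisors, and $M' := g_*M_W$, the output $(X',B'+M')$ is $\Q$-factorial dlt, and the equivalence $K_{X'}+B'+M'\sim_{\R}h^*(K_X+B+M)$ follows from the Negativity lemma applied on a common resolution of $W$ and $X'$, using that $F$ has been entirely contracted so that both sides pull back to the same divisor.

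The main obstacle is termination of the relative MMP above. In the classical setting this is handled by special termination or by Shokurov's trick for MMPs of $\R$-trivial type, and in the generalized setting it reduces to the fact that one is running an MMP on a $\Q$-factorial dlt NQC g-pair whose relative log canonical class over $X$ is $\R$-linearly equivalent to an effective $f$-exceptional divisor; the available MMP results for NQC lc g-pairs are then sufficient to conclude, as the number of exceptional divisors strictly decreases after each divisorial contraction while flips between them can only occur finitely many times by the standard length-of-extremal-rays bookkeeping. Once termination is granted, the rest of the argument is a routine verification of the asserted formula for $B'$ and of the $\Q$-factorial dlt property of $(X',B'+M')$.
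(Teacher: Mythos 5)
The paper does not prove this lemma at all: it is quoted verbatim as \cite[Proposition 3.10]{HanLi22}, so your proposal is really a reconstruction of Han--Li's argument. Your overall strategy is the correct and standard one (log resolution, coefficient $1$ on all exceptional divisors, relative MMP over $X$ to contract the effective exceptional divisor $F$, negativity lemma to get crepancy), and most of the intermediate claims are fine; in particular, once the MMP over $X$ terminates, the identity $K_{X'}+B'+M'\sim_\R h^*(K_X+B+M)$ and the fact that exactly the components of $F$ are contracted do follow from the Negativity lemma as you say. Note one small point you skip: the MMP only produces a birational contraction $W\dashrightarrow X'$, not a morphism, so to get the required data $W\overset{g}{\to}X'\to Z$ one must again replace $W$ by a higher model resolving this map; this is what the clause ``after possibly replacing $f$ with a higher model'' is for.

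The genuine gap is your justification of termination. You write that ``the number of exceptional divisors strictly decreases after each divisorial contraction while flips between them can only occur finitely many times by the standard length-of-extremal-rays bookkeeping.'' Boundedness of the length of extremal rays does not bound the number of flips, and there is no general termination of flips available here, even relatively over $X$. The correct mechanism is the special termination statement for MMPs on divisors that are (very) exceptional over the base: since $K_W+\Gamma+M_W\equiv_X F$ with $F\ge 0$ exceptional over $X$, one runs the MMP over $X$ \emph{with scaling of an ample divisor} and invokes the g-pair analogue of Birkar's theorem \cite[Theorem 3.5]{Bir12a}, namely \cite[Propositions 3.8 and 3.9]{HanLi22}: the argument there analyses the limit of the nef thresholds and applies the general negativity lemma for very exceptional divisors, rather than any counting of flips. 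Without substituting that input, your proof of termination does not go through, and this is precisely the step the whole construction hinges on.
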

	
	We now derive an easy corollary of the Negativity lemma, which plays a key role in the paper nonetheless.
	
	\begin{lem}\label{lem:pullback_P+N}
		Let $ (X,B+M) $ be an NQC lc g-pair with data $ W \overset{f}{\longrightarrow} X \to Z $ and $ M_W $. Let $ P $ be the pushforward to $ X $ of an NQC divisor (over $ Z $) on a birational model of $ X $ and let $ N $ be an effective $ \R $-divisor on $ X $ such that $ N+P $ is $ \R $-Cartier. After possibly replacing $f$ with a higher model, we may assume that $W$ is smooth and that there exists an $\R$-divisor $P_W$ on $W$ such that $P_W$ is NQC (over $Z$) and $ f_* P_W = P $. Then we may write
		\[ f^*(P+N) = P_W + f_*^{-1} N + E_W , \]
		where $E_W$ is an effective $f$-exceptional $\R$-divisor on $W$.
	\end{lem}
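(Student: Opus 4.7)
The plan is to show that the auxiliary $\R$-Cartier divisor $Q := f^*(P+N) - P_W$ on $W$ is effective, after which the desired decomposition falls out by reading off the non-exceptional and exceptional parts of $Q$. To that end, first pass to a sufficiently high $f \colon W \to X$, with $W$ smooth, on which an NQC (over $Z$) divisor $P_W$ with $f_* P_W = P$ exists; this can be arranged by taking a common resolution of $X$ and the birational model where $P$ is realized as the pushforward of an NQC divisor, and using that pullbacks of NQC divisors remain NQC.

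To show $Q \geq 0$, I would apply the Negativity lemma. Note that $Q$ is $\R$-Cartier, since both $f^*(P+N)$ and the NQC divisor $P_W$ are $\R$-Cartier, and that $f_* Q = (P+N) - P = N$ is effective by hypothesis. The crucial point is that $-Q$ is $f$-nef: for any curve $C$ on $W$ contracted by $f$, the projection formula gives $f^*(P+N) \cdot C = 0$, so $-Q \cdot C = P_W \cdot C$. Since $P_W$ is NQC over $Z$ it is nef over $Z$, and because $C$ is contracted by $f$ it is also contracted by the composition $W \to X \to Z$, yielding $P_W \cdot C \geq 0$. The Negativity lemma then gives $Q \geq 0$.

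To conclude, since $Q$ is effective with $f_* Q = N$, a comparison of coefficients forces the non-$f$-exceptional part of $Q$ to coincide with $f_*^{-1} N$: every prime component of $N$ lifts to a unique non-exceptional prime divisor on $W$ with matching coefficient, and no additional non-exceptional components can appear in $Q$ without enlarging $f_* Q$. Hence $E_W := Q - f_*^{-1} N$ equals the $f$-exceptional part of the effective divisor $Q$, and is therefore itself an effective, $f$-exceptional $\R$-divisor. Rewriting the identity as $f^*(P+N) = P_W + f_*^{-1} N + E_W$ delivers the claim.

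I do not anticipate any serious obstacle; the one step that requires care is the verification that $-Q$ is $f$-nef, but this is settled cleanly by combining the NQC property of $P_W$ with the $f$-numerical triviality of $f^*(P+N)$. The rest is formal bookkeeping with strict transforms and exceptional divisors.
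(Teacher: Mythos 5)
Your proposal is correct and follows essentially the same route as the paper: the divisor $Q = f^*(P+N) - P_W$ you work with is exactly the paper's $f_*^{-1}N + E_W$, and both arguments hinge on $-Q$ being nef over $X$ (via the NQC property of $P_W$ and the $f$-numerical triviality of $f^*(P+N)$) together with the Negativity lemma applied to $f_*Q = N \geq 0$. The only cosmetic difference is that the paper splits off $f_*^{-1}N$ before invoking negativity, whereas you do the splitting afterwards; the content is identical.
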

	
	\begin{proof}
		Since $ f_* (P_W + f_*^{-1} N) = P + N $ by construction, we may write
		\[ f^*(P+N) = P_W + f_*^{-1} N + E_W \]
		for some $f$-exceptional $\R$-Cartier $\R$-divisor $E_W$ on $W$. Since $P_W$ is clearly nef over $X$, we infer that $ - (f_*^{-1} N + E_W) $ is also nef over $X$, and since $N \geq 0$, by the Negativity lemma we obtain $ f_*^{-1} N + E_W \geq 0 $. But $ f_*^{-1} N $ and $ E_W $ have no common components, which implies that $ E_W \geq 0 $, as claimed.
	\end{proof}
	
	Finally, the following result will be often used in the paper without explicit mention, cf.\ \cite[Lemma 2.3]{LT22b}. It is an analogue of \cite[Corollaries 2.35(1) and 2.39(1)]{KM98} in the context of g-pairs. For brevity we only outline its proof below.
	
	\begin{lem}
		Let $ (X/Z,B+M) $ be a g-pair. Let $ P $ be the pushforward to $ X $ of an NQC divisor (over $ Z $) on a birational model of $ X $ and let $ N $ be an effective $ \R $-divisor on $ X $ such that $ N+P $ is $ \R $-Cartier. If the g-pair $ \big( X, (B+N) + (M+P) \big) $ is klt (resp.\ dlt, resp.\ lc), then the g-pair $ (X,B+M) $ is also klt (resp.\ dlt, resp.\ lc).
	\end{lem}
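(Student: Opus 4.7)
The plan is to compare the discrepancies of the two g-pairs on a common sufficiently high birational model and then read off the desired implications directly from Lemma \ref{lem:pullback_P+N}.

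First I would choose a log resolution $f \colon W \to X$ that simultaneously serves as data for both g-pairs: $M_W$ (the nef part of $(X,B+M)$) lives on $W$, and by Lemma \ref{lem:pullback_P+N} we may also arrange that there exists an NQC divisor $P_W$ on $W$ (over $Z$) with $f_*P_W = P$. Then $M_W + P_W$ is nef over $Z$ and pushes forward to $M+P$, so it provides data realising the nef part of $\big(X,(B+N)+(M+P)\big)$. Writing the defining relations on $W$ as
\[ K_W + B_W + M_W \sim_\R f^*(K_X + B + M) \]
and
\[ K_W + C_W + M_W + P_W \sim_\R f^*(K_X + B + N + M + P) , \]
subtracting, and invoking Lemma \ref{lem:pullback_P+N} to replace $f^*(N+P)$ by $P_W + f_*^{-1} N + E_W$ with $E_W\geq 0$ exceptional, I obtain the key identity
\[ C_W - B_W = f_*^{-1} N + E_W \geq 0 . \]
Consequently, for every divisorial valuation $E$ over $X$,
\[ a\big(E, X, (B+N)+(M+P)\big) = -\mult_E C_W \leq -\mult_E B_W = a(E,X,B+M) , \]
which immediately yields the klt and lc implications.

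The dlt case is where the main (albeit minor) obstacle lies, since one has to produce an open subset witnessing dlt-ness of $(X,B+M)$ out of one given for $\big(X,(B+N)+(M+P)\big)$. Given such a $U' \subseteq X$ I would set $U := U' \setminus \Supp N$; the pair $(U,B|_U)$ remains log smooth because $N|_U = 0$. For a divisorial valuation $E$ with $a(E,X,B+M) = -1$, the displayed inequality together with the lc-ness of $\big(X,(B+N)+(M+P)\big)$ forces $a\big(E,X,(B+N)+(M+P)\big) = -1$ as well, hence $\mult_E(f_*^{-1} N + E_W) = 0$ and in particular $\mult_E f_*^{-1} N = 0$. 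Thus $c_X(E) \not\subseteq \Supp N$, which ensures $c_X(E) \cap U \neq \emptyset$, and the lc center of $(U',(B+N)|_{U'})$ cut out by $E$ restricts to an lc center of $(U,B|_U)$. This verifies the dlt assertion and completes the plan.
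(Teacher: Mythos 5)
Your overall strategy coincides with the paper's: the proof given there simply reruns the arguments of \cite[Lemma 2.6]{CT20} and \cite[Lemma 2.3]{LT22b} with Lemma \ref{lem:pullback_P+N} substituted for a direct appeal to the Negativity lemma, and your identity $C_W - B_W = f_*^{-1}N + E_W \geq 0$ together with the resulting inequality of discrepancies is exactly that computation. The klt and lc cases are correct as written.

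There is, however, a gap in the dlt case, at the step \enquote{in particular $\mult_E f_*^{-1}N = 0$; thus $c_X(E) \not\subseteq \Supp N$}. When $E$ is exceptional over $X$, the vanishing $\mult_E f_*^{-1}N = 0$ is automatic and carries no information: an exceptional divisor is never a component of a strict transform, yet its centre may perfectly well be contained in $\Supp N$. So the implication you draw does not follow from the fact you cite. The argument can be repaired using the \emph{full} vanishing $\mult_E(f_*^{-1}N+E_W)=0$ obtained one step earlier, together with the second part of the Negativity lemma \cite[Lemma 3.39(2)]{KM98}: the divisor $-(f_*^{-1}N+E_W) = P_W - f^*(N+P)$ is nef over $X$, so every fibre of $f$ is either contained in or disjoint from $\Supp(f_*^{-1}N+E_W)$. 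If one had $c_X(E)\subseteq \Supp N$, then the fibre of $f$ over a general closed point of $c_X(E)$ would meet $f_*^{-1}N$ (as $f_*(f_*^{-1}N+E_W)=N$), hence would be entirely contained in $\Supp(f_*^{-1}N+E_W)$, forcing $\mult_E(f_*^{-1}N+E_W)>0$, a contradiction. With this repair the remainder of your dlt argument (passing to $U = U'\setminus \Supp N$ and restricting lc centres) goes through.
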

	
	\begin{proof}
		To prove the statement for klt (resp.\ lc) singularities, we argue as in the proof of \cite[Lemma 2.7]{CT23} and we apply Lemma \ref{lem:pullback_P+N} instead of invoking directly the Negativity lemma as in the proof of op.\ cit. To prove the statement for dlt singularities, we argue as in the proof of \cite[Lemma 2.3]{LT22b} and we apply Lemma \ref{lem:pullback_P+N} as explained previously.
	\end{proof}

	\subsection{Minimal models, canonical models and Mori fiber spaces}
	\label{subsection:MM_CM_MFS}
	
	We first recall the definition of (good) minimal models and Mori fiber spaces both \emph{in the usual sense} and \emph{in the sense of Birkar-Shokurov}.
	
	\begin{dfn}\label{dfn:MM_MFS}
		Assume that we have a birational map $\varphi \colon X \dashrightarrow X'$ over $ Z $ and g-pairs $(X/Z,B+M)$ and $(X'/Z,B'+M')$ such that $(X,B+M)$ is lc and the divisors $ M $ and $ M' $ are pushforwards of the same nef $ \R $-Cartier $\R$-divisor on a common birational model of $X$ and $X'$.
		
		\begin{center}
			\begin{tikzcd}
				(X,B+M) \arrow[rr, dashed, "\varphi"] \arrow[dr] && (X',B'+M') \arrow[dl] \\
				& Z
			\end{tikzcd}
		\end{center}
		
		\begin{enumerate}[(a)]
			\item The map $\varphi$ is called a \emph{minimal model in the sense of Birkar-Shokurov over $Z$} of the g-pair $(X,B+M)$ if 
			\begin{itemize}
				\item $ B' =\varphi_*B+E$, where $E$ is the sum of all $ \varphi^{-1} $-exceptional prime divisors on $ X' $,
				
				\item $X'$ is $\Q$-factorial,
				
				\item $K_{X'}+B'+M'$ is nef over $Z$, and 
				
				\item for any $\varphi $-exceptional prime divisor $ F $ on $ X $ we have 
				\[ a(F,X,B+M) < a(F,X',B'+M') . \]
			\end{itemize}
			
			If, moreover, $\varphi$ is a birational contraction, and $X'$ is not necessarily $\Q$-factorial if $X$ is not $\Q$-factorial (but $X'$ is required to be $\Q$-factorial if $X$ is $\Q$-factorial), then the map $\varphi$ is called a \emph{minimal model (in the usual sense) of $(X,B+M)$ over $ Z $}.
			
			Finally, a minimal model $ \varphi \colon (X,B+M) \dashrightarrow (X',B'+M') $ in the usual sense or in the sense of Birkar-Shokurov of $ (X,B+M) $ over $ Z $ is called \emph{good} if the divisor $ K_{X'} + B' + M' $ is semi-ample over $ Z $.
			
			\item The map $\varphi$ is called a \emph{Mori fiber space in the sense of Birkar-Shokurov over $Z$} of the g-pair $(X,B+M)$ if 
			\begin{itemize}
				\item $ B' =\varphi_*B+E$, where $E$ is the sum of all $ \varphi^{-1} $-exceptional prime divisors on $ X' $, 
				
				\item $X'$ is $\Q$-factorial,
				
				\item there exists a $ (K_{X'} + B' + M') $-negative extremal contraction $ X' \to T $ over $ Z $ with $ \dim X' > \dim T $, and
				
				\item for any divisorial valuation $ F $ over $ X $ we have
				\[ a(F,X,B+M) \leq a(F,X',B'+M') \]
				and the strict inequality holds if $ c_X(F) $ is a $\varphi $-exceptional prime divisor on $ X $.
			\end{itemize}
			
			If, moreover, $\varphi$ is a birational contraction, and $X'$ is not necessarily $\Q$-factorial if $X$ is not $\Q$-factorial (but $X'$ is required to be $\Q$-factorial if $X$ is $\Q$-factorial), then the map $ \varphi $ is called a \emph{Mori fiber space (in the usual sense) of $ (X,B+M) $ over $ Z $}.
		\end{enumerate}
	\end{dfn}
	
	We emphasize that in Definition \ref{dfn:MM_MFS} we allow a minimal model (resp.\ Mori fiber space) in the sense of Birkar-Shokurov to be lc and not only dlt; see \cite[Remark 2.4]{Hash18a} and \cite[p.\ 34, Comment]{Tsak21} for the justification. Furthermore, the g-pair $ (X',B'+M')$ in Definition \ref{dfn:MM_MFS} is lc. Indeed, if it is a (good) minimal model (in any sense) of $ (X,B+M) $, then this follows immediately from \cite[Lemma 2.8(i)]{LMT23}, while if it is a Mori fiber space (in any sense) of $ (X,B+M) $, then this follows from the above inequalities on discrepancies.
	
	\medskip
	
	We briefly discuss now the differences between the aforementioned two notions of a minimal model of a given g-pair. It is easy to check that minimal models in the usual sense and in the sense of Birkar-Shokurov coincide (modulo $\Q$-factoriality) in the klt case; see \cite[Remark 2.4(iii)]{Bir12b} and \cite[Subsection 2.2.4]{Tsak21}. The following result, which is an immediate consequence of Proposition \ref{prop:mainprop}, allows us to compare these two notions in the lc case as well, cf.\ \cite[Lemma 2.9(ii)]{LT22a}, \cite[Theorem 1.2(a)]{LT22b}.
	
	\begin{thm}
		Let $ (X/Z,B+M) $ be an NQC lc g-pair. Then $ (X,B+M) $ has a minimal model over $ Z $ if and only if $ (X,B+M) $ has a minimal model in the sense of Birkar-Shokurov over $ Z $.
	\end{thm}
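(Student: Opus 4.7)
The plan is to prove the two implications separately; the nontrivial direction reduces to the main theorem and the reverse direction is routine bookkeeping via a dlt blow-up.

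Suppose first that $(X,B+M)$ admits a minimal model in the sense of Birkar-Shokurov over $Z$. The goal is to apply Theorem~\ref{thm:mainthm_intro} under assumption~(a). To do so we first produce an effective $\R$-Cartier $\R$-divisor $A$ on $X$ which is ample over $Z$ such that $(X/Z,(B+A)+M)$ is an NQC lc g-pair and $K_X+B+A+M$ is nef over $Z$. We start with any $\R$-Cartier divisor $A_0$ ample over $Z$, replace it by a general member of its $\R$-linear system so that the lc property is preserved after adding $A_0$ to $B$, and then scale by a sufficiently large positive real number so that $K_X+B+tA_0+M$ becomes ample (hence nef) over $Z$; this is possible since a sufficiently positive ample class dominates any fixed $\R$-Cartier class. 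Theorem~\ref{thm:mainthm_intro} then yields a $(K_X+B+M)$-MMP over $Z$ with scaling of $A$ that terminates. Since every step of such an MMP is a divisorial contraction or a flip, the output $\varphi\colon X\dashrightarrow X'$ is a birational contraction, $K_{X'}+B'+M'$ is nef over $Z$, and $B'=\varphi_*B$; hence $\varphi$ is a minimal model of $(X,B+M)$ over $Z$ in the usual sense.

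Conversely, assume that $(X,B+M)$ admits a minimal model $\varphi\colon (X,B+M)\dashrightarrow (X',B'+M')$ over $Z$ in the usual sense. If $X'$ is $\Q$-factorial, then $\varphi$ is already a minimal model in the sense of Birkar-Shokurov: since $\varphi$ is a birational contraction, there are no $\varphi^{-1}$-exceptional prime divisors on $X'$, so the Birkar-Shokurov identity $B'=\varphi_*B+E$ holds with $E=0$. If $X'$ is not $\Q$-factorial, we pass to a dlt blow-up $h\colon X''\to X'$ of the NQC lc g-pair $(X',B'+M')$. Then $X''$ is $\Q$-factorial, $K_{X''}+B''+M''\sim_\R h^*(K_{X'}+B'+M')$ is nef over $Z$, and $B''=h_*^{-1}B'+F$ with $F$ the sum of the $h$-exceptional prime divisors. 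The composition $\psi:=h^{-1}\circ\varphi\colon X\dashrightarrow X''$ then satisfies the Birkar-Shokurov conditions: its $\psi^{-1}$-exceptional prime divisors on $X''$ are precisely the $h$-exceptional ones, the identity $B''=\psi_*B+F$ holds, and the strict discrepancy inequality for $\psi$-exceptional divisors on $X$ (which coincide with the $\varphi$-exceptional ones) follows from the corresponding inequality for $\varphi$ together with the crepancy of $h$.

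The essential difficulty of the argument is absorbed into the main theorem itself; what remains in each direction is routine, namely exhibiting a suitable ample scaling divisor $A$ in the forward direction and producing a $\Q$-factorial crepant model via dlt blow-up in the reverse direction. The main potential pitfall is verifying that the MMP produced by Theorem~\ref{thm:mainthm_intro} genuinely outputs a minimal model in the usual sense (rather than only something like a Birkar-Shokurov model), but this is transparent from the step-by-step nature of the MMP, each step being a birational contraction preserving the strict transform of $B$.
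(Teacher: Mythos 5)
Your argument is correct and is essentially the paper's: the direction from a usual minimal model to a Birkar--Shokurov one is handled, exactly as you do, by a crepant dlt blow-up of the model (your case split on whether $X'$ is $\Q$-factorial is unnecessary, since the dlt blow-up works uniformly), while for the converse the paper simply invokes Proposition \ref{prop:mainprop}, which already yields a terminating $(K_X+B+M)$-MMP over $Z$ with no auxiliary divisor needed. Your route through Theorem \ref{thm:mainthm} is not circular but obliges you to build the scaling divisor $A$, and there the order of operations matters: choosing a general member $A_0$ with $\big(X,(B+A_0)+M\big)$ lc and \emph{then} multiplying by $t\gg 1$ destroys log canonicity, since the coefficients of $tA_0$ then exceed $1$. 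You should instead first fix $t$ so that $K_X+B+M+tA_0$ is ample over $Z$ and only then replace $tA_0$ by a general member of its relative $\R$-linear system written with small coefficients. Finally, when you assert that the terminating MMP ends with $K_{X'}+B'+M'$ nef over $Z$, you are implicitly excluding the Mori fiber space outcome; this is legitimate because the existence of a Birkar--Shokurov minimal model forces $K_X+B+M$ to be pseudo-effective over $Z$ (on a common resolution, $p^*(K_X+B+M)-q^*(K_{X'}+B'+M')$ is effective), but it should be said explicitly.
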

	
	\begin{proof}
		If $ (X',B'+M') $ is a minimal model of $ (X,B+M) $ over $ Z $, then a dlt blow-up of $ (X',B'+M') $ is a minimal model in the sense of Birkar-Shokurov of $ (X,B+M) $ over $ Z $. The converse follows immediately from Proposition \ref{prop:mainprop}. Note also that the statement for $M=0$ follows from \cite[Theorem 1.7]{HH20}.
	\end{proof}
	
	The next remark is another immediate corollary of Proposition \ref{prop:mainprop}. It is very useful when one tries to construct minimal models of NQC lc g-pairs, as it allows one to work with NQC $\Q$-factorial dlt g-pairs instead, and thus it plays a key role in the proofs of Theorems \ref{thm:mAd_lc} and \ref{thm:Gon11_Thm1.1} and Corollaries \ref{cor:Hash20d_3.13} and \ref{cor:Hash20d_3.10}.
	
	\begin{rem} \label{rem:EMM_reduction}
		Let $ (X/Z,B+M) $ be an NQC lc g-pair. Let $ h \colon (T,B_T+M_T) \to (X,B+M) $ be a dlt blow-up of $ (X,B+M) $. If $ (Y,B_Y+M_Y) $ is a minimal model (in any sense) of $ (T,B_T+M_T) $ over $ Z $, then one can readily check that $ (Y,B_Y+M_Y) $ is a minimal model in the sense of Birkar-Shokurov of $ (X,B+M) $ over $ Z $, and therefore $(X,B+M)$ has a minimal model over $Z$ by Proposition \ref{prop:mainprop}.
	\end{rem}
	
	For the sake of completeness we also mention here that any two minimal models in the usual sense (resp.\ in the sense of Birkar-Shokurov) of an lc g-pair are isomorphic in codimension $ 1 $; see \cite[Lemma 2.14]{CT23} (resp.\ \cite[Lemma 3.1]{Chaud22c}).
	
	\begin{rem}\label{rem:WCM_EMM}
		With the same notation as in Definition \ref{dfn:MM_MFS}, if in part (a) we omit the second bullet and we replace the fourth bullet with the weaker condition \enquote{for any $\varphi $-exceptional prime divisor $ F $ on $ X $ we have $ a(F,X,B+M) \leq a(F,X',B'+M') $}, then we say that $(X',B'+M')$ is a \emph{weak canonical model in the sense of Birkar-Shokurov of $(X,B+M)$ over $Z$}; see \cite[Definition 2.26]{Tsak21} or \cite[Definition 3.2(2)]{HL23}. According to \cite[Proposition 2.33]{Tsak21} or \cite[Lemma 3.8]{HL23}, if an NQC lc g-pair $ (X/Z,B+M) $ has a weak canonical model in the sense of Birkar-Shokurov over $Z$, then it has a minimal model in the sense of Birkar-Shokurov over $Z$.
	\end{rem}
	
	The following basic result about good minimal models will be needed for the proof of Theorems \ref{thm:MM_NQC_NZD} and \ref{thm:MM_logbigmoduli}.
	
	\begin{lem}\label{lem:GMM}
		Let $(X/Z,B+M)$ be an lc g-pair. If $ (X,B+M) $ has a good minimal model in the usual sense or in the sense of Birkar-Shokurov over $ Z $, then every minimal model in the usual sense or in the sense of Birkar-Shokurov of $ (X,B+M) $ over $ Z $ is also good.
	\end{lem}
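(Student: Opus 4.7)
The plan is to reduce the lemma to the following pullback equality, which is a quantitative strengthening of the isomorphism-in-codimension-one result for minimal models cited immediately above the lemma: for any two minimal models $(X',B'+M')$ and $(X'',B''+M'')$ of $(X,B+M)$ over $Z$ (each in either of the two senses), and any sufficiently high common normal birational model $W$ with projective birational morphisms $p\colon W \to X'$ and $q\colon W \to X''$ carrying the nef part $M_W$, one has
$$ p^*(K_{X'}+B'+M') = q^*(K_{X''}+B''+M''). $$

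Granting this equality, the lemma follows quickly. Assume $(X',B'+M')$ is a good minimal model, so that $K_{X'}+B'+M'$ is semi-ample over $Z$. Then its pullback to $W$ is semi-ample over $Z$, and by the equality so is $q^*(K_{X''}+B''+M'')$. Since every $q$-exceptional curve is orthogonal to $q^*(K_{X''}+B''+M'')$, the relative morphism over $Z$ defined by a sufficiently divisible multiple of this pullback contracts the fibers of $q$, so it factors through $q$ via rigidity and Stein factorization. The induced morphism from $X''$ realizes $K_{X''}+B''+M''$ as semi-ample over $Z$, showing that $(X'',B''+M'')$ is also good.

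The main step, and the one I expect to be the principal obstacle, is the pullback equality itself. First I would fix $W$ high enough that $M_W$ is defined on it and that both $p$ and $q$ are morphisms. Writing $p^*(K_{X'}+B'+M')=K_W+B'_W+M_W$ and $q^*(K_{X''}+B''+M'')=K_W+B''_W+M_W$, the coefficient along a prime divisor $E\subset W$ of the difference $B'_W-B''_W$ equals $a(E,X'',B''+M'')-a(E,X',B'+M')$. Hence the pullback equality reduces to the equality of discrepancies
$$ a(E,X',B'+M') = a(E,X'',B''+M'') $$
for every divisorial valuation $E$ over $X$. The strategy for establishing this is a symmetric Negativity lemma argument applied to $D := p^*(K_{X'}+B'+M') - q^*(K_{X''}+B''+M'')$ on $W$: because $K_{X''}+B''+M''$ is nef over $Z$ and hence over $X'$, the divisor $-D$ is $p$-nef, so combining the Negativity lemma with the strict discrepancy inequalities $a(F,X,B+M)<a(F,X',B'+M')$ on $\varphi'$-exceptional divisors built into Definition \ref{dfn:MM_MFS} yields one inequality between the two families of discrepancies; the completely symmetric argument with $p$ and $q$ interchanged yields the reverse, forcing $D=0$.

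Some care is required to treat uniformly the four combinations of the two notions of minimal model, because in the Birkar-Shokurov sense the $\varphi^{-1}$-exceptional prime divisors are reinserted into the boundary with coefficient one and must be accounted for in the discrepancy computation. Nevertheless, in each combination the conditions in Definition \ref{dfn:MM_MFS}---the $\Q$-factoriality (when available), the form of the boundary $B' = \varphi_*B + E$, the nefness of $K_{X'}+B'+M'$, and the strict inequality for $\varphi'$-exceptional divisors on $X$---supply exactly the input needed to make the Negativity lemma argument close the loop, so the bookkeeping, while delicate, is routine.
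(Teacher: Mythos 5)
Your proposal is correct and follows essentially the same route as the paper: a symmetric Negativity-lemma argument on a common resolution showing that the pullbacks of the two generalized log canonical divisors coincide, followed by descent of semi-ampleness along a birational morphism (the paper also first reduces the ``usual sense'' cases to the Birkar--Shokurov case via dlt blow-ups). One caveat: to apply the Negativity lemma to $D$ you need $p_*D \geq 0$, i.e.\ $a(F,X'',B''+M'') \geq a(F,X',B'+M')$ for every prime divisor $F$ on $X'$, and this does not follow from the strict inequalities ``built into Definition \ref{dfn:MM_MFS}'' alone, since those only constrain $\varphi$-exceptional prime divisors on $X$; what you actually need is the derived inequality $a(F,X,B+M) \leq a(F,X'',B''+M'')$ for \emph{all} divisorial valuations (this is \cite[Remark 2.6]{LT22b}, itself proved by a Negativity-lemma argument on a common resolution of $X$ and $X''$), combined with $a(F,X',B'+M') \in \{a(F,X,B+M),\,-1\}$ for divisors $F$ on $X'$ and the log canonicity of $(X'',B''+M'')$. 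The paper packages this by taking a common resolution of all three varieties $X$, $X'$, $X''$ at once and comparing $E'=p^*(K_X+B+M)-q^*(K_{X'}+B'+M')$ with $E''=p^*(K_X+B+M)-r^*(K_{X''}+B''+M'')$, whose effectivity and exceptionality make the hypotheses of the Negativity lemma immediate.
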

	
	\begin{proof}		
		If $ (X,B+M) $ has a good minimal model $ (V,B_V+M_V) $ over $ Z $, then a dlt blow-up of $ (V,B_V+M_V) $ is a good minimal model in the sense of Birkar-Shokurov of $ (X,B+M) $ over $ Z $, so it suffices to prove the statement under this assumption. Hence, assume now that there exists a good minimal model $ (X',B'+M') $ in the sense of Birkar-Shokurov of $ (X,B+M) $ over $ Z $. The first part of the proof below is similar to the proof of \cite[Remark 2.7]{Bir12a}, but we provide all the details for the convenience of the reader.
		
		Fix a minimal model $ (X'',B'' + M'') $ in the sense of Birkar-Shokurov of $ (X,B+M) $ over $ Z $, pick a (sufficiently high) common resolution of indeterminacies $ (p,q,r) \colon W \to X \times X' \times X'' $ of the maps $ X \dashrightarrow X' $ and $ X \dashrightarrow X'' $, 
		\begin{center}
			\begin{tikzcd}
				&&& X' \\
				W \arrow[rr, "p"] \arrow[urrr, bend left = 20, "q"] \arrow[drrr, bend right = 20, "r" swap] && X \arrow[ur, dashed] \arrow[dr, dashed] \\
				&&& X''
			\end{tikzcd}
		\end{center}
		and set 
		\[ E' := p^* (K_X+B+M) - q^* (K_{X'} + B' + M') \]
		and
		\[ E'' := p^* (K_X+B+M) - r^* (K_{X''} + B'' + M'') . \]
		By \cite[Remark 2.6]{LT22b} (see also \cite[Remark 2.6]{Bir12a}), $ E' $ is effective and $ q $-exceptional, while $ E'' $ is effective and $ r $-exceptional. Since $ q_* (E'' - E') \geq 0 $ and $ -(E'' - E') $ is $ q $-nef, by the Negativity lemma we deduce that $ E'' - E' \geq 0 $. Similarly, we have $ E' - E'' \geq 0 $. Therefore, $ E' = E'' $, which yields
		\[ q^* (K_{X'} + B' + M') = r^* (K_{X''} + B'' + M'') . \]
		Since $ K_{X'} + B' + M' $ is semi-ample over $ Z $ by assumption, we infer that $ K_{X''} + B'' + M'' $ is also semi-ample over $ Z $; in other words, $ (X'',B'' + M'') $ is a good minimal model in the sense of Birkar-Shokurov of $ (X,B+M) $ over $ Z $.\footnote{Note that this part of the proof is also valid if we assume instead that both $ (X',B'+M') $ and $ (X'',B''+M'') $ are only weak canonical models in the sense of Birkar-Shokurov over $ (X,B+M) $ over $ Z $; see also \cite[Lemma 3.5]{HL23}.}
		
		Finally, fix a minimal model $ (Y,B_Y+M_Y) $ of $ (X,B+M) $ over $ Z $ and consider a dlt blow-up
		$ h \colon (T,B_T+M_T) \to (Y,B_Y+M_Y) $
		of $ (Y,B_Y+M_Y) $. Then $ (T,B_T+M_T) $ is a minimal model in the sense of Birkar-Shokurov of $ (X,B+M) $ over $ Z $, which is actually a good minimal model in the sense of Birkar-Shokurov of $ (X,B+M) $ over $ Z $ by the previous paragraph; in particular, $ K_T + B_T + M_T $ is semi-ample over $ Z $. Since $ K_T + B_T + M_T \sim_\R h^*(K_Y + B_Y + M_Y) $, we infer that $ K_Y + B_Y + M_Y $ is also semi-ample over $ Z $; in other words, $ (Y,B_Y+M_Y) $ is a good minimal model of $ (X,B+M) $ over $ Z $.
	\end{proof}
	
	Next, we recall the definition of a canonical model of a g-pair and we briefly comment on this definition afterwards.
	
	\begin{dfn}
		Consider a diagram
		\begin{center}
			\begin{tikzcd}
				(X,B+M) \arrow[rr, dashed, "\varphi"] \arrow[dr] && (X',B'+M') \arrow[dl] \\
				& Z
			\end{tikzcd}
		\end{center}
		as in Definition \ref{dfn:MM_MFS}.
		If
		\begin{itemize}
			\item $ \varphi $ is a birational contraction,
			
			\item $ B' = \varphi_* B $,
			
			\item $K_{X'}+B'+M'$ is ample over $Z$, and 
			
			\item for any $\varphi $-exceptional prime divisor $ F $ on $ X $ we have 
			\[ a(F,X,B+M) \leq a(F,X',B'+M') , \]
		\end{itemize}
		then the map $ \varphi $ is called a \emph{canonical model of $ (X,B+M) $ over $ Z $}.
	\end{dfn}
	
	Note that the g-pair $ (X',B'+M')$ is lc by \cite[Lemma 2.8(i)]{LMT23}, and it is unique up to isomorphism by \cite[Lemma 2.12]{LMT23}.
	
	\medskip
	
	Finally, as promised at the end of the introduction, we show that klt generalized pairs of general type have good minimal models using the main result of \cite{BCHM10}, cf.\ \cite[Lemma 4.4(2)]{BZ16}.
	
	\begin{thm}\label{thm:EMM_general_type}
		Let $(X/Z,B+M)$ be a klt g-pair. The following statements hold:
		\begin{enumerate}[\normalfont (i)]
			\item If $K_X+B+M$ is pseudo-effective over $Z$ and if $B$ is big over $Z$, then $(X,B+M)$ has a good minimal model over $Z$.
			
			\item If $K_X+B+M$ is big over $Z$, then $(X,B+M)$ has a good minimal model over $Z$ as well as a canonical model over $Z$.
		\end{enumerate}
	\end{thm}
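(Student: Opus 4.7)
The strategy in both parts is to reduce to the main result of \cite{BCHM10} by absorbing the nef part of the g-pair into an effective, $\R$-linearly equivalent boundary on $X$; the hypothesis that $B$ (in part (i)) or $K_X+B+M$ (in part (ii)) is big over $Z$ is precisely what enables this reduction, and the argument for part (i) follows \cite[Lemma 4.4(2)]{BZ16} closely.

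For part (i), since $B$ is big over $Z$, Kodaira's lemma gives a decomposition $B\sim_{\R,Z}A+E$ with $A$ ample over $Z$ and $E\geq 0$. On a sufficiently high log resolution $f\colon W\to X$ realizing the NQC structure of $M_W$, the divisor $M_W+\delta f^*A$ is nef and big for any $\delta>0$. Applying Kodaira's lemma on $W$, together with a Bertini-type general choice of effective representative of an ample component, will produce a decomposition $M+\delta A\sim_{\R,Z}f_*H_W'+f_*R_W$ with $H_W'$ a general effective member of an ample $\R$-linear system on $W$ and $R_W\geq 0$, so that setting
\[ \Delta := (1-\delta)B+\delta E+f_*H_W'+f_*R_W \]
yields an effective $\R$-divisor with $\Delta\sim_{\R,Z}B+M$ such that $(X,\Delta)$ is klt and $\Delta$ is big over $Z$, provided $\delta$ is small and $H_W'$ is sufficiently general. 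Since $K_X+\Delta\sim_{\R,Z}K_X+B+M$ is pseudo-effective over $Z$, \cite{BCHM10} will yield a good minimal model of the klt pair $(X,\Delta)$ over $Z$. A routine comparison of discrepancies, combined with Lemma \ref{lem:GMM}, will then show that this is simultaneously a good minimal model of the g-pair $(X,B+M)$ over $Z$.

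For part (ii), I will reduce to part (i) by a further perturbation. Writing $K_X+B+M\sim_{\R,Z}A+E$ with $A$ ample over $Z$ and $E\geq 0$, for small $\epsilon>0$ I will pick by Bertini a general effective $\R$-divisor $A_\epsilon\sim_{\R,Z}\epsilon A$ such that $\bigl(X,(B+A_\epsilon+\epsilon E)+M\bigr)$ remains klt, and set $B':=B+A_\epsilon+\epsilon E$. A direct computation gives $K_X+B'+M\sim_{\R,Z}(1+\epsilon)(K_X+B+M)$, and $B'$ is big over $Z$ since it contains the ample component $\epsilon A$ up to $\R$-linear equivalence; hence part (i) applied to $(X,B'+M)$ produces a good minimal model $\varphi\colon X\dashrightarrow X'$. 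Because $K_X+B'+M$ and $K_X+B+M$ differ only by a positive scalar modulo $\R$-linear equivalence over $Z$, a computation on a common resolution via the Negativity lemma will force the corresponding $q$-exceptional divisors to be proportional, so that $\varphi$ is also a good minimal model of $(X,B+M)$ over $Z$; the canonical model of $(X,B+M)$ over $Z$ is then the ample model of the semi-ample divisor $K_{X'}+\varphi_*B+M'$. The main obstacle throughout is the construction of $\Delta$ in part (i), which requires careful use of the NQC decomposition and Bertini's theorem to ensure klt singularities survive the pushforward to $X$; once that is in place, the remaining steps are routine.
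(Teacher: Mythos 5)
Your proposal is correct and follows essentially the same route as the paper: for (ii) the paper also writes $K_X+B+M\sim_{\R,Z}A+E$, passes to the klt g-pair $(X,(B+\varepsilon A+\varepsilon E)+M)$ for $0<\varepsilon\ll 1$, absorbs the nef part together with the ample piece $\varepsilon A$ into an effective klt boundary $\Delta$ with $K_X+\Delta\sim_{\R,Z}(1+\varepsilon)(K_X+B+M)$, and concludes by \cite{BCHM10}. The only differences are organizational: the paper simply cites \cite[Lemma 4.2(ii)]{HanLiu20} for part (i) and \cite[Lemma 3.4]{LX22b} for the absorption step that you reconstruct by hand via Kodaira's lemma and Bertini, and it proves (ii) directly rather than reducing it to (i).
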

	
	\begin{proof}~
		
		\medskip
		
		\noindent (i) This is \cite[Lemma 4.2(ii)]{HanLiu20}.
		
		\medskip
		
		\noindent (ii)
		Since $K_X+B+M$ is big over $Z$, there exist an effective $\R$-Cartier $\R$-divisor $A$ on $X$ which is ample over $Z$ and an effective $\R$-Cartier $\R$-divisor $E$ on $X$ such that $A+E\sim_{\R,Z}K_X+B+M$. Since $(X,B+M)$ is klt, for any $ 0 < \varepsilon \ll 1 $ the g-pair $(X,B+\varepsilon A+\varepsilon E+M)$ with boundary part $B+\varepsilon A+\varepsilon E $ is also klt according to \cite[Remark 4.2(2)]{BZ16}. If we regard instead $B+\varepsilon E$ as the boundary part and $\varepsilon A + M$ as the nef part of the aforementioned g-pair, then by \cite[Lemma 3.4]{LX23b} there exists an effective $\R$-divisor $\Delta$ on $X$ such that $(X,\Delta)$ is a klt pair and 
		\[ K_X+\Delta \sim_{\R,Z} K_X+B+\varepsilon E+\varepsilon A+M \sim_{\R,Z} (1+\varepsilon) (K_X+B+M) . \]
		According to \cite{BCHM10}, the pair $(X,\Delta)$ has a good minimal model over $Z$ as well as a canonical model over $Z$, so the same holds for the g-pair $(X/Z,B+M)$, as asserted.
	\end{proof}
	
	\begin{rem}
		Theorem \ref{thm:EMM_general_type}(ii) can be proved alternatively as follows. Consider a \emph{small $\Q$-factorial modification} of $(X,B+M)$, namely, a $\Q$-factorial klt g-pair $(X',B'+M')$ with data $ W \overset{g}{\to} X' \to Z $ and $ M_W $, together with a small projective birational morphism $ h \colon X' \to X $ such that $K_{X'} + B' + M' \sim_\R h^* (K_X + B + M)$ and $ B' = h_*^{-1} B $; see \cite[Lemma 2.24(ii)]{Tsak21}. Since $ K_{X'} + B' +M' $ is big over $Z$, by \cite[Lemma 4.4(2)]{BZ16} we conclude that $(X',B'+M')$ has a good minimal model $(X'',B''+M'')$ over $Z$, and since $h$ is small, we can readily check now that $(X'',B''+M'')$ is also a good minimal model of $(X,B+M)$ over $Z$.
	\end{rem}
	
	\begin{cor}
		Let $(X,B+M)$ be a klt $\Q$-g-pair with data $ W \to X \overset{\pi}{\longrightarrow} Z $ and $ M_W $ such that either $K_X+B+M$ is pseudo-effective over $Z$ and $B$ is big over $Z$ or $K_X+B+M$ is big over $Z$. Then 
		\[ R(X/Z,K_X+B+M) := \bigoplus_{m \geq 0} \pi_* \OO_X \big( m(K_X+B+M) \big) \]
		is a finitely generated $ \OO_Z $-algebra.
	\end{cor}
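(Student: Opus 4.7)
The plan is to reduce the statement to the semi-ample case via the existence of a good minimal model provided by Theorem \ref{thm:EMM_general_type}, and then apply the standard fact that relative section rings of semi-ample $\Q$-divisors are finitely generated.

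First, I would invoke Theorem \ref{thm:EMM_general_type} to produce a good minimal model $\varphi \colon (X,B+M) \dashrightarrow (X',B'+M')$ over $Z$, so that $K_{X'}+B'+M'$ is semi-ample over $Z$ and $\varphi$ is a birational contraction. Since we are in the $\Q$-coefficient setting, $K_{X'}+B'+M'$ is a semi-ample $\Q$-Cartier divisor over $Z$; thus some positive multiple is base-point-free over $Z$ and defines a fibration to a projective $Z$-scheme, namely $X' \to \operatorname{Proj}_Z R(X'/Z,K_{X'}+B'+M')$, and it is a standard fact that this algebra is a finitely generated $\OO_Z$-algebra.

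Second, I would show the $\OO_Z$-algebra isomorphism
\[
R(X/Z,K_X+B+M) \cong R(X'/Z,K_{X'}+B'+M').
\]
For this, choose a sufficiently high common resolution $(p,q)\colon W \to X \times X'$ of $\varphi$ and write
\[
p^*(K_X+B+M) = q^*(K_{X'}+B'+M') + E,
\]
where $E$ is effective and $q$-exceptional by the argument recalled in the proof of Lemma \ref{lem:GMM} (cf.\ \cite[Remark 2.6]{LT22b}). Pushing forward by $q$ then yields, for every $m \geq 0$ such that $m(K_X+B+M)$ is Cartier, a natural identification of global sections over any open of $Z$, because tensoring with $\OO_W(mE)$ over the $q$-exceptional effective divisor $mE$ does not change $q_*$. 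Putting these isomorphisms together and noting that $p_*\OO_W = \OO_X$ gives the claimed isomorphism of graded $\OO_Z$-algebras.

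The only step that needs a little care is the identification of the two section rings; the rest is immediate from Theorem \ref{thm:EMM_general_type} and the semi-ampleness of $K_{X'}+B'+M'$ over $Z$. I do not anticipate a genuine obstacle, as the effective $q$-exceptional comparison is the standard tool used to show birational invariance of relative section rings between minimal models.
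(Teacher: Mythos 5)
Your proposal is correct and follows the same route as the paper, which simply states that the corollary ``follows immediately from Theorem \ref{thm:EMM_general_type}''; you have merely spelled out the standard implicit steps (good minimal model $\Rightarrow$ semi-ample over $Z$ $\Rightarrow$ finitely generated section ring, together with the birational invariance of the relative section ring via the effective $q$-exceptional comparison divisor). No discrepancy with the paper's argument.
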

	
	\begin{proof}
		Follows immediately from Theorem \ref{thm:EMM_general_type}.
	\end{proof}

	\subsection{The MMP for generalized pairs}
	\label{subsection:MMP_g-pairs}
	
	In this paper we use the foundations of the MMP for (not necessarily $ \Q $-factorial) NQC lc g-pairs, which were recently established in the papers \cite{HL23,LX23b,Xie22}. More precisely, by \cite[Theorem 1.1(1)-(4)]{HL23} and by \cite[Theorem 1.5]{Xie22} we now have a \emph{Cone theorem} and a \emph{Contraction theorem} for (not necessarily $ \Q $-factorial) NQC lc g-pairs, respectively, while \cite[Theorem 1.2]{LX23b} proved the \emph{existence of flips} in this setting. Therefore, given a (not necessarily $ \Q $-factorial) NQC lc g-pair $ (X/Z,B+M) $, we may run a $ (K_X + B + M) $-MMP over $Z$.
	
	\begin{rem}\label{rem:non-Q-fact_MMP}
		Let $ (X/Z,B+M) $ be an NQC lc g-pair and assume that $ K_X+B+M $ is not nef over $ Z $. By \cite[Theorem 1.1(1)]{HL23} there exists a $ (K_X+B+M) $-negative extremal ray $ R \in \NEb(X/Z) $ and by \cite[Theorem 1.5]{Xie22} we may consider the contraction $ g \colon X \to Y $ of $ R $. If $ \dim Y < \dim X $, then $ g $ is a \emph{Fano contraction}, that is, it determines a Mori fiber space structure on $(X/Z,B+M)$. If, on the other hand, $ \dim Y = \dim X $, then $ g $ is a birational contraction, and either $ \codim_X \Exc(g) = 1 $, in which case $g$ may contract more than one divisor, or $ \codim_X \Exc(g) \geq 2 $, in which case $g$ is small.
		
		In any of those two cases ($ \dim Y < \dim X $ or $ \dim Y = \dim X $), the numerical equivalence over $Y$ coincides with the $\R$-linear equivalence over $Y$. Indeed, let $D$ be an $\R$-Cartier divisor on $X$ such that $D \equiv_Y 0 $, or equivalently, $D \cdot R = 0$. Then $ D = \sum d_j D_j $, where each $d_j \in \R$ and each $D_j$ is a Cartier $\Z$-divisor on $X$ which is numerically trivial over $Y$; cf.\ \cite[Example 1.3.10]{Laz04}. In other words, $D_j \cdot R = 0$, and by (the third bullet of) \cite[Theorem 1.5]{Xie22} we deduce that $D_j \sim g^* G_j$ for some Cartier $\Z$-divisor $G_j$ on $Y$. Hence, $ D \sim_\R g^* \big( \sum d_j G_j \big)$, that is, $ D \sim_{\R,Y} 0 $, which proves the claim.
		
		Assume from now on that $g$ is a birational contraction. Then, as in the proof of \cite[Theorem 1.6]{Xie22}, we obtain a diagram
		\begin{center}
			\begin{tikzcd}[column sep = 3em, row sep = 3em]
				(X,B+M) \arrow[dr, "g" swap] \arrow[rr, dashed, "\varphi"] \arrow[ddr, bend right = 25pt] && (X',B'+M') \arrow[dl, "h"] \arrow[ddl, bend left = 25pt] \\
				& Y \arrow[d] \\
				& Z
			\end{tikzcd}
		\end{center}
		where the NQC lc g-pair $ (X'/Z,B'+M') $ is the canonical model of $ (X,B+M) $ over $ Y $; see also \cite[Section 4.9]{Fuj17book}. We emphasize that both $\varphi$ and $h$ are birational contractions by construction; see, for example, the proof of \cite[Theorem 1.2]{LX23b} for the details. 
		
		We also make the following observations, which will be useful later in the paper.
		\begin{enumerate}[(1)]            
			\item If $g$ contracts a prime divisor $F$ on $X$, then $\varphi$ contracts $F$ as well, since it cannot be an isomorphism at the generic point of $F$ according to \cite[Lemma 2.8(iii)(a)]{LMT23}.
			
			\item The birational contraction $h$ is small, regardless of whether $g$ is small or not. Indeed, arguing by contradiction and using the fact that $\varphi$ is a birational contraction, this follows readily from \cite[Lemma 2.8(iii)(b)]{LMT23}.
			
			\item It follows from Remark \ref{rem:WCM_EMM} that $ (X,B+M) $ has a minimal model in the sense of Birkar-Shokurov over $ Y $.
		\end{enumerate}
	\end{rem}
	
	We establish now some basic properties of the MMP for (not necessarily $\Q$-factorial) NQC lc g-pairs.
	
	\begin{lem}\label{lem:properties_of_MMP}
		Let $(X/Z,B+M)$ be an NQC lc g-pair. Consider a step of a $(K_X+B+M)$-MMP over $ Z $:
		\begin{center}
			\begin{tikzcd}
				(X,B+M) \arrow[rr, dashed, "\varphi"] \arrow[dr, "g" swap] && (X',B'+M') \arrow[dl, "h"] \\
				& Y
			\end{tikzcd}
		\end{center}
		Denote by $\K$ the field $\Q$ of rational numbers or the field $\R$ of real numbers. The following statements hold:
		\begin{enumerate}[\normalfont (i)]
			\item If $D$ is a $\K$-Cartier divisor on $X$, then $\varphi_*D$ is a $\K$-Cartier divisor on $X'$.
			
			\item The birational contraction $\varphi$ induces a linear map 
			\[ N^1(X/Z)_\K \to N^1(X'/Z)_\K , \ [D]_Z \mapsto [\varphi_* D]_Z . \]
			If, moreover, $\varphi$ is small, then the induced linear map is injective.
		\end{enumerate}
	\end{lem}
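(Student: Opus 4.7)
The plan is to exploit the fact that $g$ contracts a single extremal ray, so $\rho(X/Y) = 1$, together with the identification, recalled in Remark~\ref{rem:non-Q-fact_MMP}, of numerical equivalence over $Y$ with $\R$-linear equivalence over $Y$. The analogous statement over $\Q$ follows at once: if $D'$ is a $\Q$-Cartier divisor on $X$ with $D' \equiv_Y 0$, then some positive integer multiple $mD'$ is a Cartier $\Z$-divisor numerically trivial over $Y$, hence by the Contraction theorem $mD' \sim g^* G$ for a Cartier $\Z$-divisor $G$ on $Y$, giving $D' \sim_\Q g^*(G/m)$ with $G/m$ $\Q$-Cartier on $Y$.

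For part (i), let $R$ denote the extremal ray over $Z$ contracted by $g$ and let $C$ be a curve generating it. Given a $\K$-Cartier divisor $D$ on $X$, I would pick a $\K$-Cartier divisor $F$ on $X$ with $F \cdot C \neq 0$ for which $\varphi_* F$ is already known to be $\K$-Cartier on $X'$. Setting $\lambda := (D \cdot C)/(F \cdot C) \in \K$, the divisor $D - \lambda F$ is $\K$-Cartier with $(D - \lambda F) \cdot R = 0$, so by the preceding paragraph $D - \lambda F \sim_\K g^* G$ for some $\K$-Cartier divisor $G$ on $Y$. Fixing a common resolution of indeterminacy $p \colon W \to X$, $q \colon W \to X'$ of $\varphi$ and pushing forward (using $g \circ p = h \circ q$) gives
\[
	\varphi_* D \sim_\K \lambda \varphi_* F + h^* G,
\]
which is $\K$-Cartier as long as $\varphi_* F$ is. For $\K = \R$, take $F = -(K_X+B+M)$, so that $\varphi_* F = -(K_{X'}+B'+M')$ is $\R$-Cartier by hypothesis. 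For $\K = \Q$, take $F$ to be a $g$-ample Cartier divisor on $X$ (existing by projectivity of $g$), the $\Q$-Cartierness of $\varphi_* F$ on $X'$ being deduced from the description of $X'$ as the canonical model of $(X,B+M)$ over $Y$ given in Remark~\ref{rem:non-Q-fact_MMP}, together with the foundational results \cite[Theorem 1.5]{Xie22} and \cite[Theorem 1.2]{LX22b}. I anticipate the $\K = \Q$ case to be the main technical obstacle, since the convenient choice $F = -(K_X+B+M)$ is unavailable here (the resulting $\lambda$ would typically be irrational), so one must use a Cartier $F$ whose pushforward's $\Q$-Cartier structure has to be verified via the flip/canonical-model construction.

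For (ii), linearity of the assignment $[D]_Z \mapsto [\varphi_* D]_Z$ is immediate from the linearity of $\varphi_*$ on divisors. For well-definedness on numerical classes, let $D$ be $\K$-Cartier on $X$ with $D \equiv_Z 0$; then also $D \equiv_Y 0$ since fibers of $X \to Y$ lie inside fibers of $X \to Z$, and hence $D \sim_\K g^* G$ for some $\K$-Cartier $G$ on $Y$ by the first paragraph. To check that $G \equiv_Z 0$ on $Y$, take any curve $\gamma \subset Y$ in a fiber of $Y \to Z$ and use surjectivity and projectivity of $g$ to find a curve $C \subset X$ in a fiber of $X \to Z$ with $g_* C = n \gamma$ for some $n \geq 1$; by the projection formula, $n G \cdot \gamma = g^* G \cdot C = D \cdot C = 0$. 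Pushing forward through the common resolution yields $\varphi_* D \sim_\K h^* G$ on $X'$, which is numerically trivial over $Z$ since $G$ is.

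Finally, for injectivity when $\varphi$ is small: as $h$ is also small (Remark~\ref{rem:non-Q-fact_MMP}) and $\rho(X'/Y) = 1$, the symmetric argument with $h \colon X' \to Y$ playing the role of $g \colon X \to Y$ shows that $(\varphi^{-1})_*$ likewise sends $\K$-Cartier divisors on $X'$ to $\K$-Cartier divisors on $X$ and descends to a linear map $N^1(X'/Z)_\K \to N^1(X/Z)_\K$. Since $\varphi$ is an isomorphism in codimension $1$, the composition $(\varphi^{-1})_* \circ \varphi_*$ acts as the identity on Weil divisors and hence on $N^1(X/Z)_\K$, proving the injectivity of $\varphi_*$.
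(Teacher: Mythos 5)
Your treatment of part (i) for $\K=\R$ and of the well-definedness statement in part (ii) is sound and essentially parallel to the paper's. However, the $\K=\Q$ case of (i) --- which you correctly single out as the main obstacle --- is not actually proved: you assert that for a $g$-ample Cartier divisor $F$ the strict transform $\varphi_*F$ is $\Q$-Cartier, to be ``deduced from the description of $X'$ as the canonical model over $Y$'', but that deduction is exactly as hard as the statement being proved. Since $K_X+B+M$ has real coefficients, the only relation that $\rho(X/Y)=1$ gives you is $F\equiv_Y-\alpha(K_X+B+M)$ with $\alpha$ typically irrational, which yields only the $\R$-Cartierness of $\varphi_*F$. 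The missing idea, which is the heart of the paper's argument, is to first replace the g-pair by a $\Q$-g-pair for which $\varphi$ is still an MMP step: by \cite[Theorem 2.28]{HaconLiu21} one writes $K_X+B+M=\sum r_j(K_X+B_j+M_j)$ with each $(X,B_j+M_j)$ an lc $\Q$-g-pair, and a short computation using $\rho(X/Y)=1$ shows that some $-(K_X+B_1+M_1)$ is ample over $Y$ and that $X'$ is the associated $\proj_Y$ of its log canonical algebra; hence $K_{X'}+B_1'+M_1'$ is $\Q$-Cartier by construction and serves as the rational reference divisor (one then takes $\mu\in\Q$ with $\big(D+\mu(K_X+B_1+M_1)\big)\cdot R=0$ and applies the third bullet of \cite[Theorem 1.5]{Xie22}). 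Without this reduction your choice of $F$ cannot be completed.

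The injectivity argument in (ii) also has a gap. To conclude from $(\varphi^{-1})_*\circ\varphi_*=\mathrm{id}$ on Weil divisors, you need $(\varphi^{-1})_*$ to descend to a linear map $N^1(X'/Z)_\K\to N^1(X/Z)_\K$, i.e.\ that $D'\equiv_Z0$ implies $(\varphi^{-1})_*D'\equiv_Z0$; applied to $D'=\varphi_*D$ this is precisely the injectivity you are trying to prove, so the argument is circular unless the descent along $h$ is established independently. But the ``symmetric argument'' is not available: the descent of numerically trivial Cartier divisors along $g$ comes from the Contraction theorem because $g$ is the contraction of a $(K_X+B+M)$-negative extremal ray, whereas $h\colon X'\to Y$ is a small birational morphism along which $K_{X'}+B'+M'$ is \emph{ample}, and no analogous descent statement is known for it (nor is $\rho(X'/Y)=1$ established at this point). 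The paper instead argues directly with the Negativity lemma on a common resolution $(p,q)\colon W\to X\times X'$: if $D':=\varphi_*D\equiv_Z0$, then $\pm(p^*D-q^*D')$ are both nef over $X$ and, because $\varphi$ is small, $p$-exceptional, whence $p^*D=q^*D'$ and therefore $D\equiv_Z0$. I would recommend adopting that argument.
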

	
	\begin{proof}~
		
		\medskip
		
		\noindent (i) It suffices to treat the case $ \K = \Q $, so let $D$ be a $\Q$-Cartier divisor on $X$. We first claim that the map $\varphi$ is also a step of a $(K_X+\hat{B}+\hat{M})$-MMP over $Z$ for some lc $\Q$-g-pair $(X,\hat{B}+\hat{M})$. To prove this assertion, by \cite[Theorem 1.4]{Chen23} we may find positive real numbers $r_1,\dots,r_\ell$ and $\Q$-divisors $B_1,\dots, B_\ell$ and $M_1,\dots, M_\ell$ on $X$ such that 
		$$\sum_{j=1}^\ell r_j=1, \quad B = \sum_{j=1}^\ell r_j B_j,\quad M = \sum_{j=1}^\ell r_j M_j, $$
		each $\Q$-g-pair $(X,B_j+M_j)$ is lc, each divisor $ K_X + B_j + M_j $ is $ \Q $-Cartier, and we have
		\begin{equation}\label{eq:1_properties_of_MMP}
			K_X + B + M = \sum_{j=1}^\ell r_j \big( K_X + B_j + M_j \big) . 
		\end{equation}
		Since $\rho(X/Y) = 1$, for each $1 \leq j \leq \ell$ there exists $\alpha_j \in \R$ such that
		\begin{equation}\label{eq:2_properties_of_MMP}
			K_X+B+M \equiv_Y \alpha_j (K_X+B_j+M_j) .
		\end{equation}
		Since $-(K_X+B+M)$ is ample over $Y$, it holds that $\alpha_j \neq 0 $ for every $1 \leq j \leq \ell$. Moreover, at least one of the $\alpha_j$, say $\alpha_1$, must be positive, since otherwise each divisor $K_X+B_j+M_j$ would be ample over $Y$ by \eqref{eq:2_properties_of_MMP}, and hence $ K_X+B+M $ would also be ample over $Y$ by \eqref{eq:1_properties_of_MMP}, which is impossible. Therefore, $(X/Z,B_1+M_1)$ is an lc $\Q$-g-pair such that $-(K_X+B_1+M_1)$ is ample over $Y$. Then, by definition of an MMP step (see also the proof of \cite[Theorem 1.6]{Xie22} and \cite[Section 4.9]{Fuj17book}), we have
		\[ X' \simeq \proj_Y \bigg( \bigoplus_{m \geq 0} g_*  \OO_X \big( m (K_X +B_1 + M_1) \big) \bigg) . \]
		Thus, $\varphi$ is a step of a $(K_X+B_1+M_1)$-MMP over $Z$.
		
		Consequently, we may assume that $(X,B+M)$ itself is a $\Q$-g-pair. We denote by $R$ the $(K_X+B+M)$-negative extremal ray contracted by $g = \operatorname{cont}_R$. Since $\rho(X/Y) = 1$, there exists $\mu \in \Q$ such that 
		$$ \big( D+\mu(K_X+B+M) \big) \cdot R = 0. $$
		Take a positive integer $m$ such that $ m \big( D+\mu(K_X+B+M) \big) $ is Cartier. By (the third bullet of) \cite[Theorem 1.5]{Xie22} there exists a Cartier $\Z$-divisor $G$ on $Y$ such that 
		$$ m \big( D+\mu(K_X+B+M) \big) \sim g^* G . $$
		Therefore, 
		$$ m \big( \varphi_*D + \mu(K_{X'}+B'+M') \big) \sim h^* G $$ 
		is a Cartier $\Z$-divisor on $X'$; see also Remark \ref{rem:non-Q-fact_MMP}(2). Since $K_{X'}+B'+M'$ is itself $\Q$-Cartier by construction, we conclude that $\varphi_* D$ is a $\Q$-Cartier divisor on $X'$, which completes the proof of (i).
		
		\medskip
		
		\noindent (ii) The existence of the linear map 
		\[ N^1(X/Z)_\K \to N^1(X'/Z)_\K , \ [D]_Z \mapsto [\varphi_* D]_Z \]
		follows immediately from (i).
		
		Assume now that $\varphi$ is small and also that $D' := \varphi_* D \equiv_Z 0$. We will show that $D \equiv_Z 0 $. To this end, consider a resolution of indeterminacies $ (p,q) \colon W \to X \times X' $ of the map $\varphi$.
		\begin{center}
			\begin{tikzcd}[column sep = 3em, row sep = 2.5em]
				& W \arrow[dl, "p" swap] \arrow[dr, "q"] \\
				X \arrow[rr, dashed, "\varphi"] \arrow[dr, "g" swap] && X' \arrow[dl, "h"] \\
				& Y
			\end{tikzcd}
		\end{center}
		Since $D'$ is nef over $Y$, we can readily check that $-(p^*D-q^*D')$ is nef over $X$. Since $\varphi$ is small, the $\K$-Cartier $\K$-divisor $p^*D-q^*D'$ is $p$-exceptional, and it follows now from the Negativity lemma that $p^*D-q^*D'\geq 0$. Since $-D'$ is also nef over $Y$, we have $q^*D'-p^*D\ge0$ as well, and hence $p^*D=q^*D'$. It follows that $D \equiv_Z 0 $, which completes the proof of (ii).
	\end{proof}
	
	Even though we may run a $ (K_X + B + M) $-MMP over $Z$ for any given NQC lc g-pair $ (X/Z,B+M) $, its termination is not known in general. However, \cite{CT23} establishes the termination of flips (and hence of any MMP) for all NQC lc g-pairs of dimension $3$ as well as for NQC lc g-pairs of dimension $4$ whose canonical class is pseudo-effective; see also \cite{HM20,HanLiu22}.
	
	As the next result demonstrates, we may also run MMPs with scaling in this very general setting. Their termination is also unsettled in general, but there are already several results in the literature regarding this termination problem. We refer to \cite{BZ16,LT22a,LT22b,LX23a} and the relevant references therein for more information. See also Theorem \ref{thm:LT22b_4.1} below, which constitutes an exact analogue of \cite[Theorem 4.1(iii)]{Bir12a} in the setting of g-pairs, as well as Sections \ref{section:applications_I} and \ref{section:applications_II} for further developments.
	
	\begin{lem}\label{lem:MMP_with_scaling}
		Let $ (X/Z,B+M) $ be an NQC lc g-pair. Let $ P $ be the pushforward to $ X $ of an NQC divisor (over $ Z $) on a birational model of $ X $ and let $ N $ be an effective $ \R $-divisor on $ X $ such that $ N+P $ is $ \R $-Cartier. Assume that the NQC g-pair $ \big( X, (B+N) + (M+P) \big) $ is lc and that the divisor $ K_X + B + N + M + P $ is nef over $ Z $. Then we may run a $ (K_X+B+M) $-MMP over $ Z $ with scaling of $ N+P $. 
		
		In particular, we may run a $ (K_X + B + M) $-MMP over $Z$ with scaling of an ample divisor.
	\end{lem}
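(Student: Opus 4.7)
The plan is to construct the desired MMP with scaling step by step, relying on the Cone theorem \cite[Theorem 5.1]{HaconLiu21}, the Contraction theorem \cite[Theorem 1.5]{Xie22}, and the existence of flips \cite[Theorem 1.2]{LX22b} for (not necessarily $\Q$-factorial) NQC lc g-pairs. The key observation underpinning the whole argument is that for every $ t \in [0,1] $ the g-pair $ \big( X, (B+tN) + (M+tP) \big) $ is NQC lc: its nef part $M+tP$ is NQC because $M$ and $P$ are (choose a common sufficiently high birational model on which both divisors lift to NQC divisors), and the lc property is preserved since the boundary only shrinks from $B+N$ and $tP$ remains the pushforward of an NQC divisor.

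If $K_X+B+M$ is already nef over $Z$, set $\lambda=0$ and stop. Otherwise, let
\[ \lambda := \inf \big\{ t \geq 0 \ \big| \ K_X+B+M+t(N+P) \text{ is nef over } Z \big\} . \]
Since $K_X+B+N+M+P$ is nef over $Z$, we have $\lambda \in (0,1]$. I would show that this infimum is attained at some $(K_X+B+M)$-negative extremal ray $R$ of $\NEb(X/Z)$ by a standard argument: for every such ray $R_i$, the number $t_i := -\frac{(K_X+B+M)\cdot R_i}{(N+P)\cdot R_i} \in (0,1]$ is well defined (observe $(N+P) \cdot R_i > 0$ because $(K_X+B+N+M+P) \cdot R_i \geq 0$), and $\lambda = \sup_i t_i$. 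The Cone theorem applied to the NQC lc g-pair $(X, (B + tN) + (M + tP))$ for any $0 < t < \lambda$ forces the rays with $t_i \geq \lambda$ to form a finite set, hence the supremum is attained.

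Having chosen such an extremal ray $R$, apply the Contraction theorem to obtain $g \colon X \to Y$; if $\dim Y<\dim X$ we have reached a Mori fiber space structure and stop; if $g$ is divisorial set $X':=Y$; and if $g$ is small take the flip provided by \cite[Theorem 1.2]{LX22b}, producing $\varphi \colon X \dashrightarrow X'$. In either remaining case $(X', B'+M')$ is again an NQC lc g-pair, and I would verify that the pushforwards $N' := \varphi_* N$ and $P' := \varphi_* P$ again satisfy the hypotheses of the lemma with $K_{X'}+B'+M'+\lambda(N'+P')$ nef over $Z$. The $\R$-Cartier property of $N'+P'$ follows from Lemma \ref{lem:properties_of_MMP}(i) applied to $N+P$; the NQC lc condition for $\big( X',(B'+N')+(M'+P') \big) $ transfers by the usual MMP arguments; and the nefness of the pushforward of $K_X+B+M+\lambda(N+P)$, which is trivial on $R$, is standard for divisorial contractions and flips. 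Iterating yields the desired MMP over $Z$ with scaling of $N+P$, along with a non-increasing sequence of scaling coefficients $\lambda_1 \geq \lambda_2 \geq \cdots$.

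The main obstacle is the bookkeeping of the NQC nef part: one must verify that, if $P$ is the pushforward of an NQC divisor on a fixed high birational model $W \to X$, then after a divisorial contraction or a flip the new $P'$ is still the pushforward of an NQC divisor on a model that dominates both $X$ and $X'$ (it suffices to take any model $W' \to W$ that also dominates $X'$), and similarly that each perturbed pair $\big( X_i, (B_i+tN_i)+(M_i+tP_i) \big)$ remains NQC lc throughout the process. For the \enquote{In particular} statement, choose an ample Cartier divisor $H$ on $X$ and a sufficiently large real number $s$ such that $K_X+B+M+sH$ is nef over $Z$, and apply the first part with $N=0$ and $P=sH$, viewed as the pushforward of the NQC Cartier divisor $sH$ on $X$ itself; then $N+P=sH$ is ample, as required.
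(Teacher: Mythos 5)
Your overall strategy coincides with the paper's: the paper constructs the MMP with scaling by combining the Cone and Contraction theorems with the observation in the second paragraph of Remark \ref{rem:non-Q-fact_MMP} (numerical triviality over $Y$ equals $\R$-linear triviality over $Y$, which is what makes the pushforward of $K_X+B+M+\lambda(N+P)$ nef after a flip in the non-$\Q$-factorial setting), and it outsources the one genuinely delicate step --- that the nef threshold is computed by an extremal ray --- to \cite[Corollary 5.22]{HaconLiu21}, noting only that the $\Q$-factoriality hypothesis there can be replaced by the $\R$-Cartier assumption on $N+P$. Your bookkeeping of the NQC/lc conditions for the perturbed g-pairs, the use of Lemma \ref{lem:properties_of_MMP}(i) for the $\R$-Cartier property of $N'+P'$, and the reduction of the \enquote{in particular} statement to $N=0$, $P$ ample are all fine and match the paper.

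The gap is in your direct justification of that delicate step. First, the Cone theorem for $\big(X,(B+tN)+(M+tP)\big)$ does \emph{not} by itself give finiteness of the set of $\big(K_X+B+M+t(N+P)\big)$-negative extremal rays: it gives countably many such rays, and finiteness is only asserted after perturbing by an ample divisor, which $N+P$ need not be. The actual proof of attainment (as in \cite[Lemma 3.1]{Bir12a} and in \cite[Corollary 5.22]{HaconLiu21}) requires the boundedness of the length of extremal rays together with a rationality/finiteness argument obtained by writing $K_X+B+M$ and $N+P$ as combinations of finitely many $\Q$-Cartier divisors; none of this appears in your sketch. Second, even granting your finiteness claim, the logic is off: since $t_i\leq\lambda=\sup_j t_j$ for every $i$, the set $\{i : t_i\geq\lambda\}$ is exactly $\{i : t_i=\lambda\}$, and its finiteness is vacuous --- it could be empty, which is precisely the situation you must rule out. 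What you need is that $\{i : t_i>\mu\}$ is finite (and nonempty) for some $\mu<\lambda$. The cleanest fix is to do what the paper does and simply invoke \cite[Corollary 5.22]{HaconLiu21}, checking that its proof only uses $\Q$-factoriality to ensure that $N+P$ is $\R$-Cartier, which you have assumed.
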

	
	\begin{proof}
		Due to \cite[Theorem 1.1(1)(2)]{HL23} and \cite[Proposition 2.6]{HanLiu20}, it is easy to check that \cite[Lemma 3.23]{HanLi22} holds if the assumption that $ X $ is $ \Q $-factorial klt (which is present in op.\ cit.\@) is replaced by the assumption that $ N+P $ is $ \R $-Cartier (which is included in the above statement). Thus, taking the second paragraph of Remark \ref{rem:non-Q-fact_MMP} into account (for the repetitions of the procedure), the previous observation and \cite[Theorem 1.5]{Xie22} imply that one may run a $ (K_X+B+M) $-MMP over $ Z $ with scaling of $ N+P $. Now, regarding the last sentence of the lemma, set $ N = 0 $ and take $ P $ to be a general ample over $ Z $ $\R$-divisor on $X$ to conclude.
	\end{proof}
	
	The recently established Contraction theorem for (not necessarily $\Q$-factorial) NQC lc g-pairs has even further and significant consequences. Specifically, it allows us to remove the $ \Q $-factoriality assumption from the majority of the results of the paper \cite{LT22b}. We indicate now such refinements of \cite[Lemma 2.13, Theorem 2.14, Lemma 2.16 and Theorem 4.1]{LT22b}. This discussion, however, will be completed later in the paper, where \cite[Proposition A.3, Theorem 1.2, Corollaries 1.3 and 1.4, and Theorem 1.5]{LT22b} will also be refined accordingly.
	
	\begin{rem}\label{rem:comments_on_LT22b}
		For ease of reference we will use here the same notation as the one from those parts of \cite{LT22b} which will be mentioned below.
		\begin{enumerate}[(1)]
			\item Taking Lemmas \ref{lem:pullback_P+N} and \ref{lem:MMP_with_scaling} into account, it is straightforward to check that 
			\cite[Lemma 2.13 and Theorem 2.14]{LT22b} also hold without the assumption that the underlying variety $ X_1 $ of the given NQC lc g-pair $ \big( X_1/Z , (B_1+N_1) + (M_1+P_1) \big) $ is $ \Q $-factorial; see also Section \ref{section:lifting_MMP} for further developments.
			
			\item In view of Lemma \ref{lem:MMP_with_scaling} and \cite[Section 3]{LX23a}, one may readily check that \cite[Lemma 2.16]{LT22b} also holds without the assumption that the underlying variety $ X $ of the given NQC lc g-pair $ \big( X/Z , (B+N) + (M+P) \big) $ is $ \Q $-factorial.
		\end{enumerate}
	\end{rem}
	
	\begin{thm}\label{thm:LT22b_4.1}
		Let $ (X/Z,B+M) $ be an NQC lc g-pair. Let $ P $ be the pushforward to $X$ of an NQC divisor (over $ Z $) on a high birational model of $ X $ and let $ N $ be an effective $ \R $-divisor on $ X $ such that $ N+P $ is $ \R $-Cartier. Assume that the NQC g-pair $ \big( X, (B+N) + (M+P) \big) $ is lc and that the divisor $ K_X + B + N + M + P $ is nef over $ Z $. Consider a $ (K_X + B + M) $-MMP over $Z$ with scaling of $ N+P $, denote by $ \lambda_i $ the corresponding nef thresholds at the steps of this MMP and set $ \lambda := \lim\limits_{i\to +\infty} \lambda_i $.
		
		If $ \lambda \neq \lambda_i $ for every $ i $ and if $\big(X,(B+\lambda N)+(M+\lambda P)\big)$ has a minimal model in the sense of Birkar-Shokurov over $Z$, then the given MMP terminates.
	\end{thm}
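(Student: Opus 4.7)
The plan is to reinterpret the given MMP as an MMP with scaling whose nef thresholds converge to zero, and then invoke the non-$\Q$-factorial version of \cite[Theorem 2.14]{LT22b} recorded in Remark \ref{rem:comments_on_LT22b}(1).

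First I would use that in any MMP with scaling the sequence $\{\lambda_i\}$ is non-increasing, so the hypothesis $\lambda \neq \lambda_i$ forces $\lambda_i > \lambda \geq 0$ for every $i$. At the $i$-th step, denoting by $R_i$ the extremal ray being contracted, we have $(K_{X_i}+B_i+M_i) \cdot R_i < 0$ and $\big(K_{X_i}+B_i+M_i+\lambda_i(N_i+P_i)\big) \cdot R_i = 0$. A direct intersection-number computation then yields
\[ \big( K_{X_i} + B_i + \lambda N_i + M_i + \lambda P_i \big) \cdot R_i \;=\; \Big( 1 - \tfrac{\lambda}{\lambda_i} \Big) \big(K_{X_i}+B_i+M_i\big) \cdot R_i \;<\; 0 \]
together with $\big( K_{X_i}+B_i+\lambda N_i+M_i+\lambda P_i + \mu_i (N_i+P_i) \big) \cdot R_i = 0$, where $\mu_i := \lambda_i - \lambda > 0$. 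Moreover, $K_X + B + \lambda N + M + \lambda P + \mu_1(N+P) = K_X + B + M + \lambda_1(N+P)$ is nef over $Z$ by the definition of $\lambda_1$ as the initial nef threshold. Consequently the same sequence of divisorial and flipping contractions is a $(K_X + B + \lambda N + M + \lambda P)$-MMP over $Z$ with scaling of $N+P$, whose corresponding sequence of nef thresholds $\{\mu_i\}$ is strictly positive and satisfies $\mu_i \searrow 0$.

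Next I would invoke the assumption that the NQC lc g-pair $\big( X, (B+\lambda N)+(M+\lambda P) \big)$ admits a minimal model in the sense of Birkar-Shokurov over $Z$; in particular, $K_X + B + \lambda N + M + \lambda P$ is pseudo-effective over $Z$. The refinement of \cite[Theorem 2.14]{LT22b} pointed out in Remark \ref{rem:comments_on_LT22b}(1) asserts the termination of precisely such an MMP with scaling in the presence of a minimal model in the sense of Birkar-Shokurov, and crucially it does not require $X$ to be $\Q$-factorial. Applied to the MMP reinterpreted above, it shows that only finitely many nontrivial steps can occur, which is exactly the desired termination of the original MMP.

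The only genuine difficulty is that, because $X$ need not be $\Q$-factorial, one cannot directly apply \cite[Theorem 2.14]{LT22b} as stated. The required extension is precisely what is asserted in Remark \ref{rem:comments_on_LT22b}(1); it rests on Lemma \ref{lem:pullback_P+N} and Lemma \ref{lem:MMP_with_scaling}, the former allowing the scaling divisor $N+P$ to be transferred to a suitable birational model (such as a dlt blow-up) while preserving the NQC structure, and the latter ensuring that an MMP with scaling of $N+P$ can be run in the non-$\Q$-factorial setting. Once these two ingredients are in hand, the termination argument of \cite[Theorem 2.14]{LT22b} goes through essentially verbatim, and the verification above completes the proof.
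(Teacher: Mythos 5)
Your opening reduction is correct and is indeed how the argument must begin: since $\lambda \neq \lambda_i$ forces $\lambda < \lambda_i$ for every $i$, the intersection computation shows that the given MMP is also a $(K_X+B+\lambda N+M+\lambda P)$-MMP over $Z$ with scaling of $N+P$ whose nef thresholds $\mu_i = \lambda_i - \lambda$ are positive and converge to $0$. The problem lies in the second half. You assert that the refined version of \cite[Theorem 2.14]{LT22b} recorded in Remark \ref{rem:comments_on_LT22b}(1) \enquote{asserts the termination of precisely such an MMP with scaling in the presence of a minimal model in the sense of Birkar--Shokurov}. It does not: \cite[Lemma 2.13 and Theorem 2.14]{LT22b} are the statements that allow one to \emph{lift} an MMP with scaling of $N+P$ to a compatible MMP with scaling on dlt blow-ups --- they are the $\Q$-factorial predecessors of the constructions of Section \ref{section:lifting_MMP}, in particular of Theorem \ref{thm:lifting_MMP_with_scaling}, which is exactly why their refinement to the non-$\Q$-factorial setting is \enquote{straightforward} given only Lemmas \ref{lem:pullback_P+N} and \ref{lem:MMP_with_scaling}. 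A lifting statement carries no termination content, so after your shift the real work of the proof has not yet started. Indeed, if Theorem 2.14 of op.\ cit.\ said what you claim, then \cite[Theorem 4.1]{LT22b} would be a two-line corollary of it, whereas its proof is a five-step argument in which Theorem 2.14 is invoked only in Step 5.

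What is missing is therefore the entire termination mechanism. The paper's proof consists of repeating that five-step proof of \cite[Theorem 4.1]{LT22b}: after the shift one passes to dlt blow-ups and lifts the MMP (this is where the refined Theorem 2.14 enters), compares the resulting models with the assumed minimal model in the sense of Birkar--Shokurov of $\big(X,(B+\lambda N)+(M+\lambda P)\big)$ via common resolutions and the Negativity lemma, and applies \cite[Lemma 2.16]{LT22b} to the $\Q$-factorial dlt g-pairs so obtained in order to contradict non-termination. None of this appears in your proposal. Note also that you cannot shortcut the argument by citing Proposition \ref{prop:mainprop} or Theorem \ref{thm:mainthm}: those results produce \emph{some} terminating MMP, whereas here one must terminate the \emph{given} MMP with scaling of $N+P$, and in any case their proofs rely on the present theorem, so such an appeal would be circular.
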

	
	\begin{proof}
		Taking Lemma \ref{lem:MMP_with_scaling} into account, we may repeat verbatim the proof of \cite[Theorem 4.1]{LT22b}, except that we now invoke the refined version of \cite[Theorem 2.14]{LT22b}, which was discussed in Remark \ref{rem:comments_on_LT22b}(1), in Step 5 of that proof\footnote{For the sake of clarity we note that \cite[Lemma 2.16]{LT22b} was (only) applied in Step 5 of the proof of \cite[Theorem 4.1]{LT22b} to NQC $ \Q $-factorial dlt g-pairs. Therefore, the slightly more general version of \cite[Lemma 2.16]{LT22b} mentioned in Remark \ref{rem:comments_on_LT22b}(2) is not required for the proof of Theorem \ref{thm:LT22b_4.1}.}.
	\end{proof}
	
	The final result in this subsection exploits the \emph{boundedness of the length of extremal rays}, namely \cite[Theorem 1.1(2)]{HL23}, and plays a fundamental role in the proofs of Theorem \ref{thm:mainthm} and Lemma \ref{lem:scaling_numbers_to_zero}. It is a variant of \cite[Lemma 3.21]{HanLi22} and constitutes a generalization of \cite[Lemma 2.20]{LT22a}, cf.\ \cite[Proposition 2.12]{LT22b}. For brevity we only indicate below the necessary modifications to the proof of \cite[Lemma 2.20]{LT22a} and we refer to op.\ cit.\ for the details. We stress that the proof of Lemma \ref{lem:trivial_MMP} also relies essentially on the Contraction theorem for NQC lc g-pairs, namely \cite[Theorem 1.5]{Xie22}.
	
	\begin{lem}\label{lem:trivial_MMP}
		Let $ (X/Z,B+M) $ be an NQC lc g-pair. Let $ P $ be the pushforward to $ X $ of an NQC divisor (over $ Z $) on a birational model of $ X $ and let $ N $ be an effective $ \R $-divisor on $ X $ such that $ N+P $ is $ \R $-Cartier. Assume that the NQC g-pair $ \big( X, (B+N) + (M+P) \big) $ is lc and that the divisor $ K_X + B + N + M + P $ is nef over $ Z $. Then there exists $ \varepsilon_0 >0 $ such that for every $ \varepsilon \in (0,\varepsilon_0)$, any $ \big(K_X + B + M + (1-\varepsilon)(N+P) \big) $-MMP over $Z$ is $ (K_X + B + N + M + P) $-trivial.
	\end{lem}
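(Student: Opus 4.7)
The plan is to follow the proof of \cite[Lemma 2.20]{LT22a} closely, with the Contraction theorem for (not necessarily $\Q$-factorial) NQC lc g-pairs \cite[Theorem 1.5]{Xie22} taking the place of the $\Q$-factorial MMP machinery used in op.\ cit. Set $L := K_X + B + N + M + P$, which is nef over $Z$. Since any $L$-trivial divisorial contraction or flip preserves the nefness of $L$ together with all the NQC and singularity hypotheses of the lemma (as one checks via Lemma \ref{lem:pullback_P+N} and the Negativity lemma, cf.\ Remark \ref{rem:non-Q-fact_MMP}), an easy induction on the number of MMP steps reduces the assertion to the following base statement: there exists $\varepsilon_0 > 0$ such that for every $\varepsilon \in (0,\varepsilon_0)$, every $(L - \varepsilon(N+P))$-negative extremal ray $R$ of $\NEb(X/Z)$ satisfies $L \cdot R = 0$.

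I would prove this base statement by contradiction. Assume there exist $\varepsilon_n \searrow 0$ and extremal rays $R_n \subset \NEb(X/Z)$ with $(L - \varepsilon_n(N+P)) \cdot R_n < 0$ and $L \cdot R_n > 0$. The algebraic identity $L - \varepsilon(N+P) = (1-\varepsilon) L + \varepsilon (K_X + B + M)$, combined with $L \cdot R_n \geq 0$, forces each $R_n$ to be $(K_X + B + M)$-negative. Applying the boundedness of the length of extremal rays \cite[Theorem 1.3(2)]{HaconLiu21} to the NQC lc g-pair $(X, B+M)$ then produces rational curves $C_n$ generating $R_n$ with $-(K_X + B + M) \cdot C_n \leq 2 \dim X$. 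Combining this length bound with the negativity of $(L - \varepsilon_n(N+P)) \cdot C_n$ yields the uniform upper bound $(1 - \varepsilon_n)(N+P)\cdot C_n \leq 2\dim X$, whence $0 < L\cdot C_n < \varepsilon_n (N+P)\cdot C_n \leq \tfrac{2 \varepsilon_n \dim X}{1 - \varepsilon_n}$, and hence $L \cdot C_n \to 0^{+}$.

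The principal obstacle is now to produce a uniform positive lower bound on $L\cdot C_n$ whenever this quantity is positive, which would immediately contradict $L\cdot C_n \to 0^{+}$. The plan is to exploit the NQC structure of $\big(X,(B+N)+(M+P)\big)$: on a sufficiently high smooth birational model $f \colon W \to X$, fix an NQC decomposition $M_W + P_W = \sum_{k=1}^r t_k Q_k$ with $Q_k$ nef Cartier $\Z$-divisors and $t_k > 0$. Using Lemma \ref{lem:pullback_P+N} (noting that $K_X + B + N = L - (M+P)$ is automatically $\R$-Cartier), one can control $f^{*}L$ modulo effective $f$-exceptional divisors; lifting each $C_n$ to a curve $\tilde C_n$ on $W$ (via a further resolution) and computing $L \cdot C_n$ via the projection formula, the integrality of each $Q_k \cdot \tilde C_n$ combined with the uniform upper bound on $(N+P) \cdot C_n$ forces $L \cdot C_n \geq \min_k t_k$ whenever $L \cdot C_n > 0$, yielding the desired contradiction.
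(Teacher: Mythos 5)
Your overall architecture --- the reduction to a single step, the contradiction with $\varepsilon_n \searrow 0$, the identity $L - \varepsilon(N+P) = (1-\varepsilon)L + \varepsilon(K_X+B+M)$, the length bound \cite[Theorem 1.3(2)]{HaconLiu21} and the resulting estimate $0 < L\cdot C_n < \tfrac{2\varepsilon_n\dim X}{1-\varepsilon_n}$ --- matches the argument of \cite[Lemma 2.20]{LT22a} that the paper follows. The gap is in your final step, the uniform positive lower bound on $L\cdot C_n$. Integrality of the numbers $Q_k\cdot\widetilde{C}_n$ only places the contribution of the nef part $M_W+P_W=\sum t_kQ_k$ in a discrete set; but $L\cdot C_n$ also contains the contribution of $K_X+B+N$, which is an arbitrary real number with no integrality whatsoever ($B$ and $N$ have real coefficients and $K_X$ need not be $\Q$-Cartier), so no bound of the form $L\cdot C_n\geq\min_k t_k$ can follow, and the ``uniform upper bound on $(N+P)\cdot C_n$'' gives no lower bound at all. (Note also that $K_X+B+N=L-(M+P)$ is \emph{not} automatically $\R$-Cartier: since $X$ is not assumed $\Q$-factorial, only $K_X+B+M$, $N+P$ and hence $L$ are known to be $\R$-Cartier, not $M+P$ itself.)

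The missing ingredient is the uniform rational decomposition \cite[Proposition 2.6]{HanLiu20} (the analogue of \cite[Proposition 3.20]{HanLi22} used in \cite[Lemma 2.20]{LT22a}), which is precisely one of the three substitutions the paper's proof records: one writes $K_X+(B+N)+(M+P)=\sum_j r_j(K_X+B_j+M_j)$ with $r_j>0$, $\sum_j r_j=1$, each $(X,B_j+M_j)$ an lc $\Q$-g-pair and, crucially, each $K_X+B_j+M_j$ nef over $Z$. Then every $(K_X+B_j+M_j)\cdot C_n\geq 0$, and if $L\cdot C_n>0$ then some summand satisfies $(K_X+B_j+M_j)\cdot C_n\geq 1/m_j$, where $m_j(K_X+B_j+M_j)$ is Cartier; hence $L\cdot C_n\geq\min_j r_j/m_j>0$, which is the contradiction you need. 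The point is that the discreteness must come from a decomposition of the \emph{entire} adjoint divisor into nef $\Q$-Cartier adjoint pieces, not from the nef part alone. Your inductive step is fine in spirit, but in the non-$\Q$-factorial setting you should explicitly invoke the third bullet of \cite[Theorem 1.5]{Xie22} to see that $L$ descends along an $L$-trivial contraction, so that its strict transform on the next model is again nef and $\R$-Cartier.
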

	
	\begin{proof}
		To prove the statement, we argue as in the proof of \cite[Lemma 2.20]{LT22a}, the difference being that we now work with the g-pair $ \big( X, (B+N) + (M+P) \big) $ and we invoke \cite[Proposition 2.6]{HanLiu20}, (the third bullet of) \cite[Theorem 1.5]{Xie22} and \cite[Theorem 1.1(2)]{HL23} instead of [HL18, Prop.\ 3.16] = \cite[Proposition 3.20]{HanLi22}, \cite[Theorem 3.25(4)]{KM98} and [HL18, Prop.\ 3.13] = \cite[Proposition 3.17]{HanLi22}, respectively.
	\end{proof}

	\subsection{Log abundant generalized pairs}
	\label{subsection:log_abundant}
	
	Throughout this subsection we work exclusively in the absolute setting, that is, we assume that $ Z = \spec \C $. Therefore, $ X $ always denotes here a normal \emph{projective} variety.
	
	Given a normal projective variety $ X $ and an $ \R $-Cartier $ \R $-divisor $ D $ on $ X $, we denote by $ \kappa_\iota(X,D) $ the \emph{invariant Iitaka dimension} of $ D $ and by $ \kappa_\sigma(X,D) $ the \emph{numerical dimension} of $ D $; see \cite{Choi08} and \cite{Nak04}, respectively.
	We say that $ D $ is \emph{abundant} if the equality $ \kappa_\iota(X,D) = \kappa_\sigma(X,D) $ holds. In particular, we say that an lc g-pair $ (X,B+M) $ is \emph{abundant} if the divisor $ K_X+B+M $ is abundant.
	
	\begin{rem}\label{rem:abundant_MM}
		Let $(X,B+M)$ be an lc g-pair and let $ \varphi \colon (X,B+M) \dashrightarrow (X',B'+M')$ be a minimal model (in any sense) of $(X,B+M)$. Then $(X,B+M)$ is abundant if and only if $(X',B'+M')$ is abundant. Indeed, for any resolution of indeterminacies $ (p,q) \colon W \to X \times X' $ of the map $\varphi$ we may write
		\[ p^* (K_X+B+M) \sim_\R q^* (K_{X'}+B'+M') + E , \]
		where $E$ is an effective $q$-exceptional $\R$-Cartier $\R$-divisor on $W$ (see \cite[Lemma 2.8(i)]{LMT23} and \cite[Remark 2.6]{LT22b}, respectively), and it follows now from \cite[Remark 2.15(2)]{Hash20d} that $K_X+B+M$ is abundant if and only if $K_{X'}+B'+M'$ is abundant.
	\end{rem}
	
	The following definition will be needed in Section \ref{section:applications_II}. For more information about the notion that will be defined below as well as for its relative version we refer to \cite[Subsection 2.3]{Hash20d}.
	
	\begin{dfn}
		Let $ (X,B+M) $ be an lc g-pair, where $ X $ is a normal projective variety. An $ \R $-Cartier $ \R $-divisor $ D $ on $ X $ is said to be \emph{log abundant with respect to $ (X,B+M) $} if $ D $ is abundant and for any lc center $ S $ of $ (X,B+M) $ with normalization $ \nu \colon S^\nu \to S $ the divisor $ \nu^* D $ is abundant. In particular, we say that the given lc g-pair $ (X,B+M) $ is \emph{log abundant} if the divisor $ K_X+B+M $ is log abundant with respect to $ (X,B+M) $.
	\end{dfn}
	
	\begin{lem}\label{lem:dlt_blow-up_log_abundant}
		Let $(X,B+M)$ be an lc g-pair and let $h \colon (X',B'+M') \to (X,B+M) $ be a dlt blow-up of $(X,B+M)$. Then $(X,B+M)$ is log abundant if and only if $(X',B'+M')$ is log abundant.
	\end{lem}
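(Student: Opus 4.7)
The plan is to verify the two defining conditions for log abundance—abundance of $K_X+B+M$ itself and abundance of $\nu^*(K_X+B+M)$ over every lc center—in parallel on both sides of the dlt blow-up, exploiting the crepant relation $K_{X'}+B'+M' \sim_\R h^*(K_X+B+M)$ that defines $h$.

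First I would observe that, since $h$ is birational and $K_{X'}+B'+M' \sim_\R h^*(K_X+B+M)$, the exact same argument recorded in Remark \ref{rem:abundant_MM} (which rests on \cite[Remark 2.15(2)]{Hash20d}) immediately yields that $K_X+B+M$ is abundant if and only if $K_{X'}+B'+M'$ is abundant.

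The core step will then be the matching of lc centers. I would invoke the standard correspondence for a dlt blow-up: every lc center $S'$ of $(X',B'+M')$ has image $S := h(S')$ which is an lc center of $(X,B+M)$, and conversely every lc center $S$ of $(X,B+M)$ arises as $h(S')$ for some lc center $S'$ of $(X',B'+M')$. Both directions follow from the crepancy of $h$, which ensures that every divisorial valuation $E$ over $X$ has $a(E,X,B+M) = a(E,X',B'+M')$, together with the identity $c_X(E) = h(c_{X'}(E))$. For any such pair $(S,S')$, the universal property of normalization lifts the surjective morphism $h|_{S'} \colon S' \to S$ to a surjective morphism $\tilde h \colon (S')^\nu \to S^\nu$, and a straightforward chase along the resulting commutative diagram produces
\[ (\nu')^*(K_{X'}+B'+M') \sim_\R \tilde h^{\,*} \nu^*(K_X+B+M). \]
Since both $\kappa_\iota$ and $\kappa_\sigma$ are invariant under pullback by a surjective morphism of normal projective varieties, the right-hand side is abundant on $(S')^\nu$ if and only if $\nu^*(K_X+B+M)$ is abundant on $S^\nu$.

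Combining the two halves, one obtains that $(X,B+M)$ is log abundant precisely when $K_X+B+M$ is abundant and $\nu^*(K_X+B+M)$ is abundant on the normalization of every lc center of $(X,B+M)$; the equivalences above translate this condition verbatim into abundance of $K_{X'}+B'+M'$ together with abundance of $(\nu')^*(K_{X'}+B'+M')$ on the normalization of every lc center of $(X',B'+M')$, that is, into log abundance of $(X',B'+M')$. The main obstacle will be to pin down the surjective correspondence of lc centers rigorously in the generalized pair setting; while this is folklore for dlt modifications of usual pairs, I would make it precise using the definition of dlt singularities from \cite{HanLi22} and the crepant identity defining the dlt blow-up, keeping in mind that we may need the equivalent formulation of dlt for NQC g-pairs provided by \cite[Theorem 6.1]{Hash22a}.
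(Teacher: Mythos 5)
Your proposal is correct and follows essentially the same route as the paper: abundance of the total canonical classes is transferred via the crepant relation and \cite[Remark 2.15(2)]{Hash20d}, the lc centers are matched using the equality of discrepancies $a(E,X,B+M)=a(E,X',B'+M')$, and abundance on the centers is transferred along the induced surjective morphism between (normalizations of) corresponding centers. The only cosmetic difference is that the paper uses the normality of lc centers of dlt g-pairs (\cite[Subsection 2.3]{HanLi22}) to work with $S'$ directly rather than its normalization, so the morphism $S'\to S$ factors through $S^\nu$ — which also settles the correspondence you flag as the "main obstacle."
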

	
	\begin{proof}
		Since
		$$ K_{X'} + B' + M' \sim_{\R} h^* (K_X + B + M) , $$
		by \cite[Remark 2.15(2)]{Hash20d} we deduce that $K_X+B+M$ is abundant if and only if $K_{X'}+B'+M'$ is abundant.
		
		For any divisorial valuation $E$ over $X$ we have $a(E,X,B+M) = a(E,X',B'+M')$, and thus $c_X(E)$ is an lc center of $(X,B+M)$ if and only if $c_{X'}(E)$ is an lc center of $(X',B'+M')$. Therefore, given an lc center $W'$ of $(X',B'+M')$, its image $W := h(W')$ is an lc center of $(X,B+M)$. Since $W'$ is normal (see \cite[Subsection 2.3]{HanLi22}), the restriction $ h|_{W'} \colon W' \to W $ of $h$ to $W'$ factors through the normalization $W^\nu$ of $W$, so we obtain a projective surjective morphism $ \xi \colon W' \to W^\nu $. Since 
		\[ (K_{X'}+B'+M')|_{W'} \sim_\R \xi^* \big( (K_X+B+M)|_{W^\nu} \big) , \]
		by \cite[Remark 2.15(2)]{Hash20d} we deduce that $(K_X+B+M)|_{W^\nu}$ is abundant if and only if $(K_{X'}+B'+M')|_{W'}$ is abundant, which proves the statement.
	\end{proof}
	
	\begin{lem}\label{lem:log_abundant_MMs}
		Let $(X,B+M)$ be an lc g-pair and let $(X',B'+M')$ and $(X'',B''+M'')$ be two minimal models (in any sense) of $(X,B+M)$. Then $(X',B'+M')$ is log abundant if and only if $(X'',B''+M'')$ is log abundant.
	\end{lem}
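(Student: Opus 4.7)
My plan is to first reduce to the case where both minimal models are dlt in the sense of Birkar-Shokurov, then to show that $K_{X'}+B'+M'$ and $K_{X''}+B''+M''$ pull back to the same $\R$-Cartier $\R$-divisor on a common resolution, and finally to transfer log abundance across the resulting bijection of lc centers via induced morphisms from a common exceptional divisor to the normalizations of corresponding lc centers.

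First, I would apply Lemma \ref{lem:dlt_blow-up_log_abundant} to replace $(X',B'+M')$ and $(X'',B''+M'')$ by their dlt blow-ups. A direct verification, in the spirit of Remark \ref{rem:EMM_reduction}, shows that such a dlt blow-up of a minimal model (in either sense) of $(X,B+M)$ is itself a minimal model of $(X,B+M)$ in the sense of Birkar-Shokurov, so this reduction is harmless. Next, I would pick a sufficiently high common log resolution $(q',q'') \colon W \to X' \times X''$ of the induced birational map $\varphi \colon X' \dashrightarrow X''$ with $W$ smooth, and repeat verbatim the Negativity lemma argument from the proof of Lemma \ref{lem:GMM} (whose applicability to two weak canonical models in the sense of Birkar-Shokurov is recorded in the footnote there) to conclude
$$q'^*(K_{X'}+B'+M') = q''^*(K_{X''}+B''+M'').$$
This forces $a(E,X',B'+M') = a(E,X'',B''+M'')$ for every divisorial valuation $E$ over $X'$, so the two g-pairs share the same lc places, and by \cite[Remark 2.15(2)]{Hash20d} the abundance of $K_{X'}+B'+M'$ is equivalent to that of $K_{X''}+B''+M''$.

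It then remains to match the log abundance condition at each lc center. Given an lc center $S'$ of $(X',B'+M')$, I would fix a divisorial lc place $E$ with $c_{X'}(E) = S'$ and set $S'' := c_{X''}(E)$, which is automatically an lc center of $(X'',B''+M'')$ by the coincidence of discrepancies. After possibly replacing $W$ with a higher smooth model, $E$ corresponds to a smooth prime divisor $D_E \subset W$, and the induced projective surjective morphisms $D_E \to S'$ and $D_E \to S''$ factor through the normalizations, yielding projective surjective morphisms $\xi' \colon D_E \to S'^\nu$ and $\xi'' \colon D_E \to S''^\nu$ from a smooth (hence normal) projective variety. Restricting the displayed equality above to $D_E$ then gives
$$\xi'^*\big((K_{X'}+B'+M')|_{S'^\nu}\big) = \xi''^*\big((K_{X''}+B''+M'')|_{S''^\nu}\big),$$
and a final invocation of \cite[Remark 2.15(2)]{Hash20d} shows that both restrictions are simultaneously abundant. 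Symmetry provides the bijection in the other direction, so $(X',B'+M')$ is log abundant if and only if $(X'',B''+M'')$ is.

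The main obstacle is the first step: the Negativity lemma argument of Lemma \ref{lem:GMM} is not obviously available for two arbitrary (in particular not necessarily $\Q$-factorial) minimal models in the usual sense, so it is essential to first reduce to the Birkar-Shokurov setting via dlt blow-ups and Lemma \ref{lem:dlt_blow-up_log_abundant}. Once this identity of pullbacks is in hand, the remaining transfer of log abundance is a straightforward application of functoriality of abundance under projective surjective morphisms between normal projective varieties.
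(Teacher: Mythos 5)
Your proof is correct and follows essentially the same route as the paper's: reduce to dlt minimal models in the sense of Birkar--Shokurov via dlt blow-ups and Lemma \ref{lem:dlt_blow-up_log_abundant}, deduce the equality of pullbacks (hence of discrepancies) from the $E'=E''$ argument of Lemma \ref{lem:GMM}, and transfer abundance between corresponding lc centers through a common divisorial lc place using \cite[Remark 2.15(2)]{Hash20d}. The only cosmetic difference is that the paper's common resolution explicitly dominates $X$ as well, which your ``verbatim'' appeal to the argument of Lemma \ref{lem:GMM} implicitly requires in any case.
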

	
	\begin{proof}
		In view of Lemma \ref{lem:dlt_blow-up_log_abundant}, it suffices to treat the case when both $(X',B'+M')$ and $(X'',B''+M'')$ are minimal models in the sense of Birkar-Shokurov of $(X,B+M)$ and have dlt singularities as well. Then the lc centers of $(X',B'+M')$ and $(X'',B''+M'')$ are normal (and finitely many); see \cite[Subsection 2.3]{HanLi22}.
		
		Pick a sufficiently high common resolution of indeterminacies $ (p,q,r) \colon W \to X \times X' \times X'' $ of the maps $ X \dashrightarrow X' $ and $ X \dashrightarrow X'' $ such that all lc centers of $(X',B'+M')$ and $(X'',B''+M'')$ have been extracted on $ W $. Then the occurring equality $E' = E''$ (see the second paragraph of the proof of Lemma \ref{lem:GMM}) implies
		\begin{equation}\label{eq:1_log_abundant_MMs}
			a(D,X',B'+M') = a(D,X'',B''+M'') \ \text{ for any prime divisor $D$ on $W$} 
		\end{equation}
		and
		\begin{equation}\label{eq:2_log_abundant_MMs}
			q^* (K_{X'} + B' + M') = r^* (K_{X''} + B'' + M'') .
		\end{equation}
		By \eqref{eq:1_log_abundant_MMs} and by construction, for any lc center $S$ of $(X',B'+M')$ there exists a prime divisor $D$ on $W$ whose image $T := r(D)$ is an lc center of $(X'',B''+M'')$, and vice versa. Then the statement follows readily by using \eqref{eq:2_log_abundant_MMs} and by invoking \cite[Remark 2.15(2)]{Hash20d}, bearing also the proof of Lemma \ref{lem:dlt_blow-up_log_abundant} in mind.
	\end{proof}

	\section{Lifting an MMP}
	\label{section:lifting_MMP}
	
	Our first goal in this section is to demonstrate how one can \enquote{lift} an MMP starting with an NQC lc g-pair to an MMP starting with an NQC $\Q$-factorial dlt g-pair, cf.\ \cite[Section 3]{LMT23}.
	
	\medskip
	
	Let $ ( X_1/Z, B_1+M_1) $ be an NQC lc g-pair. Assume that the divisor $K_{X_1} + B_1 + M_1$ is not nef over $Z$. Consider the first step
	\begin{center}
		\begin{tikzcd}[column sep = 2em, row sep = large]
			(X_1,B_1+M_1) \arrow[rr, dashed, "\pi_1"] \arrow[dr, "\theta_1" swap] && (X_2,B_2+M_2) \arrow[dl, "\theta_1^+"] \\
			& Z_1
		\end{tikzcd}
	\end{center}
	of a $ (K_{X_1}+B_1+M_1)$-MMP over $Z$, assuming (for our purposes here) that $\theta_1$ is not a Mori fiber space, and a dlt blow-up 
	$ h_1 \colon (X_1', B_1' + M_1') \to (X_1,B_1+M_1) $
	of $ (X_1,B_1+M_1) $. 
	By arguing as in the first paragraph of the proof of \cite[Lemma 2.13]{LT22b} and by taking Remark \ref{rem:non-Q-fact_MMP}(3) into account, we may construct the following diagram:
	\begin{center}
		\begin{tikzcd}[column sep = 2em, row sep = large]
			(X_1',B_1'+M_1') \arrow[d, "h_1" swap] \arrow[rr, dashed, "\rho_1"] && (X_2',B_2'+M_2') \arrow[d, "h_2" swap]
			\\ 
			(X_1,B_1+M_1) \arrow[dr, "\theta_1" swap] \arrow[rr, dashed, "\pi_1"] && (X_2,B_2+M_2) \arrow[dl, "\theta_1^+"] 
			\\
			& Z_1
		\end{tikzcd}
	\end{center}
	where the map $ \rho_1 \colon X_1' \dashrightarrow X_2' $ is a $(K_{X_1'}+B_1'+M_1')$-MMP over $ Z_1 $ (with scaling of an ample divisor) and the map $ h_2 \colon (X_2',B_2'+M_2') \to (X_2,B_2+M_2) $ is a dlt blow-up of $ (X_2,B_2+M_2) $.
	
	\begin{rem}\label{rem:lifting_Picard_number_drops}
		If $\theta_1$ contracts a prime divisor $D$ on $X_1$, then so does $\pi_1$ according to Remark \ref{rem:non-Q-fact_MMP}(1). Furthermore, the strict transform $ (h_1)_*^{-1} D $ of $D$ on $X_1'$ must be contracted by $\rho_1$. Indeed, by \cite[Lemma 2.8(iii)(a)]{LMT23} we obtain 
		\[ a(D,X_1',B_1'+M_1') = a(D,X_1,B_1+M_1) < a(D,X_2,B_2+M_2) = a(D,X_2',B_2'+M_2') , \]
		which shows that $\rho_1$ cannot be an isomorphism at the generic point of $ (h_1)_*^{-1} D $ and proves the previous claim. In particular, it follows from the above and from \cite[Corollary 5.10 and Theorem 6.1]{HL23} that $ \rho(X'_2/Z) < \rho(X'_1/Z) $.
	\end{rem}
	
	If we now have a $ (K_{X_1}+B_1+M_1)$-MMP over $Z$, then by repeating the above procedure we obtain the following result, which plays a crucial role in the proof of Proposition \ref{prop:mainprop}.
	
	\begin{thm}\label{thm:lifting_MMP}
		Let $ ( X_1/Z, B_1+M_1) $ be an NQC lc g-pair. Consider a $ (K_{X_1}+B_1+M_1)$-MMP over $Z$:
		\begin{center}
			\begin{tikzcd}[column sep = 1em, row sep = large]
				(X_1,B_1+M_1) \arrow[dr, "\theta_1" swap] \arrow[rr, dashed, "\pi_1"] && (X_2,B_2+M_2) \arrow[dl, "\theta_1^+"] \arrow[dr, "\theta_2" swap] \arrow[rr, dashed, "\pi_2"] && (X_3,B_3+M_3) \arrow[dl, "\theta_2^+"] \arrow[rr, dashed, "\pi_3"] && \cdots \\
				& Z_1 && Z_2
			\end{tikzcd}
		\end{center}
		Then there exists a diagram
		\begin{center}
			\begin{tikzcd}[column sep = 1em, row sep = large]
				(X_1',B_1'+M_1') \arrow[d, "h_1" swap] \arrow[rr, dashed, "\rho_1"] && (X_2',B_2'+M_2') \arrow[d, "h_2" swap] \arrow[rr, dashed, "\rho_2"] && (X_3',B_3'+M_3') \arrow[d, "h_3" swap] \arrow[rr, dashed, "\rho_3"] && \dots 
				\\ 
				(X_1,B_1+M_1) \arrow[dr, "\theta_1" swap] \arrow[rr, dashed, "\pi_1"] && (X_2,B_2+M_2) \arrow[dl, "\theta_1^+"] \arrow[dr, "\theta_2" swap] \arrow[rr, dashed, "\pi_2"] && (X_3,B_3+M_3) \arrow[dl, "\theta_2^+"] \arrow[rr, dashed, "\pi_3"] && \dots \\
				& Z_1 && Z_2
			\end{tikzcd}
		\end{center}
		where, for each $i \geq 1$, 
		\begin{itemize}
			\item the map $ \rho_i \colon X_i' \dashrightarrow X_{i+1}' $ is a $(K_{X_i'}+B_i'+M_i')$-MMP over $ Z_i $, and
			
			\item the map $ h_i \colon (X_i',B_i'+M_i') \to (X_i,B_i+M_i) $ is a dlt blow-up.
		\end{itemize}
		In particular, the sequence on top of the above diagram is a $ (K_{X_1'}+B_1'+M_1') $-MMP over $ Z $, where $ (X_1'/Z,B_1'+M_1') $ is an NQC $ \Q $-factorial dlt g-pair.
		
		Furthermore, there exists an integer $k \geq 1$ such that the map $ \pi_i \colon X_i \dashrightarrow X_{i+1} $ is small and the induced linear map $ N^1(X_i/Z)_\R \to N^1(X_{i+1}/Z)_\R $ is an isomorphism for each $ i \geq k $; in particular, it holds that $ \rho(X_i/Z) = \rho(X_{i+1}/Z) $ for each $ i \geq k $.
	\end{thm}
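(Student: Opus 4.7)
The plan is to construct the lifted MMP by an inductive one-step procedure and then to control its eventual shape by exploiting that a strict drop in the relative Picard number of the dlt models cannot persist forever. For the inductive construction, suppose we have already built a dlt blow-up $h_i \colon (X_i',B_i'+M_i') \to (X_i,B_i+M_i)$ compatible with the previously constructed portion of the diagram. Since $\pi_i$ is a divisorial or flipping contraction over $Z_i$, the one-step procedure described immediately before the theorem---based on the refined version of \cite[Lemma~2.13]{LT22b} recorded in Remark~\ref{rem:comments_on_LT22b}(1), together with Remark~\ref{rem:non-Q-fact_MMP}(3) and Lemma~\ref{lem:MMP_with_scaling}---produces a finite $(K_{X_i'}+B_i'+M_i')$-MMP $\rho_i \colon X_i' \dashrightarrow X_{i+1}'$ over $Z_i$ with scaling of an ample divisor such that the output $(X_{i+1}',B_{i+1}'+M_{i+1}')$ is a dlt blow-up of $(X_{i+1},B_{i+1}+M_{i+1})$. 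Concatenating these finite MMPs produces the claimed $(K_{X_1'}+B_1'+M_1')$-MMP over $Z$ on the dlt level, and $(X_1'/Z,B_1'+M_1')$ is NQC $\Q$-factorial dlt by construction.

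For the eventual smallness of the steps $\pi_i$, I would invoke Remark~\ref{rem:lifting_Picard_number_drops}: whenever $\pi_i$ is divisorial, $\rho_i$ contracts at least one prime divisor, and hence $\rho(X_{i+1}'/Z) < \rho(X_i'/Z)$. Since $\rho(X_j'/Z) \in \Z_{>0}$ for every $j$, this strict decrease can occur only finitely often, so there exists some $k \geq 1$ such that $\pi_i$ is small for every $i \geq k$.

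Finally, for $i \geq k$ the map $\pi_i$ is a flip, so Lemma~\ref{lem:properties_of_MMP}(ii) furnishes a well-defined injective pushforward map $N^1(X_i/Z)_\R \to N^1(X_{i+1}/Z)_\R$. To upgrade injectivity to an isomorphism I would establish the equality $\rho(X_i/Z) = \rho(X_{i+1}/Z)$ via additivity of relative Picard numbers along the small factorizations $X_i \to Z_i \leftarrow X_{i+1}$: both $\theta_i$ and $\theta_i^+$ are contractions of extremal rays with $\rho(X_i/Z_i) = \rho(X_{i+1}/Z_i) = 1$, and the standard exact sequence associated with a projective morphism with connected fibers yields $\rho(X_i/Z) = \rho(Z_i/Z) + 1 = \rho(X_{i+1}/Z)$; an injective linear map between finite-dimensional vector spaces of equal dimension is then an isomorphism. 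I expect the main technical obstacle to be precisely this surjectivity check, since Lemma~\ref{lem:properties_of_MMP} only supplies one direction. Should the Picard-number additivity above require additional care in the absence of $\Q$-factoriality, a fallback is to apply Lemma~\ref{lem:properties_of_MMP}(i) to the inverse flip $\pi_i^{-1}$ interpreted as an MMP step for a suitable perturbed g-pair structure on $X_{i+1}$, yielding a pushforward in the opposite direction which is the inverse of the first by smallness.
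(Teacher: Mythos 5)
Your construction of the lifted diagram and your argument for the eventual smallness of the $\pi_i$ (via Remark \ref{rem:lifting_Picard_number_drops} and the strict drop of $\rho(X_i'/Z)$ at each divisorial step) coincide with the paper's. The genuine gap is in your final step, where you try to upgrade injectivity of the pushforward $N^1(X_i/Z)_\R \to N^1(X_{i+1}/Z)_\R$ to an isomorphism by Picard-number additivity along $X_i \to Z_i \leftarrow X_{i+1}$. The identity $\rho(X_i/Z) = \rho(Z_i/Z) + \rho(X_i/Z_i)$ is legitimate for $\theta_i$, because exactness of $0 \to N^1(Z_i/Z)_\R \to N^1(X_i/Z)_\R \to N^1(X_i/Z_i)_\R \to 0$ in the middle is precisely the third bullet of the Contraction theorem \cite[Theorem 1.5]{Xie22} (cf.\ Remark \ref{rem:non-Q-fact_MMP}). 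But neither the corresponding exactness for $\theta_i^+$ nor the equality $\rho(X_{i+1}/Z_i)=1$ is available: $\theta_i^+$ is merely a small projective birational morphism onto the typically non-$\Q$-factorial base $Z_i$, it is not exhibited as the contraction of an extremal ray of any g-pair, and a divisor on $X_{i+1}$ that is numerically trivial over $Z_i$ need not be the pullback of an $\R$-Cartier divisor on $Z_i$ (its pushforward to $Z_i$ need not be $\R$-Cartier). Your fallback of realizing $\pi_i^{-1}$ as an MMP step for a perturbed g-pair on $X_{i+1}$ hits the same $\Q$-factoriality obstruction: you would need an lc g-pair structure on the non-$\Q$-factorial $X_{i+1}$ that is negative exactly on the flipped curves and whose canonical model over $Z_i$ is $X_i$, and no such structure is constructed.

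The paper avoids proving surjectivity at each fixed step. Once all $\pi_i$ are small, Lemma \ref{lem:properties_of_MMP}(ii) makes $\{\rho(X_i/Z)\}_i$ a non-decreasing sequence of positive integers, and this sequence is bounded above by $\rho(X_1'/Z) < +\infty$, since $N^1(X_i/Z)_\R$ embeds into $N^1(X_i'/Z)_\R$ by pullback along the birational morphism $h_i$ and the relative Picard number does not increase along the $\Q$-factorial MMP upstairs. Hence the sequence stabilizes at some index $\ell_2$, and for $i \geq \ell_2$ an injective linear map between finite-dimensional spaces of equal dimension is an isomorphism. Note that the theorem only asserts the isomorphism for $i \geq k$ with $k := \ell_2$, which may be strictly larger than the first index at which the steps become small; your argument aims at the stronger statement and that is exactly where it breaks. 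Replacing the additivity argument with this monotonicity-plus-boundedness argument closes the gap.
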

	
	\begin{proof}
		It remains to prove the second part of the statement.
		In view of Lemma \ref{lem:properties_of_MMP}(ii) and Remark \ref{rem:lifting_Picard_number_drops}, we may find an integer $ \ell_1 \geq 1 $ such that for each $ i \geq \ell_1 $ the birational contraction $ \pi_i \colon X_i \dashrightarrow X_{i+1} $ is small and the induced linear map $ N^1(X_i/Z)_\R \to N^1(X_{i+1}/Z)_\R $ is injective. By relabelling the given $ (K_{X_1}+B_1+M_1)$-MMP over $Z$, we may assume that $ \ell_1 = 1 $. We obtain thus a non-decreasing sequence $ \big\{ \rho(X_i/Z) \big\}_{i=1}^{+\infty} $ of positive integers which is bounded from above by $ \rho(X_1'/Z) < +\infty $; see \cite[Corollary 5.10 and Theorem 6.1]{HL23}. Therefore, this sequence must stabilize, that is, there exists an integer $ \ell_2 \geq 1 $ such that for each $ i \geq \ell_2 $ the linear map $ N^1(X_i/Z)_\R \to N^1(X_{i+1}/Z)_\R $ is an isomorphism. We are done by taking $ k := \ell_2 \geq \ell_1 \geq 1 $.
	\end{proof}
	
	Our second goal in this section is to explain how one can make an analogous construction when one considers an MMP \emph{with scaling} starting with an NQC lc g-pair, cf.\ \cite[Subsection 2.5]{LT22b}.
	
	\medskip
	
	
	Let the g-pair $ ( X_1/Z, B_1+M_1) $ and the divisors $P_1$ and $N_1$ be as in Lemma \ref{lem:MMP_with_scaling} so that we may run a $ (K_{X_1}+B_1+M_1)$-MMP over $Z$ with scaling of $ N_1 + P_1 $. Assume that the divisor $K_{X_1} + B_1 + M_1$ is not nef over $Z$, and set
	\[ \lambda_1 := \inf \{ t \in \R_{\geq 0} \mid K_{X_1} + (B_1 + t N_1) + (M_1 + t P_1) \text{ is nef over } Z \} \in (0,1] . \]
	Consider the first step
	\begin{center}
		\begin{tikzcd}[column sep = 2em, row sep = large]
			(X_1,B_1+M_1) \arrow[rr, dashed, "\pi_1"] \arrow[dr, "\theta_1" swap] && (X_2,B_2+M_2) \arrow[dl, "\theta_1^+"] \\
			& Z_1
		\end{tikzcd}
	\end{center}
	of a $ (K_{X_1}+B_1+M_1)$-MMP over $Z$ with scaling of $N_1+P_1$, assuming (for our purposes here) that $\theta_1$ is not a Mori fiber space, as well as a dlt blow-up 
	$ h_1 \colon (X_1', B_1' + M_1') \to (X_1,B_1+M_1) $
	of $ (X_1,B_1+M_1) $; in particular, we have 
	\[ K_{X_1'} + B_1'+M_1' \sim_\R h_1^*(K_{X_1} + B_1 + M_1) . \]
	Let $f_1\colon W\to X_1$ be a log resolution of $(X_1,B_1+N_1)$ which factors through $X_1'$ and such that there exists an $\R$-divisor $P_W$ on $W$ such that $P_W$ is NQC (over $Z$) and $ (f_1)_* P_W = P $. By Lemma \ref{lem:pullback_P+N} we may write 
	\[ f_1^*(P_1+N_1) = P_W + (f_1)_*^{-1} N_1 + E_1 , \]
	where $ E_1 $ is an effective $ f_1 $-exceptional $ \R $-divisor on $ W $. We now define $N_1'$ and $P_1'$ as the pushforwards of $(f_1)_*^{-1} N_1+E_1$ and $P_W$, respectively, to $X_1'$, and we note that
	\begin{equation}\label{eq:1_lifting_MMP_with_scaling}
		N_1' + P_1' = h_1^* (N_1 + P_1) .
	\end{equation}
	By arguing as in the proof of \cite[Lemma 2.13]{LT22b} and by taking Remark \ref{rem:non-Q-fact_MMP}(3) into account, we may construct the following diagram:
	\begin{center}
		\begin{tikzcd}[column sep = 2em, row sep = large]
			(X_1',B_1'+M_1') \arrow[d, "h_1" swap] \arrow[rr, dashed, "\rho_1"] && (X_2',B_2'+M_2') \arrow[d, "h_2" swap]
			\\ 
			(X_1,B_1+M_1) \arrow[dr, "\theta_1" swap] \arrow[rr, dashed, "\pi_1"] && (X_2,B_2+M_2) \arrow[dl, "\theta_1^+"] 
			\\
			& Z_1
		\end{tikzcd}
	\end{center}
	where the map $ \rho_1 \colon X_1' \dashrightarrow X_2' $ is a $(K_{X_1'}+B_1'+M_1')$-MMP over $ Z_1 $ with scaling of $ N_1' + P_1' $ and the map $ h_2 \colon (X_2',B_2'+M_2') \to (X_2,B_2+M_2) $ is a dlt blow-up of $ (X_2,B_2+M_2) $. Moreover, this MMP is also a $(K_{X_1'}+B_1'+M_1')$-MMP over $ Z $ with scaling of $  N_1' + P_1' $, and if we set $P_2 := (\pi_1)_* P_1 $, $N_2 := (\pi_1)_* N_1 $, $ P_2' :=(\rho_1)_* P_1' $ and $N_2' :=(\rho_1)_* N_1'$, then we have
	\begin{equation}\label{eq:2_lifting_MMP_with_scaling}
		N_2' + P_2' = h_2^* (N_2 + P_2) .
	\end{equation}
	
	A priori, the map $ \rho_1 \colon X_1' \dashrightarrow X_2' $ is a $(K_{X_1'}+B_1'+M_1')$-MMP over $ Z_1 $ with scaling of an ample divisor. However, the following crucial observation, which is also contained in the proof of \cite[Lemma 2.13]{LT22b}, allows us to view $\rho_1$ as a $(K_{X_1'}+B_1'+M_1')$-MMP with scaling of $ N_1' + P_1' $ over $Z_1$ or over $ Z $, as mentioned above. Specifically, if we denote by $ Y^j \dashrightarrow Y^{j+1} $ the steps of the $(K_{X_1'}+B_1'+M_1')$-MMP over $ Z_1 $ with scaling of an ample divisor, where $ Y^1 := X_1' $ and $ Y^k := X_2' $, and by $B^j$, $M^j$, $N^j$ and $P^j$ the pushforwards of $B_1'$, $M_1'$, $N_1'$ and $P_1'$, respectively, to $Y^j$, and if we consider the nef thresholds $\nu_j$ at the steps of this MMP, i.e., 
	$$ \nu_j := \inf \big\{ t \in \R_{\geq 0} \mid K_{Y^j} + (B^j + t N^j) + (M^j + t P^j) \text{ is nef over } Z \big\} , \ j \in \{1,\dots,k \} , $$
	then we can check that $ \nu_j = \lambda_1 $ for every $ j \in \{1, \dots, k-1 \} $.
	
	Therefore, if we are given instead a $ (K_{X_1}+B_1+M_1)$-MMP over $Z$ with scaling of $N_1 + P_1$, then by repeating the above procedure and by taking the previous observation, Lemma \ref{lem:properties_of_MMP}(ii) and Remark \ref{rem:lifting_Picard_number_drops} into account, we obtain the following result, which plays a central role in the proofs of Corollaries \ref{cor:Hash20d_3.9} and \ref{cor:Hash20d_3.13}.
	
	\begin{thm} \label{thm:lifting_MMP_with_scaling}
		Let $ (X_1/Z,B_1+M_1) $ be an NQC lc g-pair. Let $ P_1 $ be the pushforward to $ X_1 $ of an NQC divisor (over $ Z $) on a birational model of $ X_1 $ and let $ N_1 $ be an effective $ \R $-divisor on $ X_1 $ such that $ N_1 + P_1 $ is $ \R $-Cartier. Assume that the NQC g-pair $ \big( X_1, (B_1+N_1) + (M_1+P_1) \big) $ is lc and that the divisor $ K_{X_1} + B_1 + N_1 + M_1 + P_1 $ is nef over $ Z $.
		Consider a $(K_{X_1} + B_1 + M_1)$-MMP over $Z$ with scaling of $ N_1 + P_1 $:
		\begin{center}
			\begin{tikzcd}[column sep = 1em, row sep = large]
				(X_1,B_1+M_1) \arrow[dr, "\theta_1" swap] \arrow[rr, dashed, "\pi_1"] && (X_2,B_2+M_2) \arrow[dl, "\theta_1^+"] \arrow[dr, "\theta_2" swap] \arrow[rr, dashed, "\pi_2"] && (X_3,B_3+M_3) \arrow[dl, "\theta_2^+"] \arrow[rr, dashed, "\pi_3"] && \cdots \\
				& Z_1 && Z_2
			\end{tikzcd}
		\end{center}
		and denote by $N_i$ and $P_i$ the pushforwards of $N_1$ and $P_1$, respectively, to $X_i$. Then there exists a diagram
		\begin{center}
			\begin{tikzcd}[column sep = 1em, row sep = large]
				(X_1',B_1'+M_1') \arrow[d, "h_1" swap] \arrow[rr, dashed, "\rho_1"] && (X_2',B_2'+M_2') \arrow[d, "h_2" swap] \arrow[rr, dashed, "\rho_2"] && (X_3',B_3'+M_3') \arrow[d, "h_3" swap] \arrow[rr, dashed, "\rho_3"] && \dots 
				\\ 
				(X_1,B_1+M_1) \arrow[dr, "\theta_1" swap] \arrow[rr, dashed, "\pi_1"] && (X_2,B_2+M_2) \arrow[dl, "\theta_1^+"] \arrow[dr, "\theta_2" swap] \arrow[rr, dashed, "\pi_2"] && (X_3,B_3+M_3) \arrow[dl, "\theta_2^+"] \arrow[rr, dashed, "\pi_3"] && \dots \\
				& Z_1 && Z_2
			\end{tikzcd}
		\end{center}
		where, for each $i \geq 1$, 
		\begin{itemize}
			\item the map $ \rho_i \colon X_i' \dashrightarrow X_{i+1}' $ is a $(K_{X_i'}+B_i'+M_i')$-MMP over $Z$ with scaling of $ N_i' + P_i' $, where the divisors $ N_i' $ and $ P_i' $ on $ X_i' $ are defined as in \eqref{eq:2_lifting_MMP_with_scaling} (or as in \eqref{eq:1_lifting_MMP_with_scaling} for $i=1$), and
			
			\item the map $ h_i \colon (X_i',B_i'+M_i') \to (X_i,B_i+M_i) $ is a dlt blow-up.
		\end{itemize}
		In particular, the sequence on top of the above diagram is a $ (K_{X_1'}+B_1'+M_1') $-MMP over $ Z $ with scaling of $ N_1' + P_1' = h_1^* (N_1 + P_1) $, where $ (X_1'/Z,B_1'+M_1') $ is an NQC $ \Q $-factorial dlt g-pair.
		
		Furthermore, if we denote by $\lambda_i$ the nef thresholds at the steps of the $(K_{X_1} + B_1 + M_1)$-MMP over $Z$ with scaling of $ N_1 + P_1 $, that is,
		\[ \lambda_i := \inf \{ t \in \R_{\geq 0} \mid K_{X_i} + (B_i + t N_i) + (M_i + t P_i) \text{ is nef over } Z \} , \]
		and by $\mu_i$ the nef thresholds at the corresponding steps of the $ (K_{X_1'}+B_1'+M_1') $-MMP over $ Z $ with scaling of $ N_1' + P_1' = h_1^* (N_1 + P_1) $, that is,
		\[ \mu_i := \inf \{ t \in \R_{\geq 0} \mid K_{X_i'} + (B_i' + t N_i') + (M_i' + t P_i') \text{ is nef over } Z \} , \]
		then it holds that 
		\[ \lambda_i = \mu_i \quad \text{for every } i \geq 1 . \]
		
		Finally, there exists an integer $k \geq 1$ such that the map $ \pi_i \colon X_i \dashrightarrow X_{i+1} $ is small and the induced linear map $ N^1(X_i/Z)_\R \to N^1(X_{i+1}/Z)_\R $ is an isomorphism for each $ i \geq k $.
	\end{thm}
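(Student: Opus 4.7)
The plan is to iterate the one-step construction described in the paragraphs just before the theorem statement, and then to verify the three remaining claims: that the concatenated sequence is a single MMP with scaling over $Z$, that the nef thresholds match ($\lambda_i = \mu_i$), and that the Picard numbers eventually stabilize.

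First I would build the diagram by induction on $i$. The base case $i=1$ is exactly the construction carried out above: choose the dlt blow-up $h_1$, take a log resolution $f_1\colon W\to X_1$ factoring through $X_1'$ on which a nef lift $P_W$ of $P_1$ exists, apply Lemma \ref{lem:pullback_P+N} to decompose $f_1^*(P_1+N_1)=P_W+(f_1)_*^{-1}N_1+E_1$, and push the result down to $X_1'$ to define $N_1'$ and $P_1'$ satisfying $N_1'+P_1'=h_1^*(N_1+P_1)$. The arrow $\rho_1$ and the next dlt blow-up $h_2$ are then produced exactly as in the one-step discussion. For the inductive step one simply repeats the construction applied to $(X_i,B_i+M_i)$ with the divisors $N_i$ and $P_i$, using relation \eqref{eq:2_lifting_MMP_with_scaling} at the previous stage to guarantee that $N_i'+P_i'=h_i^*(N_i+P_i)$; this is precisely what allows Lemma \ref{lem:MMP_with_scaling} to be applied again so that $\rho_i$ is an MMP with scaling of $N_i'+P_i'$.

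To see that the concatenated sequence on top is a single $(K_{X_1'}+B_1'+M_1')$-MMP over $Z$ with scaling of $N_1'+P_1'$, I would invoke the observation already recorded in the discussion preceding the theorem: within the block of steps that lifts a single step $\pi_i$, the intermediate nef thresholds all equal $\lambda_i$. Hence the full sequence of nef thresholds along the lifted MMP is non-increasing, and the scaling property propagates from one block to the next. The equality $\lambda_i=\mu_i$ then follows from
\[ K_{X_i'}+B_i'+M_i'+t(N_i'+P_i') \sim_{\R} h_i^*\bigl(K_{X_i}+B_i+M_i+t(N_i+P_i)\bigr), \]
combined with the elementary fact that for the projective birational morphism $h_i$ an $\R$-Cartier divisor on $X_i$ is nef over $Z$ if and only if its pullback is nef over $Z$ (pullback preserves nefness, and the converse uses that every curve on $X_i$ contracted over $Z$ lifts to a curve on $X_i'$ contracted over $Z$, together with the projection formula).

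The final stabilization assertion is proved exactly as in Theorem \ref{thm:lifting_MMP}: Lemma \ref{lem:properties_of_MMP}(ii) together with Remark \ref{rem:lifting_Picard_number_drops} produce an integer $\ell_1$ beyond which every $\pi_i$ is small and each induced map $N^1(X_i/Z)_\R\to N^1(X_{i+1}/Z)_\R$ is injective; the Picard numbers $\rho(X_i/Z)$ then form a non-decreasing sequence bounded above by $\rho(X_1'/Z)<+\infty$, and therefore stabilize from some $k\geq\ell_1$ onward, making the injective maps isomorphisms. The most delicate point I expect is the inductive bookkeeping of compatible log resolutions and nef lifts of the nef parts needed to apply Lemma \ref{lem:pullback_P+N} at each stage, but since that lemma is designed precisely for this situation, the argument should proceed without essential new ingredients beyond the one-step case.
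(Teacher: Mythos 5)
Your proposal is correct and follows essentially the same route as the paper, which likewise obtains the theorem by iterating the one-step lifting construction (with $N_i'+P_i'=h_i^*(N_i+P_i)$ via Lemma \ref{lem:pullback_P+N}), using the recorded observation that the intermediate nef thresholds within each lifted block equal $\lambda_i$, and repeating the Picard-number stabilization argument of Theorem \ref{thm:lifting_MMP}. Your explicit justification of $\lambda_i=\mu_i$ via the pullback relation and the birational invariance of relative nefness is a correct spelling-out of a point the paper leaves implicit.
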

	
	We conclude this section with some clarifying comments about Theorem \ref{thm:lifting_MMP_with_scaling}. 
	Each map $ \rho_i \colon X_i' \dashrightarrow X_{i+1}' $ is, in general, the composite of finitely many steps of a $(K_{X_i'}+B_i'+M_i')$-MMP over $ Z $ with scaling of $ N_i' + P_i' = h_i^* (N_i + P_i) $, but not necessarily just one step (e.g., a flip). Additionally, the nef thresholds at the steps of this MMP, denoted by 
	$$ \nu_j^{(i)} , \ i \geq 1 , \ 1 \leq j \leq k_i , $$ 
	where $ \nu_1^{(i)}$ is computed on $X_i'$ and $ \nu_{k_i}^{(i)} $ is computed on $X_{i+1}'$, 
	satisfy the following properties:
	\[ \nu_j^{(i)} = \lambda_i \ \text{ for all } 1 \leq j < k_i \quad \text{and} \quad \nu_{k_i}^{(i)} = \lambda_{i+1} , \]
	where $ \lambda_i $ denotes the nef threshold at the $i$-th step of the given $(K_{X_1} + B_1 + M_1)$-MMP over $Z$ with scaling of $ N_1 + P_1 $. Finally, the sequence $ \{ \mu_i \}_{i=1}^{+\infty} $ of nef thresholds defined above forms a subsequence of $ \big\{ \nu_j^{(i)} \big\}_{i=1, \ j=1}^{+\infty, \ k_i} $, namely, we have
	\[ \mu_i = \nu_1^{(i)} = \lambda_i \quad \text{for each } i \geq 1 . \]

	\section{Proof of Theorem \ref{thm:mainthm_intro} and an application}
	
	We prove here our main result, Theorem \ref{thm:mainthm_intro}. This is accomplished by removing from \cite[Proposition A.3 and Theorem 1.2]{LT22b} the assumption that the underlying variety is $ \Q $-factorial with the aid of \cite[Theorem 1.5]{Xie22}; see Proposition \ref{prop:mainprop} and Theorem \ref{thm:mainthm}, respectively. The strategy for the proof of these two results is essentially the same as the one employed in \cite{LT22b}, so we only outline their proofs below and we refer to op.\ cit.\ for the details, although there are some additional complications now due to the absence of $\Q$-factoriality. 
	
	\begin{prop}\label{prop:mainprop}
		Let $ (X/Z,B+M) $ be an NQC lc g-pair. Assume that $ (X,B+M) $ has a minimal model in the sense of Birkar-Shokurov over $ Z $ or that $ K_X+B+M $ is not pseudo-effective over $ Z $. Then there exists a $ (K_X + B + M) $-MMP over $Z$ which terminates. In particular, $ (X,B+M) $ has a minimal model or a Mori fiber space over $ Z $.
	\end{prop}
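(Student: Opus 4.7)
The plan is to run a $(K_X+B+M)$-MMP over $Z$ with scaling of an auxiliary ample divisor $A$, and then to verify termination separately in each of the two cases by combining the Contraction theorem \cite[Theorem 1.5]{Xie22}, the lifting construction of Theorem \ref{thm:lifting_MMP}, and the refined termination criteria furnished by \cite[Theorem 1.2(1)]{LX22a} and Theorem \ref{thm:LT22b_4.1}.

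Concretely, fix a general effective $\R$-Cartier $\R$-divisor $A$ on $X$ which is ample over $Z$ and sufficiently small so that $\big(X/Z,(B+A)+M\big)$ is NQC lc, and so that $K_X+B+A+M$ is nef over $Z$; Lemma \ref{lem:MMP_with_scaling} then permits us to run a $(K_X+B+M)$-MMP over $Z$ with scaling of $A$. Let $\lambda_i$ denote the nef thresholds and set $\lambda:=\lim_{i\to\infty}\lambda_i\in[0,1]$. If $K_X+B+M$ is not pseudo-effective over $Z$, I lift this MMP via Theorem \ref{thm:lifting_MMP} to a $(K_{X_1'}+B_1'+M_1')$-MMP over $Z$ on an NQC $\Q$-factorial dlt blow-up $(X_1',B_1'+M_1')$ of $(X,B+M)$. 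Since $K_{X_1'}+B_1'+M_1'\sim_{\R}h_1^*(K_X+B+M)$, the divisor $K_{X_1'}+B_1'+M_1'$ is also not pseudo-effective over $Z$, so the lifted MMP terminates by \cite[Theorem 1.2(1)]{LX22a}. The eventual stabilization of relative Picard numbers, provided by Theorem \ref{thm:lifting_MMP} together with Lemma \ref{lem:properties_of_MMP}(ii), then forces the downstairs MMP to terminate as well, necessarily with a Mori fiber space structure on $(X,B+M)$ over $Z$.

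If instead $(X,B+M)$ admits a minimal model $\varphi\colon X\dashrightarrow X'$ in the sense of Birkar-Shokurov over $Z$, I apply Theorem \ref{thm:LT22b_4.1} with $P=A$ and $N=0$. When some $\lambda_k$ equals $\lambda$, from step $k$ onward the scaling threshold is constant, and Lemma \ref{lem:trivial_MMP} combined with the trivial-MMP argument of \cite[Proposition A.3]{LT22b} delivers termination of the tail. When $\lambda\neq\lambda_i$ for every $i$, Theorem \ref{thm:LT22b_4.1} reduces termination to producing a minimal model in the sense of Birkar-Shokurov of $\big(X,B+(M+\lambda A)\big)$ over $Z$. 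Such a model is obtained from $\varphi$ by enlarging the nef part of $(X',B'+M')$ by the pushforward $\lambda\varphi_*A$: one checks, via Lemma \ref{lem:pullback_P+N} and the Negativity lemma, that $\lambda\varphi_*A$ is the pushforward of an NQC divisor on a common resolution, that $K_{X'}+B'+M'+\lambda\varphi_*A$ remains nef over $Z$, and that the strict discrepancy inequalities of Definition \ref{dfn:MM_MFS}(a) for $\varphi$-exceptional divisors persist when $\lambda A$ is added, since $A\geq 0$.

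The main obstacle is this last verification, namely transferring the minimal model in the sense of Birkar-Shokurov of the unperturbed g-pair to one of the perturbed g-pair with enlarged nef part, while keeping track of the fact that $X$ and $X'$ need not be $\Q$-factorial. Modulo this bookkeeping, the proof is essentially the $\Q$-factorial argument of \cite[Proposition A.3 and Theorem 1.2]{LT22b}, which carries over verbatim once one has the non-$\Q$-factorial MMP machinery of \cite{Xie22,LX22b} together with the lifting procedure of Theorem \ref{thm:lifting_MMP} to pass between the given g-pair and its dlt blow-up.
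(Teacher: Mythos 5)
Your overall framework (run a $(K_X+B+M)$-MMP over $Z$ with scaling of $A$, pass to the limit $\lambda$ of the nef thresholds, and invoke Theorem \ref{thm:LT22b_4.1}) matches the paper's, but the step you yourself flag as \enquote{the main obstacle} is a genuine gap, not bookkeeping. You propose to produce the required minimal model in the sense of Birkar--Shokurov of the perturbed g-pair by taking the given model $\varphi\colon (X,B+M)\dashrightarrow(X',B'+M')$ and enlarging its nef part by $\lambda\varphi_*A$. This fails: there is no reason for $K_{X'}+B'+M'+\lambda\varphi_*A$ to be nef over $Z$, since the pushforward of an ample divisor under a birational contraction is in general not nef (this is precisely why one runs an MMP \emph{with scaling} in the first place), and likewise the discrepancy inequalities do not \enquote{persist since $A\geq 0$}: for a $\varphi$-exceptional divisor $F$, the two discrepancies differ by the multiplicity of $\lambda\big(q^*\varphi_*A-p^*A\big)$ along the strict transform of $F$ on a common resolution $(p,q)\colon W\to X\times X'$, and this divisor has no sign. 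The paper avoids the issue entirely: when $\lambda=0$ the perturbed pair \emph{is} $(X,B+M)$, so hypothesis (a) supplies the Birkar--Shokurov minimal model directly; when $\lambda>0$ one puts $\lambda A$ into the \emph{boundary} (i.e., applies Theorem \ref{thm:LT22b_4.1} with $N=A$ and $P=0$, not the other way around) and invokes \cite[Theorem 1.2(1)]{LX22a}, which produces a Birkar--Shokurov minimal model of $\big(X,(B+\lambda A)+M\big)$ from scratch, because its boundary now contains an ample divisor and $K_X+B+\lambda A+M$ is pseudo-effective over $Z$ as a limit of pushforwards of nef divisors.

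Your treatment of the non-pseudo-effective case is also off. \cite[Theorem 1.2(1)]{LX22a} is an existence statement for minimal models in the sense of Birkar--Shokurov of g-pairs whose boundary contains an ample divisor; it does not assert termination of an MMP for a non-pseudo-effective g-pair. Moreover, the lifted MMP produced by Theorem \ref{thm:lifting_MMP} is a concatenation of MMPs with scaling over the intermediate varieties $Z_i$, not an MMP with scaling of a single ample divisor over $Z$, so termination statements of that type cannot be applied to it directly; the stabilization of relative Picard numbers only shows that the downstairs MMP eventually consists of small maps, not that it stops. In the paper both hypotheses are handled by one and the same contradiction argument: if the MMP with scaling of $A$ did not terminate, then $K_X+B+M+\lambda A$ would be pseudo-effective over $Z$; non-pseudo-effectivity of $K_X+B+M$ then forces $\lambda>0$, which places you in the case already covered by \cite[Theorem 1.2(1)]{LX22a}, and Theorem \ref{thm:LT22b_4.1} yields the contradiction.
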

	
	\begin{proof}
		To prove the statement, we follow closely the proofs of \cite[Proposition 6.2]{HH20} and \cite[Propositions 5.1 and A.3]{LT22b}. 
		
		First, since we work here without the assumption that $ X $ is $ \Q $-factorial, by applying Theorem \ref{thm:lifting_MMP} and by replacing $(X,B+M)$ with an appropriate g-pair, we may assume that any $(K_X+B+M)$-MMP over $Z$
		\[ (X_1, B_1+M_1) := (X,B+M) \dashrightarrow (X_2,B_2+M_2) \dashrightarrow \cdots \dashrightarrow (X_i,B_i+M_i)\dashrightarrow \cdots \]
		has the property that for each $i \geq 1$, the map $X_i \dashrightarrow X_{i+1}$ is small and $\rho(X_i/Z) = \rho(X_{i+1}/Z)$; see the second and third paragraph of the proof of \cite[Proposition 6.2]{HH20}, as well as the second paragraph of the proof of \cite[Proposition 5.1]{LT22b}.
		
		Next, by arguing by contradiction and by repeating essentially verbatim the proof of \cite[Proposition 5.1]{LT22b} (starting from the third paragraph of op.\ cit.\ and also replacing [14, Section 3.3] = \cite[Section 3.3]{HanLi22} with \cite[Theorem 1.4]{Chen23}), we infer that eventually there exists a $ (K_X+B+M) $-MMP over $ Z $ with scaling of an (appropriately chosen) ample divisor $ A $, which consists only of flips, whose nef thresholds satisfy $ \lambda_i > \lambda_{i+1} $ for every $ i \geq 1 $, and which does not terminate by assumption. We remark in passing that for the proof of the above strict inequality in our setting we invoke Lemma \ref{lem:properties_of_MMP}(i) as in the fifth paragraph of the proof of \cite[Proposition 6.2]{HH20} and we also need to replace [16, Theorem 1.3(4)(c)] = \cite[Theorem 1.1(5)(c)]{HL23} with (the third bullet of) \cite[Theorem 1.5]{Xie22}. 
		
		Finally, we set $\lambda := \lim_{i\to + \infty}\lambda_i$ and we observe that $\lambda < \lambda_i$ for every $ i \geq 1 $. Since each divisor $ K_{X_i} + B_i + M_i + \lambda_i A_i $ is nef over $ Z $ by construction of the MMP with scaling, where $(X_i,B_i+M_i)$ is the NQC lc g-pair appearing at the $ i $-th step of this MMP and $A_i$ is the strict transform of $A$ on $X_i$, the divisor $K_X+B+M+\lambda A$ is pseudo-effective over $Z$. By the assumptions of the proposition when $ \lambda = 0 $ or by \cite[Theorem 1.3(1)]{LX23a} when $ \lambda > 0 $ we conclude that the g-pair $\big(X,(B+\lambda A)+M\big)$ has a minimal model in the sense of Birkar-Shokurov over $Z$. Hence, the above MMP terminates by Theorem \ref{thm:LT22b_4.1}, a contradiction. 
	\end{proof}
	
	\begin{thm}\label{thm:mainthm}
		Let $ (X/Z,B+M) $ be an NQC lc g-pair. Assume that $ (X,B+M) $ has a minimal model in the sense of Birkar-Shokurov over $ Z $ or that $ K_X+B+M $ is not pseudo-effective over $ Z $. Let $ A $ be an effective $ \R $-Cartier $ \R $-divisor on $ X $ which is ample over $ Z $ such that $ \big( X/Z, (B+A)+M \big) $ is lc and $ K_X + B + A + M $ is nef over $Z$. Then there exists a $ (K_X + B + M) $-MMP over $Z$ with scaling of $A$ that terminates. In particular, $ (X,B+M) $ has a minimal model or a Mori fiber space over $ Z $.
	\end{thm}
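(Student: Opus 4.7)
The plan is to argue by contradiction, adapting the strategy employed in the proof of Proposition \ref{prop:mainprop} to the specific ample divisor $A$ fixed in the statement. By Lemma \ref{lem:MMP_with_scaling} one may run a $(K_X+B+M)$-MMP over $Z$ with scaling of $A$, and we suppose for contradiction that no such MMP terminates. Writing $(X_i,B_i+M_i)$ for the g-pair at step $i$, $A_i$ for the strict transform of $A$ on $X_i$, and $\lambda_i \in (0,1]$ for the corresponding nef thresholds (which form a non-increasing sequence since $K_X+B+A+M$ is nef over $Z$), set $\lambda := \lim_{i\to\infty}\lambda_i$. Passing to the limit in the nefness over $Z$ of each $K_{X_i}+B_i+M_i+\lambda_i A_i$ shows that $K_X+B+M+\lambda A$ is pseudo-effective over $Z$.

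The next step is to produce a minimal model in the sense of Birkar-Shokurov over $Z$ of the NQC lc g-pair $\bigl(X,(B+\lambda A)+M\bigr)$, obtained as a convex combination of $(X,B+M)$ and $\bigl(X,(B+A)+M\bigr)$ (and coinciding with $(X,B+M)$ when $\lambda=0$). When $\lambda > 0$, the divisor $\lambda A$ is still ample over $Z$, so \cite[Theorem 1.2(1)]{LX22a} applied to $\bigl(X,(B+\lambda A)+M\bigr)$ supplies the required model. When $\lambda = 0$, the pseudo-effectiveness of $K_X+B+M$ over $Z$ rules out the second branch of the theorem's hypothesis, so the first branch delivers the model directly. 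Under the generic condition $\lambda < \lambda_i$ for every $i$, Theorem \ref{thm:LT22b_4.1} applied with $N=0$ and $P=A$ now forces termination, contradicting the non-termination assumption.

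The main obstacle is the boundary case $\lambda = \lambda_i$ for some $i$, which lies outside the scope of Theorem \ref{thm:LT22b_4.1}. After relabeling we may assume $\lambda_j = \lambda$ for every $j \geq 1$, so that each contracted extremal ray is $\bigl(K_{X_j}+B_j+M_j+\lambda A_j\bigr)$-trivial and the MMP becomes a $(K+B+M+\lambda A)$-trivial MMP. The intended way to dispose of this case is to lift the MMP via Theorem \ref{thm:lifting_MMP_with_scaling} to a parallel MMP on an NQC $\Q$-factorial dlt blow-up with identical nef thresholds, and then to invoke the $\Q$-factorial predecessor \cite[Theorem 1.2]{LT22b}; the eventual flip-only stabilization of Picard numbers furnished by the last part of Theorem \ref{thm:lifting_MMP_with_scaling} then translates termination upstairs into termination of the original MMP. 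Once termination is established, the structural dichotomy is immediate: the final step produces a minimal model of $(X,B+M)$ over $Z$ when $K_X+B+M$ is pseudo-effective over $Z$, and a Mori fiber space over $Z$ otherwise.
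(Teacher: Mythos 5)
Your treatment of the generic case $\lambda < \lambda_i$ for all $i$ is exactly the paper's: the pseudo-effectivity of $K_X+B+M+\lambda A$ over $Z$, the Birkar--Shokurov minimal model of $\big(X,(B+\lambda A)+M\big)$ supplied by \cite[Theorem 1.2(1)]{LX22a} when $\lambda>0$ and by the hypothesis when $\lambda=0$, and then Theorem \ref{thm:LT22b_4.1}. The divergence, and the gap, is in the boundary case $\lambda=\lambda_{i}$ for some $i$. Your plan there is to lift via Theorem \ref{thm:lifting_MMP_with_scaling} and invoke the $\Q$-factorial predecessor \cite[Theorem 1.2]{LT22b} upstairs, but this fails for two reasons. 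First, the scaling divisor upstairs is $h^*A=N_1'+P_1'$, which is nef and big but not ample on the dlt blow-up $X'$, so the hypotheses of \cite[Theorem 1.2]{LT22b} (which, like the present theorem, requires the scaling divisor to be ample) are not satisfied; the result that does allow general scaling divisors, namely \cite[Theorem 4.1]{LT22b} (= Theorem \ref{thm:LT22b_4.1}), requires precisely the condition $\lambda\neq\lambda_i$ that fails here. Second, even granting applicability, \cite[Theorem 1.2]{LT22b} is an existence statement --- it produces \emph{some} terminating MMP with scaling upstairs --- whereas your contradiction requires termination of the \emph{specific} lifted MMP $\rho_1,\rho_2,\dots$; existence of one terminating run does not preclude the lifted run from being infinite.

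What the paper does instead in the boundary case (following the proof of \cite[Theorem 1.2]{LT22b} with the substitutions listed) is to stay downstairs: after relabelling, all nef thresholds equal $\lambda>0$ and the remaining MMP is $(K+B+\lambda A+M)$-trivial, hence is also a $\big(K_{X_{i_0}}+B_{i_0}+(1-\varepsilon)\lambda A_{i_0}+M_{i_0}\big)$-MMP for small $\varepsilon$. One then applies Proposition \ref{prop:mainprop} to the g-pair $\big(X_{i_0},(B_{i_0}+(1-\varepsilon)\lambda A_{i_0})+M_{i_0}\big)$ --- whose canonical class is either not pseudo-effective over $Z$ or has a Birkar--Shokurov minimal model by \cite[Theorem 1.2(1)]{LX22a} --- to obtain a terminating MMP for it, and Lemma \ref{lem:trivial_MMP} guarantees that for $\varepsilon\in(0,\varepsilon_0)$ \emph{any} such MMP is $(K+B+\lambda A+M)$-trivial, hence qualifies as a continuation of the $(K_X+B+M)$-MMP with scaling of $A$ with nef thresholds equal to $\lambda$. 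This is the mechanism that closes the boundary case and that your proposal is missing; note that Lemma \ref{lem:trivial_MMP} delivers triviality for \emph{every} MMP of the $\varepsilon$-perturbed pair, which is exactly the universal quantifier your lifting argument lacks.
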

	
	\begin{proof}
		We may repeat verbatim the proof of \cite[Theorem 1.2]{LT22b} (which is essentially the same as that of \cite[Theorem 5.2]{LT22b} with $P = A$ and $N = 0$), except that we replace \cite[Theorem A.2]{LT22b} with \cite[Theorem 1.3(1)]{LX23a}, \cite[Proposition 2.12]{LT22b} with Lemma \ref{lem:trivial_MMP}, \cite[Proposition A.3]{LT22b} with Proposition \ref{prop:mainprop}, and finally \cite[Theorem 4.1]{LT22b} with Theorem \ref{thm:LT22b_4.1}.
	\end{proof}
	
	We conclude this brief section with an application of Theorem \ref{thm:mainthm}, which generalizes \cite[Lemma 2.12]{Hash20d} to the setting of g-pairs and improves on \cite[Lemma 2.10]{LX23a} by removing the assumption that the underlying variety is $ \Q $-factorial klt. For brevity we only indicate below the necessary modifications to the proof of \cite[Lemma 2.10]{LX23a}.
	
	\begin{lem}\label{lem:scaling_numbers_to_zero}
		Let $(X/Z,B+M)$ be an NQC lc g-pair. Let $H$ be an effective $\R$-Cartier $\R$-divisor on $X$ such that the NQC g-pair $\big(X/Z, (B+H)+M \big)$ is lc and the divisor $K_X+B+H+M$ is nef over $Z$. Assume also that for any $\nu \in (0,1]$ the NQC lc g-pair $ \big(X, (B + \nu H) + M \big)$ has a minimal model in the sense of Birkar-Shokurov over $ Z $. Then we can construct a $(K_X+B+M)$-MMP over $Z$ with scaling of $H$
		\[ (X_1, B_1+M_1) := (X,B+M) \dashrightarrow (X_2,B_2+M_2) \dashrightarrow \cdots \dashrightarrow (X_i,B_i+M_i)\dashrightarrow \cdots \]
		with the following property: if we denote by $H_i$ the strict transform of $H$ on $X_i$ and by 
		$$ \lambda_i := \inf \{ t \geq 0 \mid K_{X_i}+B_i+tH_i+M_i \text{ is nef over } Z \} $$
		the corresponding nef threshold, then it holds that 
		\[ \lim_{i \to +\infty}\lambda_i=0 , \]
		regardless of whether this $ (K_X+B+M) $-MMP over $ Z $ with scaling of $H$ terminates or not.
	\end{lem}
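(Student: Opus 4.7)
The plan is to construct the $(K_X+B+M)$-MMP over $Z$ with scaling of $H$ via Lemma \ref{lem:MMP_with_scaling} (applied with $N=H$ and $P=0$), and then to prove $\lim \lambda_i = 0$ by contradiction using Theorem \ref{thm:LT22b_4.1}. The MMP produces a sequence $(X_i,B_i+M_i)$ together with the strict transforms $H_i$ of $H$ and non-increasing nef thresholds $\lambda_i \in [0,1]$, where $\lambda_1 \le 1$ because $K_X+B+H+M$ is nef over $Z$. If the MMP terminates after finitely many steps, then $\lambda_j=0$ for $j$ sufficiently large and the conclusion holds trivially; hence assume it is infinite with $\lambda_i>0$ for every $i$, and set $\lambda := \lim_{i\to +\infty} \lambda_i \ge 0$.

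Suppose for contradiction that $\lambda > 0$. Applying the hypothesis of the lemma with $\nu = \lambda \in (0,1]$, the NQC lc g-pair $(X,(B+\lambda H)+M)$ has a minimal model in the sense of Birkar-Shokurov over $Z$. If $\lambda_i \neq \lambda$ for every $i$ (equivalently, $\lambda_i$ decreases to $\lambda$ strictly), then Theorem \ref{thm:LT22b_4.1} (invoked with $N=H$ and $P=0$) immediately forces termination of the MMP, contradicting its assumed infinitude; this is the core case, directly analogous to the argument in the proof of Proposition \ref{prop:mainprop}.

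The main obstacle is the degenerate sub-case where $\lambda_{i_0}=\lambda$ for some $i_0$, which by monotonicity forces $\lambda_i=\lambda$ for every $i \ge i_0$, so that every MMP step beyond $i_0$ contracts a $(K_{X_i}+B_i+M_i)$-negative extremal ray that is numerically trivial with respect to $K_{X_i}+B_i+\lambda H_i+M_i$; in this situation the hypothesis of Theorem \ref{thm:LT22b_4.1} fails. To handle this, the plan is to exploit the flexibility inherent in the MMP construction: at each step of the MMP, among the $(K_{X_i}+B_i+M_i)$-negative extremal rays $R$ realizing $(K_{X_i}+B_i+\lambda_i H_i + M_i)\cdot R = 0$, I would select one so that the subsequent nef threshold $\lambda_{i+1}$ is strictly smaller than $\lambda_i$ whenever this is possible.

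To verify that such a selection can always be made unless the MMP has effectively terminated, I would lift the MMP to a $\Q$-factorial dlt model via Theorem \ref{thm:lifting_MMP_with_scaling} (which preserves all nef thresholds), apply the boundedness of the length of extremal rays encoded in Lemma \ref{lem:trivial_MMP} to control the $(K+B+\lambda H+M)$-trivial rays, and use the fact that the BS minimal model of $(X,(B+\lambda H)+M)$ transfers to each $(X_i,(B_i+\lambda H_i)+M_i)$ with $i \ge i_0$ along the $(K+B+\lambda H+M)$-trivial sub-MMP. This reduces the remaining sub-case to the strict-decrease case treated above, yielding the desired contradiction and completing the proof.
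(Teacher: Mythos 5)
Your endgame---reduce to $\lambda:=\lim_i\lambda_i>0$ with $\lambda_i\neq\lambda$ for all $i$ and then invoke Theorem \ref{thm:LT22b_4.1} with $\nu=\lambda$---is exactly the final step of the paper's argument, which simply imports the proof of \cite[Lemma 2.9]{LX22a} with Lemma \ref{lem:trivial_MMP}, Theorem \ref{thm:mainthm} and Theorem \ref{thm:LT22b_4.1} substituted for the $\Q$-factorial ingredients. The gap is in your treatment of the degenerate case $\lambda_i\equiv\lambda$ for $i\geq i_0$, which is the actual content of the lemma. There are two problems. First, a structural one: you fix an arbitrary MMP produced by Lemma \ref{lem:MMP_with_scaling} and then argue by contradiction about its limit, but the conclusion is simply false for an arbitrary MMP with scaling of $H$ --- nothing known rules out a badly chosen run that stays at constant threshold $\lambda>0$ forever, since Proposition \ref{prop:mainprop} and Theorem \ref{thm:mainthm} only assert that \emph{some} MMP terminates, not every one. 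The lemma is an existence statement (\enquote{we can construct}), so the choices must be made correctly from the outset; one cannot retroactively \enquote{exploit the flexibility} of a run that has already been fixed. Second, your proposed mechanism --- at each step pick a $(K_{X_i}+B_i+\lambda_i H_i+M_i)$-trivial negative ray that makes $\lambda_{i+1}<\lambda_i$, and claim such a ray always exists unless the MMP has terminated --- is not correct: in general no single contraction drops the threshold, and one must pass through finitely many steps at constant threshold before it can drop. The finiteness of that constant-threshold phase is precisely what needs to be proved, and your sketch assumes it.

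The correct construction runs in rounds. At a model $(X_k,B_k+M_k)$ with threshold $\lambda_k>0$, apply Lemma \ref{lem:trivial_MMP} with $N=\lambda_k H_k$ and $P=0$ to obtain $\varepsilon_k>0$ such that any $\big(K_{X_k}+B_k+(1-\varepsilon_k)\lambda_k H_k+M_k\big)$-MMP over $Z$ is $\big(K_{X_k}+B_k+\lambda_k H_k+M_k\big)$-trivial. The g-pair $\big(X_k,(B_k+(1-\varepsilon_k)\lambda_k H_k)+M_k\big)$ has a minimal model in the sense of Birkar-Shokurov over $Z$ (your hypothesis with $\nu=(1-\varepsilon_k)\lambda_k$, transferred along the partial MMP already run), so Theorem \ref{thm:mainthm} provides a \emph{terminating} MMP for it with scaling of an ample divisor. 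Its steps are trivial for $K+B+\lambda_k H+M$ and negative for $K+B+M$, hence are legitimate steps of the $(K_X+B+M)$-MMP with scaling of $H$ at threshold exactly $\lambda_k$; when this finite auxiliary MMP ends, the threshold has dropped to at most $(1-\varepsilon_k)\lambda_k<\lambda_k$. Note that Lemma \ref{lem:trivial_MMP} is used here in the opposite direction from what you suggest: not to control the trivial rays of your fixed run, but to certify that the auxiliary MMP's rays are trivial and therefore admissible. Concatenating the rounds yields an MMP with $\lambda_i>\lambda$ for every $i$, after which your Theorem \ref{thm:LT22b_4.1} contradiction applies verbatim; the detour through Theorem \ref{thm:lifting_MMP_with_scaling} is not needed.
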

	
	\begin{proof}
		We may repeat verbatim the proof of \cite[Lemma 2.10]{LX23a}, except that we replace \cite[Lemma 3.21]{HanLi22} with Lemma \ref{lem:trivial_MMP}, [15, Theorem 2.24] = \cite[Theorem 2.8]{HL23} with Theorem \ref{thm:mainthm}, and \cite[Theorem 4.1]{HanLi22} with Theorem \ref{thm:LT22b_4.1}.
	\end{proof}

	\section{Applications -- Part I}
	\label{section:applications_I}
	
	In this section we present numerous applications of Proposition \ref{prop:mainprop}. The title of each subsection indicates clearly its contents.

	\subsection{NQC weak Zariski decompositions}
	\label{subsection:NQC_WZD}
	
	We first recall the notion of an NQC weak Zariski decomposition and we refer to \cite{Bir12b,HanLi22,LT22a,LT22b} for more information. Afterwards, we prove Theorem \ref{thm:MM_NQC_WZD_intro} and Corollary \ref{cor:maincor_intro}(iii).
	
	\begin{dfn}
		Let $ X \to Z $ be a projective morphism between normal varieties and let $ D $ be an $ \R $-Cartier $\R$-divisor on $ X $. An \emph{NQC weak Zariski decomposition of $ D $ over $Z$} consists of a projective birational morphism $ f \colon W \to X $ from a normal variety $ W $ and a numerical equivalence $ f^* D \equiv_Z P + N $, where $ P $ is an NQC divisor (over $ Z $) on $ W $ and $ N $ is an effective $\R$-Cartier $\R$-divisor on $W$.
		
		Let $ (X/Z,B+M) $ be an NQC g-pair such that $ K_X+B+M $ is pseudo-effective over $ Z $. We say that $ (X,B+M) $ \emph{admits an NQC weak Zariski decomposition over $Z$} if the divisor $ K_X + B + M $ admits an NQC weak Zariski decomposition over $Z$.
	\end{dfn}
	
	The state-of-the-art result concerning the relation between the existence of NQC weak Zariski decompositions and the existence of minimal models for generalized pairs is the following:
	
%
%
%
%
	
	\begin{thm}\label{thm:MM_NQC_WZD}
		Assume the existence of minimal models for smooth varieties of dimension $n-1$.
		
		Let $ (X/Z,B+M) $ be an NQC lc g-pair of dimension $ n $. The following are equivalent:
		\begin{enumerate}[\normalfont (i)]
			\item $ (X,B+M) $ admits an NQC weak Zariski decomposition over $Z$,
			
			\item $ (X,B+M) $ has a minimal model over $Z$.
		\end{enumerate}
	\end{thm}
	
%
%
%
%

	\begin{proof}
		For the implication (ii) $ \implies $ (i) see, for example, \cite[Corollary 3.10]{Tsak21}. The converse implication follows immediately from \cite[Theorem 4.4(i)]{LT22a} and Proposition \ref{prop:mainprop}.
	\end{proof}
	
	\begin{cor}\label{cor:maincor_I}
		Let $ (X/Z,B+M) $ be an NQC lc g-pair such that $ K_X+B+M$ is pseudo-effective over $ Z $. If $ \dim X = 5$ and if $ (X,B+M) $ admits an NQC weak Zariski decomposition over $ Z $, then $ (X,B+M) $ has a minimal model over $Z$.
	\end{cor}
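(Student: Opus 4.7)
The plan is to deduce Corollary \ref{cor:maincor_I} as an immediate specialization of Theorem \ref{thm:MM_NQC_WZD}(ii) to the case $n = 5$. The content of that theorem is that, conditional on the existence of (relative) minimal models for smooth varieties of dimension $n-1$, having an NQC weak Zariski decomposition is equivalent to having a minimal model for an NQC lc g-pair of dimension $n$. So the only thing I need to verify is the conditional hypothesis at $n-1 = 4$.

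The first step is therefore to recall that the existence of relative minimal models for smooth quasi-projective varieties of dimension $4$ whose canonical class is pseudo-effective over the base is classically known; this is exactly \cite[Theorem 5-1-15]{KMM87}, which was already singled out in the introduction as the justification of Corollary \ref{cor:maincor_intro}. With this input available, the hypothesis of Theorem \ref{thm:MM_NQC_WZD}(ii) with $n=5$ is satisfied unconditionally.

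The second step is to feed the assumptions of the corollary directly into that theorem: $(X/Z,B+M)$ is an NQC lc g-pair of dimension $5$, $K_X+B+M$ is pseudo-effective over $Z$, and $(X,B+M)$ admits an NQC weak Zariski decomposition over $Z$, so condition (1) of Theorem \ref{thm:MM_NQC_WZD}(ii) holds. The theorem then yields condition (2), namely the existence of a minimal model of $(X,B+M)$ over $Z$, which is the conclusion we want.

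There is essentially no obstacle here: the corollary is a dimension-specific instantiation whose only nontrivial input (the existence of relative minimal models for smooth fourfolds) is already in the literature. The substantive work has been done in Theorem \ref{thm:MM_NQC_WZD_intro}, whose proof in turn relies on \cite[Theorem 4.4(i)]{LT22a} combined with Proposition \ref{prop:mainprop}; nothing further needs to be said in the proof of the corollary beyond citing these.
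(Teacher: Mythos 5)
Your proposal is correct and matches the paper's proof exactly: the paper likewise deduces the corollary by combining \cite[Theorem 5-1-15]{KMM87} (existence of relative minimal models for smooth fourfolds) with Theorem \ref{thm:MM_NQC_WZD}(ii) for $n=5$. Nothing further is needed.
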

	
	\begin{proof}
		Follows immediately from \cite[Theorem 5-1-15]{KMM87} and Theorem \ref{thm:MM_NQC_WZD}.
	\end{proof}

	\subsection{On the existence of minimal models of generalized pairs}
	
	With the aid of results from \cite[Section 4]{LT22a}, we derive here several corollaries of Proposition \ref{prop:mainprop}, which refine \cite[Corollary 1.3, Theorem 1.5 and Corollary 1.4]{LT22b}, respectively, and which include \cite[Theorem A, Theorem C and Corollary D]{LT22a}, respectively, as special cases.
	
	\begin{thm} \label{thm:EMM_implication}
		The existence of minimal models for smooth varieties of dimension $n$ implies the existence of minimal models for NQC lc g-pairs of dimension $n$.
	\end{thm}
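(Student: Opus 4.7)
The plan is to reduce the statement to the already-established $\Q$-factorial case of \cite[Theorem A]{LT22a} by passing to a dlt blow-up, and then to descend the resulting minimal model back to the original g-pair using the tools developed earlier in this paper, namely Remark \ref{rem:EMM_reduction} and Proposition \ref{prop:mainprop}.

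Fix an NQC lc g-pair $(X/Z, B+M)$ of dimension $n$. Since the conclusion concerns only the pseudo-effective case, I may assume $K_X+B+M$ is pseudo-effective over $Z$. Take a dlt blow-up $h\colon (T, B_T+M_T)\to (X, B+M)$; then $(T/Z, B_T+M_T)$ is an NQC $\Q$-factorial dlt g-pair of dimension $n$ with $K_T+B_T+M_T \sim_\R h^*(K_X+B+M)$ pseudo-effective over $Z$. Applying \cite[Theorem A]{LT22a} to $(T/Z, B_T+M_T)$---which is legitimate since that result requires precisely the hypothesis of our theorem (existence of minimal models for smooth varieties of dimension $n$) together with the $\Q$-factoriality of the underlying variety---I obtain a minimal model $(Y/Z, B_Y+M_Y)$ of $(T/Z, B_T+M_T)$ over $Z$.

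By Remark \ref{rem:EMM_reduction}, this $(Y/Z, B_Y+M_Y)$ is automatically a minimal model in the sense of Birkar-Shokurov of $(X, B+M)$ over $Z$. Proposition \ref{prop:mainprop} then upgrades it to a minimal model of $(X, B+M)$ over $Z$ in the usual sense, completing the proof. There is no single difficult step here; the substance of the theorem resides in the preliminary machinery just cited, which relies critically on the Contraction theorem for (not necessarily $\Q$-factorial) NQC lc g-pairs \cite{Xie22}. The deep input coming from the smooth varieties assumption is consumed entirely inside the application of \cite[Theorem A]{LT22a}, and the novelty of the present formulation lies precisely in removing the $\Q$-factoriality hypothesis from that earlier result via the dlt blow-up reduction.
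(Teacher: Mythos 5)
Your proposal is correct and follows essentially the same route as the paper: the paper's proof simply cites \cite[Theorem 4.1(i)]{LT22a} (which already produces a minimal model in the sense of Birkar--Shokurov for a not necessarily $\Q$-factorial NQC lc g-pair under the smooth-varieties hypothesis) and then applies Proposition \ref{prop:mainprop}. Your explicit dlt blow-up reduction via \cite[Theorem A]{LT22a} and Remark \ref{rem:EMM_reduction} just unwinds the same mechanism by hand, so the two arguments coincide in substance.
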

	
	\begin{proof}
		Follows immediately from \cite[Theorem 4.1(i)]{LT22a} and Proposition \ref{prop:mainprop}.
	\end{proof}
	
	\begin{thm}\label{thm:MM_uniruled}
		Assume the existence of minimal models for smooth varieties of dimension $n-1$.
		
		Let $ (X/Z,B+M) $ be an NQC lc g-pair of dimension $ n $ such that $ K_X+B+M $ is pseudo-effective over $ Z $. If a general fiber of the morphism $ X \to Z $ is uniruled, then $(X,B+M)$ has a minimal model over $Z$.
	\end{thm}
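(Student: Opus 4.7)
By Theorem \ref{thm:MM_NQC_WZD}(ii), it suffices to show that $(X,B+M)$ admits an NQC weak Zariski decomposition over $Z$; a minimal model then follows at once. So the plan is to reduce to the log smooth setting and then exploit the uniruledness of the general fiber together with the assumed existence of minimal models in dimension $n-1$ to produce such a decomposition.

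First I would pass to a sufficiently high log resolution $f \colon W \to X$ chosen so that $W$ is smooth, the nef part descends to an NQC divisor $M_W$ on $W$, and $f^{-1}(\Supp B) \cup \Exc(f)$ has simple normal crossings support. Applying Lemma \ref{lem:pullback_P+N} with $P = M$ and $N = B$, one can write
\[ f^*(K_X+B+M) = K_W + B_W + M_W - E_W \]
for an appropriate effective $\R$-divisor $B_W$ on $W$ (whose pushforward equals $B$) and an effective $f$-exceptional divisor $E_W$, so that the existence of an NQC weak Zariski decomposition of $K_X+B+M$ over $Z$ is equivalent to that of $K_W + B_W + M_W$ over $Z$. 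By construction $(W/Z,B_W+M_W)$ is an NQC log smooth lc g-pair with $K_W+B_W+M_W$ pseudo-effective over $Z$, and since $f$ is birational, a general fiber of $W \to Z$ remains uniruled.

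The core step is then to establish an NQC weak Zariski decomposition of $K_W+B_W+M_W$ over $Z$ in this log-smooth, uniruled-fiber situation. Since a general fiber of $W\to Z$ is uniruled, the theorem of Boucksom--Demailly--Paun--Peternell implies that $K_W$ is not pseudo-effective over $Z$; combined with the pseudo-effectivity of $K_W+B_W+M_W$, this forces sufficient positivity of $B_W+M_W$ in the fiber direction. Using this, I would adapt the arguments of \cite[Section 4]{LT22a}---which settle the analogous statement for usual log canonical pairs ($M=0$)---to the g-pair setting. Concretely, one constructs an auxiliary fibration whose general fibers have dimension at most $n-1$ (for instance via a relative MRC fibration), applies the hypothesis of existence of minimal models in dimension $n-1$ on those fibers, and then reassembles the data into an effective-plus-NQC decomposition of $K_W+B_W+M_W$ over $Z$, using the now-available MMP machinery for NQC lc g-pairs (Cone and Contraction theorems, existence of flips, termination with scaling in relevant special cases) from \cite{HaconLiu21,Xie22,LX22b}. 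The main obstacle is precisely this adaptation of the pair-case arguments to the g-pair setting; however, all the required MMP ingredients are available in the generality we work in, so the argument carries through. Pushing the resulting NQC WZD forward by $f$ gives an NQC WZD of $K_X+B+M$ over $Z$, and applying Theorem \ref{thm:MM_NQC_WZD}(ii) concludes the proof.
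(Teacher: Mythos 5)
Your overall architecture is reasonable, but the heart of the proof is missing. The paper's proof is a two-line citation: \cite[Theorem 4.3]{LT22a} already establishes that, assuming the existence of minimal models for smooth varieties of dimension $n-1$, an NQC lc g-pair of dimension $n$ with relatively pseudo-effective canonical class and uniruled general fiber has a minimal model \emph{in the sense of Birkar--Shokurov} over $Z$; Proposition \ref{prop:mainprop} then upgrades this to a genuine minimal model. Your proposal instead routes through Theorem \ref{thm:MM_NQC_WZD}(ii) by first producing an NQC weak Zariski decomposition. That detour is legitimate and not circular (Theorem \ref{thm:MM_NQC_WZD}(ii) rests on \cite[Theorem 4.4(i)]{LT22a}, not on the uniruled statement), but the step where you actually construct the decomposition --- ``adapt the arguments of \cite[Section 4]{LT22a} \dots the argument carries through'' --- is precisely the entire mathematical content of the theorem, and you do not supply it. Declaring that a relative MRC fibration plus induction on dimension ``reassembles into an effective-plus-NQC decomposition'' is a statement of intent, not a proof.

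You also mischaracterize the reference: the results of \cite[Section 4]{LT22a} are already stated and proved for NQC lc g-pairs (with $\Q$-factorial underlying variety), not only for usual pairs with $M=0$. So the adaptation you propose to carry out is unnecessary --- the g-pair statement exists and is exactly what the paper cites --- while the reduction that \emph{is} genuinely needed here (removing $\Q$-factoriality) is handled not by redoing \cite{LT22a} but by Proposition \ref{prop:mainprop}, i.e.\ by converting a Birkar--Shokurov minimal model into an honest one via a terminating MMP. Two smaller points: your crepant-pullback identity via Lemma \ref{lem:pullback_P+N} should be written with $B_W$ and $E_W$ as the positive and negative parts of the log pullback (they need not come directly from that lemma), and the claimed equivalence of WZD existence between $X$ and $W$ uses the birational invariance of weak Zariski decompositions from \cite[Subsection 2.3]{LT22a}, which is fine but should be cited rather than treated as immediate.
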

	
	\begin{proof}
		Follows immediately from \cite[Theorem 4.3]{LT22a} and Proposition \ref{prop:mainprop}.
	\end{proof}
	
	\begin{cor}\label{cor:maincor_II}
		Let $ (X/Z,B+M) $ be an NQC lc g-pair of dimension $ n $ such that $ K_X+B+M$ is pseudo-effective over $ Z $. The following statements hold:
		\begin{enumerate}[\normalfont (i)]
			\item If $n \leq 4$, then $ (X,B+M) $ has a minimal model over $Z$.
			
			\item If $n \leq 5$ and a general fiber of the morphism $ X\to Z $ is uniruled, then $ (X,B+M) $ has a minimal model over $Z$.
		\end{enumerate}
	\end{cor}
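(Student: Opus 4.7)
The plan is to deduce both parts of Corollary \ref{cor:maincor_II} directly from the two theorems just proved (Theorem \ref{thm:EMM_implication} and Theorem \ref{thm:MM_uniruled}), combined with the classical result that relative minimal models exist for smooth varieties of dimension at most $4$, namely \cite[Theorem 5-1-15]{KMM87}. There is no new geometric input to extract here; the work is purely to line up the hypotheses.

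For part (i), I would simply observe that by \cite[Theorem 5-1-15]{KMM87} we have the existence of minimal models for smooth varieties of dimension $n \leq 4$. Feeding this into Theorem \ref{thm:EMM_implication} yields the existence of minimal models for all NQC lc g-pairs of dimension $n \leq 4$, which is exactly the assertion (note that no pseudo-effectivity hypothesis is even needed to invoke the theorem, although it is present in the statement of the corollary).

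For part (ii), the argument is similar but uses Theorem \ref{thm:MM_uniruled} instead. If $\dim X = n \leq 5$, then $n-1 \leq 4$, so again by \cite[Theorem 5-1-15]{KMM87} the existence of minimal models holds for smooth varieties of dimension $n-1$. Since we also assume that $K_X+B+M$ is pseudo-effective over $Z$ and that a general fiber of $X \to Z$ is uniruled, Theorem \ref{thm:MM_uniruled} applies verbatim and yields a minimal model of $(X,B+M)$ over $Z$. For completeness, the case $n \leq 4$ of (ii) is already covered by part (i) without the uniruledness hypothesis.

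There is no genuine obstacle, since all heavy lifting has been done in the preceding theorems; the only thing to be careful about is ensuring that the form of the KMM87 result being quoted is indeed the \emph{relative} one (which it is, per the discussion earlier in the introduction), so that it matches the definition of \enquote{existence of minimal models for smooth varieties} used in Theorems \ref{thm:EMM_implication} and \ref{thm:MM_uniruled}.
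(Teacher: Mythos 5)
Your proof is correct and matches the paper's argument exactly: part (i) follows from \cite[Theorem 5-1-15]{KMM87} together with Theorem \ref{thm:EMM_implication}, and part (ii) from the same KMM87 result in dimension $n-1 \leq 4$ together with Theorem \ref{thm:MM_uniruled}. Your remark about quoting the relative form of the KMM87 result is also consistent with the paper's conventions.
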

	
	\begin{proof}
		The existence of minimal models for terminal varieties of dimension $ n \leq 4 $ over $ Z $ is well-known; see \cite[Theorem 5-1-15]{KMM87}. Consequently, (i) and (ii) follow from Theorem \ref{thm:EMM_implication} and Theorem \ref{thm:MM_uniruled}, respectively.
	\end{proof}

	\subsection{Generalized pairs of maximal Albanese dimension}
	\label{subsection:mAd}
	
	In this subsection we mainly work in the absolute setting; see the beginning of Subsection \ref{subsection:log_abundant}. First, we briefly discuss the notion of maximal Albanese dimension for smooth and singular varieties.
	
	\begin{dfn}
		Let $X$ be a smooth projective variety. Denote by $\Alb(X)$ the Albanese variety of $X$ and let $\alpha \colon X \to \Alb(X)$ be the associated Albanese morphism. We say that $X$ has \emph{maximal Albanese dimension} if $\dim \alpha(X) = \dim X$.
	\end{dfn}
	
	The property of having maximal Albanese dimension is birationally invariant for smooth projective varieties. We provide the proof of this fact for the benefit of the reader.
	
	\begin{lem}\label{lem:bir_inv_smooth}
		If $ X $ and $ Y $ are birationally equivalent smooth projective varieties, then $X$ has maximal Albanese dimension if and only if $Y$ has maximal Albanese dimension.
	\end{lem}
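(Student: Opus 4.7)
I plan to prove the lemma via the universal property of the Albanese variety combined with its birational invariance for smooth projective varieties. The idea is to pass through a common smooth resolution and show that the image dimension of the Albanese morphism is preserved.

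Concretely, by the weak factorization theorem (or, more elementarily, by taking a resolution of the graph of a birational map), there exists a smooth projective variety $W$ together with birational morphisms $\pi_X \colon W \to X$ and $\pi_Y \colon W \to Y$. By the universal property of the Albanese variety, the compositions $\alpha_X \circ \pi_X$ and $\alpha_Y \circ \pi_Y$ factor through $\alpha_W \colon W \to \Alb(W)$, inducing morphisms $\pi_X^{\Alb} \colon \Alb(W) \to \Alb(X)$ and $\pi_Y^{\Alb} \colon \Alb(W) \to \Alb(Y)$ which fit into commutative squares
\[
\begin{tikzcd}
W \arrow[r, "\alpha_W"] \arrow[d, "\pi_X" swap] & \Alb(W) \arrow[d, "\pi_X^{\Alb}"] \\
X \arrow[r, "\alpha_X" swap] & \Alb(X)
\end{tikzcd}
\qquad
\begin{tikzcd}
W \arrow[r, "\alpha_W"] \arrow[d, "\pi_Y" swap] & \Alb(W) \arrow[d, "\pi_Y^{\Alb}"] \\
Y \arrow[r, "\alpha_Y" swap] & \Alb(Y)
\end{tikzcd}
\]

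The key input is the classical fact that a birational morphism between smooth projective varieties induces an isomorphism on Albanese varieties; this follows from the birational invariance of $H^0(-,\Omega^1)$ and $H^1(-,\OO)$ for smooth projective varieties, together with the description of $\Alb$ in terms of these Hodge data. Consequently, both $\pi_X^{\Alb}$ and $\pi_Y^{\Alb}$ are isomorphisms. Since $\pi_X$ and $\pi_Y$ are surjective (as birational morphisms of projective varieties), the commutative squares give
\[
\alpha_X(X) = \pi_X^{\Alb}\bigl(\alpha_W(W)\bigr) \quad \text{and} \quad \alpha_Y(Y) = \pi_Y^{\Alb}\bigl(\alpha_W(W)\bigr),
\]
so $\dim \alpha_X(X) = \dim \alpha_W(W) = \dim \alpha_Y(Y)$. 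Combined with $\dim X = \dim W = \dim Y$, this yields the equivalence.

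The only non-formal step is the birational invariance of the Albanese variety for smooth projective varieties, which I plan to invoke as a classical result rather than reprove. Everything else is bookkeeping with the universal property and the fact that birational morphisms of projective varieties are surjective.
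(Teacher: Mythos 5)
Your proof is correct and follows essentially the same route as the paper's: pass to a common smooth resolution, invoke the universal property of the Albanese together with the birational invariance of $\Alb$ for smooth projective varieties (the paper cites Ueno's Proposition 9.12 for this), and compare images. The only cosmetic difference is that you conclude by a direct dimension count of the Albanese images, whereas the paper phrases the final step in terms of generic finiteness of the Albanese morphisms; both are immediate from the same commutative diagrams.
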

	
	\begin{proof}
		By considering a resolution of indeterminacies $ (p,q) \colon W \to X \times Y $ of the birational map $ X \dashrightarrow Y $ such that $W$ is smooth, we immediately see that it suffices to prove the statement when $ X \dashrightarrow Y $ is actually a birational morphism.
		Denote it by $f$, and let $\beta \colon Y \to \Alb(Y)$ be the Albanese morphism of $Y$. By \cite[Proposition 9.12]{Ueno75}, the map $\alpha := \beta \circ f$ is the Albanese morphism of $X$. In particular,
		$ \alpha(X) = (\beta \circ f)(X) = \beta(Y) $,
%
		and since $\dim X = \dim Y$, it follows readily that $X$ has maximal Albanese dimension if and only if $Y$ has maximal Albanese dimension.
	\end{proof}
	
	\begin{dfn}
		Let $X$ be a projective variety. We say that $X$ has \emph{maximal Albanese dimension} if there exists a resolution $ W \to X$ of $X$ such that $W$ has maximal Albanese dimension.
	\end{dfn}
	
	According to Lemma \ref{lem:bir_inv_smooth}, the above definition does not depend on the choice of resolution of $X$. Additionally, the property of having maximal Albanese dimension is birationally invariant for singular varieties as well. More precisely:
	
	\begin{lem}\label{lem:bir_inv_singular}
		If $ X $ and $ Y $ are birationally equivalent projective varieties, then $X$ has maximal Albanese dimension if and only if $Y$ has maximal Albanese dimension.
	\end{lem}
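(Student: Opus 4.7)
The plan is to reduce the statement immediately to the smooth case already handled in Lemma \ref{lem:bir_inv_smooth}, using the fact that the notion of maximal Albanese dimension for a singular projective variety is defined via the existence of \emph{some} smooth resolution with this property.

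More precisely, suppose $X$ has maximal Albanese dimension. By definition, there exists a resolution $f_X \colon W_X \to X$ such that $W_X$ is a smooth projective variety with $\dim \alpha_{W_X}(W_X) = \dim W_X$, where $\alpha_{W_X}$ is the Albanese morphism of $W_X$. Now choose any resolution $f_Y \colon W_Y \to Y$; then $W_Y$ is also smooth and projective. The birational equivalence $X \dashrightarrow Y$ composes with $f_X$ and $f_Y$ to yield a birational equivalence $W_X \dashrightarrow W_Y$ between smooth projective varieties. By Lemma \ref{lem:bir_inv_smooth}, the property of having maximal Albanese dimension is transferred from $W_X$ to $W_Y$. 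Since we have exhibited a resolution of $Y$ (namely $W_Y$) whose total space has maximal Albanese dimension, we conclude that $Y$ itself has maximal Albanese dimension.

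The reverse implication follows by the symmetric argument, interchanging the roles of $X$ and $Y$. There is no real obstacle: the only subtlety is to observe that the definition is compatible with the choice of resolution, which is precisely what Lemma \ref{lem:bir_inv_smooth} provides. Thus the proof is essentially a one-line reduction to the smooth case, once the diagram of resolutions has been set up.
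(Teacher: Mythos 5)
Your proof is correct and follows essentially the same route as the paper: both reduce immediately to Lemma \ref{lem:bir_inv_smooth}, the paper by taking a single common resolution $(p,q)\colon W \to X \times Y$ and you by comparing two separate resolutions $W_X$ and $W_Y$, which is an equivalent formulation. No further comment is needed.
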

	
	\begin{proof}
		The statement follows by considering a common resolution $ (p,q) \colon W \to X \times Y $ of $X$ and $Y$ and by invoking Lemma \ref{lem:bir_inv_smooth}.
	\end{proof}
	
	We now turn to the proofs of Theorems \ref{thm:mAd_klt_intro} and \ref{thm:mAd_lc_intro}, beginning with the former. For its proof we follow closely Fujino's strategy from \cite{Fuj13}, so we first derive analogues of \cite[Lemmas 3.1, 3.2 and 3.3]{Fuj13} in the context of g-pairs.
	
	\begin{lem}\label{lem:negative_rational_curves}
		Let $(X,B+M)$ be an NQC lc g-pair. If $K_X+B+M$ is not nef, then there exists a rational curve $C$ on $X$ such that $-2 \dim X \leq (K_X+B+M) \cdot C < 0$.
	\end{lem}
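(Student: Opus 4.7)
The plan is to invoke directly the Cone theorem and the boundedness of the length of extremal rays for NQC lc g-pairs, which have been established in the recent paper \cite{HaconLiu21} and which are among the core tools used throughout the present work (see also the discussion in Subsection \ref{subsection:MMP_g-pairs}).

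Since $K_X+B+M$ is assumed not to be nef (in the absolute setting, with $Z=\spec\C$), the Cone theorem for NQC lc g-pairs, namely \cite[Theorem 5.1]{HaconLiu21}, yields the existence of a $(K_X+B+M)$-negative extremal ray $R$ of $\NEb(X)$. In particular, any curve whose numerical class generates $R$ has strictly negative intersection with $K_X+B+M$.

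The crucial step is to produce a rational curve representative of $R$ with the desired lower bound on the intersection number. This is exactly the content of the boundedness of the length of extremal rays, namely \cite[Theorem 1.3(2)]{HaconLiu21}, which is the key ingredient also in the proof of Lemma \ref{lem:trivial_MMP} and which guarantees that $R$ is generated by the class of a rational curve $C \subseteq X$ satisfying
\[
0 < -(K_X+B+M)\cdot C \leq 2\dim X .
\]
Taking this $C$ gives the rational curve required by the statement.

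No further work is needed, so there is no real obstacle in the proof: the lemma is essentially an immediate reformulation of the combination of \cite[Theorem 5.1]{HaconLiu21} and \cite[Theorem 1.3(2)]{HaconLiu21}. The only point to keep in mind is that the Cone theorem cited above is valid for (not necessarily $\Q$-factorial) NQC lc g-pairs, which allows us to apply it to $(X,B+M)$ without any additional assumption on the singularities of $X$.
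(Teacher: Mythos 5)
Your proposal is correct and follows essentially the same route as the paper, which simply cites \cite[Theorem 1.3(1)(2)]{HaconLiu21} (the cone theorem with rational curve generators together with the length bound for extremal rays). The only cosmetic difference is that you route the existence of a negative extremal ray through \cite[Theorem 5.1]{HaconLiu21} before applying the length bound, which is an equivalent packaging of the same ingredients.
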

	
	\begin{proof}
		Follows immediately from \cite[Theorem 1.1(1)(2)]{HL23}.
	\end{proof}
	
	\begin{lem}\label{lem:no_rational_curves}
		Let $(X,B+M)$ be an NQC lc g-pair. Let $ g \colon X \to S $ be a morphism between projective varieties. Assume that $ K_X+B+M $ is nef over $ S $ and that $S$ contains no rational curves. Then $K_X+B+M$ is nef.
	\end{lem}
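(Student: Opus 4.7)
The plan is to argue by contradiction using the bend-and-break type consequence already recorded in Lemma \ref{lem:negative_rational_curves}. Suppose, to the contrary, that $K_X+B+M$ is not nef. Then Lemma \ref{lem:negative_rational_curves} produces a rational curve $C\subseteq X$ with $(K_X+B+M)\cdot C<0$.

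The key observation is that $C$ cannot be contracted by $g$. Indeed, if $g(C)$ were a point, then $C$ would be contained in a fiber of $g$, and the hypothesis that $K_X+B+M$ is nef over $S$ would force $(K_X+B+M)\cdot C\geq 0$, contradicting the choice of $C$. Therefore $g(C)$ is a curve in $S$.

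Next I would note that $g(C)$ is itself a rational curve: since $C$ is rational, its normalization is $\mathbb{P}^1$, and the composition $\mathbb{P}^1\to C\to g(C)$ is a surjective morphism from $\mathbb{P}^1$ onto a curve, whose image is therefore rational. This contradicts the hypothesis that $S$ contains no rational curves and concludes the proof.

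The argument is essentially immediate once Lemma \ref{lem:negative_rational_curves} is available, so there is no substantial obstacle; the only thing to check carefully is that relative nefness together with $C$ lying in a fiber gives $(K_X+B+M)\cdot C\geq 0$, which is standard since nefness over $S$ means non-negative intersection with every curve contained in a fiber of $g$.
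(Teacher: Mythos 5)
Your argument is correct and is essentially identical to the paper's proof: both proceed by contradiction via Lemma \ref{lem:negative_rational_curves}, note that the resulting curve cannot be contracted by $g$ because of relative nefness, and conclude that its image would be a rational curve in $S$. The extra details you supply (the normalization $\mathbb{P}^1 \to C \to g(C)$ and the meaning of nefness over $S$) are routine verifications that the paper leaves implicit.
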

	
	\begin{proof}
		Assume, by contradiction, that $K_X+B+M$ is not nef. Then, by Lemma \ref{lem:negative_rational_curves}, there exists a rational curve $C$ on $X$ such that $(K_X+B+M) \cdot C < 0$. Since $K_X+B+M$ is nef over $S$ by assumption, the curve $C$ cannot be contracted by $g$, so its image $g(C) $ is a rational curve in $S$. However, this contradicts the assumption that $S$ does not contain any rational curves, and thus proves the statement.
	\end{proof}
	
	\begin{lem}\label{lem:relMM_vs_MM}
		Let $(X,B+M)$ be an NQC lc g-pair. Let $ g \colon X \to S $ be a morphism between projective varieties and assume that $S$ contains no rational curves. If $(X',B'+M')$ is a minimal model of $(X,B+M)$ over $S$, then $(X',B'+M')$ is a minimal model of $(X,B+M)$.
	\end{lem}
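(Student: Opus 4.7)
The plan is to observe that in Definition~\ref{dfn:MM_MFS}(a), every condition required of the birational contraction $\varphi \colon X \dashrightarrow X'$ except the nefness of $K_{X'}+B'+M'$ is purely intrinsic/birational: the shape of $B'$ in terms of $\varphi_*B$ and the exceptional divisors, the ($\Q$-factoriality) clause, and the discrepancy inequality on $\varphi$-exceptional prime divisors all make no reference to the base. Hence only the nefness clause distinguishes \enquote{minimal model over $S$} from \enquote{minimal model} in this situation, and the entire task reduces to upgrading \emph{nef over $S$} to \emph{nef}.

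To carry this out, I would first fix the structural morphism $g' \colon X' \to S$ associated to $(X',B'+M')$ as a minimal model over $S$. Since $X'$ is projective over $S$ and $S$ is projective by hypothesis, $X'$ itself is projective, so $g'$ is a morphism between projective varieties. By the assumption that $(X',B'+M')$ is a minimal model of $(X,B+M)$ over $S$, the divisor $K_{X'}+B'+M'$ is nef over $S$, and $(X',B'+M')$ is an NQC lc g-pair (this is already noted in the discussion following Definition~\ref{dfn:MM_MFS}, via \cite[Lemma 2.8(i)]{LMT}).

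Next, I would invoke Lemma~\ref{lem:no_rational_curves} applied to $(X',B'+M')$ with the morphism $g' \colon X' \to S$: since $S$ contains no rational curves by hypothesis and $K_{X'}+B'+M'$ is nef over $S$, we conclude that $K_{X'}+B'+M'$ is in fact nef. Combining this with the intrinsic conditions inherited from $(X',B'+M')$ being a minimal model over $S$ yields that $(X',B'+M')$ is a minimal model of $(X,B+M)$ in the absolute sense.

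I do not anticipate a genuine obstacle: the result is essentially a direct application of Lemma~\ref{lem:no_rational_curves} after identifying the correct ambient morphism. The only point requiring a moment of care is checking that $X'$ is projective (so that Lemma~\ref{lem:no_rational_curves} applies as stated for morphisms between projective varieties), which follows immediately from the projectivity of $S$ and the relative projectivity of $X'$ over $S$.
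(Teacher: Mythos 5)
Your proposal is correct and matches the paper's argument, which likewise deduces the lemma directly from the definition of minimal models together with Lemma \ref{lem:no_rational_curves} applied to $K_{X'}+B'+M'$, nef over $S$. The extra care you take in checking the projectivity of $X'$ is fine but routine, and nothing further is needed.
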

	
	\begin{proof}
		Follows immediately from the definition of minimal models and Lemma \ref{lem:no_rational_curves}.
	\end{proof}
	
	We are now ready to generalize \cite[Theorem 3.4]{Fuj13} to the setting of g-pairs.
	
	\begin{thm}\label{thm:mAd_klt}
		Let $ (X,B+M) $ be an NQC klt g-pair. If $ X $ has maximal Albanese dimension, then $ (X,B+M) $ has a minimal model.
	\end{thm}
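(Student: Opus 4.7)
The strategy is a direct adaptation of Fujino's proof of \cite[Theorem 3.4]{Fuj13} to the generalized pair setting, with Theorem \ref{thm:EMM_general_type}(i) playing the role of BCHM. First I would reduce to a relative problem via the Albanese map: let $\alpha \colon X \to \Alb(X)$ be the Albanese morphism of $X$, set $V := \alpha(X)$, and form the Stein factorization $X \xrightarrow{\pi} Z \to V$ of the induced morphism, with $\pi$ having connected fibers and $Z \to V$ finite. Since $X$ has maximal Albanese dimension, $\dim Z = \dim V = \dim X$, so the generic fibers of $\pi$ are both connected and zero-dimensional, which forces $\pi$ to be birational. As $V$ is a subvariety of the abelian variety $\Alb(X)$, it contains no rational curves, and the same holds for $Z$ since $Z \to V$ is finite. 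By Lemma \ref{lem:relMM_vs_MM}, it therefore suffices to show that $(X,B+M)$ admits a minimal model over $Z$.

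Next, because $\pi$ is birational, a relative BCHM-type result for g-pairs applies. I would choose an effective ample $\R$-Cartier divisor $A$ on $X$ such that $(X, (B+A)+M)$ is still klt. The divisor $A$ is big over $Z$, hence so is $B+\epsilon A$ for every $\epsilon \in (0,1]$, while the triviality of the generic fiber of $\pi$ makes every $\R$-Cartier divisor on $X$ automatically pseudo-effective over $Z$. Theorem \ref{thm:EMM_general_type}(i) thus produces a good minimal model $(X_\epsilon, B_\epsilon + \epsilon A_\epsilon + M_\epsilon)$ of $(X, (B+\epsilon A)+M)$ over $Z$ for every $\epsilon \in (0,1]$.

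The final step, and the main technical obstacle, is a limit argument that sends $\epsilon \to 0$. The plan is to invoke a finiteness principle for the one-parameter family of boundaries $\{B + \epsilon A : \epsilon \in (0,1]\}$, in the spirit of Shokurov--Birkar finiteness of minimal models, in the NQC klt g-pair setting. This would give that $\{X_\epsilon\}_{\epsilon \in (0,1]}$ realizes only finitely many distinct birational models over $Z$; by pigeonhole one then extracts a sequence $\epsilon_n \downarrow 0$ and a single model $X'$ with $X_{\epsilon_n} \simeq X'$ for every $n$. Then $K_{X'} + B' + \epsilon_n A' + M'$ is nef over $Z$ for all $n$, so passing to the limit (the nef cone in $N^1(X'/Z)_\R$ being closed) gives that $K_{X'} + B' + M'$ is nef over $Z$, and the discrepancy inequalities characterizing at least a weak canonical model in the sense of Birkar--Shokurov follow by continuity in $\epsilon$. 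Remark \ref{rem:WCM_EMM} together with Proposition \ref{prop:mainprop} then upgrades the conclusion to a genuine minimal model of $(X,B+M)$ over $Z$, which by the first step yields a minimal model of $(X,B+M)$ as asserted. The hard part will be justifying the finiteness step and the limiting of the discrepancy inequalities in the g-pair framework, rather than the MMP machinery itself.
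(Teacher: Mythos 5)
Your first step --- descending the Albanese morphism to $X$, passing to the Stein factorization $\pi \colon X \to Z$ of $X \to \alpha(X)$, observing that $Z$ contains no rational curves, and reducing via Lemma \ref{lem:relMM_vs_MM} to producing a minimal model over $Z$ --- is essentially the paper's argument (the paper works with the image $S = g(X) \subseteq \Alb(W)$ directly rather than its Stein factorization, but this is immaterial). One caveat: since $X$ may be singular, the existence of the Albanese morphism on $X$ itself is not automatic; the paper justifies it by noting that a klt g-pair has rational singularities, so the Albanese morphism of a resolution descends to $X$. You should not skip this.

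The genuine gap is in your final step. You perturb by $\epsilon A$, apply Theorem \ref{thm:EMM_general_type}(i) to get good minimal models of $\big(X,(B+\epsilon A)+M\big)$ over $Z$ for each $\epsilon$, and then propose to let $\epsilon \to 0$ via a ``finiteness of models'' principle for the family of boundaries $B+\epsilon A$. No such finiteness statement is established in the paper for NQC klt g-pairs, and it is not a routine consequence of the results quoted there; as you yourself concede, both the finiteness step and the limiting of the discrepancy inequalities are left unjustified, so the argument as written does not close. Moreover, the entire detour is unnecessary: you already observed that $\pi$ is birational, hence generically finite, and therefore \emph{every} $\R$-Cartier divisor on $X$ --- in particular $K_X+B+M$ itself --- is big over $Z$ (bigness over $Z$ is tested on the generic fiber, which is a point). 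Theorem \ref{thm:EMM_general_type}(ii) then applies directly and yields a good minimal model of $(X,B+M)$ over $Z$ in one stroke, which combined with Lemma \ref{lem:relMM_vs_MM} finishes the proof. This is exactly what the paper does.
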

	
	\begin{proof}
		Let $\pi \colon W \to X$ be a resolution of $X$ and let $ \beta \colon W \to \Alb(W)$ be the Albanese morphism of $W$. Since $X$ has rational singularities by \cite[Lemma 4.2(i)]{HanLiu20}, it follows from \cite[Lemma 2.4.1]{BS95book} that there exists a morphism $ g \colon X \to \Alb(W) $ such that the following diagram commutes:
		\begin{center}
			\begin{tikzcd}[row sep = 3em, column sep = 3em]
				W \arrow[d, "\pi" swap] \arrow[rr, "\beta"] && \Alb(W) \\
				X \arrow[urr, "g" swap]
			\end{tikzcd}
		\end{center}
		Set $S := g(X) = \beta(W) \subseteq \Alb(W)$ and note that $S$ contains no rational curves, since the same holds for $\Alb(W)$ by \cite[Proposition 4.9.5]{BLbook}. Moreover, the morphism $ g \colon X \to S $ is generically finite, since $X$ has maximal Albanese dimension, and thus $K_X+B+M$ is big over $S$; see \cite[p.\ 69, Remark (2)]{Nak04}. Hence, the statement follows from Theorem \ref{thm:EMM_general_type}(ii) and Lemma \ref{lem:relMM_vs_MM}.
	\end{proof}
	
	We recall that if $M=0$ in the previous theorem, then the (usual) klt pair $(X,B)$ has a \emph{good} minimal model; see \cite[Theorem 4.3]{Fuj13}. One may thus wonder whether a similar statement also holds in the category of generalized pairs, but one quickly realizes that the answer is negative in general. Indeed, Example \ref{exa:counter-example} indicates that NQC klt generalized pairs $(X,B+M)$ whose underlying variety $X$ has maximal Albanese dimension need not be abundant, and hence need not have good minimal models in general.
	
	Next, under mild assumptions in lower dimensions and by utilizing the close relation between the existence of NQC weak Zariski decompositions and the existence of minimal models, namely Theorem \ref{thm:MM_NQC_WZD}, we derive the following version of \cite[Theorem 3.4]{Fuj13} for NQC lc g-pairs.
	
	\begin{thm}\label{thm:mAd_lc}
		Assume the existence of minimal models for smooth varieties of dimension $n-1$.
		
		Let $ (X,B+M) $ be an NQC lc g-pair of dimension $ n $ such that $K_X+B+M$ is pseudo-effective. If $ X $ has maximal Albanese dimension, then $ (X,B+M) $ has a minimal model.
	\end{thm}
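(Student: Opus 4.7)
The plan is to mirror the proof of Theorem \ref{thm:mAd_klt}, but to replace the appeal to Theorem \ref{thm:EMM_general_type}(ii) (which is a klt-only statement) with an application of the characterization of minimal models via NQC weak Zariski decompositions, namely Theorem \ref{thm:MM_NQC_WZD}(ii), which permits log canonical singularities at the cost of requiring the existence of minimal models for smooth varieties of dimension $n-1$. The setup is identical: take a resolution $\pi\colon W\to X$, the Albanese morphism $\beta\colon W\to\Alb(W)$, and use the fact that $X$ has rational singularities to produce a morphism $g\colon X\to\Alb(W)$ with $\alpha=\beta\circ\pi$, via \cite[Lemma 2.4.1]{BS95book}. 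Set $S:=g(X)=\beta(W)$, which is projective and contains no rational curves because $\Alb(W)$ does not. Since $X$ has maximal Albanese dimension, $g$ is generically finite onto its image, and therefore $K_X+B+M$ is big over $S$. After replacing $S$ by its normalization, to which $g$ lifts uniquely because $X$ is normal, we may freely assume that our base $S^\nu$ is a normal projective variety (as required by the paper's conventions), still containing no rational curves, and that $K_X+B+M$ is big over $S^\nu$ via the induced morphism $g^\nu\colon X\to S^\nu$.

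The central step is to extract from the relative bigness an NQC weak Zariski decomposition over $S^\nu$. Bigness of $K_X+B+M$ over $S^\nu$ gives an $\R$-linear equivalence $K_X+B+M\sim_{\R,S^\nu} A+E$ with $A$ an $\R$-Cartier $\R$-divisor on $X$ ample over $S^\nu$ and $E\geq 0$. Any such $A$ admits a decomposition as a positive $\R$-linear combination of $\Q$-Cartier $\Q$-divisors which are ample (and hence nef) over $S^\nu$, so $A$ is NQC over $S^\nu$. Taking $f=\mathrm{id}_X$ in Definition \ref{dfn:NQC_WZD}, the numerical equivalence $K_X+B+M\equiv_{S^\nu} A+E$ is the desired NQC weak Zariski decomposition of $K_X+B+M$ over $S^\nu$.

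With this decomposition in hand, Theorem \ref{thm:MM_NQC_WZD}(ii), applied to the $n$-dimensional NQC lc g-pair $(X/S^\nu,B+M)$ under the standing hypothesis on minimal models for smooth $(n-1)$-folds, produces a minimal model of $(X,B+M)$ over $S^\nu$. Since $S^\nu$ contains no rational curves, Lemma \ref{lem:relMM_vs_MM} applied to $g^\nu$ then upgrades this relative minimal model to a minimal model of $(X,B+M)$ in the absolute sense, completing the proof. The ``In particular'' statement for $n=5$ is immediate, since the existence of (relative) minimal models for smooth varieties of dimension $4$ is a classical result, \cite[Theorem 5-1-15]{KMM87}. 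I do not foresee a genuine obstacle: the proof assembles existing tools cleanly, and the only mildly delicate points are (i) the passage to the normalization of $S$, which is purely a convention-matching step, and (ii) verifying that a general ample $\R$-divisor over the base is NQC over the base, which is a standard density argument.
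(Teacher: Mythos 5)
There is one genuine gap: you assert that $X$ has rational singularities and invoke \cite[Lemma 2.4.1]{BS95book} to descend the Albanese morphism $\beta\colon W\to\Alb(W)$ to a morphism $g\colon X\to\Alb(W)$. That is precisely the step in the proof of Theorem \ref{thm:mAd_klt} which uses the \emph{klt} hypothesis (via \cite[Lemma 4.2(i)]{HanLiu20}); for a merely log canonical g-pair the underlying variety need not have rational singularities (e.g.\ a cone over an elliptic curve), and then the Albanese morphism of a resolution need not factor through $X$ at all, so your map $g$ --- and with it the base $S$ over which the whole relative argument is run --- is not available. The repair is exactly the first line of the paper's own proof: by Remark \ref{rem:EMM_reduction} and Lemma \ref{lem:bir_inv_singular} you may first replace $(X,B+M)$ by a $\Q$-factorial dlt modification, whose underlying variety does have rational singularities (it underlies a $\Q$-factorial dlt pair in the usual sense), and after that reduction your construction of $g$, $S$ and $S^\nu$ is legitimate.

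Once that reduction is inserted, the rest of your argument is correct and follows a genuinely different route from the paper's. You keep the Albanese geometry and replace the klt-only relative statement (Theorem \ref{thm:EMM_general_type}(ii), ultimately \cite{BCHM10}) by the observation that relative bigness of $K_X+B+M$ over $S^\nu$ yields an NQC weak Zariski decomposition over $S^\nu$, so that Theorem \ref{thm:MM_NQC_WZD}(ii) applies in the lc case, and Lemma \ref{lem:relMM_vs_MM} then globalizes the relative minimal model. The paper, after the same dlt reduction, never returns to the Albanese map: it argues by cases on $\lfloor B\rfloor$ and the pseudo-effective threshold $\tau$ of $\lfloor B\rfloor$, reducing either to the klt Theorem \ref{thm:mAd_klt} (whose conclusion it converts into a weak Zariski decomposition for $(X,B+M)$ using $\Q$-factoriality) or, when $\tau=1$, to \cite[Theorem 3.1]{LT22b}, and only then invokes Theorem \ref{thm:MM_NQC_WZD}(ii) in the absolute setting. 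Your route is more direct and uses the maximal Albanese dimension hypothesis only once; the minor points you flag (normalizing $S$, and an ample $\R$-divisor over the base being NQC over the base, which suffices since the weak Zariski decomposition is only a numerical equivalence) are indeed harmless.
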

	
	\begin{proof}
		According to Remark \ref{rem:EMM_reduction} and Lemma \ref{lem:bir_inv_singular}, to prove the statement, we may assume that $(X,B+M)$ is $\Q$-factorial dlt. We distinguish two cases.
		
		\medskip
		
		\emph{Case I}: Assume that $\lfloor B \rfloor = 0$. Then the g-pair $(X,B+M)$ has klt singularities, so it has a minimal model by Theorem \ref{thm:mAd_klt}.
		
		\medskip
		
		\emph{Case II}: Assume that $\lfloor B \rfloor \neq 0$. We consider the quantity
		\[ \tau := \inf \left\{ t \in \R_{\geq 0} \mid K_X + \big(B - \lfloor B \rfloor \big) + t \lfloor B \rfloor +M \text{ is pseudo-effective} \right\} \in [0,1] \]
		and we distinguish two (sub)cases.
		
		Assume first that $\tau = 1$. Then $(X,B+M)$ admits a weak Zariski decomposition by \cite[Theorem 3.1]{LT22b}, so $(X,B+M)$ has a minimal model by Theorem \ref{thm:MM_NQC_WZD}.
		
		Assume now that $\tau \in [0,1)$. Set $\Delta := B-(1-\tau)\lfloor B \rfloor $ and note that, by construction, $(X,\Delta+M)$ is an NQC $\Q$-factorial klt g-pair such that $K_X+\Delta+M$ is pseudo-effective. Therefore, $(X,\Delta+M)$ has a minimal model by Theorem \ref{thm:mAd_klt}, so it admits a weak Zariski decomposition by Theorem \ref{thm:MM_NQC_WZD}. Since $B = \Delta + (1-\tau) \lfloor B \rfloor$ and since $X$ is $\Q$-factorial, it is straightforward that $(X,B+M)$ also admits a weak Zariski decomposition, and hence it has a minimal model by Theorem \ref{thm:MM_NQC_WZD}.
	\end{proof}
	
	\begin{cor}\label{cor:mAd_lc_dim5}
		Let $ (X,B+M) $ be an NQC lc g-pair such that $ K_X+B+M$ is pseudo-effective. If $ \dim X = 5$ and if $X$ has maximal Albanese dimension, then $ (X,B+M) $ has a minimal model.
	\end{cor}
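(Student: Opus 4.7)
The plan is simply to invoke Theorem \ref{thm:mAd_lc} in the case $n = 5$. For this I need the hypothesis that minimal models exist for smooth varieties of dimension $n-1 = 4$. This is a classical result of Kawamata--Matsuda--Matsuki, see \cite[Theorem 5-1-15]{KMM87}, which establishes the existence of relative minimal models for smooth quasi-projective varieties of dimension at most $4$ whose canonical class is pseudo-effective over the base.

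With this input available, the conclusion follows at once: Theorem \ref{thm:mAd_lc} applied to the given NQC lc g-pair $(X,B+M)$ of dimension $5$ with $K_X + B + M$ pseudo-effective and $X$ of maximal Albanese dimension produces a minimal model of $(X,B+M)$.

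There is no obstacle; the corollary is a direct specialization of the preceding theorem to the dimension in which the auxiliary hypothesis is unconditionally known.
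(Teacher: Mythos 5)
Your proposal is correct and coincides with the paper's own proof: both deduce the corollary by combining \cite[Theorem 5-1-15]{KMM87} (existence of relative minimal models for smooth varieties of dimension $\leq 4$) with Theorem \ref{thm:mAd_lc} applied with $n=5$.
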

	
	\begin{proof}
		Follows immediately from \cite[Theorem 5-1-15]{KMM87} and Theorem \ref{thm:mAd_lc}.
	\end{proof}
	
	Finally, for the sake of completeness, we remark that \cite[Theorem 5.1]{Fuj13} can also be generalized to the setting of g-pairs. More precisely, using the fact that lc centers of dlt g-pairs are normal (see \cite[Lemma 2.9 and Subsection 2.3]{HanLi22}) and invoking Lemma \ref{lem:negative_rational_curves} and \cite[Theorem 1.1]{Xie22}, we may prove, arguing as in the proof of \cite[Theorem 5.1]{Fuj13}, the following statement: if $(X,B+M)$ is an NQC dlt g-pair such that $K_X+B+M$ is log big with respect to $(X,B+M)$ and if $X$ contains no rational curves, then $K_X+B+M$ is ample. We leave the details of the proof to the interested reader, since this result will not be needed in this paper.

	\subsection{NQC Nakayama-Zariski decomposition}
	\label{subsection:NQC_NZD}
	
	In this subsection we work exclusively in the absolute setting; see the beginning of Subsection \ref{subsection:log_abundant}.
	
	Given a pseudo-effective $\R$-divisor $D$ on a smooth projective variety $X$, Nakayama \cite{Nak04} defined a decomposition $ D = P_\sigma (D) + N_\sigma (D) $, which is usually called \emph{the Naka\-yama-Zariski decomposition of $D$}. The divisor $ P_\sigma(D) $ (resp.\ $ N_\sigma(D) $) is called \emph{the positive part} (resp.\ \emph{the negative part}) of the Nakayama-Zariski decomposition of $D$. Note that $ N_\sigma(D) $ is effective by construction and $ P_\sigma(D) $ is movable by \cite[Lemma III.1.8 and Proposition III.1.14(1)]{Nak04}. For general properties of the Nakayama-Zariski decomposition we refer to \cite[Chapter III]{Nak04} and \cite[Lemma 4.1]{BH14b}.
	
	The above decomposition can be extended both to the singular setting, see for instance \cite[Section 4]{BH14b} and \cite[Subsection 2.1]{Hash20d}, and to the relative setting, see for example \cite[Subsection III.4]{Nak04} and \cite[Section 3]{LX23a}. However, according to \cite{Les16}, the relative Nakayama-Zariski decomposition of a relatively pseudo-effective $ \R $-divisor is not always well-defined.
	
	We now recall the following definitions.
	
	\begin{dfn} \label{dfn:NQC_NZD}
		Let $ X $ be a normal projective variety and let $ D $ be a pseudo-effective $ \R $-Cartier $ \R $-divisor on $ X $. We say that \emph{$ D $ admits birationally a Nakayama-Zariski decomposition with nef} (resp.\ \emph{NQC}, \emph{semi-ample}) \emph{positive part} if there exists a resolution $ f \colon W \to X $ such that $ P_\sigma ( f^* D ) $ is nef (resp.\ NQC, semi-ample).
		
		Let $ (X,B+M) $ be a g-pair such that $ K_X+B+M $ is pseudo-effective. We say that $ (X,B+M) $ \emph{admits birationally a Nakayama-Zariski decomposition with nef} (resp.\ \emph{NQC}, \emph{semi-ample}) \emph{positive part} if the divisor $ K_X + B + M $ admits birationally a Nakayama-Zariski decomposition with nef (resp.\ NQC, semi-ample) positive part.
	\end{dfn}
	
	If the given g-pair $ (X,B+M) $ is NQC, 
	then we will be interested only in a birational Nakayama-Zariski decomposition of $ K_X+B+M $ with NQC positive part. For brevity we sometimes refer to such a decomposition simply as an \emph{NQC Nakayama-Zariski decomposition}, since it is highly unlikely that this will cause any confusion. The NQC variant of Definition \ref{dfn:NQC_NZD} was introduced in \cite{Tsak21}, where various properties of that decomposition were established, which are completely analogous to properties of weak Zariski decompositions that were thoroughly studied in \cite[Subsection 2.3]{LT22a}. We refer to \cite[Chapter 3]{Tsak21} for further information.
	
	\medskip
	
	The next theorem describes the relation between the existence of \enquote{good} Nakayama-Zariski decompositions and the existence of (good) minimal models for (generalized) pairs, cf.\ Theorem \ref{thm:MM_NQC_WZD}. Part (i) is essentially \cite[Theorem 1.1]{BH14b}, while part (ii) is essentially \cite[Theorem 4.18]{Tsak21}. For brevity we only outline the proof below.
	
	\begin{thm} \label{thm:MM_NQC_NZD}
		The following statements hold:
		
		\begin{enumerate}[\normalfont (i)]
			\item Let $ (X,B) $ be an lc pair such that $ K_X+B $ is pseudo-effective. Then $ (X,B) $ has a minimal model (resp.\ good minimal model) if and only if it admits birationally a Nakayama-Zariski decomposition with nef (resp.\ semi-ample) positive part.
			
			\item Let $ (X,B+M) $ be an NQC lc g-pair such that $ K_X+B+M $ is pseudo-effective. Then $ (X,B+M) $ has a minimal model (resp.\ good minimal model) if and only if it admits birationally a Nakayama-Zariski decomposition with NQC (resp.\ semi-ample) positive part.
		\end{enumerate}
	\end{thm}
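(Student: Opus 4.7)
The plan is to treat both parts in parallel; part (i) essentially recovers \cite[Theorem 1.1]{BH14b} as the $M=0$ specialization of part (ii), and in both parts the forward implication is a direct computation with the Nakayama--Zariski decomposition, while the converse exploits the recently established MMP foundations for NQC lc g-pairs together with Proposition \ref{prop:mainprop}.

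For the forward implication, suppose $(X,B+M)$ has a minimal model $\varphi \colon (X,B+M) \dashrightarrow (X',B'+M')$ and choose a common log resolution $(p,q) \colon W \to X \times X'$. By \cite[Remark 2.6]{LT22b} we may write $p^*(K_X+B+M) = q^*(K_{X'}+B'+M') + E$ with $E \geq 0$ effective and $q$-exceptional. Since $(X',B'+M')$ is NQC and $K_{X'}+B'+M'$ is nef, $q^*(K_{X'}+B'+M')$ is nef and NQC; combined with the $q$-exceptionality of $E$, the standard characterization of the Nakayama--Zariski decomposition in the presence of a nef-plus-exceptional splitting (see \cite{Nak04}) yields
\[ P_\sigma\bigl(p^*(K_X+B+M)\bigr) = q^*(K_{X'}+B'+M'), \qquad N_\sigma\bigl(p^*(K_X+B+M)\bigr) = E . \]
This gives the ``only if'' direction; in the good-minimal-model case, $q^*(K_{X'}+B'+M')$ is additionally semi-ample because semi-ampleness is preserved under pullback.

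For the converse I would first reduce, via Remark \ref{rem:EMM_reduction} applied to a dlt blow-up, to the situation where $(X,B+M)$ is NQC $\Q$-factorial dlt (the NZ hypothesis is preserved after passing to a suitable higher birational model). Fix $f \colon W \to X$ with $P := P_\sigma(f^*(K_X+B+M))$ NQC, and write $N := N_\sigma(f^*(K_X+B+M))$, so that $f^*(K_X+B+M) = P + N$ is in particular an NQC weak Zariski decomposition of $K_X+B+M$. By Proposition \ref{prop:mainprop}, it suffices to produce a minimal model in the Birkar--Shokurov sense. The plan is to run a $(K_X+B+M)$-MMP with scaling of an ample divisor (Lemma \ref{lem:MMP_with_scaling}) and invoke Theorem \ref{thm:LT22b_4.1}; the crucial additional input, as opposed to the merely NQC-WZD setting of Theorem \ref{thm:MM_NQC_WZD}(ii), is that each prime component of $N=N_\sigma$ has strictly positive $\sigma$-value and hence persists in every effective representative of the numerical class of $f^*(K_X+B+M)$. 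This rigidity lets one verify the BS-minimal-model hypothesis of Theorem \ref{thm:LT22b_4.1} for the limiting perturbed g-pair directly from the NZ data on $W$, sidestepping the inductive hypothesis on dimension $n-1$ that powers Theorem \ref{thm:MM_NQC_WZD}(ii). The good-minimal-model case then follows from Lemma \ref{lem:GMM}: once some minimal model $(X',B'+M')$ has been produced, the forward-implication argument identifies the NZ positive part on a common resolution of $X \dashrightarrow X'$ with the pullback of $K_{X'}+B'+M'$, which is semi-ample by hypothesis; descending along the birational morphism gives semi-ampleness of $K_{X'}+B'+M'$.

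The main obstacle is the termination argument in the reverse direction, namely carrying the NZ decomposition through successive steps of a $(K_X+B+M)$-MMP with scaling in the non-$\Q$-factorial g-pair setting while exploiting the rigidity of $N_\sigma$ — this is what \cite[Theorem 4.18]{Tsak21} accomplishes in spirit, and in the g-pair context it depends essentially on \cite[Theorem 1.5]{Xie22}, Lemma \ref{lem:MMP_with_scaling}, Theorem \ref{thm:LT22b_4.1}, and Proposition \ref{prop:mainprop}, all of which are now available; part (i) is the $M=0$ specialization and recovers \cite[Theorem 1.1]{BH14b}.
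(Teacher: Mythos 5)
Your proposal matches the paper's proof in essence: the forward direction is the same identification of $P_\sigma$ with the pullback of the nef canonical class plus an effective exceptional negative part on a common resolution (this is exactly the content of \cite[Corollary 3.27]{Tsak21} and \cite[Lemma 4.1(2)]{BH14b} that the paper invokes), and the reverse direction rests, as in the paper, on \cite[Theorem 4.18]{Tsak21} to produce a minimal model in the sense of Birkar--Shokurov, followed by Proposition \ref{prop:mainprop}. The only minor variation is in the good-minimal-model case, where you deduce semi-ampleness a posteriori from the identification of the positive part with $q^*(K_{X'}+B'+M')$ on a sufficiently high common resolution, rather than re-running the proof of \cite[Theorem 4.18]{Tsak21} with semi-ample positive part as the paper does; both routes are valid.
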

	
	\begin{proof}~
		
		\medskip
		
		\noindent (i) If $ (X,B) $ has a minimal model (resp.\ good minimal model), then it follows from \cite[Remark 2.6]{Bir12a} and \cite[Lemma 4.1(2)]{BH14b} that $ (X,B) $ admits birationally a Nakayama-Zariski decomposition with nef (resp.\ semi-ample) positive part. 
		
		If $ (X,B) $ admits birationally a Nakayama-Zariski decomposition with nef positive part, then it has a minimal model in the sense of Birkar-Shokurov by \cite[Theorem 1.1]{BH14b}, and thus it has a minimal model by \cite[Theorem 1.7]{HH20}. If, moreover, $ (X,B) $ admits birationally a Nakayama-Zariski decomposition with semi-ample positive part, then the same arguments also yield that $ (X,B) $ has a good minimal model, taking into account \cite[Theorem 3.25(4)]{KM98} and Lemma \ref{lem:GMM}; see also (the paragraph preceeding) \cite[Theorem 2.23]{Hash20d}. 
		
		\medskip
		
		\noindent (ii) The \enquote{only if} part of the statement follows from (the proof of) \cite[Corollary 3.27]{Tsak21}. As far as the \enquote{if} part of the statement is concerned, if $ (X,B+M) $ admits birationally a Nakayama-Zariski decomposition with NQC positive part, then it has a minimal model in the sense of Birkar-Shokurov by \cite[Theorem 4.18]{Tsak21}, and thus it has a minimal model by Proposition \ref{prop:mainprop}. If, moreover, $ (X,B+M) $ admits birationally a Nakayama-Zariski decomposition with semi-ample positive part, then by repeating verbatim the proof of \cite[Theorem 4.18]{Tsak21}, while bearing \cite[2.1.8]{Fuj17book} in mind, we deduce that $ (X,B+M) $ has a good minimal model in the sense of Birkar-Shokurov. It follows from Lemma \ref{lem:GMM} and Proposition \ref{prop:mainprop} that $ (X,B+M) $ has a good minimal model.
	\end{proof}
	
	\begin{rem}~
		\label{rem:comment_BH14b_1.1-g}
		\begin{enumerate}[(1)]
			\item In Theorem \ref{thm:MM_NQC_NZD}(i) there is no loss of generality if we replace the phrase \enquote{$ (X,B) $ admits birationally a Nakayama-Zariski decomposition with nef positive part} with the phrase \enquote{$ (X,B) $ admits birationally a Nakayama-Zariski decomposition with NQC positive part}. Indeed, this follows readily from the construction and \cite[2.1.8 and Theorem 4.7.2(3)]{Fuj17book}. Thus, (i) may be regarded as a special case of (ii) for $ M = 0 $.
			
			\item The NQC condition is used crucially in Theorem \ref{thm:MM_NQC_NZD}(ii) due to the application of \cite[Lemma 3.22]{HanLi22} for its proof.
		\end{enumerate}
	\end{rem}
	
	We conclude this subsection with the generalization of \cite[Theorem 1.5]{Hash22c} to the context of g-pairs. More precisely, we establish the minimal model theory for NQC lc g-pairs admitting an lc-trivial fibration with log big moduli part (noting that log bigness is satisfied on a sufficiently high birational model of the base of the lc-trivial fibration). We only outline the proof below and we refer to op.\ cit.\ for the details as well as for the relevant definitions.
	
%
%

%
%
%
%
%
	
	\begin{thm}\label{thm:MM_logbigmoduli}
		Let $(X,B+M)$ be an NQC lc g-pair admitting an lc-trivial fibration with log big moduli part. Then $(X,B+M)$ has a good minimal model or a Mori fiber space.
	\end{thm}

	\begin{proof}
		Assume first that $ K_X+B+M $ is not pseudo-effective. Then $ (X,B+M) $ has a Mori fiber space by Proposition \ref{prop:mainprop}.
			
		Assume now that $K_X+B+M$ is pseudo-effective. In order to show that $(X,B+M)$ has a good minimal model, we argue essentially as in the proof of \cite[Theorem 1.5]{Hash22c},
		except that we now invoke Theorem \ref{thm:MM_NQC_NZD}(ii) instead of \cite[Theorem 2.23]{Hash20d}.
	\end{proof}

	\subsection{Generalized pairs of numerical dimension zero}
	\label{subsection:num_dim_zero}
	
	In this subsection we work exclusively in the absolute setting; see the beginning of Subsection \ref{subsection:log_abundant}. We prove here the existence of minimal models of NQC lc g-pairs of numerical dimension zero. To this end, we first derive an analogue of \cite[Theorem 5.1]{Gon11} in the context of g-pairs.
	
	\begin{lem}\label{lem:Gon11_Thm5.1}
		Let $ (X,B+M) $ be an NQC lc g-pair such that $ (X,0) $ is $ \Q $-factorial klt and $ \kappa_\sigma(X,K_X+B+M) = 0 $. Let $A$ be an effective ample $\R$-divisor on $X$ such that $ \big(X, (B+A)+M \big) $ is lc and $ K_X+B+A+M $ is nef. Then any $ (K_X+B+M) $-MMP with scaling of $ A $ terminates.
	\end{lem}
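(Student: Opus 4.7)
The plan is to reduce termination of an arbitrary $(K_X+B+M)$-MMP with scaling of $A$ to an application of Theorem~\ref{thm:LT22b_4.1}, by first establishing, for every $\lambda \in [0,1]$, the existence of a minimal model in the sense of Birkar--Shokurov of the auxiliary g-pair $(X,(B+\lambda A)+M)$. The hypothesis $\kappa_\sigma(X,K_X+B+M)=0$ will enter precisely in the limiting case $\lambda = 0$, while the case $\lambda > 0$ is handled via the ample-boundary minimal model theorem.

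For $\lambda > 0$, the boundary $B+\lambda A$ contains the ample divisor $\lambda A$, so Theorem~\ref{thm:EGMM_boundary_contains_ample} provides a good minimal model of $(X,(B+\lambda A)+M)$, and a dlt blow-up thereof is a minimal model in the sense of Birkar--Shokurov. For $\lambda = 0$, I would exploit $\kappa_\sigma = 0$ as follows: on a sufficiently high resolution $f\colon W \to X$, the positive part $P_\sigma(f^*(K_X+B+M))$ of the Nakayama--Zariski decomposition is movable, and a movable divisor of numerical dimension zero must be numerically zero. Hence this positive part is identically zero, and so trivially NQC; thus $(X,B+M)$ admits birationally a Nakayama--Zariski decomposition with NQC positive part, and Theorem~\ref{thm:MM_NQC_NZD}(ii) then produces a minimal model of $(X,B+M)$, a dlt blow-up of which is a minimal model in the sense of Birkar--Shokurov.

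With these facts in hand, I would run any $(K_X+B+M)$-MMP with scaling of $A$, denote by $\lambda_i$ the nef thresholds, and set $\lambda := \lim_{i\to\infty}\lambda_i \in [0,1]$. If $\lambda \neq \lambda_i$ for every $i$, then Theorem~\ref{thm:LT22b_4.1} applied with $N=A$ and $P=0$ gives termination directly. Otherwise $\lambda_i = \lambda$ for all sufficiently large $i$, and either $\lambda = 0$, in which case $K_{X_i}+B_i+M_i$ is already nef and the MMP terminates at step $i$, or $\lambda > 0$. In the latter subcase the sub-MMP from step $i$ onwards is $(K+B+\lambda A+M)$-trivial and $(K+B+M)$-negative; since $K_{X_i}+B_i+\lambda A_i+M_i$ is nef and big, the canonical model $\varphi_i\colon X_i \to Y_i$ of $(X_i,B_i+\lambda A_i+M_i)$ exists as a birational contraction, so the sub-MMP becomes an MMP over the birational base $Y_i$.

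The main obstacle is the last subcase: termination of the sub-MMP over $Y_i$ requires care, since termination of $(K+B+M)$-flips over a birational base is not automatic. A natural route is to combine the finiteness of the relative Picard number $\rho(X_i/Y_i)$, which bounds the number of divisorial contractions, with a length-of-extremal-rays argument in the spirit of Lemma~\ref{lem:trivial_MMP} to control the remaining $(K+B+\lambda A+M)$-trivial flips; alternatively one can reduce further to a small $(K+B+M)$-MMP over $Y_i$ and invoke termination results available in the relative setting, using that $(X_i,B_i+\lambda A_i+M_i)$ is a good minimal model. Beyond this, a subtle point is the rigorous passage in the $\lambda = 0$ step from $\kappa_\sigma = 0$ to the numerical triviality of $P_\sigma$ and to the NQC hypothesis required by Theorem~\ref{thm:MM_NQC_NZD}(ii).
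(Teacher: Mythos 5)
Your treatment of the limiting case $\lambda=0$ is a legitimate alternative to the paper's argument and is essentially correct: $\kappa_\sigma=0$ forces $P_\sigma\bigl(f^*(K_X+B+M)\bigr)\equiv 0$ by Nakayama's Proposition V.2.7(8), a numerically trivial $\R$-Cartier divisor on a smooth variety is indeed NQC (it is a non-negative combination of numerically trivial, hence nef, $\Q$-Cartier divisors, since the relation $\sum r_i[D_i]=0$ cuts out a rational subspace), and Theorem \ref{thm:MM_NQC_NZD}(ii) then yields a minimal model in the sense of Birkar--Shokurov, so Theorem \ref{thm:LT22b_4.1} applies whenever $\lambda\neq\lambda_i$ for all $i$. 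The paper instead argues by contradiction: it quotes \cite[Lemma 2.17]{LT22a} to make $K_{X_\ell}+B_\ell+M_\ell$ movable after finitely many flips, deduces $N_\sigma$ of its pullback is exceptional, and concludes $K_{X_\ell}+B_\ell+M_\ell\equiv 0$, which is nef --- a contradiction. Your route is more modular but proves the same thing.

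The genuine gap is the sub-case you yourself flag: $\lambda_i=\lambda>0$ for all large $i$. Neither of your suggested repairs works. The relative Picard number over the canonical model $Y_i$ only bounds the number of divisorial contractions, not the number of $(K+B+\lambda A+M)$-trivial flips, and the length-of-extremal-rays bound (as in Lemma \ref{lem:trivial_MMP}) controls which rays can be contracted, not how many times; termination of flips over a birational base is in general exactly as hard as absolute termination. Note also that your argument nowhere uses the hypothesis that $(X,0)$ is $\Q$-factorial klt and that $A$ is ample --- a warning sign, since this is precisely the input needed here: when $\lambda>0$ every step is $\bigl(K+B+\tfrac{\lambda}{2}A+M\bigr)$-negative, the divisor $K_X+B+\tfrac{\lambda}{2}A+M$ is $\R$-linearly equivalent to $K_X+\Delta$ for a klt pair $(X,\Delta)$ with $\Delta$ big, and termination then follows from the BCHM finiteness of (weak log canonical) models as the scaling coefficient varies in a compact interval. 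This is how the paper disposes of the entire case $\lambda>0$ in one line, by citing the second paragraph of the proof of \cite[Lemma 2.17]{LT22a}; without that reduction (or an equivalent finiteness-of-models argument) your proof is incomplete in this sub-case.
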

	
	\begin{proof}
		We run a $ (K_X+B+M) $-MMP with scaling of $ A $. For each $ i \geq 1 $, we denote by $(X_i,B_i+M_i)$ the g-pair appearing at the $ i $-th step of this MMP, where $ (X_1,B_1+M_1) := (X,B+M) $, and by $ \lambda_i $ the corresponding nef threshold. We set $\lambda :=\lim_{i \to +\infty} \lambda_i$ and we distinguish two cases. 
		
		First, if $ \lambda > 0 $, then we claim that the given MMP terminates.
		Indeed, this MMP is also a $(K_X + B + M + \frac{\lambda}{2} A)$-MMP. By \cite[Lemma 3.4]{HanLi22} there exists a boundary $ \Delta $ on $ X $ such that 
		$K_X + \Delta \sim_\R K_X + B + M + \frac{\lambda}{2} A  $, 
		$(X,\Delta)$ is klt and $ \Delta $ is big. By \cite[Corollary 1.4.2]{BCHM10}, the $(K_X+\Delta)$-MMP with scaling of $ A $ over $ Z $ terminates, and therefore the original MMP terminates.
		
		Second, if $ \lambda = 0 $, then we will also show that the given MMP terminates. To this end, arguing by contradiction, we assume that the above $ (K_X+B+M) $-MMP with scaling of $ A $ does not terminate. We may also assume that it consists only of flips. By \cite[Lemma 2.17]{LT22a} there exists an index $ \ell \geq 1 $ such that the divisor $ K_{X_\ell} + B_\ell + M_\ell $ is movable (see \cite[Definition III.1.13]{Nak04}). On the other hand, it follows from the Negativity lemma \cite[Lemma 3.39(1)]{KM98} and from \cite[Remark 2.15(2)]{Hash20d} that $ \kappa_\sigma(X_\ell, K_{X_\ell}+B_\ell+M_\ell) = 0 $. Therefore, we may replace $ (X,B+M) $ with $ (X_k, B_k+M_k) $ and we may thus assume that the divisor $ K_X+B+M $ is movable.
		
		Next, let $ f \colon W \to X $ be a log resolution of $ (X,B) $ and consider the Nakayama-Zariski decomposition of $ f^*(K_X+B+M) $:
		\begin{equation}\label{eq:1_Gon11_Thm5.1}
			f^* (K_X+B+M) = P_\sigma \big( f^* (K_X+B+M) \big) + N_\sigma \big( f^* (K_X+B+M) \big) .
		\end{equation}
		Since by \cite[Remark 2.15(2)]{Hash20d} we have
		\[ \kappa_\sigma \big(W, f^* (K_X+B+M) \big) = \kappa_\sigma(X,K_X+B+M) = 0 , \]
		by \cite[Proposition V.2.7(8)]{Nak04} we infer that 
		\begin{equation}\label{eq:2_Gon11_Thm5.1}
			P_\sigma \big( f^* (K_X+B+M) \big) \equiv 0 .
		\end{equation} 
		We now claim that $ N_\sigma \big( f^* (K_X+B+M) \big) \geq 0 $ is an $ f $-exceptional divisor. Indeed, there would otherwise exist a component $ G $ of $ N_\sigma \big( f^* (K_X+B+M) \big) $ which would not be $ f $-exceptional, so $ f_*G $ would be a component of $ N_\sigma(K_X+B+M) $ by \cite[Theorem III.5.16]{Nak04}, but this is impossible by \cite[Proposition III.1.14(1)]{Nak04}. (Alternatively, one may argue as in the proof of \cite[Claim 5.2]{Gon11}, taking \cite[Lemma III.1.4(5)]{Nak04} and \cite[Lemma 2.4]{Hash20d} into account.) Therefore, the previous claim, together with \eqref{eq:1_Gon11_Thm5.1} and \eqref{eq:2_Gon11_Thm5.1}, imply that $ K_X+B+M \equiv 0 $; in particular, $ K_X+B+M $ is nef. However, this contradicts our assumption that the given $(K_X+B+M)$-MMP with scaling of $A$ does not terminate, and finishes the proof.
	\end{proof}
	
	We are now ready to generalize \cite[Theorem 1.1 = Corollary 5.1]{Gon11} to the setting of g-pairs. Even though part (i) of the next theorem is a special case of part (ii), for the sake of completeness we also give a direct proof of (i) which depends only on \cite{Gon11} and \cite{HH20}.
	
	\begin{thm}\label{thm:Gon11_Thm1.1}
		The following statements hold:
		\begin{enumerate}[\normalfont (i)]
			\item If $ (X,B) $ is an lc pair such that $ \kappa_\sigma(X,K_X+B) = 0 $, then $ (X,B) $ has a minimal model.
			
			\item If $ (X,B+M) $ is an NQC lc g-pair such that $ \kappa_\sigma(X,K_X+B+M) = 0 $, then $ (X,B+M) $ has a minimal model.
		\end{enumerate}
	\end{thm}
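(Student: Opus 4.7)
For part (i), the plan is to apply Gongyo's theorem \cite[Corollary 5.1]{Gon11}, which directly yields a minimal model of $(X,B)$ in the sense of Birkar-Shokurov, and then to invoke \cite[Theorem 1.7]{HH20} to upgrade this to a minimal model in the usual sense. This is exactly what the paper announces for (i), and requires no additional argument.

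For part (ii), my plan is to reduce to the $\Q$-factorial dlt situation, where Lemma \ref{lem:Gon11_Thm5.1} applies directly. Concretely, I would first take a dlt blow-up $h \colon (T,B_T+M_T) \to (X,B+M)$. Since $K_T+B_T+M_T \sim_\R h^*(K_X+B+M)$, \cite[Remark 2.15(2)]{Hash20d} gives
\[
\kappa_\sigma(T,K_T+B_T+M_T)=\kappa_\sigma(X,K_X+B+M)=0.
\]
Because $(T,B_T+M_T)$ is $\Q$-factorial dlt, its underlying variety $T$ has klt singularities, i.e.\ $(T,0)$ is $\Q$-factorial klt. I would then pick an effective ample $\R$-divisor $A$ on $T$, sufficiently general and positive so that $(T,(B_T+A)+M_T)$ is lc and $K_T+B_T+A+M_T$ is nef; this is standard.

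With these preparations, Lemma \ref{lem:Gon11_Thm5.1} yields that any $(K_T+B_T+M_T)$-MMP with scaling of $A$ terminates, producing a minimal model $(Y,B_Y+M_Y)$ of $(T,B_T+M_T)$. By Remark \ref{rem:EMM_reduction}, $(Y,B_Y+M_Y)$ is automatically a minimal model of $(X,B+M)$ in the sense of Birkar-Shokurov, and hence, by Proposition \ref{prop:mainprop}, $(X,B+M)$ admits a minimal model in the usual sense.

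I do not expect a genuine obstacle beyond what is already built into Lemma \ref{lem:Gon11_Thm5.1} itself. The hard step there is the Nakayama-Zariski calculation showing that once the MMP with scaling enters its movable phase the canonical divisor becomes numerically trivial. All that this proof strategy contributes on top of that is the dlt-blow-up reduction, whose sole purpose is to ensure the klt hypothesis on the underlying variety required by Lemma \ref{lem:Gon11_Thm5.1}, starting from an arbitrary NQC lc g-pair; the passage from the output of the MMP on $T$ back to a minimal model of $(X,B+M)$ is then immediate via Remark \ref{rem:EMM_reduction} and Proposition \ref{prop:mainprop}.
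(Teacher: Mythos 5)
Your proposal matches the paper's proof essentially verbatim: for (ii) the paper likewise passes to a dlt blow-up, uses \cite[Remark 2.15(2)]{Hash20d} to preserve $\kappa_\sigma=0$, applies Lemma \ref{lem:Gon11_Thm5.1} to the resulting $\Q$-factorial dlt g-pair, and descends via Remark \ref{rem:EMM_reduction} and Proposition \ref{prop:mainprop}; for (i) it is the same chain through \cite[Corollary 5.1]{Gon11} and \cite[Theorem 1.7]{HH20} (the paper also inserts a dlt blow-up before invoking Gongyo, a minor point). No gaps.
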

	
	\begin{proof}~
		
		\medskip
		
		\noindent (i) Let $ h \colon (T,B_T) \to (X,B) $ be a dlt blow-up of $ (X,B) $. By \cite[Remark 2.15(2)]{Hash20d} it holds that $ \kappa_\sigma(T,K_T+B_T) = 0 $, so $ (T,B_T) $ has a minimal model $ (Y,B_Y) $ by \cite[Corollary 5.1]{Gon11}. Note that $ (Y,B_Y) $ is a minimal model in the sense of Birkar-Shokurov of $ (X,B) $. We conclude by \cite[Theorem 1.7]{HH20}.
		
		\medskip
		
		\noindent (ii) Let $ h \colon (T,B_T+M_T) \to (X,B+M) $ be a dlt blow-up of $ (X,B+M) $. By \cite[Remark 2.15(2)]{Hash20d} it holds that $ \kappa_\sigma(T,K_T+B_T+M_T) = 0 $, so $ (T,B_T+M_T) $ has a minimal model $ (Y,B_Y+M_Y) $ by Lemma \ref{lem:Gon11_Thm5.1}. Note that $ (Y,B_Y+M_Y) $ is a minimal model in the sense of Birkar-Shokurov of $ (X,B+M) $. We conclude by Proposition \ref{prop:mainprop}.
	\end{proof}
	
	Finally, one may wonder whether it is also possible to extend \cite[Theorem 1.2 = Theorem 6.1]{Gon11} to the setting of g-pairs; in other words, whether NQC lc g-pairs of numerical dimension zero are abundant. The following example, which was first discussed in \cite[p.\ 212, Nonvanishing]{BH14b}, demonstrates that this fails in general.
	
	\begin{exa}\label{exa:counter-example}
		Let $ X $ be an elliptic curve, set $ B := 0 $ and take $ M $ to be a non-torsion divisor on $ X $ of degree zero. Then $ (X,B+M) $ is an NQC lc g-pair such that
		\[ \kappa_\sigma(X,K_X+B+M) = \kappa_\sigma(X,M) = 0  \]
		and
		\[ \kappa_\iota(X,K_X+B+M) = \kappa(X,M) = - \infty . \]
	\end{exa}

	\subsection{Generalized pairs whose boundary contains an ample divisor}
	
	We establish here an analogue of \cite[Theorem 1.5]{HH20} in the context of g-pairs, which also refines \cite[Theorem 1.3(2)]{LX23a} significantly; see Theorem \ref{thm:EGMM_boundary_contains_ample}. To this end, we first prove two auxiliary results. The first one is the relative version of \cite[Proposition 1.45]{KM98}, so it should be well-known, but we provide its proof for the convenience of the reader nonetheless.
	
	\begin{lem}\label{lem:KM98_1.45_relative}
		Let $ f \colon X \to Y $ and $ g \colon Y \to Z $ be projective morphisms of varieties. If $ H $ is an $f$-ample divisor on $X$ and if $ L $ is a $g$-ample divisor on $Y$, then the divisor $ H + \nu f^* L$ is $(g \circ f)$-ample for $ \nu \gg 0 $.
	\end{lem}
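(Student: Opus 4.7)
The plan is to apply the relative Kleiman ampleness criterion: for a projective morphism $\pi\colon X\to Z$, an $\R$-Cartier divisor $D$ on $X$ is $\pi$-ample if and only if $D$ is strictly positive on every nonzero class in the relative Mori cone $\overline{\mathrm{NE}}(X/Z)$. Thus it suffices to produce a single $\nu_0>0$ such that $(H+\nu_0 f^*L)\cdot R>0$ for every nonzero $R\in\overline{\mathrm{NE}}(X/Z)$. The natural approach is to decompose $\overline{\mathrm{NE}}(X/Z)$ according to whether a class is contracted by $f$ and then combine the two cases by a compactness argument in the finite-dimensional space $N_1(X/Z)_\R$.

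First, observe that $L$ is $g$-ample, hence $g$-nef, so $f^*L$ is $(g\circ f)$-nef; in particular $f^*L\cdot R\geq 0$ for all $R\in\overline{\mathrm{NE}}(X/Z)$. For $R\in\overline{\mathrm{NE}}(X/Y)\setminus\{0\}$, i.e.\ classes represented by curves contracted by $f$, we have $f^*L\cdot R=0$ by the projection formula, while $H\cdot R>0$ by the $f$-ampleness of $H$ and Kleiman's criterion for $f$. For $R\in\overline{\mathrm{NE}}(X/Z)\setminus\overline{\mathrm{NE}}(X/Y)$, the pushforward $f_*R$ is a nonzero class in $\overline{\mathrm{NE}}(Y/Z)$, and the projection formula together with the $g$-ampleness of $L$ and Kleiman's criterion for $g$ give $f^*L\cdot R=L\cdot f_*R>0$.

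To assemble a uniform $\nu_0$, I would fix a norm on $N_1(X/Z)_\R$ and consider the compact cross-section $K:=\{R\in\overline{\mathrm{NE}}(X/Z):\|R\|=1\}$. The continuous functional $f^*L\cdot(\cdot)$ is nonnegative on $K$ and vanishes precisely on $K\cap\overline{\mathrm{NE}}(X/Y)$, where $H\cdot(\cdot)$ takes a positive minimum $\varepsilon>0$ by compactness. By continuity, $H\cdot(\cdot)\geq\varepsilon/2$ on an open neighborhood $U$ of this vanishing locus in $K$, and $f^*L\cdot(\cdot)\geq\delta>0$ on the compact complement $K\setminus U$. Since $H\cdot(\cdot)$ is bounded below (by some $-C$) on $K$, choosing any $\nu_0>C/\delta$ makes $(H+\nu_0 f^*L)\cdot R>0$ on all of $K$, and the inequality extends to every nonzero $R\in\overline{\mathrm{NE}}(X/Z)$ by homogeneity.

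The main obstacle is the compactness/continuity bookkeeping that converts the pointwise positivity of the previous paragraph into a uniform bound; once the Mori cone decomposition is in place, the argument is routine and relies crucially on $\dim_\R N^1(X/Z)_\R<\infty$. No novel ingredient beyond the relative Kleiman criterion and the projection formula should be needed.
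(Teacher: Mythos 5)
There is a genuine gap at the crux of your argument, namely in the claim that the functional $f^*L\cdot(\,\cdot\,)$ vanishes on $\NEb(X/Z)$ exactly along $\NEb(X/Y)$ --- equivalently, that $f_*R\neq 0$ for every $R\in\NEb(X/Z)\setminus\NEb(X/Y)$. This dichotomy is clear for the class of an irreducible curve $C$: either $C$ is contracted by $f$, or $f(C)$ is a curve in a fiber of $g$ and then $L\cdot f_*[C]>0$ by Kleiman's criterion for $g$. But a general element of the \emph{closed} cone $\NEb(X/Z)$ is only a limit of positive combinations of such classes, and it can happen a priori that classes of non-$f$-contracted curves accumulate, after rescaling, onto the face $\NEb(X/Z)\cap\ker f_*$ at points lying outside $\NEb(X/Y)$; the projection formula and Kleiman's criterion do not exclude this. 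What your compactness argument actually requires is that $H\cdot R>0$ for every nonzero $R\in\NEb(X/Z)$ with $f_*R=0$, and the standard route to this fact (equivalently, to the inclusion $\NEb(X/Z)\cap\ker f_*\subseteq\NEb(X/Y)$) goes \emph{through} the statement of the lemma: one first produces a $(g\circ f)$-ample divisor of the form $H+\nu f^*L$ by cohomological or fiberwise means and only then reads off such cone statements. As written, your proof is therefore circular at this step, or at least rests on an unproved structural claim about $\NEb(X/Z)$. The remaining ingredients --- the relative Kleiman criterion (legitimate here, since all varieties are quasi-projective and $g\circ f$ is projective, so the cone has compact slices) and the compactness bookkeeping on the unit slice --- are fine.

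For comparison, the paper's proof avoids the cone of curves entirely: it restricts to the fibers $X_z\to Y_z$ over points $z\in Z$, applies the known absolute statement \cite[Proposition 1.45]{KM98} there, upgrades fiberwise ampleness to ampleness over a neighborhood of $z$ via \cite[Proposition 1.41]{KM98}, and concludes by quasi-compactness of $Z$. If you wish to retain a purely numerical argument, you must first establish the positivity of $H$ on $\big(\NEb(X/Z)\cap\ker f_*\big)\setminus\{0\}$ by an independent argument, which is precisely the hard point.
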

	
	\begin{proof}
		Fix a point $ z \in Z $ and consider the projective fibers $ Y_z := g^{-1}(z) $ and $ X_z := (g \circ f)^{-1}(z) = f^{-1}(Y_z) $ and the induced map $ f |_{X_z} \colon X_z \to Y_z $. Note that $ L |_{Y_z} $ is ample and $ H |_{X_z} $ is ample over $ Y_z $. Therefore, by \cite[Proposition 1.45]{KM98}, $ (H + \nu_z f^*L) |_{X_z} $ is ample on $X_z$ for $ \nu_z \gg 0 $. It follows now from \cite[Proposition 1.41]{KM98} that the divisor $ H + \nu_z f^*L $ is ample over some open neighborhood $ U_z \subseteq Z $ of $z$ for $ \nu_z \gg 0 $. Since $ Z $ is quasi-compact, it can be covered by only finitely many such open subsets $ U_z \subseteq Z $. Thus, for any sufficiently large positive integer $ \nu $, the divisor $ H + \nu f^*L $ on $X$ is ample over $ Z $, as asserted.
	\end{proof}
	
	\begin{lem}\label{lem:MMP_ampleness_boundary}
		Let $\big(X/Z, (B+A)+M \big)$ be an NQC lc g-pair, where $A$ is an effective $ \R $-Cartier $ \R $-divisor on $ X $ which is ample over $Z$ and contains no lc center of the g-pair $ \big(X/Z,(B+A)+M \big)$. If $ \varphi \colon X \dasharrow X' $ is a partial $ (K_X+B+A+M) $-MMP over $Z$, then there exist effective $ \R $-divisors $ \widetilde A $ and $ \widetilde B $ on $ X' $ such that
		\begin{itemize}
			\item $ \widetilde A $ is ample over $ Z $,
			
			\item $ \widetilde B + \widetilde A \sim_{\R,Z} \varphi_*(B+A)$,
			
			\item $ \big( X'/Z, (\widetilde B + \widetilde A) + M' \big) $ is an NQC lc g-pair, and
			
			\item $\widetilde A$ contains no lc center of $ \big( X'/Z, (\widetilde B + \widetilde A) +M' \big) $.
		\end{itemize}
	\end{lem}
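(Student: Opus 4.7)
The plan is to realise the ample part of the boundary on $X'$ by extracting a small genuinely ample divisor from the big pushforward $\varphi_*A$ via the relative Kodaira lemma, and then to perturb the decomposition slightly to restore log canonicity.

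\smallskip

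First I would record the starting point: since $\varphi$ is a composite of $(K_X+B+A+M)$-MMP steps over $Z$, it is a birational contraction, and the intermediate g-pair $(X'/Z,\varphi_*(B+A)+M')$ is again NQC lc; discrepancies can only increase along an MMP and the nef part descends from a common resolution. Moreover, $\varphi_*A$ is big over $Z$ because $A$ is ample over $Z$ and $\varphi$ is a birational contraction. Fix any ample $\R$-Cartier $\R$-divisor $H$ on $X'$ over $Z$ that is sufficiently general so as not to contain any lc center of $(X'/Z,\varphi_*(B+A)+M')$. By the relative Kodaira lemma applied to $\varphi_*A$, for $m \gg 0$ the divisor $m\varphi_*A - H$ is effective over $Z$; choosing a \emph{general} representative $F$ of $|m\varphi_*A - H|_Z$ (using that a sufficiently high multiple is basepoint-free over $Z$), I may arrange that $\mathrm{Supp}(F)$ avoids any prescribed finite collection of subvarieties of $X'$, in particular all lc centers of $(X',\varphi_*(B+A)+M')$ that are not already contained in $\mathrm{Supp}(\varphi_*A)$. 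This yields
\[
\varphi_*A \sim_{\R,Z} \tfrac{1}{m}H + \tfrac{1}{m}F,
\qquad F\ge 0.
\]

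\smallskip

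Next, for a small parameter $\delta\in(0,1)$ to be fixed later, I would set
\[
\widetilde A := \tfrac{\delta}{m}H,
\qquad
\widetilde B := \varphi_*B + (1-\delta)\varphi_*A + \tfrac{\delta}{m}F.
\]
Both $\widetilde A$ and $\widetilde B$ are effective, $\widetilde A$ is ample over $Z$, and a one-line computation using the displayed equivalence above gives $\widetilde B+\widetilde A \sim_{\R,Z} \varphi_*(B+A)$. The NQC condition on the new g-pair is inherited from that of $(X'/Z,\varphi_*(B+A)+M')$ since it depends only on $M'$. For log canonicity, I would argue in two stages: the auxiliary g-pair $(X',\varphi_*B+(1-\delta)\varphi_*A+M')$ is lc because its boundary is strictly smaller than the lc boundary $\varphi_*(B+A)$; in fact its lc centers are precisely those lc centers of $(X',\varphi_*(B+A)+M')$ disjoint from $\mathrm{Supp}(\varphi_*A)$. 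By the generality built into the choices of $H$ and $F$, neither contains any such center, so the lc threshold of $(F+H)$ with respect to the auxiliary g-pair is strictly positive; choosing $\delta$ small enough then makes the perturbed g-pair $(X',\widetilde B+\widetilde A+M')$ lc. The final bullet is automatic: any lc center of $(X',\widetilde B+\widetilde A+M')$ is in particular an lc center of the auxiliary g-pair, and $\widetilde A=\tfrac{\delta}{m}H$ was chosen to miss all of these.

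\smallskip

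The main obstacle is the generality step: one must be able to move the effective divisor $F$ inside its $\R$-linear equivalence class over $Z$ in order to dodge the finitely many lc centers of the intermediate g-pair that lie outside $\mathrm{Supp}(\varphi_*A)$. Since those centers are finite in number and a sufficiently high multiple of an ample-plus-effective class is basepoint-free over $Z$, this is handled by a relative Bertini argument, but it is the only place in the proof where some care (beyond bookkeeping of $\R$-linear equivalences) is required.
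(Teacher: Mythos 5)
Your construction breaks down at the generality step, and the problem is not just "some care is required" --- it is the reason the paper does not argue this way. The relative Kodaira lemma only tells you that $m\varphi_*A-H$ is $\R$-linearly equivalent over $Z$ to \emph{some} effective divisor; it gives you no freedom to move that divisor. Your parenthetical claim that ``a sufficiently high multiple of an ample-plus-effective class is basepoint-free over $Z$'' is false: ample plus effective means big, and a big divisor need not have any basepoint-free multiple (its stable base locus over $Z$ is typically nonempty and can be of high dimension). Since $\varphi_*A$ is merely big over $Z$ on $X'$ (ampleness is destroyed by the flips and divisorial contractions in $\varphi$), every effective representative $F$ of $m\varphi_*A-H$ may be forced to contain one of the lc centers of the auxiliary g-pair $\big(X',\varphi_*B+(1-\delta)\varphi_*A+M'\big)$, in which case the lc threshold of $F+H$ with respect to that g-pair is zero and no choice of $\delta$ makes $\big(X',\widetilde B+\widetilde A+M'\big)$ lc. (Two smaller points: you should justify that $\varphi_*A$ is $\R$-Cartier on the possibly non-$\Q$-factorial $X'$, which is Lemma \ref{lem:properties_of_MMP}(i); and the lc centers of the auxiliary g-pair are those \emph{not contained in} $\Supp\varphi_*A$, not those disjoint from it.)

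The paper's proof is engineered precisely to avoid choosing a general member of a big class on $X'$. It reduces to a single MMP step $g\colon X\to Y$, $h\colon X'\to Y$ with $\Q$-coefficients, and uses the divisor $H$ on $X$, ample over $Y$, with $K_X+B+A+M+H\sim_{\Q,g}0$ coming from the construction of the step, so that the strict transform $H'$ satisfies that $-H'$ is ample over $Y$. One then splits $A\sim_{\Q,Z}(A-g^*C+\varepsilon H)+(g^*C-\varepsilon H)$ for a suitable $C$ ample over $Z$ on $Y$: the first summand is ample over $Z$ \emph{on $X$}, so a general effective representative $E_\varepsilon$ avoiding all lc centers exists there and stays lc after the MMP step (which is also a step of the $(K_X+B+E_\varepsilon+M)$-MMP); the second summand becomes $h^*C-\varepsilon H'$ on $X'$, which is genuinely ample over $Z$ by the relative version of \cite[Proposition 1.45]{KM98}, so its general member supplies $\widetilde A$. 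If you want to keep your strategy, you must likewise produce the decomposition upstairs where the class is actually ample, rather than on $X'$ where it is only big.
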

	
	\begin{proof}
		We may assume that $\varphi$ is a single step of a $(K_X+B+A+M)$-MMP over $Z$:
		\begin{center}
			\begin{tikzcd}
				(X,B+M) \arrow[rr, dashed, "\varphi"] \arrow[dr, "g" swap] && (X',B'+M') \arrow[dl, "h"] \\
				& Y
			\end{tikzcd}
		\end{center}
		Arguing as in the first paragraph of the proof of Lemma \ref{lem:properties_of_MMP}(i), we may further assume that $\big(X,(B+A)+M\big)$ is a $\Q$-g-pair. Then there exists a $\Q$-divisor $H$ on $X$ which is ample over $Y$ such that $ K_X+B+A+M+H \sim_{\Q,g} 0 $, and hence, by definition, the above MMP step is constructed as follows:
		$$ h \colon X' \simeq \proj_Y \Bigg( \bigoplus_{m \geq 0} g_* \OO_X(-mH) \Bigg) \longrightarrow Y. $$
		
		Let $H'$ be the strict transform of $H$ on $X'$. Then $-H'$ is ample over $Y$. Since the $\Q$-divisor $A$ on $X$ is ample over $Z$ and since relative ampleness is an open condition, we may find a $\Q$-divisor $C$ on $Y$ which is ample over $Z$ such that the divisor $ A - g^*C $ is still ample over $Z$. Additionally, for any sufficiently small rational number $ \varepsilon > 0 $, both $\Q$-divisors 
		\[ A-g^*C+\varepsilon H \quad \text{and} \quad h^*C-\varepsilon H' \quad \text{are ample over } Z . \]
		Indeed, the claim about the former follows as above, while the claim about the latter follows from Lemma \ref{lem:KM98_1.45_relative}. Thus, we may find a sufficiently general
		$$ 0 \leq  E_{\varepsilon} \sim_{\Q,Z} A-g^*C+\varepsilon H $$
		such that $ \big(X,(B+E_\varepsilon)+M \big)$ is an NQC lc g-pair and any lc center of $\big(X,(B+E_\varepsilon)+M\big)$ is an lc center of $\big(X,(B+A)+M\big)$. We denote by $E'_\varepsilon$ the strict transform of $E_\varepsilon$ on $X'$. Since the map $ \varphi \colon X \dashrightarrow X' $ is also a single step of a $(K_X+B+E_\varepsilon+M)$-MMP over $Z$ for $\varepsilon \ll 1$ due to the choice of $E_\varepsilon$, the NQC g-pair $\big(X',(B'+E'_\varepsilon)+M'\big)$ is also lc. Furthermore, we may choose
		$$ 0 \leq \widetilde A \sim_{\Q,Z} h^*C-\varepsilon H' $$
		sufficiently general such that the NQC g-pair $\big(X',(B'+E'_\varepsilon+ \widetilde A)+M'\big)$ is lc and $\widetilde A$ contains no lc center of $\big(X',(B'+E'_\varepsilon+\widetilde A)+M\big)$. Now, we set 
		$$ \widetilde B := B' + E'_\varepsilon , $$
		and by construction we have 
		$$ \varphi_*(B+A) \sim_{\Q,Z} \widetilde B + \widetilde A . $$ 
		Therefore, the divisors $ \widetilde A $ and $ \widetilde B $ on $X'$ satisfy all the requirements.
	\end{proof}
	
	\begin{thm}\label{thm:EGMM_boundary_contains_ample}
		Let $ \big( X/Z,(B+A)+M \big) $ be an NQC lc g-pair, where $ A $ is an effective $ \R $-Cartier $\R$-divisor which is ample over $ Z $. If the divisor $K_X+B+A+M$ is pseudo-effective over $Z$, then there exists a $(K_X+B+A+M)$-MMP over $Z$ which terminates with a good minimal model of $ \big( X,(B+A)+M \big) $ over $ Z $.
	\end{thm}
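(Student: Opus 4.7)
The plan is to first produce a terminating $(K_X+B+A+M)$-MMP ending with a minimal model, and then to upgrade this minimal model to a good one by a tie-breaking reduction to the klt case.

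First, after perturbing $A$ within its $\R$-linear equivalence class over $Z$, one may assume that $A$ contains no lc center of $\big(X,(B+A)+M\big)$. Since $K_X+B+A+M$ is pseudo-effective over $Z$ and the boundary $B+A$ contains the ample divisor $A$, \cite[Theorem 1.2(1)]{LX22a} ensures that $\big(X,(B+A)+M\big)$ has a minimal model in the sense of Birkar-Shokurov over $Z$. Choosing a general effective $\R$-Cartier divisor $H$ with sufficiently small coefficients, ample over $Z$, so that $\big(X,(B+A+H)+M\big)$ is lc and $K_X+B+A+H+M$ is nef over $Z$, Theorem \ref{thm:mainthm}(a) then yields a $(K_X+B+A+M)$-MMP over $Z$ with scaling of $H$ which terminates in a birational contraction $\varphi \colon X \dashrightarrow X'$ onto a minimal model of $\big(X,(B+A)+M\big)$ over $Z$.

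Second, I will exploit Lemma \ref{lem:MMP_ampleness_boundary} along this MMP. Its iterated application yields effective $\R$-divisors $\widetilde{B}$ and $\widetilde{A}$ on $X'$ such that $\widetilde{B}+\widetilde{A}\sim_{\R,Z}\varphi_\ast(B+A)$, $\widetilde{A}$ is ample over $Z$, the NQC g-pair $\big(X',(\widetilde{B}+\widetilde{A})+M'\big)$ is lc, and $\widetilde{A}$ contains no lc center of this g-pair. In particular $K_{X'}+\widetilde{B}+\widetilde{A}+M'$ is nef over $Z$, and it remains only to show that it is semi-ample over $Z$.

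The main obstacle will be this final step, which I propose to handle by a tie-breaking argument reducing matters to the klt case. For $0<\varepsilon\ll 1$ the divisor $\widetilde{A}+\varepsilon\widetilde{B}$ remains ample over $Z$ by openness of ampleness, so I can pick a very general effective $\R$-divisor $A^\ast\sim_{\R,Z}\widetilde{A}+\varepsilon\widetilde{B}$ and set $\Delta:=(1-\varepsilon)\widetilde{B}+A^\ast$. Reducing the coefficients of $\widetilde{B}$ by the factor $1-\varepsilon$ eliminates the lc places supported on $\widetilde{B}$, and the very general choice of $A^\ast$, possible precisely because $\widetilde{A}$ avoids lc centers, prevents the introduction of new non-klt loci; hence $(X',\Delta+M')$ is an NQC klt g-pair with $K_{X'}+\Delta+M'\sim_{\R,Z}K_{X'}+\widetilde{B}+\widetilde{A}+M'$ nef over $Z$ and boundary $\Delta$ big over $Z$ (as it contains the ample $A^\ast$). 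Theorem \ref{thm:EMM_general_type}(i) then furnishes a good minimal model of $(X',\Delta+M')$ over $Z$, and Lemma \ref{lem:GMM}, applied to the tautological minimal model given by the identity, forces $K_{X'}+\Delta+M'$, and thus $K_{X'}+\widetilde{B}+\widetilde{A}+M'\sim_{\R,Z}K_{X'}+B'+A'+M'$, to be semi-ample over $Z$, completing the proof.
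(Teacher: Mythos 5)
Your first two steps coincide with the paper's argument: you obtain a minimal model in the sense of Birkar--Shokurov from \cite[Theorem 1.2(1)]{LX22a}, run a terminating $(K_X+B+A+M)$-MMP over $Z$ (via Theorem \ref{thm:mainthm}; the paper uses Proposition \ref{prop:mainprop}, which is the same mechanism), and use Lemma \ref{lem:MMP_ampleness_boundary} to carry the structure \enquote{lc g-pair whose boundary contains a relatively ample divisor avoiding all lc centers} to the end product $X'$. Up to that point the proposal is fine.

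The gap is in the final semi-ampleness step. Your tie-breaking reduction to the klt case assumes that every non-klt place of $\big(X',(\widetilde B+\widetilde A)+M'\big)$ has its center contained in $\Supp\widetilde B$ (so that replacing $\widetilde B$ by $(1-\varepsilon)\widetilde B$ raises the relevant discrepancies above $-1$) or would be introduced by a bad choice of $A^\ast$. This is false for a general lc g-pair: there may be divisorial valuations $E$ with $a(E,X',\widetilde B+\widetilde A+M')=-1$ whose centers are not contained in $\Supp\widetilde B\cup\Supp\widetilde A$, coming from the singularities of the underlying variety $X'$ itself (e.g.\ $X'$ with a strictly lc, non-klt point and $\widetilde B=0$) or from the nef part $M'$. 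For such $E$ the discrepancy is unchanged by scaling down $\widetilde B$ and by a general choice of $A^\ast$, so $(X',\Delta+M')$ need not be klt, and Theorem \ref{thm:EMM_general_type}(i) does not apply. This is precisely why the lc case cannot be reduced to \cite{BCHM10}-type statements by perturbation. The paper instead concludes semi-ampleness of $K_{X'}+B'+A'+M'$ by invoking the base-point-free theorem for (not necessarily $\Q$-factorial) NQC lc g-pairs with ample boundary part, namely \cite[Theorem 1.2]{Xie22}, which is the correct tool here and is exactly what Lemma \ref{lem:MMP_ampleness_boundary} sets up (a nef divisor of the form $K_{X'}+\widetilde B+\widetilde A+M'$ with $\widetilde A$ ample over $Z$ and avoiding all lc centers). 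Your argument would be repaired by replacing the klt reduction with that citation; the rest of the Lemma \ref{lem:GMM} bookkeeping then becomes unnecessary.
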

	
	\begin{proof}
		By \cite[Theorem 1.3(1)]{LX23a} the g-pair $\big(X, (B+A) + M \big)$ has a minimal model in the sense of Birkar-Shokurov over $ Z $. Therefore, by Proposition \ref{prop:mainprop} there exists a $ (K_X + B + A + M) $-MMP over $Z$ which terminates with a minimal model $ \big( X',(B'+A')+M' \big) $ of $ \big( X,(B+A)+M \big) $ over $ Z $. By replacing $A$ with a general member of its $\R$-linear system, we may assume that $A$ contains no lc center of the g-pair $ \big(X/Z,(B+A)+M \big)$. It follows now from Lemma \ref{lem:MMP_ampleness_boundary} and from \cite[Theorem 1.2]{Xie22} that the divisor $ K_{X'} + B' + A' + M' $ is semi-ample over $ Z $, which proves the assertion.
	\end{proof}
	
	\begin{cor}\label{cor:finite_generation}
		Let $ \big( X, (B+A)+M \big) $ be an lc $\Q$-g-pair with data $ W \to X \overset{\pi}{\longrightarrow} Z $ and $ M_W $, where $A$ is an effective $\Q$-Cartier $\Q$-divisor on $ X $ which is ample over $Z$. Then 
		\[ R(X/Z,K_X+B+A+M) := \bigoplus_{m \geq 0} \pi_* \OO_X \big( m(K_X+B+A+M) \big) \]
		is a finitely generated $ \OO_Z $-algebra.
	\end{cor}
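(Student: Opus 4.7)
The plan is to split the argument according to whether $D := K_X+B+A+M$ is pseudo-effective over $Z$, and in either case to use a $(K_X+B+A+M)$-MMP over $Z$ to reduce to a birational model on which finite generation is manifest. The common ingredient is the fact that such an MMP preserves the relative section ring: for any $(K_X+B+A+M)$-MMP over $Z$ producing a birational map $\varphi \colon X \dashrightarrow X'$, setting $D' := \varphi_* D$, there is a canonical isomorphism $R(X/Z, D) \cong R(X'/Z, D')$ of graded $\OO_Z$-algebras. This holds step-by-step because flips are constructed as relative $\proj$ of the canonical ring (so the section rings coincide over the base of the flip by definition) and divisorial contractions force sections of $mD$ to vanish along the $D$-negative contracted divisors (so pushforward induces an isomorphism on sections).

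In the pseudo-effective case, I would invoke Theorem \ref{thm:EGMM_boundary_contains_ample} to obtain a $(K_X+B+A+M)$-MMP over $Z$ terminating with a good minimal model $(X',(B'+A')+M')$ of $(X,(B+A)+M)$ over $Z$, on which $D' := K_{X'}+B'+A'+M'$ is semi-ample over $Z$. Choosing $m_0 > 0$ with $m_0 D'$ base-point-free over $Z$, the associated morphism $h \colon X' \to Y$ over $Z$ satisfies $m_0 D' \sim h^*H$ for a Cartier divisor $H$ on $Y$ that is ample over $Z$; hence the $m_0$-th Veronese of $R(X'/Z, D')$ is identified with the relative section ring of the ample over $Z$ divisor $H$ on $Y$, which is a finitely generated $\OO_Z$-algebra. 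Since finite generation of a Veronese subring implies finite generation of the full graded ring, this handles Case 1.

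In the non-pseudo-effective case, I would apply Proposition \ref{prop:mainprop} to the NQC lc g-pair $(X/Z,(B+A)+M)$ to obtain a $(K_X+B+A+M)$-MMP over $Z$ terminating with a Mori fiber space $\varphi \colon X \dashrightarrow X'$ endowed with a Fano contraction $h \colon X' \to T$ over $Z$ along which $-D'$ is ample. Then $h_*\OO_{X'}(mD') = 0$ for every $m > 0$ with $mD'$ Cartier, so $R(X'/Z, D') = \OO_Z$, which is trivially finitely generated. Combined with the section ring invariance established above, this concludes the proof in both cases.

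The main obstacle is the careful verification of the relative section ring invariance along the MMP in the generalized pair setting. While folklore for usual pairs, its extension to NQC lc g-pairs hinges on the construction of flips as relative $\proj$ via \cite[Theorem 1.2]{LX22b} and on the standard Negativity lemma argument for divisorial contractions, utilizing the Contraction theorem \cite[Theorem 1.5]{Xie22}. Neither step is conceptually difficult, but each must be handled carefully to ensure that the resulting isomorphism is genuinely one of graded $\OO_Z$-algebras, taking the nef part $M$ into account at every stage.
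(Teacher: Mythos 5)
Your proposal is correct and follows essentially the same route as the paper, whose entire proof is \enquote{Follows immediately from Theorem \ref{thm:EGMM_boundary_contains_ample}}: your two cases, the invariance of the relative section ring along the MMP, and the Veronese/semi-ampleness argument are exactly the standard unpacking of that one line. The only cosmetic difference is that in the non-pseudo-effective case one can skip the Mori fiber space entirely, since $\pi_*\OO_X\big(m(K_X+B+A+M)\big)=0$ for all $m>0$ already follows from non-pseudo-effectivity over $Z$.
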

	
	\begin{proof}
		Follows immediately from Theorem \ref{thm:EGMM_boundary_contains_ample}.
	\end{proof}

	\section{Applications -- Part II}
	\label{section:applications_II}
	
	In this section we generalize appropriately to the setting of g-pairs all results from \cite[Subsection 3.2]{Hash20d}.
	
	First, we extend \cite[Corollaries 3.8 and 3.9]{Hash20d} to the context of g-pairs as well as to the relative setting; see Corollaries \ref{cor:Hash20d_3.8} and \ref{cor:Hash20d_3.9}, respectively. For the definition of the notion of a \emph{relatively log abundant} log canonical generalized pair that appears in these two results we refer, for example, to \cite[Subsection 2.3]{Hash20d}. Before stating and proving Corollaries \ref{cor:Hash20d_3.8} and \ref{cor:Hash20d_3.9}, we note that the phrase \enquote{an MMP with scaling contains only finitely many (relatively) log abundant generalized pairs} used below means the following: if we have a $ (K_X+B+M) $-MMP over $Z$ with scaling of $ A $ and if for each $ i \geq 1 $ we denote by $(X_i,B_i+M_i)$ the g-pair appearing at the $ i $-th step of this MMP, where $ (X_1,B_1+M_1) := (X,B+M) $, then there are only finitely many indices $i$ such that $K_{X_i} + B_i + M_i$ is log abundant over $Z$ with respect to $(X_i,B_i+M_i)$. 
	
	\begin{cor}\label{cor:Hash20d_3.8}
		Any MMP with scaling of an ample divisor starting with an NQC $ \Q $-factorial dlt g-pair contains only finitely many log abundant NQC $ \Q $-factorial dlt g-pairs.
	\end{cor}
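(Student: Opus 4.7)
The plan is to proceed by contradiction. Suppose the given MMP with scaling of an ample divisor $A$, starting from an NQC $\Q$-factorial dlt g-pair $(X_1, B_1+M_1)$, contains infinitely many log abundant NQC $\Q$-factorial dlt g-pairs $(X_{i_k}, B_{i_k}+M_{i_k})_{k \geq 1}$; in particular, the MMP does not terminate. Denote by $A_i$ the strict transform of $A$ on the $i$-th step $(X_i, B_i+M_i)$.

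First, I would observe that $K_{X_1}+B_1+M_1$ must be pseudo-effective over the base $Z$. Indeed, were it not, Theorem \ref{thm:mainthm} combined with the essential uniqueness (at each step) of an MMP with scaling of an ample divisor would force the given MMP to terminate with a Mori fiber space in finitely many steps, contradicting the infinitude assumption. Consequently, $K_{X_i}+B_i+M_i$ is pseudo-effective for every $i$.

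Next, I would fix one log abundant step $(X_{i_0}, B_{i_0}+M_{i_0})$ from the sequence. The crux of the argument is to show that $(X_{i_0}, B_{i_0}+M_{i_0})$ admits a minimal model in the sense of Birkar-Shokurov. By Theorem \ref{thm:MM_NQC_NZD}(ii), it suffices to exhibit, on a sufficiently high birational model of $X_{i_0}$, a Nakayama-Zariski decomposition of the pullback of $K_{X_{i_0}}+B_{i_0}+M_{i_0}$ whose positive part is NQC (indeed semi-ample). This is expected to follow from the log abundance of $K_{X_{i_0}}+B_{i_0}+M_{i_0}$ via Nakayama's theory of pseudo-effective divisors \cite[Chapter V]{Nak04}, controlling the behavior on each lc center through the log abundance hypothesis together with rational approximation arguments. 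I expect this verification to be the main technical obstacle; an alternative route is to invoke a g-pair analog of Hashizume's existence result for log abundant pseudo-effective pairs.

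Finally, having secured a minimal model of $(X_{i_0}, B_{i_0}+M_{i_0})$ in the sense of Birkar-Shokurov, I would observe that the tail of the given MMP starting at step $i_0$ constitutes a $(K_{X_{i_0}}+B_{i_0}+M_{i_0})$-MMP over $Z$ with scaling of the ample divisor $A_{i_0}$. Applying Theorem \ref{thm:mainthm} to this tail MMP, it terminates in finitely many steps, contradicting the infinitude of the original MMP and concluding the argument.
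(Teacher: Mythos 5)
There is a genuine gap at the heart of your argument, namely the step where you claim that log abundance of $K_{X_{i_0}}+B_{i_0}+M_{i_0}$ yields a birational Nakayama--Zariski decomposition with NQC positive part, hence a minimal model in the sense of Birkar--Shokurov. This implication is not available: even the paper only proves the existence of a minimal model for a log abundant pseudo-effective NQC lc g-pair under the \emph{additional} hypothesis that the stable base locus of $K_X+B+M$ contains no center of a divisorial valuation $P$ with $a(P,X,B+M)<0$ (Corollary \ref{cor:Hash20d_3.10}), and that result is itself a downstream consequence of the corollary you are trying to prove. You acknowledge that this is "the main technical obstacle," but it is not merely technical — filling it would essentially amount to reproving the finiteness theorem \cite[Theorem 7.6]{LX22a} (cf.\ \cite[Theorem 3.15]{Hash22a}), which is exactly the external input the paper's proof invokes. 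The paper's argument is much shorter: if $K_X+B+M$ is not pseudo-effective, any MMP with scaling of an ample divisor terminates by \cite[Lemma 4.4(1)]{BZ16} (this works because the g-pair is $\Q$-factorial dlt — your appeal to "essential uniqueness" of the MMP with scaling is not a valid substitute, since the choice of extremal ray at each step is not unique); if it is pseudo-effective and the nef thresholds satisfy $\lambda=\lim\lambda_i>0$, the MMP terminates; and if $\lambda=0$, the cited finiteness theorem applies directly.

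Your concluding step also misuses Theorem \ref{thm:mainthm} in two ways. First, the strict transform $A_{i_0}$ of the ample divisor $A$ on $X_{i_0}$ is in general no longer ample, so the tail of the given MMP is not an MMP with scaling of an \emph{ample} divisor in the sense required by that theorem. Second, Theorem \ref{thm:mainthm} only asserts the \emph{existence} of some terminating $(K_X+B+M)$-MMP with scaling of $A$; it does not assert that every such MMP (in particular, the given one) terminates, so even granting a minimal model for $(X_{i_0},B_{i_0}+M_{i_0})$ you would not obtain the desired contradiction without an additional argument such as Theorem \ref{thm:LT22b_4.1}, whose hypotheses ($\lambda\neq\lambda_i$ for all $i$) you have not verified.
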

	
	\begin{proof}
		Fix an NQC $ \Q $-factorial dlt g-pair $(X/Z,B+M)$. Pick an effective $\R$-Cartier $\R$-divisor $A$ on $X$ which is ample over $ Z $ such that $ \big(X/Z, (B+A)+M \big) $ is lc and $ K_X+B+A+M $ is nef over $ Z $. We distinguish two cases.
		
		If $ K_X+B+M $ is not pseudo-effective over $ Z $, then by \cite[Lemma 4.4(1)]{BZ16} any $ (K_X+B+M) $-MMP over $ Z $ with scaling of $ A $ terminates with a Mori fiber space of $ (X,B+M) $ over $ Z $, so the statement clearly holds.
		
		If $K_X+B+M$ is pseudo-effective over $ Z $, then run a $ (K_X+B+M) $-MMP over $ Z $ with scaling of $ A $, denote by $ \lambda_i $ the nef thresholds at the steps of this MMP and set $ \lambda := \lim_{i \to +\infty} \lambda_i $. If $ \lambda > 0 $, then this MMP terminates; see the second paragraph of the proof of Lemma \ref{lem:Gon11_Thm5.1}. If $ \lambda = 0 $, then by \cite[Theorem 6.6]{LX23a}, cf.\ \cite[Theorem 3.15]{Hash22a}, this MMP contains only finitely many log abundant NQC $ \Q $-factorial dlt g-pairs. Thus, in any case, the statement holds.
	\end{proof}
	
	\begin{cor}\label{cor:Hash20d_3.9}
		Let $(X/Z,B+M)$ be an NQC lc g-pair and let $A$ be an effective $\R$-Cartier $\R$-divisor on $X$ which is ample over $ Z $ such that $ \big(X/Z, (B+A)+M \big) $ is lc and $ K_X+B+A+M $ is nef over $ Z $. Then there exists a $(K_X+B+M)$-MMP over $ Z $ with scaling of $A$ which contains only finitely many log abundant NQC lc g-pairs.
	\end{cor}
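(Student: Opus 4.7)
My plan is to mimic the proof of Corollary \ref{cor:Hash20d_3.8}, reducing to the $\Q$-factorial dlt case via a dlt blow-up and the lifting construction of Section \ref{section:lifting_MMP}. By Lemma \ref{lem:MMP_with_scaling} we may run some $(K_X+B+M)$-MMP $\mathcal{M}$ over $Z$ with scaling of $A$; denote its nef thresholds by $\lambda_i$ and set $\lambda := \lim_{i \to +\infty}\lambda_i$. If $K_X+B+M$ is not pseudo-effective over $Z$, then by Theorem \ref{thm:mainthm} we may choose $\mathcal{M}$ to terminate, in which case the statement is trivial. Similarly, if $\lambda > 0$, then $\mathcal{M}$ terminates by a standard scaling argument (see, e.g., the second paragraph of the proof of \cite[Lemma 2.10]{LT22b}), which extends to the present non-$\Q$-factorial setting by combining Theorem \ref{thm:LT22b_4.1} with Lemma \ref{lem:MMP_with_scaling}.

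It thus remains to treat the case when $K_X+B+M$ is pseudo-effective over $Z$ and $\lambda = 0$. In this case, take a dlt blow-up $h_1 \colon (X_1',B_1'+M_1') \to (X,B+M)$, and apply Theorem \ref{thm:lifting_MMP_with_scaling} to lift $\mathcal{M}$ to a $(K_{X_1'}+B_1'+M_1')$-MMP $\mathcal{M}'$ over $Z$ with scaling of $N_1'+P_1' = h_1^*A$, along with accompanying dlt blow-ups $h_i \colon (X_i',B_i'+M_i') \to (X_i,B_i+M_i)$ at every step. By that theorem, the nef thresholds of $\mathcal{M}$ form a subsequence of those of $\mathcal{M}'$, and thus the latter also converge to $0$. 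By Lemma \ref{lem:dlt_blow-up_log_abundant}, the g-pair $(X_i,B_i+M_i)$ is log abundant if and only if $(X_i',B_i'+M_i')$ is log abundant, so the problem reduces to showing that $\mathcal{M}'$ contains only finitely many log abundant NQC $\Q$-factorial dlt g-pairs.

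This last assertion would follow at once from Corollary \ref{cor:Hash20d_3.8} if the scaling divisor $h_1^*A$ were ample; however, $h_1^*A$ is in general only big and nef over $Z$, so Corollary \ref{cor:Hash20d_3.8} does not apply verbatim. Nevertheless, the desired conclusion still holds by applying \cite[Theorem 7.6]{LX22a} directly to $\mathcal{M}'$, since that result, as well as its companion \cite[Theorem 3.15]{Hash22a}, is formulated for MMPs with scaling of a divisor $N+P$ satisfying the hypotheses of Lemma \ref{lem:MMP_with_scaling}, rather than for MMPs with scaling of an ample divisor only. The main obstacle in this plan is therefore to verify that the cited theorem is indeed available in this more general scaling setup; once this is confirmed, the finiteness of log abundant $\Q$-factorial dlt steps in $\mathcal{M}'$, and hence of log abundant NQC lc steps in $\mathcal{M}$, follows immediately.
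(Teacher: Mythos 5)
Your overall architecture coincides with the paper's in the decisive step: lift the MMP to a $\Q$-factorial dlt MMP with scaling of $h_1^*A$ via Theorem \ref{thm:lifting_MMP_with_scaling}, transfer log abundance at every stage through Lemma \ref{lem:dlt_blow-up_log_abundant}, and conclude by \cite[Theorem 7.6]{LX22a} applied to the lifted MMP whose nef thresholds converge to zero. Your worry about $h_1^*A$ not being ample is moot: the paper applies \cite[Theorem 7.6]{LX22a} to exactly this lifted MMP with scaling of $h^*A$, so the cited result is indeed available in that generality, and the non-pseudo-effective case is handled identically via Theorem \ref{thm:mainthm}.

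The one place where you deviate, and where there is a genuine gap, is the reduction to the case $\lambda = 0$. You take an arbitrary MMP $\mathcal{M}$ and claim that if $\lambda > 0$ then $\mathcal{M}$ terminates, citing the standard scaling argument plus Theorem \ref{thm:LT22b_4.1}. But Theorem \ref{thm:LT22b_4.1} only applies when $\lambda \neq \lambda_i$ for every $i$; if the limit is attained (so $\lambda_i = \lambda$ for all large $i$), that theorem says nothing, and the reference the paper uses for this step in Corollary \ref{cor:Hash20d_3.8} (the second paragraph of the proof of \cite[Lemma 2.10]{LT22b}) is stated in the $\Q$-factorial dlt setting, so it does not transfer verbatim to the present non-$\Q$-factorial lc situation. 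The paper sidesteps this entirely: in the pseudo-effective case it does not analyze an arbitrary MMP but instead \emph{constructs} one whose nef thresholds converge to zero by invoking Lemma \ref{lem:scaling_numbers_to_zero}, whose hypothesis (that $\big(X,(B+\nu A)+M\big)$ has a minimal model in the sense of Birkar--Shokurov for every $\nu \in (0,1]$) is supplied by \cite[Theorem 1.2(1)]{LX22a}. Since the statement is existential, this is exactly the right move; replacing your $\lambda>0$ termination claim with an appeal to Lemma \ref{lem:scaling_numbers_to_zero} closes the gap and recovers the paper's proof.
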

	
	\begin{proof}
		Assume first that $K_X+B+M$ is not pseudo-effective over $ Z $. By Theorem \ref{thm:mainthm} there exists a $(K_X+B+M)$-MMP over $ Z $ with scaling of $A$ that terminates with a Mori fiber space of $ (X,B+M) $ over $ Z $, so the statement clearly holds.
		
		Assume now that $K_X+B+M$ is  pseudo-effective over $ Z $. By \cite[Theorem 1.3(1)]{LX23a}, for any $ \nu \in (0,1] $ the NQC lc g-pair $\big(X, (B+\nu A)+M \big)$ has a minimal model in the sense of Birkar-Shokurov over $ Z $. Therefore, by Lemma \ref{lem:scaling_numbers_to_zero} we can construct a $(K_X+B+M)$-MMP over $ Z $ with scaling of $A$ whose nef thresholds $ \lambda_i $ converge to zero, that is, $ \lim_{i \to +\infty} \lambda_i = 0 $, regardless of whether this MMP terminates or not. If it does terminate, then the statement clearly holds. Otherwise, to prove the statement, we argue by contradiction as follows. Assume that this $(K_X+B+M)$-MMP over $ Z $ with scaling of $A$ contains infinitely many log abundant NQC lc g-pairs. By considering a dlt blow-up $ h \colon (X',B'+M') \to (X,B+M) $ and by applying Theorem \ref{thm:lifting_MMP_with_scaling}, we can construct a $(K_{X'}+B'+M')$-MMP over $Z$ with scaling of $ h^* A $, which contains infinitely many log abundant NQC lc g-pairs according to Lemma \ref{lem:dlt_blow-up_log_abundant} and whose nef thresholds $\mu_i$ converge to zero, that is, $\lim_{i \to +\infty} \mu_i = 0 $. However, this is impossible by \cite[Theorem 6.6]{LX23a}, cf.\ \cite[Theorem 3.15]{Hash22a}. Thus, the statement holds in this case as well.
	\end{proof}
	
	The above two results are closely related to the termination of flips conjecture. In fact, two applications of Corollary \ref{cor:Hash20d_3.8} in this direction will be provided below; see Lemma \ref{lem:Hash20d_3.10_auxiliary} and Corollary \ref{cor:Hash20d_3.10}. For further information we refer to the introduction of \cite{Hash20d}.
	
	\begin{conv}
		From this point forward we work exclusively in the absolute setting; see the beginning of Subsection \ref{subsection:log_abundant}.
	\end{conv}
	
	As another application of Theorem \ref{thm:MM_NQC_NZD}(ii), we may generalize \cite[Lemma 3.11]{Hash20d} to the context of g-pairs.
	
	\begin{lem}\label{lem:Hash20d_3.11}
		Let $(X,B+M)$ be an NQC lc g-pair such that $K_X+B+M$ is pseudo-effective. Assume that there exists a projective morphism $ \pi \colon X \to Y $ to a projective variety $ Y $ such that $ \dim Y \leq 4 $ and $K_X+B+M \sim_{\R,Y} 0$. Then $(X,B+M)$ has a minimal model.
	\end{lem}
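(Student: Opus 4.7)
The plan is to use the canonical bundle formula to descend to a generalized pair on $Y$, where the dimension assumption $\dim Y\leq 4$ allows invoking Corollary~\ref{cor:maincor_II}(i), and then to lift the resulting minimal model back to $X$ via the Nakayama--Zariski decomposition (Theorem~\ref{thm:MM_NQC_NZD}(ii)). To begin, by Remark~\ref{rem:EMM_reduction} we may replace $(X,B+M)$ with a dlt blow-up and thus assume it is $\Q$-factorial dlt, and by Stein factorization we may assume $\pi$ has connected fibers.

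Next, I would apply the canonical bundle formula for NQC log canonical generalized pairs to the contraction $\pi\colon X\to Y$ so as to produce an NQC lc g-pair $(Y,B_Y+M_Y)$ satisfying $K_X+B+M\sim_{\R}\pi^*(K_Y+B_Y+M_Y)$. Since $\pi$ is surjective and $K_X+B+M$ is pseudo-effective, $K_Y+B_Y+M_Y$ is pseudo-effective as well. As $\dim Y\leq 4$, Corollary~\ref{cor:maincor_II}(i) supplies a minimal model $(Y'',B_{Y''}+M_{Y''})$ of $(Y,B_Y+M_Y)$, on which $K_{Y''}+B_{Y''}+M_{Y''}$ is nef and NQC. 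On a common resolution $W$ of $Y$ and $Y''$ with induced morphisms $p\colon W\to Y$ and $q\colon W\to Y''$, the standard identity
\[
p^*(K_Y+B_Y+M_Y)=q^*(K_{Y''}+B_{Y''}+M_{Y''})+E_W,
\]
with $E_W\geq 0$ and $q$-exceptional, realizes the Nakayama--Zariski decomposition of $p^*(K_Y+B_Y+M_Y)$ (see \cite[Remark 2.7]{Bir12a} and \cite[Lemma 4.1(2)]{BH14b}), whose positive part is NQC. Choosing a sufficiently high birational model $f\colon V\to X$ that factors through $W$ via some $\pi_V\colon V\to W$, pulling back the above identity yields
\[
f^*(K_X+B+M)\sim_{\R}\pi_V^*q^*(K_{Y''}+B_{Y''}+M_{Y''})+\pi_V^*E_W,
\]
whose first summand is still NQC.

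The hard part will be verifying that this displayed decomposition is in fact the Nakayama--Zariski decomposition of $f^*(K_X+B+M)$ on $V$, i.e.\ that $\pi_V^*E_W=N_\sigma\bigl(f^*(K_X+B+M)\bigr)$; one inclusion follows from the general fact that $N_\sigma$ is dominated by the negative part of any weak Zariski decomposition, whereas the other requires ruling out that any component of $\pi_V^*E_W$ can be absorbed into the positive part while keeping the latter movable. For this I would exploit the relation $K_X+B+M\sim_{\R}\pi^*(K_Y+B_Y+M_Y)$, the $q$-exceptionality of $E_W$, and the standard pullback formalism for the Nakayama--Zariski decomposition under dominant morphisms from \cite[Chapter III]{Nak04}, passing to a yet higher birational model of $X$ if necessary. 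Once this verification is in place, $(X,B+M)$ admits birationally a Nakayama--Zariski decomposition with NQC positive part, and Theorem~\ref{thm:MM_NQC_NZD}(ii) then yields the desired minimal model.
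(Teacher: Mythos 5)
Your proposal is correct and follows essentially the same route as the paper: Stein factorization, the canonical bundle formula to descend to an NQC lc g-pair on $Y$, Corollary~\ref{cor:maincor_II}(i) in dimension $\leq 4$, and then transferring the NQC Nakayama--Zariski decomposition back to $X$ before applying Theorem~\ref{thm:MM_NQC_NZD}(ii). The pullback step you flag as the hard part is exactly what the paper delegates to \cite[Remark 3.21]{Tsak21} (compatibility of $P_\sigma$ and $N_\sigma$ with pullback along the fibration $\pi$), so no new argument is needed there.
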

	
	\begin{proof}
		By taking the Stein factorization of $\pi$, we may assume that $Y$ is a normal variety and that $\pi$ is a fibration.
		By the canonical bundle formula \cite{Fil20,HanLiu21} there exists an NQC lc g-pair structure $ (Y,B_Y+M_Y) $ on $ Y $ such that
		\begin{equation}\label{eq:CBF_dim4}
			K_X+B+M \sim_\R \pi^*(K_Y+B_Y+M_Y) .
		\end{equation}
		By Corollary \ref{cor:maincor_II}(i), $ (Y,B_Y+M_Y)$ has a minimal model, and hence it admits an NQC Nakayama-Zariski decomposition by Theorem \ref{thm:MM_NQC_NZD}(ii). By \eqref{eq:CBF_dim4} and by \cite[Remark 3.21]{Tsak21}, $ (X,B+M) $ admits an NQC Nakayama-Zariski decomposition as well, and thus it has a minimal model by Theorem \ref{thm:MM_NQC_NZD}(ii).
	\end{proof}
	
	The following two results constitute analogues of \cite[Corollaries 3.13 and 3.12]{Hash20d}, respectively, in the setting of g-pairs.
	
	\begin{cor}\label{cor:Hash20d_3.13}
		Let $(X,B+M)$ be an NQC lc g-pair. Assume that $K_X+B+M$ is pseudo-effective and abundant and that all lc centers of $(X,B+M)$ have dimension at most $ 4 $. Then $(X,B+M)$ has a minimal model which is abundant.
	\end{cor}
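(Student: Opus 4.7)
The plan is to reduce to the combination of \cite[Theorem 3.14]{Hash22a} with Theorem \ref{thm:mainthm} after upgrading abundance of $K_X+B+M$ to log abundance with respect to $(X,B+M)$. First I would verify this log abundance directly. For each lc center $S$ of $(X,B+M)$ with normalization $\nu\colon S^\nu\to S$, the hypothesis gives $\dim S^\nu=\dim S\le 4$; applying generalized adjunction (after passing to a dlt blow-up where lc centers are normal) endows $S^\nu$ with an NQC lc g-pair structure $(S^\nu,B_{S^\nu}+M_{S^\nu})$ satisfying $K_{S^\nu}+B_{S^\nu}+M_{S^\nu}\sim_\R \nu^*(K_X+B+M)|_{S^\nu}$, which is pseudo-effective as the pullback of a pseudo-effective divisor. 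By Corollary \ref{cor:maincor_II}(i), $(S^\nu,B_{S^\nu}+M_{S^\nu})$ has a minimal model, and combining this with the abundance of the ambient $K_X+B+M$, one concludes that $\nu^*(K_X+B+M)$ is abundant. Hence $K_X+B+M$ is log abundant with respect to $(X,B+M)$.

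Next I would apply Remark \ref{rem:EMM_reduction} in tandem with Lemma \ref{lem:dlt_blow-up_log_abundant}: a dlt blow-up $h\colon(X',B'+M')\to(X,B+M)$ produces an NQC $\Q$-factorial dlt g-pair whose canonical class $K_{X'}+B'+M'\sim_\R h^*(K_X+B+M)$ is pseudo-effective and log abundant with respect to $(X',B'+M')$. Invoking \cite[Theorem 3.14]{Hash22a} on $(X',B'+M')$, we obtain a minimal model in the sense of Birkar-Shokurov of $(X',B'+M')$, which by Remark \ref{rem:EMM_reduction} is also a minimal model in the sense of Birkar-Shokurov of $(X,B+M)$. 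Theorem \ref{thm:mainthm} then promotes this to a minimal model in the usual sense of $(X,B+M)$, and abundance of this minimal model is automatic by Remark \ref{rem:abundant_MM}.

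The main obstacle is the verification of log abundance in the first step, specifically the claim that the restriction of the abundant divisor $K_X+B+M$ to each lc center $S$ is itself abundant. Abundance does not restrict automatically, and the hypothesis $\dim S\le 4$ is essential: it renders minimal model theory available on $S^\nu$ via Corollary \ref{cor:maincor_II}(i), and it permits the comparison between $\kappa_\iota$ and $\kappa_\sigma$ to be transferred from $X$ to $S^\nu$ through a careful examination of the Iitaka and numerical dimensions via the restricted Iitaka fibration. A further subtlety is confirming that generalized adjunction yields the correct NQC structure on $S^\nu$ needed to apply Corollary \ref{cor:maincor_II}(i) in the g-pair setting, and that this NQC structure is in turn compatible with the invocation of \cite[Theorem 3.14]{Hash22a} after the dlt blow-up.
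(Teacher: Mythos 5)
Your proposal has a genuine gap at its first and central step: the claimed upgrade from \emph{abundant} to \emph{log abundant}. You assert that because each lc center $S$ has $\dim S \le 4$, the adjoint g-pair on $S^\nu$ has a minimal model by Corollary \ref{cor:maincor_II}(i), and that ``combining this with the abundance of the ambient $K_X+B+M$'' yields abundance of $\nu^*(K_X+B+M)$. This does not follow. For generalized pairs, the existence of a minimal model does not imply abundance — see Example \ref{exa:counter-example}, an NQC klt g-pair on an elliptic curve with $\kappa_\sigma = 0$ but $\kappa_\iota = -\infty$ — and abundance of a divisor does not restrict to subvarieties. Indeed, the paper explicitly explains (in the discussion between Corollaries \ref{cor:Hash20d_3.12} and \ref{cor:Hash20d_3.10}) that the route you sketch in your last paragraph, namely transferring the comparison of $\kappa_\iota$ and $\kappa_\sigma$ through restricted Iitaka fibrations using low-dimensional minimal model theory, is exactly the argument from \cite[Corollaries 3.12--3.13]{Hash20d} that \emph{fails} for g-pairs, because abundance fails already in dimension one in this category. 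This is why the corollary is stated as only a partial analogue of Hashizume's result: log abundance is not available, and the proof must avoid it.

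The second step is also not quite right even if log abundance were granted: \cite[Theorem 3.14]{Hash22a} is not applied in the paper to the dlt blow-up directly, but only after running a $(K_{X'}+B'+M')$-MMP with scaling whose nef thresholds tend to zero and invoking special termination. That MMP is needed to arrange the numerical hypotheses of Hashizume's theorem — nefness of the restriction of $K_{X'_m}+B'_m+M'_m$ to the components of $\llcorner B'_m \lrcorner$ and the vanishing $\sigma_P(K_{X'_m}+B'_m+M'_m)=0$ for divisorial valuations whose centers meet lc centers — which do not hold on $X'$ itself. (Compare Lemma \ref{lem:Hash20d_3.10_auxiliary} and Corollary \ref{cor:Hash20d_3.10}: even assuming full log abundance, the paper needs an extra stable base locus hypothesis to conclude.) The actual role of the hypothesis $\dim(\text{lc centers}) \le 4$ in the paper's proof is different from the one you assign it: it guarantees, via the canonical bundle formula and Lemma \ref{lem:Hash20d_3.11}, that the adjoint g-pairs $(T_i, B'_{T_i}+M'_{T_i})$ on the lc centers have minimal models or Mori fiber spaces, which is precisely what the special termination argument consumes. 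Your final reduction (Remark \ref{rem:EMM_reduction}, Proposition \ref{prop:mainprop}/Theorem \ref{thm:mainthm}, Remark \ref{rem:abundant_MM}) matches the paper's endgame, but the core of the argument is missing.
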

	
	\begin{proof}
		First, consider a dlt blow-up $ h \colon (X',B'+M') \to (X,B+M) $ of $(X,B+M)$, and set $ (X_1, B_1+M_1) := (X,B+M) $, $ (X_1', B_1'+M_1') := (X',B'+M') $ and $ h_1 := h$. As in the second paragraph of the proof of Corollary \ref{cor:Hash20d_3.9}, we can construct the following diagram:
		\begin{center}
			\begin{tikzcd}[column sep = 1em, row sep = large]
				(X_1',B_1'+M_1') \arrow[d, "h_1" swap] \arrow[rr, dashed, "\rho_1"] && (X_2',B_2'+M_2') \arrow[d, "h_2" swap] \arrow[rr, dashed, "\rho_2"] && (X_3',B_3'+M_3') \arrow[d, "h_3" swap] \arrow[rr, dashed, "\rho_3"] && \dots 
				\\ 
				(X_1,B_1+M_1) \arrow[rr, dashed, "\pi_1"] && (X_2,B_2+M_2) \arrow[rr, dashed, "\pi_2"] && (X_3,B_3+M_3) \arrow[rr, dashed, "\pi_3"] && \dots
			\end{tikzcd}
		\end{center}
		where the sequence at the bottom is a $(K_X+B+M)$-MMP with scaling of an ample divisor $H$ whose nef thresholds $ \lambda_i $ satisfy $ \lambda := \lim_{i \to +\infty} \lambda_i = 0 $, the sequence on top is a $(K_{X'}+B'+M')$-MMP with scaling of $ h^* H $ whose nef thresholds $ \mu_i $ also satisfy $ \mu := \lim_{i \to +\infty} \mu_i = 0 $, and each map $ h_i \colon (X_i',B_i'+M_i') \to (X_i,B_i+M_i) $ is a dlt blow-up.
		
		Next, for any $i \geq 1$ and any lc center $T_i$ of $(X'_i,B'_i+M'_i)$ we define an NQC dlt g-pair $(T_i,B'_{T_i}+M'_{T_i})$ by adjunction (see \cite[Subsection 2.3]{HanLi22} for the details): 
		\[ K_{T_i}+B'_{T_i}+M'_{T_i}=(K_{X'_i}+B'_{i}+M'_i)|_{T_{i}} . \]
		We will show that $(T_i,B'_{T_i}+M'_{T_i})$ has a minimal model or a Mori fiber space. Since all lc centers of $(X,B+M)$ have dimension at most $ 4 $ by assumption, the same holds for all lc centers of $(X_i,B_i+M_i)$ as well. Therefore, the morphism $h_i \colon X'_i \to X_i$ induces a morphism $T_i \to S_i$ to a projective variety $S_i$ such that $ \dim S_i \leq 4$ and $K_{T_i}+B'_{T_i}+M'_{T_i} \sim_{\R,S_i} 0$; see the second paragraph of the proof of Lemma \ref{lem:dlt_blow-up_log_abundant} for the details. By Lemma \ref{lem:Hash20d_3.11} or by Proposition \ref{prop:mainprop} the g-pair $(T_i,B'_{T_i}+M'_{T_i})$ has indeed a minimal model or a Mori fiber space.
		
		Due to the above fact, by using the standard argument of special termination (see \cite{LMT23} and \cite[Remark 2.21]{Hash20d} for the details), together with \cite[Theorem 4.1]{HanLi22}, we may find a positive integer $m$ such that any step in the sequence
		$$(X'_m,B'_m+M'_m) \dashrightarrow (X'_{m+1},B'_{m+1}+M'_{m+1}) \dashrightarrow\cdots \dashrightarrow (X'_i,B'_i+M'_i)\dashrightarrow \cdots$$
		is an isomorphism in a neighborhood of $\Supp \llcorner B'_{i}\lrcorner$.
		Since $\mu=0$, by \cite[Lemma 3.8]{Hash22a} (see also \cite[Remark III.2.8 and Lemma V.1.9]{Nak04}) the restriction of 
		$K_{X'_m}+B'_m+M'_m$ to any component of $\llcorner B'_m\lrcorner$ is nef and, additionally, for any divisorial valuation $P$ over $X'_m$ whose center $c_{X'_m}(P)$ intersects an lc center of $(X'_m,B'_m+M'_m)$ we have
		\[ \sigma_P (K_{X'_m}+B'_m+M'_m)=0 . \]
		Since $K_{X}+B+M$ is abundant and since the map $(X',B'+M') \dashrightarrow (X_m',B_m'+M_m')$ is a partial $(K_{X'}+B'+M')$-MMP, by the Negativity lemma and by \cite[Remark 2.15(2)]{Hash20d} we deduce that $K_{X'_m}+B'_m+M'_m$ is also abundant. Hence, the g-pair $(X'_m,B'_m+M'_m)$ has a minimal model in the sense of Birkar-Shokurov by \cite[Theorem 3.14]{Hash22a}, and thus so does the g-pair $ (X_1', B_1'+M_1') = (X',B'+M') $; indeed, this follows by applying Proposition \ref{prop:mainprop} to the g-pair $(X'_m,B'_m+M'_m)$ and by observing that we obtain overall a $(K_{X'} + B' + M')$-MMP which terminates. Finally, according to Remarks \ref{rem:EMM_reduction} and \ref{rem:abundant_MM}, the given g-pair $(X,B+M)$ has a minimal model which is abundant, as claimed.
	\end{proof}
	
	\begin{cor}\label{cor:Hash20d_3.12}
		Let $(X,B+M)$ be an NQC lc g-pair of dimension $\dim X = 6$. Assume that $K_X+B+M$ is pseudo-effective and abundant and that $\llcorner B \lrcorner = 0$. Then $(X,B+M)$ has a minimal model which is abundant.
	\end{cor}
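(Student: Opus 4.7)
The plan is to deduce this corollary as a direct consequence of Corollary \ref{cor:Hash20d_3.13}. Since $\dim X = 6$, it suffices to show that the hypothesis $\llcorner B \lrcorner = 0$ forces every lc center of $(X,B+M)$ to have dimension at most $4$; then Corollary \ref{cor:Hash20d_3.13} applies verbatim and produces an abundant minimal model.

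To establish the bound on the dimension of lc centers, I would pass to a dlt blow-up $h \colon (X',B'+M') \to (X,B+M)$, so that $B' = h_*^{-1} B + E$, where $E$ is the reduced sum of all $h$-exceptional prime divisors on $X'$. Since $\llcorner h_*^{-1} B \lrcorner \leq h_*^{-1} \llcorner B \lrcorner = 0$, we obtain $\llcorner B' \lrcorner = E$; in other words, every component of $\llcorner B' \lrcorner$ is $h$-exceptional. As $(X',B'+M')$ is dlt, each of its lc centers $T'$ is an irreducible component of an intersection of components of $\llcorner B' \lrcorner$, and hence is contained in some $h$-exceptional prime divisor $F$ on $X'$. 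It follows that
\[ \dim h(T') \leq \dim h(F) \leq \dim X - 2 = 4 . \]

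To conclude, recall that for any divisorial valuation $E$ over $X$ we have $a(E,X,B+M) = a(E,X',B'+M')$, so the lc centers of $(X,B+M)$ are precisely the images under $h$ of the lc centers of $(X',B'+M')$. By the previous paragraph, every lc center of $(X,B+M)$ has dimension at most $4$, and thus Corollary \ref{cor:Hash20d_3.13} yields a minimal model of $(X,B+M)$ which is abundant, as required. There is no real obstacle here: the content of the corollary lies entirely in Corollary \ref{cor:Hash20d_3.13}, and the only verification needed is the elementary observation above about dlt blow-ups.
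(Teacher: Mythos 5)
Your proposal is correct and follows exactly the paper's route: the corollary is deduced from Corollary \ref{cor:Hash20d_3.13} once one observes that $\llcorner B \lrcorner = 0$ forces every lc center of $(X,B+M)$ to have codimension at least $2$, hence dimension at most $4$. The paper treats this observation as immediate (a divisorial lc center would have to be a component of $\llcorner B \lrcorner$), whereas you verify it via a dlt blow-up; your detour is valid but not needed.
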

	
	\begin{proof}
		Since $\dim X = 6$, the condition $\llcorner B \lrcorner =0$ implies that any lc center of $(X,B+M)$ has dimension $\leq 4$, so the statement follows immediately from Corollary \ref{cor:Hash20d_3.13}.
	\end{proof}
	
	We stress that Corollaries \ref{cor:Hash20d_3.13} and \ref{cor:Hash20d_3.12} are not exact analogues of \cite[Corollaries 3.13 and 3.12]{Hash20d}, respectively, in the context of g-pairs and we explain now the main reason why we cannot obtain completely analogous statements in this more general framework. In \cite[Corollaries 3.12 and 3.13]{Hash20d} it is shown that the divisor $K_X+\Delta$ is abundant using the assumption that $ \kappa_\iota(X,K_X+\Delta)\geq \dim X-3 $ as follows: first, by taking the Iitaka fibration $f \colon X\to V$ associated with $K_X+\Delta$ and by invoking the Abundance theorem for 3-dimensional lc pairs, one deduces that
	$$ \kappa_\iota(F,K_F+\Delta_F)=\kappa_\sigma(F,K_F+\Delta_F)=0 , $$
	where $F$ is a general fiber of $f$ and $(K_F+\Delta_F) :=(K_X+\Delta)|_F$, and then by applying \cite[Proposition V.2.7(9)]{Nak04} one obtains
	$$ \kappa_\sigma(X,K_X+\Delta) \leq \kappa_\sigma(F,K_F+\Delta_F)+\dim V = \kappa_\iota(X,K_X+\Delta) , $$
	which implies that $K_X+\Delta$ is abundant. However, the Abundance conjecture fails even in dimension one for generalized pairs; see, for instance, \cite[Section 3]{BH14b}. The following example, whose first part is briefly discussed also in \cite[Section 2, Paragraph after Theorem 2.1]{Tot09}, shows that the canonical divisor $K_X+B+M$ of a g-pair $(X,B+M)$ may not be abundant even if it is nef and satisfies $\kappa_\iota(X,K_X+B+M)=0$.
	
	\begin{exa}
		Let $g \colon X \to \mathbb{P}^2$ be the blow-up of $\mathbb{P}^2$ at nine very general points of an elliptic curve $E$ and let $B$ be the strict transform of $E$ on $X$. Since $E \in |\OO_{\mathbb{P}^2}(3)| $, and thus $E^2 = 9$, it is easy to see that $B^2 = 0$, and since $B$ is irreducible, we infer that $B$ is nef. Observe also that $B$ is not numerically trivial, and since it is not big either, we conclude that $\kappa_\sigma(X,B) = 1$. Furthermore, the normal bundle $\mathcal{O}_X(B)|_B$ of $B$ in $X$ is a non-torsion line bundle of degree $0$ on $B$, and hence no positive multiple of $B$ moves in $X$, that is, for every $n \geq 1$, the effective divisor $nB$ is the unique element of the linear system $ |nB|$, which yields $\kappa(X,B) = 0$.
		
		Set $M:=B$ and note that $(X,B+M)$ is an lc g-pair. By construction it holds that $K_X+B \sim g^* (K_{\mathbb{P}^2} + E) \sim 0$, so
		\[ K_X+B+M \sim B \; \text{ is nef} \]
		and by the above we obtain
		\[ \kappa_\iota(X,K_X+B+M)=\kappa(X,B)=0 , \]
		while
		\[ \kappa_\sigma(X,K_X+B+M)=\kappa_\sigma(X,B)=1 . \]
	\end{exa}
	
	It remains to deduce an analogue of \cite[Corollary 3.10]{Hash20d} in the setting of g-pairs; this is Corollary \ref{cor:Hash20d_3.10} below. We begin with an auxiliary result, which plays a key role in the proof of Corollary \ref{cor:Hash20d_3.10}. Specifically, the next lemma provides a sufficient condition for the termination of an MMP with scaling of an ample divisor starting from an NQC $\Q$-factorial dlt g-pair, essentially when the corresponding nef thresholds converge to zero.
	
	\begin{lem}\label{lem:Hash20d_3.10_auxiliary}
		Let $(X,B+M)$ be an NQC $\Q$-factorial dlt g-pair. Assume that $K_X+B+M$ is pseudo-effective and log abundant with respect to $(X,B+M)$, and that the stable base locus of $K_X+B+M$ does not contain the center of any divisorial valuation $ P $ over $X$ such that $ a(P,X,B+M) < 0 $. Then any $(K_X+B+M)$-MMP with scaling of an ample divisor terminates with a minimal model of $(X,B+M)$ which is log abundant.
	\end{lem}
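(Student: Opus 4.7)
The plan is to fix a $(K_X+B+M)$-MMP with scaling of an ample divisor $A$, denote by $(X_i,B_i+M_i)$ the resulting sequence of NQC $\Q$-factorial dlt g-pairs with $(X_1,B_1+M_1)=(X,B+M)$, and by $\lambda_i$ the corresponding nef thresholds. Since $\{\lambda_i\}$ is non-increasing, I set $\lambda:=\lim_{i\to+\infty}\lambda_i\geq 0$ and split the argument according to whether $\lambda>0$ or $\lambda=0$.

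If $\lambda>0$, then the lc g-pair $\big(X,(B+\lambda A)+M\big)$ has boundary containing the ample divisor $\lambda A$, so by \cite[Theorem 1.2(1)]{LX22a} it has a minimal model in the sense of Birkar-Shokurov; Theorem \ref{thm:LT22b_4.1} (applied with $N=0$ and $P=A$) then forces termination, and log abundance of the resulting minimal model follows from Lemma \ref{lem:log_abundant_MMs} once log abundance has been transported through any single intermediate output (see below).

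Suppose now $\lambda=0$. I first observe that the hypotheses of the lemma are preserved along the MMP: each $(X_i,B_i+M_i)$ remains log abundant (via Remark \ref{rem:abundant_MM} applied to the abundance of $K_{X_i}+B_i+M_i$ and to the abundance of its restriction to every lc center, using that MMP steps are birational contractions inducing surjections on normalizations of lc centers), while the stable base locus hypothesis descends because the discrepancies $a(P,X_i,B_i+M_i)$ are non-decreasing in $i$ and the stable base locus shrinks along the MMP. Next I carry out special termination by induction on $\dim X$, with the inductive hypothesis being the lemma itself in strictly lower dimension: for every lc center $T$ of each $(X_i,B_i+M_i)$, adjunction yields an NQC dlt g-pair $(T,B_T+M_T)$ which inherits both log abundance and the stable base locus condition, and the restriction of the MMP to $T$ is itself an MMP with scaling of an ample divisor; the inductive hypothesis then forces this induced MMP to terminate. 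Combined with $\lambda_i\to 0$ and the mechanism underlying Corollary \ref{cor:Hash20d_3.9}, this produces an integer $m$ such that, for every $i\geq m$, the birational map $X_i\dashrightarrow X_{i+1}$ is an isomorphism in a neighborhood of $\Supp \lfloor B_i \rfloor$.

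Once in this stabilized configuration, the condition $\lambda=0$ together with \cite[Lemma 3.8]{Hash22a} implies that $(K_{X_m}+B_m+M_m)|_S$ is nef for every lc center $S$ of $(X_m,B_m+M_m)$ and that $\sigma_P(K_{X_m}+B_m+M_m)=0$ for every divisorial valuation $P$ over $X_m$ whose center meets some lc center. Combined with the log abundance of $(X_m,B_m+M_m)$ and the preserved stable base locus condition, \cite[Theorem 3.14]{Hash22a} furnishes a minimal model of $(X_m,B_m+M_m)$ in the sense of Birkar-Shokurov. Since $\lambda=0\neq\lambda_i$ for every $i$, Theorem \ref{thm:LT22b_4.1} applied to the tail of the MMP starting from $X_m$ now forces the original MMP to terminate, and Lemma \ref{lem:log_abundant_MMs} yields log abundance of the resulting minimal model.

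The main obstacle I expect is the special termination step: one must carefully verify that log abundance and the stable base locus hypothesis descend via adjunction to the g-pair on every lc center, and that the restricted sequences are themselves MMPs with scaling of an ample divisor to which the inductive hypothesis applies. In the g-pair setting this is more delicate than in the usual case due to the presence of the nef part, but the machinery developed in Subsection \ref{subsection:log_abundant} (together with the strategy employed in the proof of Corollary \ref{cor:Hash20d_3.13}) should make it go through.
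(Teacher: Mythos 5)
Your overall skeleton (show the hypotheses persist along the MMP, then force termination) is in the right spirit, but there are two genuine gaps, and you miss the shortcut that makes the paper's proof work. First, the preservation of log abundance: Remark \ref{rem:abundant_MM} does not justify the abundance of $(K_{X_i}+B_i+M_i)|_{S_i}$ on lc centers. That remark relies on writing $p^*(K_X+B+M) \sim_\R q^*(K_{X'}+B'+M')+E$ with $E$ effective and $q$-exceptional, which holds for minimal models of the ambient g-pair but not for the induced birational map $S \dashrightarrow S_i$ between lc centers: a priori that map can behave badly with respect to the adjoint g-pair structures. This is exactly where the stable base locus hypothesis must enter. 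The paper compares each $(S_i,B_{S_i}+M_{S_i})$ directly with $(S,B_S+M_S)$ on $X=X_1$: one discrepancy inequality comes from \cite[Lemma 2.18(iv)]{Tsak21}, and the reverse inequality for prime divisors on $S_i$ comes from the stable base locus assumption via \cite[Lemma 3.5]{Hash22a} and \cite[Lemma 3.6(4)]{LX22a}; then \cite[Lemma 3.9]{Hash22a} transfers abundance. You invoke the stable base locus hypothesis only to claim it "descends" to $X_i$ (a claim the paper never needs), not to prove the preservation of log abundance, which is the actual content.

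Second, your special termination step rests on the false assertion that the restriction of the MMP to an lc center $T$ "is itself an MMP with scaling of an ample divisor" for the adjoint g-pair: the induced maps on lc centers need not be MMP steps (they may extract divisors or fail to be discrepancy-negative), so the inductive hypothesis cannot be applied to them; moreover you do not verify that the adjoint g-pairs satisfy the stable base locus condition required by the induction. More importantly, none of this machinery (the case split on $\lambda$, special termination, \cite[Theorem 3.14]{Hash22a}, Theorem \ref{thm:LT22b_4.1}) is needed: once every $(X_i,B_i+M_i)$ is known to be log abundant, Corollary \ref{cor:Hash20d_3.8} — any MMP with scaling of an ample divisor on an NQC $\Q$-factorial dlt g-pair contains only finitely many log abundant g-pairs — immediately forces termination, and the last model is log abundant by construction. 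That single observation is the engine of the paper's proof and replaces your entire $\lambda=0$ branch.
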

	
	\begin{proof}
		Run a $(K_X+B+M)$-MMP with scaling of an ample divisor:
		$$ (X_1, B_1+M_1) := (X,B+M) \dashrightarrow (X_2,B_2+M_2) \dashrightarrow \cdots \dashrightarrow (X_i,B_i+M_i)\dashrightarrow \cdots $$
		Fix $i\geq 1$ and pick an lc center $S_i$ of $(X_i,B_i+M_i)$. Then there exists an lc center $S$ of $(X,B+M)$ such that the map $X \dashrightarrow X_i$ induces a birational map $S \dashrightarrow S_i$; see \cite[Lemma 2.18(iii)]{Tsak21}. Define NQC dlt g-pairs $(S,B_S+M_S)$ and $(S_i,B_{S_i}+M_{S_i})$ by adjunction: 
		\[ K_S+B_S+M_S=(K_X+B+M)|_S \quad \text{and} \quad K_{S_i}+B_{S_i}+M_{S_i}=(K_{X_i}+B_i+M_i)|_{S_i} ; \]
		see \cite[Subsection 2.3]{HanLi22} for the details. By \cite[Lemma 2.18(iv)]{Tsak21} for any divisorial valuation $P$ over $S$ we have
		\[ a(P,S,B_S+M_S) \leq a(P,S_i,B_{S_i}+M_{S_i}) . \]
		In addition, by our assumptions, \cite[Lemma 3.5]{Hash22a} and \cite[Lemma 3.7(4)]{LX23a}, for every prime divisor $G$ on $S_i$ we have 
		\[ a(G,S_i,B_{S_i}+M_{S_i})\leq a(G,S,B_S+M_S) . \]
		Therefore, \cite[Lemma 3.9]{Hash22a} implies that the divisor $K_{S_i}+B_{S_i}+M_{S_i}$ is abundant. Hence, for every $i \geq 1$ the g-pair $(X_i,B_i+M_i)$ is log abundant. It follows now from Corollary \ref{cor:Hash20d_3.8} that the above $(K_X+B+M)$-MMP with scaling terminates with a minimal model $(Y,B_Y+M_Y)$ of $(X,B+M)$, while by construction the divisor $K_Y+B_Y+M_Y$ is log abundant with respect to $(Y,B_Y+M_Y)$.
	\end{proof}
	
	\begin{cor}\label{cor:Hash20d_3.10}
		Let $(X,B+M)$ be an NQC lc g-pair. Assume that $K_X+B+M$ is pseudo-effective and log abundant with respect to $(X,B+M)$, and that the stable base locus of $K_X+B+M$ does not contain the center of any divisorial valuation $P$ over $X$ such that $ a(P,X,B+M) < 0 $. Then $(X,B+M)$ has a minimal model which is log abundant.
	\end{cor}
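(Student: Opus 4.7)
The plan is to reduce the statement to Lemma \ref{lem:Hash20d_3.10_auxiliary}, which already handles the $\Q$-factorial dlt case, via a dlt blow-up. Let $h \colon (X',B'+M') \to (X,B+M)$ be a dlt blow-up of $(X,B+M)$. Then $(X',B'+M')$ is an NQC $\Q$-factorial dlt g-pair with $K_{X'}+B'+M' \sim_\R h^*(K_X+B+M)$, so $K_{X'}+B'+M'$ remains pseudo-effective, and by Lemma \ref{lem:dlt_blow-up_log_abundant} it is also log abundant with respect to $(X',B'+M')$.

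The main technical step is to verify that the stable base locus hypothesis transfers to $(X',B'+M')$. Since $h$ is proper birational between normal varieties, pullback of sections yields $\mathbf{B}(K_{X'}+B'+M')=h^{-1}\bigl(\mathbf{B}(K_X+B+M)\bigr)$ (reasoning level-by-level with $\mathrm{Bs}(|m(\cdot)|)$ and then intersecting over $m$; the analogous $\R$-statement follows by standard arguments). Moreover, divisorial valuations over $X'$ coincide with those over $X$, and the $\R$-linear equivalence above gives $a(P,X',B'+M')=a(P,X,B+M)$ for every divisorial valuation $P$. If $a(P,X',B'+M')<0$, then $c_X(P)=h(c_{X'}(P))\not\subseteq \mathbf{B}(K_X+B+M)$ by hypothesis; picking any point of $c_X(P)$ outside $\mathbf{B}(K_X+B+M)$ and lifting along $h$ produces a point of $c_{X'}(P)$ outside $h^{-1}(\mathbf{B}(K_X+B+M))=\mathbf{B}(K_{X'}+B'+M')$. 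Thus $c_{X'}(P)\not\subseteq \mathbf{B}(K_{X'}+B'+M')$, as required.

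We may therefore apply Lemma \ref{lem:Hash20d_3.10_auxiliary} to $(X',B'+M')$: any $(K_{X'}+B'+M')$-MMP with scaling of an ample divisor terminates with a minimal model $(Y,B_Y+M_Y)$ of $(X',B'+M')$ which is log abundant. By Remark \ref{rem:EMM_reduction}, $(Y,B_Y+M_Y)$ is a minimal model in the sense of Birkar-Shokurov of $(X,B+M)$, so Proposition \ref{prop:mainprop} yields that $(X,B+M)$ has a minimal model $(X'',B''+M'')$ in the usual sense. Finally, applying Lemma \ref{lem:log_abundant_MMs} to the two minimal models $(Y,B_Y+M_Y)$ and $(X'',B''+M'')$ of $(X,B+M)$, log abundance of the former forces log abundance of the latter.

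The only nontrivial point is the transfer of the stable base locus condition across the dlt blow-up; once that is in place, everything else is a direct invocation of Lemma \ref{lem:Hash20d_3.10_auxiliary}, Remark \ref{rem:EMM_reduction}, Proposition \ref{prop:mainprop} and Lemma \ref{lem:log_abundant_MMs}.
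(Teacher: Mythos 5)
Your proof is correct and follows essentially the same route as the paper: dlt blow-up, transfer of the hypotheses (log abundance via Lemma \ref{lem:dlt_blow-up_log_abundant}, and the identity $\sbs(K_{X'}+B'+M')=h^{-1}(\sbs(K_X+B+M))$, which you argue directly where the paper cites a lemma from \cite{LMT}), then Lemma \ref{lem:Hash20d_3.10_auxiliary}, Remark \ref{rem:EMM_reduction}/Proposition \ref{prop:mainprop}, and Lemma \ref{lem:log_abundant_MMs}. The only cosmetic difference is at the end, where the paper inserts an extra dlt blow-up of the minimal model before comparing, while you apply Lemma \ref{lem:log_abundant_MMs} directly to the two minimal models \enquote{in any sense}, which its statement permits.
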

	
	\begin{proof}
		Let $ h \colon (X',B'+M') \to (X,B+M) $ be a dlt blow-up of $(X,B+M)$. According to Lemma \ref{lem:dlt_blow-up_log_abundant}, the divisor $K_{X'}+B'+M'$ is pseudo-effective and log abundant with respect to $(X',B'+M')$, and by \cite[Lemma 2.3]{LMT23} we deduce that the stable base loci of $K_X+B+M$ and $K_{X'}+B'+M'$ are related as follows:
		\[ \sbs(K_{X'}+B'+M') = h^{-1} \big( \sbs(K_X+B+M) \big) . \]
		Therefore, $(X',B'+M')$ satisfies the same hypotheses as $(X,B+M)$, and now Lemma \ref{lem:Hash20d_3.10_auxiliary} implies that it has a minimal model $(Y'',B_Y''+M_Y'')$ which is log abundant. It follows from Remark \ref{rem:EMM_reduction} that $(X,B+M)$ has a minimal model $(Y,B_Y+M_Y)$. Let $ t \colon (Y',B_Y'+M_Y') \to (Y,B_Y+M_Y) $ be a dlt blow-up of $(Y,B_Y+M_Y)$. Note that $(Y',B_Y'+M_Y')$ is a minimal model in the sense of Birkar-Shokurov of $(X,B+M)$, so it is also log abundant according to Lemma \ref{lem:log_abundant_MMs}. By Lemma \ref{lem:dlt_blow-up_log_abundant} we now conclude that $(Y,B_Y+M_Y)$ is log abundant, which completes the proof.
	\end{proof}
	
	As in the case of Corollaries \ref{cor:Hash20d_3.13} and \ref{cor:Hash20d_3.12}, we remark that Corollary \ref{cor:Hash20d_3.10} is also only a partial analogue of \cite[Corollary 3.10]{Hash20d} in the context of g-pairs. Indeed, in contrast to the case of usual pairs, where it is well-known that a nef and log abundant canonical divisor is semi-ample (see, for instance, \cite{Xie22} for more information), we cannot necessarily deduce the existence of a good minimal model of the g-pair $(X,B+M)$ from Corollary \ref{cor:Hash20d_3.10}, as demonstrated by \cite[Example 2.2]{LX23b}.

	\bibliographystyle{amsalpha}
	\bibliography{BibliographyForPapers}
	
\end{document}